\newtheorem{theorem}{Theorem}
\newtheorem{lemma}[theorem]{Lemma}
\newtheorem{corollary}[theorem]{Corollary}
\newtheorem{proposition}[theorem]{Proposition}
\theoremstyle{definition}
\newtheorem{remark}[theorem]{Remark}
\newtheorem{claim}{Claim}
\numberwithin{equation}{section}\numberwithin{theorem}{section}
\newcounter{stepctr}
{\end{list}}
\def\XXint#1#2#3{{\setbox0=\hbox{$#1{#2#3}{\int}$}
     \vcenter{\hbox{$#2#3$}}\kern-.5\wd0}}
\newcommand{\mres}{\mathbin{\vrule height 1.6ex depth 0pt width 0.13ex\vrule height 0.13ex depth 0pt width 1.3ex}}
\newcommand{\e}{\varepsilon}
\DeclareMathOperator{\spt}{spt}
\providecommand{\titlemacro}{{Brakke Regularity for the Allen--Cahn Flow}}
\title{\titlemacro}
\author{Huy The Nguyen and Shengwen Wang}
\newcommand\printaddress{{
\setlength{\parindent}{15pt}
\setlength{\parskip}{2.5ex}
\footnotesize~
\par
{
\par
{\scshape Huy The Nguyen}
\newline 
Queen Mary University of London, 
School of Mathematical Sciences, 
Mile End Road, 
London E1 4NS, UK
\newline
\textit{E-mail address:} 
\texttt{h.nguyen@qmul.ac.uk}
\par
{\scshape Shengwen Wang}
\newline 
Queen Mary University of London, 
School of Mathematical Sciences, 
Mile End Road, 
London E1 4NS, UK
\newline
\textit{E-mail address:} 
\texttt{shengwen.wang@qmul.ac.uk}
\par
}} 
}
\date{}
\begin{document}

\title{Brakke Regularity for the Allen--Cahn Flow}
\maketitle
\begin{abstract}
In this paper we prove an analogue of the Brakke's $\e$-regularity theorem for the parabolic Allen--Cahn equation. In particular, we show uniform $C^{2,\alpha}$ regularity for the transition layers converging to smooth mean curvature flows as $\e\rightarrow0$. The proof utilises Allen--Cahn versions of the monotonicity formula, parabolic Lipschitz approximation and blowups. A corresponding gap theorem for entire eternal solutions of the parabolic Allen--Cahn is also obtained. As an application of the regularity theorem, we give an affirmative answer to a question of Ilmanen \cite[13.4]{Ilmanen1994} that there is no cancellation in $\mathbf {BV}$ convergence in the mean convex setting.
\end{abstract}

\tableofcontents

\section{Introduction}

The $\e$-parabolic Allen--Cahn equation is
\begin{align}
\partial_t u^\e &=\Delta u^\e-\frac{1}{\e^2}W'(u^\e), \quad u(x,0)=u^\e_0(x),  
\end{align}
where $W:\mathbb R\rightarrow\mathbb R$ is a double-well potential function (a typical choice is $W(u)=\frac{(1-u^2)^2}{2}$). This flow was introduced by Allen and Cahn in 1979 \cite{Allen1979} to model the motion of phase boundaries by surface tension. This flow is the gradient flow of the energy
\begin{align*}
\mu^\e&=\int \left(\frac{\e|\nabla u^\e|^2}{2}+ \frac{W(u^\e)}{\e}\right) dx.
\end{align*}  
Heuristically, the term $\frac{1}{\e^2}W'(u^\e)$ forces $u^\e$ to approximate a characteristic function with a transition layer of width $\e$ and slope $ C/\e$. As $\e$ approaches zero, the nodal set approaches a hypersurface interface moving by mean curvature. This was derived formally by \cite{Fife1988} and \cite{Rubinstein1989}. In the radially symmetric case, a rigorous proof was obtained by \cite{Bronsard1991}. Under the assumption that the underlying mean curvature flow exists and is smooth, convergence was shown by  
\cite{Mottoni1989}, \cite{Chen1992} and \cite{Chen1994} among others. If the mean curvature flow is not smooth, there are various notions of weak mean curvature flow, we mention \cite{brakke2015motion}, \cite{Chen1991}, \cite{Evans1991}. For example \cite{Evans1992} showed that the limit of the level sets of the parabolic Allen--Cahn are contained in the viscosity solutions for the mean curvature flow studied by \cite{Evans1991} and \cite{Chen1991}. Most relevant for this paper is the paper of \cite{Ilmanen1993}, where it was shown, using methods from geometric measure theory, that the limit is a mean curvature flow in the sense of Brakke, that is the energy measures $d\mu^\e_t:=\frac{\e|\nabla u^\e|^2}{2}+ \frac{W(u^\e)}{\e}$ converge to the Brakke's weak mean curvature flow \cite{brakke2015motion}. Also of vital importance is Tonegawa's proof that the limit is an integral varifold \cite{ tonegawa2003integrality}. 

Higher regularity for the convergence is a necessary ingredient for geometric applications. In the elliptic setting, Caffarelli--Cordoba \cite{caffarelli2006phase} have shown that the transition layers of stable phase transitions have uniform $C^{1,\theta}$ regularity (independent of $\e$) and Wang--Wei \cite{Wang2019a, Wang2019} have proved that stable transition layers converge in a stronger $C^{2,\alpha}$ sense to the limit minimal surfaces. Using improved convergence in dimension $3$, Chodosh--Mantoulidis \cite{Chodosh2018} showed that the min-max minimal surfaces obtained from the Allen--Cahn construction in a generic 3-manifold has multiplicity $1$ and expected index. 

We will develop here the $\e$-regularity theory of parabolic Allen--Cahn equations, which is the diffused analogue of Brakke's regularity theory \cite{brakke2015motion} for mean curvature flow. Our main theorem is the following
\begin{theorem}\label{MainRegularityTheorem}
For any $b\in(0,1)$, there exists $\e_0,\tau_0>0$ small, and $R_0, K_0>0$ such that the following holds: let $u^\e$ be a solution to \eqref{eqn_ACFe} with non-positive discrepancy (see section 2 for the definition) in the parabolic cylinder $P_{R_0}(0,0)=B^{n+1}_{R_0}(0)\times(-R_0^2,R_0^2)\subset\mathbb R^{n+1}\times\mathbb R$, with $|u^\e(0,0)|\leq 1-b$ such that the following energy estimate holds 
\begin{align}\label{AlmostUnit}
\frac{1}{2}R_0^{-n-2}\iint_{P_{R_0}(0,0)}\left(\frac{\e|\nabla u^\e|^2}{2}+\frac{W(u^\e)}{\e}\right) dxdt\leq (1+\tau_0)\alpha\omega_n,
\end{align}
where $\alpha=\int_{-1}^1\sqrt{2W(g)} dx$ is the total energy of the of the $1$-dimensional solution (see \eqref{eqn_oneD}).

Then there exists a hyperplane (without loss of generality, assumed to be $\{x_{n+1}=0\}$), such that for any $s\in(-1+b,1-b)$, $|\nabla u^\e(x,t)|\neq0$ for $u^\e(x,t)=s$, and the level sets $\{u=s\}\cap P_1$ can be represented by a $C^{2,\alpha}_P$ graph of the form $x_{n+1}=h^{\e,s}(x_1,...,x_n,t)=h^{\e,s}(\hat x,t)$ with
\begin{align*}
\|h^{\e,s}\|_{C^{2,\alpha}(\hat P_1)}\leq K_0,
\end{align*}
where $\hat P_r=B^n_r\times(-r^2,r^2)\subset\mathbb R^n\times\mathbb R$ and $C^{2,\alpha}_P$ is the parabolic $C^{2,\alpha}$ (with respect to the parabolic metric $d\left((x_1,t_1),(x_2,t_2)\right)=\max\{|x_1-x_2|,\sqrt{|t_1-t_2|}\}$ in space-time).
\end{theorem}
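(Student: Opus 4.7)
The strategy adapts the classical Brakke/Allard $\e$-regularity argument to the diffuse Allen--Cahn setting. I would proceed by contradiction and compactness: suppose the theorem fails, producing sequences $\e_j\to 0$, $\tau_j\to 0$ and solutions $u^{\e_j}$ obeying \eqref{AlmostUnit} with non-positive discrepancy, but not admitting the claimed $C^{2,\alpha}_P$ graphical representation at unit scale with bound $K_0$. From this sequence I would extract a limiting flat unit-multiplicity plane moving trivially by mean curvature, and then propagate the resulting smoothness back to $u^{\e_j}$ for large $j$ via an iterated excess decay.

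The first ingredient is the diffuse Huisken-type monotonicity formula of Ilmanen, which is available under the non-positive discrepancy hypothesis, applied to the energy measures $\mu^{\e_j}_t$. Combined with \eqref{AlmostUnit} and Tonegawa's integrality theorem, this yields a pinched Gaussian density bound $|\Theta^{\e_j}(x,t,r)-\alpha|\le\tau_j'$ (with $\tau_j'\to 0$) for all $(x,t)$ in the interior and all $r\le R_0/2$, so the diffuse interface is everywhere close to a single flat layer at every small scale.

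Next comes a parabolic Lipschitz approximation. Using the near-unit density and the concentration of energy on the transition layer, together with a clearing-out lemma for Allen--Cahn, I would show that at some intermediate scale the nodal set $\{u^{\e_j}=0\}\cap P_{3/4}$ is, outside an exceptional set of small parabolic measure, a parabolic Lipschitz graph $x_{n+1}=\varphi^{\e_j}(\hat x,t)$ over a reference hyperplane, with Lipschitz constant controlled by the tilt-excess $E_j$ coming from the energy gap in \eqref{AlmostUnit}. Rescaling $\varphi^{\e_j}$ by $\sqrt{E_j}$ and extracting a subsequence, I would test the Allen--Cahn equation against vector fields tangential to the reference plane; together with the monotonicity formula and integration by parts, this forces the limit $\varphi^{\infty}$ to satisfy the linear heat equation $\partial_t\varphi^{\infty}=\Delta_{\hat x}\varphi^{\infty}$. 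Standard interior Schauder estimates for caloric functions then yield an improved excess bound at a smaller scale, which iterates via a Campanato-type criterion to the uniform $C^{2,\alpha}_P$ estimate. The graph representation for general $s\in(-1+b,1-b)$ would then follow from the case $s=0$ by a parabolic implicit-function argument that uses the nondegeneracy $|\nabla u^{\e_j}|\neq 0$ to foliate the transition region by level sets of uniformly bounded slope.

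The main obstacle will be the caloric approximation step: establishing that, after subtracting the vertical contribution of the one-dimensional profile $g$, the tangential motion of the transition layer is approximated by a solution of the heat equation with an error $o(\sqrt{E_j})$ as $\e_j,E_j\to 0$. This requires a careful perturbative expansion of $u^{\e_j}$ around $g$ composed with the signed distance to the approximating plane, keeping precise track of the interplay between the diffuse scale $\e_j$ and the excess scale $\sqrt{E_j}$, and absorbing the non-positive discrepancy into favourable error terms rather than using it only for monotonicity. It is at this step that the Allen--Cahn analysis most decisively departs from both the elliptic case of Caffarelli--Cordoba and Wang--Wei and from sharp-interface Brakke theory, and where the bulk of the technical work of the proof should lie.
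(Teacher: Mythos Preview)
Your overall architecture---parabolic Lipschitz approximation of the transition layer, blow-up by the square root of the excess, identification of the limit as a caloric function, and excess decay via interior estimates for the heat equation---matches the paper's closely. The paper also proves the excess decay (its Theorem~\ref{thm_ExcessDecay}) by exactly this contradiction/compactness route, and the Caccioppoli and energy inequalities you would need are developed there.

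The genuine gap is in your last step. You assert that the Campanato iteration of the excess decay yields a uniform $C^{2,\alpha}_P$ bound. In the Allen--Cahn setting this fails: the excess decay step requires a \emph{layer repulsion} hypothesis $\mathbb H \ge K_1\e^2$ (the paper's condition \eqref{LayerRepulsion}), because the blow-up argument needs the normalised graphs $h^{\e,s}/\sqrt{\mathbb H}$ to have a limit independent of the level value $s$, and this is only guaranteed when the separation between layers (of order $\e$) is $o(\sqrt{\mathbb H})$. Consequently the iteration terminates once the height excess reaches the scale $\e^2$, and at that point one has only a uniform Lipschitz bound on the level sets, not $C^{1,\alpha}$ or better. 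The paper explicitly flags this (Remark after Theorem~\ref{GapTheorem} and the discussion at the end of Section~\ref{sec_StatementExcessDecay}): ``Because of the layer repulsion hypothesis $\mathbb H_\e \gg \e$ we cannot argue as in \cite{brakke2015motion}, \cite{Kasai2014} to conclude $C^{1,\alpha}$ regularity.''

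The paper's workaround is two-fold and is missing from your plan. First, the conditional excess decay is used \emph{in the blow-down direction} to prove a gap theorem (Theorem~\ref{GapTheorem}) classifying entire eternal solutions of near-unit density as one-dimensional; this gap theorem is then fed back, via a second compactness argument at scale $\e$ (Theorem~\ref{InteriorLipschitz}), to obtain the uniform Lipschitz bound on the level sets. Second, the jump from Lipschitz to $C^{2,\alpha}$ is not obtained by further excess iteration at all, but by invoking a separate second-order estimate for transition layers from the companion paper \cite{Nguyen2020}. Your proposal needs to account for the breakdown of the iteration at scale $\e$ and supply an independent mechanism---either the gap theorem route of the paper or some substitute---to bridge from Lipschitz to $C^{2,\alpha}$.
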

\begin{remark}
We will prove uniform Lipschitz estimates in this paper. The improvement in regularity from Lipschitz to uniform $C^{2,\alpha}$ was proved in \cite{Nguyen2020}.
\end{remark}

As in the case of minimal surfaces, mean curvature flow and elliptic Allen--Cahn (see \cite{allard1972, brakke2015motion, wang2014new}), the key to proving uniform regularity is to obtain an excess decay property, Theorem \ref{thm_ExcessDecay}.

One of the ingredients of the proof is the following gap theorem for entire eternal parabolic Allen--Cahn equations. This theorem is of interest on its own.
\begin{theorem}\label{GapTheorem}
There exists $\tau(n)>0$ depending only on the dimension (and independent of $\e$) such that if $u^\e:\mathbb R^{n+1}\times\mathbb R\rightarrow[-1,1]$ is an eternal entire solution to \eqref{eqn_ACFe} with
\begin{align*}
\frac{1}{2}R^{-n-2}\iint_{P_{R}(0,0)}\left(\frac{\e|\nabla u^\e|^2}{2}+\frac{W(u^\e)}{\e}\right) dxdt\leq (1+\tau(n))\alpha\omega_n, \forall R>0,
\end{align*}
then $u^\e(x,t)=\tanh(\frac{e\cdot x+s_0}{\e})$  for some unit vector $e\in\mathbb S^n$ and $s_0\in\mathbb R$.
\end{theorem}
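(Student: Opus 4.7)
The plan is to combine Theorem~\ref{MainRegularityTheorem} with parabolic scale invariance to force all level sets of $u^\e$ to be flat, and then reduce to the classification of one-dimensional bounded solutions.

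First, I would use the Ilmanen-type monotonicity formula for parabolic Allen--Cahn at every spacetime basepoint to transfer the energy hypothesis at $(0,0)$ to every $(x_0,t_0)$. For an eternal solution with non-positive discrepancy, the backward heat-kernel weighted Gaussian density at $(x_0,t_0)$ is controlled by its limit as the test scale tends to $\infty$, which in turn is governed by the hypothesized bound $(1+\tau(n))\alpha\omega_n$ at the origin. After shrinking $\tau(n)$ if necessary, this yields the parabolic cylinder estimate
\[
\frac{1}{2}R^{-n-2}\iint_{P_R(x_0,t_0)}\left(\frac{\e|\nabla u^\e|^2}{2}+\frac{W(u^\e)}{\e}\right)dx\,dt \le (1+\tau_0)\alpha\omega_n
\]
at every basepoint $(x_0,t_0)$ and every $R>0$, where $\tau_0$ is the constant from Theorem~\ref{MainRegularityTheorem}.

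Next, I would exploit the parabolic scale invariance of the hypothesis: $u_R(x,t):=u^\e(Rx,R^2 t)$ solves the $(\e/R)$-Allen--Cahn equation and, by the direct change of variables $\int_{P_1}\mu^{\e/R}(u_R)=R^{-n-2}\int_{P_R}\mu^\e(u^\e)$, satisfies the same energy hypothesis at unit scale. Applying Theorem~\ref{MainRegularityTheorem} to $u_R$ and unrescaling produces, for each $s\in(-1+b,1-b)$, a hyperplane $\Pi_{R,x_0,t_0,s}$ over which $\{u^\e=s\}\cap P_R(x_0,t_0)$ is a $C^{2,\alpha}_P$ graph, with second fundamental form and normal velocity bounded by $K_0/R$. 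Letting $R\to\infty$ at any fixed point forces each level set to be a time-independent hyperplane; since distinct level sets are disjoint, they must all be parallel, so $u^\e(x,t)=\phi(e\cdot x)$ for some unit vector $e\in\mathbb S^n$ and $\phi:\mathbb R\to[-1,1]$.

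Substituting into the equation shows $\phi$ satisfies the ODE $\phi''=W'(\phi)/\e^2$; the global energy bound forces $\phi$ to be a heteroclinic orbit connecting $-1$ and $1$, namely $\phi(\sigma)=\tanh((\sigma+s_0)/\e)$ for some $s_0\in\mathbb R$, completing the proof. The main technical obstacle is the unrescaling step: showing that the hyperplanes $\Pi_{R,x_0,t_0,s}$ stabilize as $R\to\infty$ and that the curvature decay of order $K_0/R$ genuinely promotes to exact flatness. This rests on the consistency of the graph representations of a single level set at multiple scales, together with a blow-down compactness argument adapted to the parabolic setting.
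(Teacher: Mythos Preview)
Your proposal contains a genuine circularity. You invoke Theorem~\ref{MainRegularityTheorem} to prove Theorem~\ref{GapTheorem}, but in this paper the logical dependence runs the other way: the proof of Theorem~\ref{MainRegularityTheorem} (via Theorem~\ref{InteriorLipschitz} in Section~\ref{sec_Holder}) explicitly passes to a blow-up limit and applies Theorem~\ref{GapTheorem} to classify it. The Remark immediately following the statement of Theorem~\ref{GapTheorem} flags exactly this point: unlike the Allard/Brakke setting, the gap theorem here must be proved \emph{before} the regularity theorem, because the excess decay (Theorem~\ref{thm_ExcessDecay}) only holds under the technical hypothesis \eqref{LayerRepulsion} and therefore cannot be iterated down to the $\e$-scale directly to give uniform $C^{2,\alpha}$ regularity. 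Your strategy is precisely the classical route that the paper says is unavailable.

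The paper's actual argument (Section~\ref{sec_Gap}) avoids the regularity theorem entirely. It works directly with the conditional excess decay: rescaling so that the solution satisfies \eqref{eqn_ACF1}, one shows (Proposition~\ref{ExcessGrowthControl}) that if the height excess $\mathbb H_k$ at dyadic scales $r_k=\theta^{-k}$ ever exceeded a fixed threshold, then \eqref{LayerRepulsion} would be triggered at all larger scales, forcing geometric growth of $\mathbb H_k$ and contradicting the excess convergence Lemma~\ref{lem_excess}. This yields the bound $r^{-n-2}\iint_{P_r}(1-\langle\nu,e_r\rangle^2)|\nabla u|^2 \leq C r^{-2}$, and Proposition~\ref{ExcessGrowthBound} upgrades $e_r$ to a fixed $e_\infty$ via a telescoping estimate. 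The conclusion then follows not from curvature bounds on level sets but from the sliding method: the decay of tilt excess forces the distance-type function $z=g^{-1}\circ u$ to satisfy $|\nabla z-e_\infty|\leq\delta$ far out, giving monotonicity of $u$ in a cone of directions there, and the moving-plane argument propagates this monotonicity to all of $\mathbb R^{n+1}\times\mathbb R$. Letting $\delta\to 0$ forces $u=\tanh(x\cdot e_\infty+s_0)$.
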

\begin{remark}
Unlike the proof of Allard's regularity minimal surfaces or Brakke's regularity for mean curvature flow, where the gap theorem is obtained as a corollary of the regularity theorem, here the gap theorem is proved first from the excess decay, and then it is used in the proof of the main regularity theorem. This difference is due to that fact that the excess decay assumption requires the technical assumption \eqref{TechnicalAssumption}, and consequently does not directly give uniform regularity.
\end{remark}

As an application to this regularity theory, we give an affirmative answer to a question of Ilmanen (\cite[Question 4 of section 13]{Ilmanen1993}) about the \textbf{BV} convergence of a sequence of Allen--Cahn equations as the parameter $\e\rightarrow0$ in the mean convex case. This is the Allen--Cahn analogue of no cancellation for the Brakke flow \cite{Metzger2008}.
\begin{theorem}[Strong Convergence for $H>0$]\label{NoCancelation}
Suppose $M_0^n\subset\mathbb R^{n+1}$ is a smooth strictly mean convex closed hypersurface. Then there exists a sequence of smooth functions (constructed in \cite[1.4]{Ilmanen1993}) $u_0^\e:\mathbb R^{n+1}\rightarrow[-1,1]$ such that $u_0^\e\rightarrow2\chi_{E_0}-1$ in $\mathbf{BV}_{loc}(\mathbb R^{n+1})$, where $E_0$ is the region enclosed by $M_0$ (see also the beginning of section \ref{PfNoCancelation} for a summary of the properties satisfied by this approximation). Furthermore the sequence of solutions $u^\e:\mathbb R^{n+1}\times\mathbb R^+\rightarrow[-1,1]$ to \eqref{eqn_ACFe} with initial condition $u^\e(\cdot,0)=u^\e_0$ constructed above satisfies
\begin{align*}
u^\e\rightarrow2\chi_{E}-1
\end{align*}
in $\mathbf{BV}_{loc}(\mathbb R^{n+1}\times\mathbb R^+)$ for some open subset $E\subset \mathbb R^{n+1}\times\mathbb R^+$, with $E\cap\left(\mathbb R^{n+1}\times\{0\}\right)=E_0\times\{0\}$.

Then we have the conclusion that there is no cancellation in the $\mathbf{BV}$ convergence, namely
\begin{align*}
|\nabla u^\e|\rightarrow|\nabla u|=\frac{1}{\alpha}d\mu
\end{align*}
in $L^1_{loc}(\mathbb R^{n+1}\times\mathbb R^+)$.
\end{theorem}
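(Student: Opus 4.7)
The plan is to upgrade the weak convergence $\mu^\e\rightharpoonup\mu$ (from Ilmanen \cite{Ilmanen1993}) to strong $L^1_{loc}$ convergence of $|\nabla u^\e|$, using Theorem \ref{MainRegularityTheorem} at regular points of the limiting Brakke flow together with mean-convex structure to control the singular set. The starting observation is that by AM--GM, $|\nabla G(u^\e)|=\sqrt{2W(u^\e)}\,|\nabla u^\e|\le\mu^\e$ where $G(u)=\int_0^u\sqrt{2W(s)}\,ds$ satisfies $G(\pm1)=\pm\alpha/2$; since $G(u^\e)\to G(2\chi_E-1)$ in $L^1$, passing to the liminf yields $\alpha|\partial^*E_t|\le\mu_t$ (up to the standard normalisation constant). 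The no-cancellation conclusion is then equivalent to the reverse inequality, i.e., that the limit Brakke flow has unit multiplicity and that equipartition holds in the limit.

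First, I would exploit the mean-convex construction of $u_0^\e$ from \cite[1.4]{Ilmanen1993}: the initial profile is built from the signed distance function to $M_0$, which by mean convexity of $M_0$ gives $\partial_tu^\e(\cdot,0)\le 0$; the maximum principle applied to the linearised Allen--Cahn equation then propagates this to all $t\ge 0$. A parallel argument for the discrepancy $\xi^\e=\e|\nabla u^\e|^2/2-W(u^\e)/\e$ (as in Ilmanen's original computation) shows $\xi^\e\le 0$ throughout spacetime. Monotonicity $\partial_tu^\e\le 0$ forces $E_t$ to shrink strictly, so Ilmanen's convergence together with mean-convex Brakke-flow theory (Huisken monotonicity and White's stratification for mean-convex MCF) identifies $\{\mu_t\}$ as a unit-regular integral Brakke flow whose singular set has parabolic Hausdorff dimension at most $n-1$ in spacetime, hence $\mathcal L^{n+2}$-null.

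Second, I would apply Theorem \ref{MainRegularityTheorem} at each regular spacetime point $(x_0,t_0)\in\spt(\mu)$. At such a point the Allen--Cahn monotonicity formula combined with unit multiplicity of the tangent measure gives $R_0^{-n-2}\iint_{P_{R_0}(x_0,t_0)}\mu^\e\le(1+\tau_0)\alpha\omega_n$ for some fixed $R_0$ and all $\e$ sufficiently small; the non-positive discrepancy is inherited from the previous step. Theorem \ref{MainRegularityTheorem} then furnishes, for every level $s\in(-1+b,1-b)$, a $C^{2,\alpha}_P$ graph $h^{\e,s}$ with uniform bound, so that $u^\e$ on the neighbourhood is an $O(\e)$-perturbation of the one-dimensional profile $g((x_{n+1}-h^{\e,0})/\e)$. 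A direct co-area computation in this graphical foliation then yields strong $L^1$ convergence of $|\nabla u^\e|\,dxdt$ to $|\nabla u|$ on the neighbourhood.

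Finally, I would patch the local conclusions via a Vitali cover of the regular portion of $\spt(\mu)$, using that the singular set is $\mathcal L^{n+2}$-null. Combined with the tightness estimate $\int_K|\nabla u^\e|\,dxdt\le C\int_K\mu^\e\,dxdt$ and the weak convergence $\mu^\e\rightharpoonup\mu$, this yields $\int_K|\nabla u^\e|\,dxdt\to\int_K d|\nabla u|$ for every compact $K\subset\mathbb R^{n+1}\times\mathbb R^+$, which, combined with the lower-semicontinuous inequality from the first paragraph, gives the desired $L^1_{loc}$ convergence. The principal obstacle is establishing unit multiplicity of the limiting Brakke flow in the Allen--Cahn setting: higher-multiplicity tangent flows would invalidate hypothesis \eqref{AlmostUnit} and permit cancellation. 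Mean convexity (via $\partial_tu^\e\le 0$ and the maximum principle) is essential here, because it forces the level sets $\{u^\e=s\}$ to propagate monotonically in a single direction and rules out the multi-sheeted configurations responsible for cancellation.
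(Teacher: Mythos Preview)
Your overall strategy matches the paper's: use White's mean-convex regularity to isolate a small singular set $S$, apply Theorem~\ref{MainRegularityTheorem} at regular points to get local strong convergence via the co-area formula on the $C^{2,\alpha}$ graphs $h^{\e,s}$, and then show nothing concentrates on $S$. The paper organises this as two propositions: one showing the defect measure $\gamma(t):=\lim|\nabla u^{\e_i}(\cdot,t)|-|\nabla u(\cdot,t)|$ is supported in $S(t)$, and one showing $\gamma(t)(S(t))=0$.

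There is, however, a genuine gap in your treatment of the singular set. Saying $S$ is ``$\mathcal L^{n+2}$-null'' is vacuous here: the entire support of $\mu$ (and of $|\nabla u^\e|$ in the limit) is $\mathcal L^{n+2}$-null, since the transition layer has width $\e$. Lebesgue-nullity gives no control on how much of $|\nabla u^\e|$ sits near $S$. What is actually needed is the much stronger statement $\mathcal H^n(S(t))=0$ for each time slice (which follows from White's parabolic $\mathcal H_p^{n-1}(S)=0$), combined with the \emph{uniform} Euclidean density bound $\mu^\e_t(B_r(x))\le C r^n$ coming from Ilmanen's construction. The paper covers $S(t)$ by balls with $\sum_i r_i^n<\bar\delta$ and uses $|\nabla u^{\e}|\le C\,d\mu^{\e}$ together with $\mu^\e_t(B_{r_i})\le C r_i^n$ to bound $|\nabla u^\e|$ on the cover by $C\bar\delta$, uniformly in $\e$. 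Your tightness-plus-weak-convergence argument could be repaired along similar lines (one needs $\mu(S)=0$, not $\mathcal L^{n+2}(S)=0$), but as written it does not close.

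A smaller point: your claim that $\partial_t u^\e\le0$ everywhere requires care. Ilmanen's initial data are built from a \emph{truncated} signed-distance profile, and the cutoff can spoil the sign of $\Delta u_0^\e-\e^{-2}W'(u_0^\e)$ away from $M_0$. The paper does not use this monotonicity at the $\e$-level; instead it uses that the \emph{limit} Brakke flow inherits mean-convexity (non-fattening), which is enough to invoke White's partial regularity and to verify the unit-density hypothesis~\eqref{AlmostUnit} of Theorem~\ref{MainRegularityTheorem} at regular points.
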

The paper is organised as follows. In section \ref{sec_prelim}, we fix our notation and define quantities that will be used throughout the paper. In section \ref{sec_StatementExcessDecay}, we state the main technical part of the paper, the excess decay theorem. The proof will proceed by contradiction, so will assume that the result of the Excess decay theorem does not hold. The majority of this paper is concerned with the proof of this theorem and we give a very detailed overview of the rest of the paper. In section \ref{sec_Brakke}, we further develop some of the geometric measure theory of the $\e$-Allen--Cahn equation. In particular, we prove the $L^2$-$L^\infty$ version of $\e$-Allen--Cahn equation which is modification of the mean curvature flow version of Ecker \cite{Ecker2004} (see also \cite{Kasai2014}). In section \ref{sec_EnergyEstimates}, we prove the key Caccioppoli type inequality and the energy estimate (which can be viewed as the parabolic version of Caccioppoli type inequality). In section \ref{sec_ParabolicLipschitz}, we prove a Lipschitz approximation of the level sets of the solution to the parabolic Allen--Cahn equation. In section \ref{sec_ExcessDecay}, we prove that level sets of $\e$-Allen--Cahn equation are well approximated by solutions of the heat equation and derive our contradiction. In section \ref{sec_Gap}, we prove Theorem \ref{GapTheorem}, which allows us to obtain the the level sets are Lipschitz regular. Since we require the assumption \eqref{TechnicalAssumption}, we can not conclude the level sets are H\"{o}lder regular. Instead in section \ref{sec_Holder}, we show how to appeal to our previous paper \cite{Nguyen2020} to conclude full $C^{2,\alpha}$ regularity. Finally, in section \ref{PfNoCancelation} we show how we can provide an affirmative answer to Ilmanen's question through a proof of Theorem \ref{NoCancelation}.

$\textbf{Acknowledgements.}$
H.T.Nguyen and S.Wang were supported by the EPSRC grant EP/S012907/1.

\section{Preliminaries and notations}\label{sec_prelim}
We say $u^\e:\mathbb R^{n+1}\times\mathbb R\rightarrow\mathbb R$ is a solution to the $\e$-parabolic Allen--Cahn equation if it satisfies
\begin{align}\label{eqn_ACFe}
\partial_t u^\e &=\Delta u^\e-\frac{1}{\e^2}W'(u^\e), \quad u(x,0)=u^\e_0(x).  
\end{align}
We will omit the super-script if $\e=1$ in which case $u$ satisfies 
\begin{align}\label{eqn_ACF1}
\partial_t u &=\Delta u-W'(u), \quad u(x,0)=u_0(x),  
\end{align}
the scaled equation with $\e=1$.

We will use the notation $ \hat x=(x_1, \dots, x_n)$ to denote the first $n$ coordinates and write $(\hat x,x_{n+1})=(x_1,\dots,x_n,x_{n+1})\in\mathbb R^{n+1}$ and denote by $B_r=\{(\hat x, x_{n+1})\in\mathbb R^{n+1}:|\hat x|^2+x_{n+1}^2\leq r^2\}$, $\hat C_r=\{(\hat x,x_{n+1})\in\mathbb R^{n+1}: | \hat x|^2\leq r^2\}$ and $\hat B_r=\{\hat x\in\mathbb R^n: |\hat x|^2\leq r^2\}$.

We also denote the parabolic cylinder of radius $r$ by $P_r=\{(x,t)\in\mathbb R^{n+1}\times\mathbb R: |x|\leq r, |t|\leq r^2\}$.

For any $T\in\mathbf G(n+1,n)$ a hyperplane, the cylinder $C(T,r)$ of radius $r$ with respect to $T$ is defined by $C(T,r)=\{x\in\mathbb R^{n+1}:|T(x)|\leq r,|T^\perp(x)|\leq r\}$. For a solution $u^\e$ to \eqref{eqn_ACFe} in $\mathbb R^{n+1}\times\mathbb R$, we define the following geometric quantities for subsets $B_r\subset\mathbb R^{n+1}$ and  $P_r\subset\mathbb R^{n+1}\times\mathbb R$ that will be used in this paper,
\begin{enumerate}[(i)]
\item The energy
\begin{align*}
\mu^\e(B_r)&=\int_{B_r}\left(\frac{\e|\nabla u^\e|^2}{2}+ \frac{W(u^\e)}{\e}\right) dx\\
\mu^\e(P_r)&=\iint_{P_r}\left(\frac{\e|\nabla u^\e|^2}{2}+ \frac{W(u^\e)}{\e}\right) dxdt,
\end{align*}
\item The tilt-excess
\begin{align*}
E^\e(B_r)&=r^{-n}\int_{B_r}\left(1-\left(\frac{\nabla u^\e}{|\nabla u^\e|}\cdot e_{n+1}\right)^2\right)\e|\nabla u^\e|^2 dx\\
&=r^{-n}\int_{B_r}(1-\nu_{n+1}^2)\e|\nabla u^\e|^2 dx\\
E^\e(P_r)&=r^{-n-2}\iint_{P_r}\left(1-\left(\frac{\nabla u^\e}{|\nabla u^\e|}\cdot e_{n+1}\right)^2\right)\e|\nabla u^\e|^2 dxdt\\
&=r^{-n-2}\iint_{P_r}(1-\nu_{n+1}^2)\e|\nabla u^\e|^2 dxdt,
\end{align*}
where for simplicity we denote by the normal vector to the level sets of $u$ by $\nu=\frac{\nabla u^\e}{|\nabla u^\e|}$,
\item The height-excess
\begin{align*}
\mathbb H^\e(B_r)&=r^{-n-2}\int_{B_r}x_{n+1}^2\e|\nabla u^\e|^2 dx\\
\mathbb H^\e(P_r)&=r^{-n-4}\iint_{P_r}x_{n+1}^2\e|\nabla u^\e|^2 dxdt,
\end{align*}
\item The $L^2$ norm of time derivative ($\e$-Willmore term)
\begin{align*}
\mathcal{W}^\e(B_r)=\int_{B_r}\e\left(\Delta u^\e-\frac{W(u^\e)}{\e^2}\right)^2 dx=\int_{B_1}\e\left(\frac{\partial u}{\partial t}\right)^2 dx\\
\mathcal{W}^\e(P_r)=\iint_{P_r}\e\left(\Delta u^\e-\frac{W(u^\e)}{\e^2}\right)^2 dxdt=\iint_{P_r}\e\left(\frac{\partial u}{\partial t}\right)^2 dxdt,
\end{align*}
\item  The discrepancy
\begin{align*}
\xi^\e=\frac{\e|\nabla u|^2}{2}-\frac{W(u)}{\e}
\end{align*}
and the discrepancy measure
\begin{align*}
d \xi^\e=\left(\frac{\e|\nabla u|^2}{2}-\frac{W(u)}{\e}\right)dx.
\end{align*}
\end{enumerate}
For the double-well potential $W(u)=\frac{(1-u^2)^2}{4}$, there is an explicit solution, the $1$-d standing wave solution $g^\e(x)=\tanh(\frac{x}{\e})$, to the equation $g''-\frac{W'(g)}{\e^2}=g''+g(1-g^2)=0$. For simplicity we denote $g(x)=\tanh(x)$ when $\e=1$. The total energy of this 1-d solution is 
\begin{align}\label{eqn_oneD}
\alpha:=\int_{-\infty}^\infty[(g^\e)']^2 dx=\int_{-\infty}^\infty[g']^2 dx.
\end{align}
Throughout this paper, we assume the discrepancy satisfies $ \xi^\e\leq 0$, that is 
\begin{align}\label{eqn_discrepancy}
\frac{\e|\nabla u(x,t)|^2}{2}\leq \frac{W(u(x,t))}{\e}, \forall t\in\mathbb R
\end{align}
which is the same technical assumption made in \cite{Ilmanen1993} that simplifies our argument. It is shown in \cite{Ilmanen1993}, that if the discrepancy is initially non-positive, then it stays non-positive along the flow. We remark here that this assumption may be removed by a result of Soner \cite{soner1997ginzburg} who showed that even if the discrepancy is initially not non-positive, it exponentially decays to zero. 

\section{Statement of Excess Decay}\label{sec_StatementExcessDecay}
The following Excess decay theorem shows us that if the height excess is small over some hyperplane (here $\mathbb R^n \times\{ 0\}\subset \mathbb R^{n+1}$) then by shrinking the radius of the parabolic ball and possibly tilting the hyperplane a little, the height excess becomes smaller. This theorem parallels Campanato's theorem for functions and will be used to prove the Lipschitz regularity of intermediate layers. 
\begin{theorem}[Excess Decay]\label{thm_ExcessDecay}
Given $b\in(0,1)$, there exists constants $\e_0,\delta_0>0,\tau_0$ small, $\theta\in(0,\frac{1}{4})$, $K_1>0$ large with the following property : let $u^\e$ be a solution to \eqref{eqn_ACFe} with $\e\leq \e_0$ in the parabolic ball $P_1\subset\mathbb R^{n+1}\times\mathbb R$, that satisfies the non-positive discrepancy condition \eqref{eqn_discrepancy}, and 
\begin{align}
\label{NonTrivialEnd}|u^\e(0,0)|&\leq 1-b,\\
\label{SingleLayer}\int_{B_1}\left(\frac{\e|\nabla u^\e|^2}{2}+ \frac{W(u^\e)}{\e}\right) dx&\leq \alpha\omega_n(1+\tau_0),\\
\label{SmallHeight} \iint_{P_1}x_{n+1}^2\e|\nabla u^\e|^2 dxdt&\leq \delta_0.
\end{align}
Furthermore we assume
\begin{align}\label{LayerRepulsion}
\iint_{P_1}x_{n+1}^2\e|\nabla u^\e|^2 dxdt&\geq K_1\e^2.
\end{align}
Then there exists $T\in G(n+1,n), \lambda\in\mathbb R$ such that 
\begin{align}\label{ExcessDecay}
\theta^{-n-4}\iint_{P_\theta}|T^\perp(x)-\lambda|^2\e|\nabla u^\e|^2 dxdt&\leq \frac{\theta}{2} \iint_{P_1}x_{n+1}^2\e|\nabla u^\e|^2 dxdt.
\end{align}
Moreover there exists a universal constant $C$ such that 
\begin{align}\label{PointwiseTilt}
\| e-e_{n+1}\|\leq C \left(\iint_{P_1}x_{n+1}^2\e|\nabla u^\e|^2 dxdt\right)^{1/2}
\end{align}
where $e$ is the unit normal to $T$.
\end{theorem}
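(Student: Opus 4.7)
The plan is to prove the theorem by a contradiction/blowup argument that reduces the Allen--Cahn excess decay to the standard Campanato-type decay for solutions of the heat equation. Suppose the statement fails for some fixed $b\in(0,1)$. Then we extract sequences $\e_k\to 0$, $\tau_k\to 0$, $\delta_k\to 0$ and solutions $u^{\e_k}$ on $P_1$ satisfying \eqref{NonTrivialEnd}--\eqref{LayerRepulsion} (with $K_1$ large, to be chosen) for which no pair $(T,\lambda)$ verifies \eqref{ExcessDecay} at the scale $\theta$ that will be fixed in Step~3. The natural rescaling factor is the square root of the height excess
\begin{equation*}
\sigma_k^2:=\iint_{P_1}x_{n+1}^2\,\e_k|\nabla u^{\e_k}|^2\,dxdt\in[K_1\e_k^2,\,\delta_k].
\end{equation*}
Thus $\sigma_k\to 0$, and the technical assumption \eqref{LayerRepulsion} forces $\e_k/\sigma_k\to 0$ once $K_1$ is taken large; this scale separation is precisely what pushes the blowup limit into the mean-curvature-flow (i.e.\ heat equation) regime rather than keeping it trapped at the transition-layer scale.

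For each intermediate level $s\in(-1+b,1-b)$, apply the parabolic Lipschitz approximation of section~\ref{sec_ParabolicLipschitz} on $P_{3/4}$ to produce a parabolic Lipschitz function $f_k^s(\hat x,t)$ whose graph coincides with $\{u^{\e_k}=s\}$ outside a set whose $\e_k|\nabla u^{\e_k}|^2$-measure is controlled by the tilt excess $E^{\e_k}(P_1)$. The Caccioppoli-type inequality proved in section~\ref{sec_EnergyEstimates} bounds the tilt excess by the height excess $\sigma_k^2$ plus an $O(\e_k^2)$ error, so $\|f_k^s\|_{L^2}\lesssim\sigma_k$, with comparable control on $\nabla f_k^s$ and $\partial_t f_k^s$ in an appropriate weak sense. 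Set $h_k(\hat x,t):=\sigma_k^{-1}f_k^s(\hat x,t)$; by parabolic compactness a subsequence converges strongly in $L^2(\hat P_{1/2})$ to some $h_\infty$ with $\|h_\infty\|_{L^2(\hat P_{1/2})}\le C$. The level-set equation derived from \eqref{eqn_ACFe} reads $\partial_t f_k^s=\Delta f_k^s+N_k$ with $N_k$ consisting of quadratic gradient terms (of size $|\nabla f_k^s|^2=o(\sigma_k)$) and of an Allen--Cahn residual that is $o(\sigma_k)$ because $\e_k/\sigma_k\to 0$ and the discrepancy is non-positive. Dividing by $\sigma_k$ and passing to the limit, $h_\infty$ is a weak, hence smooth, solution of $\partial_t h_\infty=\Delta h_\infty$ on $\hat P_{1/2}$.

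Standard Campanato estimates for caloric functions give a universal constant $C(n)$ such that, for every $\theta\in(0,1/4)$,
\begin{equation*}
\theta^{-n-4}\iint_{\hat P_\theta}\bigl|h_\infty(\hat x,t)-h_\infty(0,0)-\nabla h_\infty(0,0)\cdot\hat x\bigr|^2\,d\hat x\,dt\le C\theta^2\iint_{\hat P_{1/2}}|h_\infty|^2\,d\hat x\,dt.
\end{equation*}
Fix $\theta$ so small that the right-hand side is at most $\theta/4$. Undoing the normalisation, choose the hyperplane $T_k$ with unit normal $e_k$ proportional to $e_{n+1}-\sigma_k(\nabla h_\infty(0,0),0)$ and affine offset $\lambda_k=\sigma_k h_\infty(0,0)$. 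Using the coarea formula together with the $\e_k|\nabla u^{\e_k}|^2$-concentration of the energy on the approximating graph (with error controlled by the Caccioppoli bound), the caloric decay translates into
\begin{equation*}
\theta^{-n-4}\iint_{P_\theta}|T_k^\perp(x)-\lambda_k|^2\,\e_k|\nabla u^{\e_k}|^2\,dxdt\le\tfrac{\theta}{2}\sigma_k^2+o(\sigma_k^2),
\end{equation*}
which for large $k$ contradicts the failure of \eqref{ExcessDecay}. The pointwise tilt bound \eqref{PointwiseTilt} is built into the construction, since $\|e_k-e_{n+1}\|=O(\sigma_k|\nabla h_\infty(0,0)|)$ and $|\nabla h_\infty(0,0)|$ is bounded in terms of $\|h_\infty\|_{L^2(\hat P_{1/2})}$ by interior parabolic regularity.

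The main obstacle is the blowup step itself: showing that the rescaled Lipschitz approximants $h_k$ converge in $L^2$ to a caloric function and that the nonlinear error $N_k$ genuinely vanishes after dividing by $\sigma_k$. This requires three ingredients working together: the non-positive discrepancy \eqref{eqn_discrepancy} to identify the tilt and height excesses with the Allen--Cahn energy defect; the Caccioppoli inequality to upgrade energy control into $W^{1,2}_{\text{par}}$-control of $h_k$; and the scale separation $\e_k\ll\sigma_k$ enforced by \eqref{LayerRepulsion}, without which the singular transition-layer contribution would overwhelm the heat-equation signal. Controlling the transfer of the caloric decay back to the diffuse energy---in particular justifying that $\e_k|\nabla u^{\e_k}|^2\,dxdt$ behaves in the limit as $\alpha$ times the surface area of the approximating graph---is where the coarea formula and the Modica--Mortola structure of the energy must be deployed carefully.
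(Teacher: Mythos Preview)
Your overall scaffolding---contradiction blowup, parabolic Lipschitz approximation of the level sets, normalisation by $\sigma_k=\mathbb H^{1/2}$, passage to a caloric limit, and Campanato decay yielding the contradiction---matches the paper's argument in Section~\ref{sec_ExcessDecay}, and you correctly identify that the scale separation $\e_k/\sigma_k\to 0$ enforced by \eqref{LayerRepulsion} is what makes the limit independent of the level $s$.

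The genuine gap is in how you derive the heat equation for the limit. You assert a level-set PDE ``$\partial_t f_k^s=\Delta f_k^s+N_k$'' with an Allen--Cahn residual that is $o(\sigma_k)$ ``because $\e_k/\sigma_k\to 0$ and the discrepancy is non-positive.'' This is not justified: the residual along $\{u=s\}$ has the form $(\partial_\nu^2 u-\e^{-2}W'(u))/|\nabla u|$, which measures the deviation of $u$ from a one-dimensional profile in the normal direction, and neither $\e_k/\sigma_k\to 0$ nor $\xi^\e\le 0$ bounds it directly. Moreover $f_k^s$ is only a Lipschitz \emph{extension} off the good set, where no PDE holds at all, so a pointwise equation for $f_k^s$ is not available.

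The paper never writes a pointwise level-set equation. Instead it substitutes the test function $\eta=\phi(\hat x,t)\psi(x_{n+1})(x_{n+1}-\lambda)$ into the integrated $\e$-Brakke identity \eqref{eqn_BrakkeAllenCahn2} and estimates each resulting space-time integral term by term (the two Claims inside the proof of Theorem~\ref{thm_L2Bound}), passing to the graphing functions via the coarea formula only \emph{after} integration. The crucial input you are missing is the parabolic energy estimate Theorem~\ref{ParabolicCaccioppoli}, which bounds the Willmore term $\mathcal W$ by the height excess; combined with Proposition~\ref{Comparable} this makes $\mathbb H$, $E$, $\mathcal W$ and $|\xi|$ all comparable to $\sigma_k^2$. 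That comparability is precisely what controls the $\e(\Delta u-W'(u)/\e^2)^2$ contribution in the Brakke identity, and it is the rigorous substitute for your unproven claim that the Allen--Cahn residual is $o(\sigma_k)$.
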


\begin{remark} 
\begin{enumerate}[(i)]

\item The condition \eqref{SingleLayer} says that the generalized varifold associated to $u^\e$ has area ratio very close to that of the n-dimensional Euclidean hyperplane. In particular, it consists of a single transition layer. This corresponds to the small area ratio condition in Allard's and Brakke's regularity theorems. 

\item The condition \eqref{NonTrivialEnd} says that measure theoretically, there is a nontrivial part of the varifold at some point along the flow.

\item We can in fact show \eqref{SmallHeight} holds with respect to a suitable hyperplane provided that \eqref{SingleLayer} holds with sufficiently small $\tau_0$ (by Lemma \ref{ExcessConvergence} and Proposition \ref{Comparable}). So in the proof of Theorem \ref{MainRegularityTheorem} when Theorem \ref{thm_ExcessDecay} will be applied, if $\tau_0$ in \eqref{AlmostUnit} is chosen small enough, condition \eqref{SmallHeight} is automatically satisfied.

\item We note that compared to Allard's regularity and Brakke's regularity theorems, the hypothesis \eqref{LayerRepulsion} does not seem so satisfactory. However, such a condition is required because in the Allen--Cahn equation separate layers can interact since, although energy is concentrated in transition region, it is still distributed in a strip of width $\e$. This phenomenon does not occur in minimal surface theory or mean curvature flow.   
\end{enumerate}
\end{remark}

The excess decay theorem will be proved in section \ref{sec_ExcessDecay} by a contradiction argument.

Here we will give an overview of the proof. We will consider here $\phi_T$ to be our test function and we use the integrated form of the Allen--Cahn equation \eqref{eqn_BrakkeAllenCahn1}, which gives us 
\begin{align*}
\frac{d}{dt}\int_{\mathbb R^{n+1}}\phi_T^2 d\mu^\e_t= \int_{\mathbb R^{n+1}}-\e \phi_T^2 \left(-\Delta u+\frac{W'(u)}{\e^2} \right)^2 +\e\langle \nabla \phi_T^2 ,\nabla u \rangle \left(-\Delta u+\frac{W'(u)}{\e^2} \right) dx.
\end{align*}

We note that the term $-\e \varphi \left(-\Delta u+\frac{W'(u)}{\e^2} \right)^2$ roughly corresponds to $-H^2d\mu_t$ of the mean curvature flow \cite{Ilmanen1993}. This term is clearly negative and has a dissipative effect on the left hand side. We will refer to this as a Willmore type term and we will want to control it using our height excess. A straightforward computation \eqref{DerivativeEnergy} gives us 
\begin{align*}
\frac{d}{dt}\int\phi_T^2\left(\frac{\e |\nabla u|^2}{2}+ \frac{W(u)}{\e}\right) dx\leq &-\frac{3}{4}\int\e\phi_T^2\left(\Delta u-\frac{W'(u)}{\e^2}\right)^2dx\\
&+4\int|\nabla\phi_T|^2(1-\nu_{n+1}^2)\e|\nabla u|^2dx
\end{align*}
The term $(1-\nu_{n+1}^2)\e|\nabla u|^2$ corresponds is the Tilt-Excess. Using the divergence structure and the Stress-Energy tensor associated to the Allen--Cahn energy, an integration by parts argument (Corollary \ref{TimeSliceCaccioppoli}) allows us to estimate this term, 
\begin{align*}
\frac{d}{dt}\int_{\mathbb R^{n+1}}\phi_T^2 d\mu^\e_t&\leq -\frac{1}{4}\int_{B_1}\e\left(\Delta u-\frac{W'(u)}{\e^2}\right)^2dx+C_0\int_{B_1}x_{n+1}^2\e|\nabla u|^2dx.
\end{align*}   
In order to utilise this inequality, we need a certain nonlinear inequality. In the varifold setting, this first appeared in 
\cite{brakke2015motion} (see also \cite{Kasai2014}). This estimate tells us that if $\frac{1}{4}\int_{B_1}\e\left(\Delta u-\tfrac{W'(u)}{\e^2}\right)^2dx$ is small and the area excess is small (that is the area is close to that of hyperplane) then we can show the area excess is small and in fact uniformly bounded with a leading order term which is the Willmore type energy ($L^2$ norm of the diffuse mean curvature) raised to the power $\tfrac{2n}{n-2}$. The precise statement can be found in Proposition \ref{InteriorSobolev}. The statement is independent of the Allen--Cahn flow and its proof uses a Lipschitz graphical decomposition of level sets. In order to get a rough idea of the proof, we will drop the lower order terms and if we let
\begin{align*}
E(t)=\int_{T} \phi_T^2 d\mu^\e_t
\end{align*}
then we get that if the energy difference from the flat solution is close to zero and $ W\leq W_1$
\begin{align*}
\frac{d}{dt}E(t) \leq c \mathbb H - c E(t)^{\frac{n-3}{n-1}}.
\end{align*}
 Otherwise, we have
 \begin{align*}
\frac{d}{dt}E(t) \leq c \mathbb H-c W_1.
\end{align*}
If we assume $2 c \mathbb H < c W_1$, we see that $E(t)$ decreases at least at a fixed rate, that is 
\begin{align*}
\frac{d}{dt} E(t) \leq - \frac{c}{2}W_1.
\end{align*}
If we set $ \tilde E(t)=E(t) - c t \mathbb H < E(t)$ we get
\begin{align*}
\frac{d}{dt} \tilde E(t)=\frac{d}{dt}E(t) - c \mathbb H \leq -c E(t)^{\frac{n-3}{n-1}} \leq -c \tilde E(t)^{\frac{n-3}{n-1}}.  
\end{align*}
Therefore if $\tilde E(-T+1)$ is less than $(1-\nu)\alpha$ but larger than $\frac{\alpha}{8}$, $\tilde E(t)$ will decrease at a rate $-\frac{c}{2}W_1$ or at a rate $ - c \tilde E(t)^{\frac{n-3}{n-1}}$. Since $\tfrac{n-3}{n-1}<1$ this implies $\tilde E(t)$ will vanish in finite time, that is $\tilde E(t)\equiv 0$ for $ t\geq -1$ as long as $T$ is sufficiently large. This in turn implies $E(t)\leq c \mathbb H$ after some fixed amount of time has elapsed. This is our first estimate and shows the energy excess is bounded by the height excess. 

In the actual argument, we show
 \begin{align*}
\sup \left | \mu^{\e}_{t}(\phi^2_T) - \alpha \int \phi_T^2d\mu^\e_t \right| \leq c \mathbb H. 
\end{align*}
If we re-examine our $\e$-Brakke Allen--Cahn equation, the change in time of $\int \phi_T^2 d\mu^\e_t$ bounds the Willmore type term $\iint \e \phi_T^2\left (\Delta u^\e- \frac{W(u^\e)}{\e^2}\right)^2 dx$ and hence bounds this term by the height excess, that is  
\begin{align}\label{eqn_WillmoreType}
\iint \e \phi_T^2\left (\Delta u^\e- \frac{W(u^\e)}{\e^2}\right)^2 dx\leq c \mathbb H. 
\end{align}
Then from Corollary \ref{TimeSliceCaccioppoli} allows use to estimate 
\begin{align*}
\int_{\mathbb{R}^{n+1}}\phi_T^2( 1- \nu^2)d \mu^\e_t \leq C_1\mathbb{H}  + C_2\sqrt{\mathbb H \mathcal W}.  
\end{align*}
Then the estimate \eqref{eqn_WillmoreType} allows us to bound
\begin{align*}
\int_{-1}^{1}\int_{\mathbb R^{n+1}} \phi_T^2(1-\nu^2) d\mu_t^\e dt \leq C \mathbb H. 
\end{align*}
This is the tilt-excess bound in terms of the height excess, which is the parabolic version of Caccioppoli inequality in \cite{wang2014new} and a parabolic Allen--Cahn version of \cite{brakke2015motion} and \cite{Kasai2014}. Compared to the elliptic case, we note that the estimates involved are more intricate since the $\e$-Brakke Allen--Cahn equation does not give us direct access to the tilt-excess. 

Once we have the tilt-excess-height excess bounds, we require the key estimate - the height excess decay. Roughly speaking, the height excess decay states that if the height excess with respect to some hyperplane in a ball is sufficiently small then shrinking the radius of the ball and perhaps tilting the hyperplane a little the excess decays. The excess decay will then be used to prove the uniform Lipschitz regularity of the intermediate layers. The proof proceeds by contradiction and involves a number of intricate steps which we summarise as follows
\begin{enumerate}[(i)]
\item We show the level sets $ \{ u^\e=x\}\mid s \in (-1+b, 1-b)$ can be represented by Lipschitz graphs over $\mathbb R^{n}$ of the form
\begin{align*}
x_{n+1}=h^{\e,s}(\hat x)
\end{align*} 
up to bad sets of small measure which are controlled by the excess. Here we use a parabolic Hardy--Littlewood maximal function. 

\item We rewrite the excess using the graphical co-ordinates and we show $ \frac{h^{\e,s}}{\mathbb H_\e}$ are uniformly bounded in $ H^1_{loc}(B_1)$. This shows us that we can assume $\frac{h^{\e,s}}{\mathbb H_\e} \rightharpoonup h^\infty$ weakly in $H^1_{loc}$. Here we will require the layer repulsion hypothesis $\mathbb H_\e \gg \e$ to guarantee that the limit is independent of $s$.

\item By choosing the test function $\phi(\hat x)\psi(x_{n+1})x_{n+1}$ in the $\e$-Brakke Allen--Cahn equation ($\phi\in C_c^\infty(B_1), \psi \in C_c^\infty((-1,1))$) and passing to the limit we show $h_\infty$ is in fact a solution to the heat equation on $B_1$.  

\item Furthermore, by choosing the test function $\phi(\hat x)\psi(x_{n+1})x_{n+1}^2$ in the $\e$-Brakke Allen--Cahn equation, we can show $\tfrac{h^{\e,s}}{\mathbb H_\e}$ converges strongly in $H^1_{loc}(B_1)$, but since heat equations satisfy the height excess decay this provides the contradiction and proves the height excess decay.
\end{enumerate}
Because of the layer repulsion hypothesis, $\mathbb H_\e \gg \e$ we cannot argue as in \cite{brakke2015motion}, \cite{Kasai2014} to conclude $C^{1,\alpha}$ regularity. We are able to only directly conclude uniform Lipschitz regularity of the level sets. Instead, we appeal to our previous paper \cite{Nguyen2020} which shows Lipschitz regularity of the level sets implies full $ C^{2,\alpha}$ regularity.
 
\section{Brakke's Equality and Huisken's Monotonicity}

\subsection{$\e$-Brakke Allen--Cahn Equation}\label{sec_Brakke}
The integral form of the parabolic Allen--Cahn equation (deduced in \cite{Ilmanen1993}) is
\begin{align}\label{eqn_BrakkeAllenCahn1}
&\frac{d}{dt}\int \varphi d\mu^\e_t \\
\nonumber&=\int \varphi \frac{\partial}{\partial t}\left( \frac{\e|\nabla u|^2}{2}+\frac{W(u)}{\e}\right) dx\\
\nonumber &=\int \e \varphi \left(-\Delta u+\frac{W'(u)}{\e^2} \right) \partial_t u-\e\langle \nabla \varphi, \nabla u \rangle \partial_tu dx\\
\nonumber  &=  \int -\e \varphi \left(-\Delta u+\frac{W'(u)}{\e^2} \right)^2 +\e\langle \nabla \varphi ,\nabla u \rangle \left(-\Delta u+\frac{W'(u)}{\e^2} \right) dx.
\end{align}
 If we let $ T_{ij}=\e \nabla_i u\nabla_j u-\left( \frac{\e|\nabla u|^2}{2}+ \frac{W(u)}{\e}\right) \delta_{ij}$ denote the stress-energy tensor, we take its divergence and obtain
 \begin{align*}
\nabla_i T_{ij}=\e \Delta u \nabla_j u-\frac{1}{\e}W'(u) \nabla_j u=\partial_t u \nabla_j u.
\end{align*}

Multiplying this equation by $\nabla_j \varphi$ and integrating by parts, we get 
 \begin{align}\label{eqn_BrakkeAllenCahn2}
\frac{d}{dt}\int \varphi d\mu^\e_t &=\int -\e \varphi \left(-\Delta u+\frac{W'(u)}{\e^2}\right)^2 d x\\
\nonumber&+\int \left(\e\nabla u\otimes\nabla u -\left(\frac{\e|\nabla u|^2}{2}+\frac{W(u)}{\e}\right)I \right):\nabla^2\varphi dx
\end{align}
for any $\phi\in C_c^\infty(\mathbb R^{n+1},\mathbb R^ +)$. 
This is the $\e$-version of Brakke's inequality in \cite{brakke2015motion}. Alternatively, if we let $\varphi=\langle\nabla u, g\rangle$ be a test function, where $g=(g^1,\dots,g^{n+1})$ is any compactly supported test $C^1$ vector field, we compute
\begin{align}\label{eqn_ACDivergence}
&\int \e\varphi \left(\Delta u-\frac{W'(u)}{\e^2}\right)dx\\
\nonumber&=\int \e\langle \nabla u,g\rangle \left(\Delta u-\frac{W'(u)}{\e^2}\right)dx\\
\nonumber&=\int \e\langle \nabla u,g\rangle \Delta u dx-\int \e\langle \nabla u,g\rangle \frac{W'(u)}{\e^2}dx\\
\nonumber&=-\int \e\left\langle \frac{1}{2}\nabla|\nabla u|^2,g\right\rangle dx-\int  \e \nabla u\otimes \nabla u:\nabla g dx-\int \langle \nabla u,g\rangle \frac{W'(u)}{\e}dx\\
\nonumber&=\int \frac{\e|\nabla u|^2}{2}\mathrm{div}g dx-\int \e \nabla u\otimes \nabla u:\nabla gdx+\int \frac{W(u)}{\e}\mathrm{div}g dx\\
\nonumber&=-\int \e \nabla u\otimes \nabla u:\nabla g+\int \left(\frac{\e|\nabla u|^2}{2}+ \frac{W(u)}{\e}\right)\mathrm{div}g dx\\
\nonumber&=-\int \e \sum_{i,j=1}^{n+1}u_{x_i}u_{x_j}g^j_{x_i}dx+\int \left(\frac{\e|\nabla u|^2}{2}+ \frac{W(u)}{\e}\right)\mathrm{div}g dx\\
&=\nonumber \int - \left (\frac{\e}{2} |\nabla u|^2 + \frac{W(u)}{\e} \right)(\delta-\nu \otimes\nu):\nabla gdx + \int \left (\frac{W(u)}{\e} + \frac{\e|\nabla u|^2}{2} \right)\nu\otimes\nu :\nabla g dx \\
&\nonumber \int-\left(\delta - \nu\otimes\nu\right):\nabla g d \mu^\e_t + \int \nu\otimes\nu:\nabla g d\xi^\e_t  
\end{align}
where $\nu=\frac{\nabla u}{|\nabla u|}$ and $d \xi^\e_t=\left(\frac{\e|\nabla u|^2}{2} -\frac{W(u)}{\e}\right)dx$ is the discrepancy measure.  
\subsection{Weighted Monotonicity Formula}
  
Here we will derive a weighted parabolic Allen--Cahn equation $\e$-version of Huisken's monotonicity formula \cite{Huisken1990} for the mean curvature flow. The unweighted $\e$-version may be found in \cite{Ilmanen1993}. Here we will denote by $\Phi_{y,s}$ Huisken's monotonicity kernel
\begin{align*}
\Phi=\Phi_{y,s}(x,t) \equiv \frac{1}{(4 \pi(s-t))^n}e^{-\tfrac{|x-y|^2}{4(s-t)}}, \quad t < s, x \in \mathbb R^{n+1}.
\end{align*}
\begin{theorem}[Weighted Monotonicity Formula] \label{thm_WMF}  Consider a solution to the parabolic Allen--Cahn equation \eqref{eqn_ACFe} and fix a point $(y,s) \in \mathbb R^{n+1}\times \mathbb R$. Let $\rho$ be a sufficiently smooth (possibly time-dependent) function. Then we have 
\begin{align}\label{eqn_WMF}
\frac{d}{dt}\int& \Phi_{y,s}\rho d\mu^\e_t=
\int-\e \rho \Phi_{y,s}\left(-\Delta u+\frac{W'(u)}{\e^2}-\frac{\nabla u \cdot \nabla \Phi_{y,s}}{\Phi_{y,s}}\right)^2 dx+\int  \frac{\rho \Phi_{y,s}}{2(s-t)}d\xi^\e_t\\
\nonumber&+\int \Phi_{y,s}\partial_t \rho d\mu^\e_t+\int\left(\e \nabla_i u \nabla_j u-\frac{\e|\nabla u|^2}{2}\delta_{ij}-\frac{W(u)}{\e}\delta_{ij}\right)\Phi_{y,s}\nabla_i\nabla_j \rho dx
\end{align} 
where $d\xi^\e_t=\left(\frac{\e|\nabla u|^2}{2}-\frac{W(u)}{\e}\right)dx$ is the discrepancy measure. 
\end{theorem}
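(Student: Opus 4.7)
The plan is to differentiate $\int \rho\,\Phi_{y,s}\, d\mu^\e_t$ under the integral sign and regroup the resulting terms using three ingredients: (a) the time-dependent version of \eqref{eqn_BrakkeAllenCahn1}, obtained by differentiating the energy density $e^\e=\tfrac{\e|\nabla u|^2}{2}+\tfrac{W(u)}{\e}$ and integrating by parts; (b) the divergence identity $\int\e H_\e(g\cdot\nabla u)\,dx=\int T:\nabla g\,dx$ for the stress--energy tensor $T_{ij}=\e\nabla_iu\nabla_ju-e^\e\delta_{ij}$, which follows from $\nabla_iT_{ij}=\e\partial_tu\,\nabla_ju$ (here $H_\e=-\Delta u+W'(u)/\e^2=-u_t$); and (c) the backward heat equation $\partial_t\Phi+\Delta\Phi=-\Phi/(2(s-t))$ satisfied by $\Phi=\Phi_{y,s}$ together with the pointwise Gaussian Hessian identity $\nabla^2\Phi=-\tfrac{\Phi}{2(s-t)}\delta+\tfrac{\nabla\Phi\otimes\nabla\Phi}{\Phi}$.

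First, applying (a) with the time-dependent test function $\varphi=\rho\Phi$ yields
\begin{align*}
\frac{d}{dt}\int\rho\Phi\, d\mu^\e_t = \int\Phi\,\partial_t\rho\, d\mu^\e_t + \int\rho\,\partial_t\Phi\, e^\e\,dx-\int\e\rho\Phi H_\e^2\, dx + \int\e H_\e\,\nabla(\rho\Phi)\cdot\nabla u\, dx.
\end{align*}
I split the gradient $\nabla(\rho\Phi)=\rho\nabla\Phi+\Phi\nabla\rho$; the piece containing $\Phi\nabla\rho$ is converted via (b) with $g=\Phi\nabla\rho$ into $\int\Phi\,T:\nabla^2\rho\,dx+\int T:(\nabla\Phi\otimes\nabla\rho)\,dx$, which supplies the fourth term of \eqref{eqn_WMF}.

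Next I process $\int\rho\,\partial_t\Phi\,e^\e\,dx$ using (c). Substituting $\partial_t\Phi=-\Delta\Phi-\Phi/(2(s-t))$ and expanding $\int\rho\,T:\nabla^2\Phi\,dx$ in two ways --- once by the Gaussian Hessian identity, which gives $-\int\tfrac{\rho\Phi\e|\nabla u|^2}{2(s-t)}\,dx+\int\tfrac{\rho\e(\nabla u\cdot\nabla\Phi)^2}{\Phi}\,dx-\int\rho e^\e\Delta\Phi\,dx$, and once via (b) with $g=\rho\nabla\Phi$, which gives $\int\e\rho H_\e\,\nabla u\cdot\nabla\Phi\,dx-\int T:(\nabla\rho\otimes\nabla\Phi)\,dx$ --- and eliminating $\int\rho e^\e\Delta\Phi\,dx$ between the two, the identity $\e|\nabla u|^2=e^\e+\xi^\e$ combines $-\tfrac{\rho\Phi\e|\nabla u|^2}{2(s-t)}$ with $-\tfrac{\rho\Phi e^\e}{2(s-t)}$ into the discrepancy term $\int\tfrac{\rho\Phi}{2(s-t)}\,d\xi^\e$, and simultaneously releases an extra copy of $\int\e\rho H_\e\,\nabla u\cdot\nabla\Phi\,dx$ together with $-\int\tfrac{\rho\e(\nabla u\cdot\nabla\Phi)^2}{\Phi}\,dx$ and a residual $-\int T:(\nabla\rho\otimes\nabla\Phi)\,dx$.

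Assembling everything, the two cross terms $-\int T:(\nabla\rho\otimes\nabla\Phi)\,dx$ and $+\int T:(\nabla\Phi\otimes\nabla\rho)\,dx$ cancel by symmetry of $T$, and the two copies of $\int\e\rho H_\e\,\nabla u\cdot\nabla\Phi\,dx$ combine with $-\int\e\rho\Phi H_\e^2\,dx$ and $-\int\tfrac{\rho\e(\nabla u\cdot\nabla\Phi)^2}{\Phi}\,dx$ to form the completed square $-\int\e\rho\Phi(H_\e-\nabla u\cdot\nabla\Phi/\Phi)^2\,dx$. The main subtlety is the emergence of the second copy of the cross term $\int\e\rho H_\e\,\nabla u\cdot\nabla\Phi\,dx$: one copy is produced immediately by \eqref{eqn_BrakkeAllenCahn1}, but the second is released only through the joint use of the backward heat equation and the divergence structure of $T$, and this is precisely the step that also yields the discrepancy term with the sharp coefficient $1/(2(s-t))$.
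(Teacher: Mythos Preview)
Your argument is correct and is essentially the same computation as the paper's, organised a little differently. The paper first proves a general $\phi\leftrightarrow\rho$ symmetry identity for the stress--energy tensor (which amounts to your two applications of the divergence identity with $g=\Phi\nabla\rho$ and $g=\rho\nabla\Phi$), carries $\phi$ as an arbitrary weight, and only at the end substitutes $\phi=\Phi_{y,s}$ using the two ``tangential'' identities $-\nu\otimes\nu:\nabla^2\Phi+(\nu\cdot\nabla\Phi)^2/\Phi=\Phi/(2(s-t))$ and $\partial_t\Phi+(\delta-\nu\otimes\nu):\nabla^2\Phi+(\nu\cdot\nabla\Phi)^2/\Phi=0$. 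You instead work with $\Phi$ from the outset via its ambient identities $\partial_t\Phi+\Delta\Phi=-\Phi/(2(s-t))$ and $\nabla^2\Phi=-\tfrac{\Phi}{2(s-t)}\delta+\tfrac{\nabla\Phi\otimes\nabla\Phi}{\Phi}$, and recover the discrepancy term through $\e|\nabla u|^2-e^\e=\xi^\e$. These are equivalent: your pair of ambient identities is exactly what underlies the paper's pair of $\nu$--projected identities, and the doubling of the cross term $\int\e\rho H_\e\,\nabla u\cdot\nabla\Phi$ that you flag is the same mechanism that the paper's symmetry lemma encodes. Your route is slightly more direct for this specific kernel; the paper's route has the minor advantage of isolating a general weighted formula before specialising.
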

\begin{proof}
We contract the stress-energy tensor of the Allen--Cahn functional and apply integration by parts to obtain the following equation 
\begin{align*}
&\int\left(\e \nabla_i u \nabla_j u-\frac{\e|\nabla u|^2}{2}\delta_{ij}-\frac{W(u)}{\e}\delta_{ij}\right)\phi \nabla_i\nabla_j \rho dx\\
 &=-\int \nabla_j \left(\e \nabla_i u \nabla_j u-\frac{\e|\nabla u|^2}{2}\delta_{ij}-\frac{W(u)\delta_{ij}}{\e}\right)  \phi \nabla_i \rho dx\\
&-\int \left(\e \nabla_i u \nabla_j u-\frac \e 2|\nabla u|^2 \delta_{ij}-\frac{W(u)}{\e}\delta_{ij}\right) \nabla_i \phi \nabla_j \rho dx\\
&=-\int \nabla_j u  \left(\e \Delta u-\frac{W'(u)}{\e}\right) \phi \nabla_j \rho dx\\
&-\int \left(\e \nabla_i u \nabla_j u-\frac{\e|\nabla u|^2}{2}\delta_{ij}-\frac{W(u)}{\e}\delta_{ij}\right) \nabla_i \phi \nabla_j \rho dx. 
\end{align*}
That is we have 
\begin{align*}
&\int\left(\e \nabla_i u \nabla_j u-\frac{\e|\nabla u|^2}{2}\delta_{ij}-\frac{W(u)}{\e}\delta_{ij}\right)\phi \nabla_i\nabla_j \rho dx+\int \nabla_j u  \left(\e \Delta u-\frac{W'(u)}{\e}\right) \phi \nabla_j \rho dx\\
&=-\int \left(\e \nabla_i u \nabla_j u-\frac{\e|\nabla u|^2}{2}\delta_{ij}-\frac{W(u)}{\e}\delta_{ij}\right) \nabla_i \phi \nabla_j \rho dx\\
&=\int\left(\e \nabla_i u \nabla_j u-\frac{\e|\nabla u|^2}{2}\delta_{ij}-\frac{W(u)}{\e}\delta_{ij}\right)\rho \nabla_i\nabla_j \phi dx+\int \nabla_j u  \left(\e \Delta u-\frac{W'(u)}{\e}\right) \rho \nabla_j \phi dx. 
\end{align*}
Rewriting the above equation, we get 
\begin{align*}
&\int\left(\e \nabla_i u \nabla_j u-\frac{\e|\nabla u|^2}{2}\delta_{ij}-\frac{W(u)}{\e}\delta_{ij}\right)\phi \nabla_i\nabla_j \rho dx+\int \nabla_j u  \left(\e \Delta u-\frac{W'(u)}{\e}\right) \phi \nabla_j \rho dx\\
&-\int\left(\e \nabla_i u \nabla_j u-\frac{\e|\nabla u|^2}{2}\delta_{ij}-\frac{W(u)}{\e}\delta_{ij}\right)\rho \nabla_i\nabla_j \phi dx-\int \nabla_j u  \left(\e \Delta u-\frac{W'(u)}{\e}\right) \rho \nabla_j \phi dx\\
&=0.
\end{align*}
We then compute
\begin{align}\label{eqn_one}
\nonumber\frac{d}{dt}\int \phi \rho d\mu^\e_t  &=\int-\e \phi \rho \left(-\Delta u+\frac{W'(u)}{\e^2}\right)^2 dx+\int(\phi \partial_t \rho+\rho \partial_t \phi) d\mu^\e_t\\
\nonumber&+\e \int \rho \nabla \phi \cdot \nabla  u \left(-\Delta u+\frac{W'(u)}{\e^2}\right) dx +\e \int \phi \nabla \rho \cdot \nabla u \left(-\Delta u+\frac{W'(u)}{\e^2}\right) dx\\
&=\int-\e \phi \rho \left(-\Delta u+\frac{W'(u)}{\e^2}\right)^2 dx+\int(\phi \partial_t \rho+\rho \partial_t \phi) d\mu^\e_t.\\
\nonumber&+2\e \int \rho \nabla \phi \cdot \nabla  u \left(-\Delta u+\frac{W'(u)}{\e^2}\right) dx\\
\nonumber&+\int\left(\e \nabla_i u \nabla_j u-\frac{\e|\nabla u|^2}{2}\delta_{ij}-\frac{W(u)}{\e}\delta_{ij}\right)\phi \nabla_i\nabla_j \rho dx\\
\nonumber&-\int\left(\e \nabla_i u \nabla_j u-\frac{\e|\nabla u|^2}{2}\delta_{ij}-\frac{W(u)}{\e}\delta_{ij}\right)\rho \nabla_i\nabla_j \phi dx.
\end{align}
Consider the term 
\begin{align*}
&\int\left(\e \nabla_i u \nabla_j u-\frac{\e|\nabla u|^2}{2}\delta_{ij}-\frac{W(u)}{\e}\delta_{ij}\right)\rho \nabla_i\nabla_j \phi dx\\
 &=-\int \rho \delta : \nabla^2 \phi d \mu^\e_t+\rho \nu \otimes\nu : \nabla^2 \phi \e|\nabla u|^2 dx\\
&=-\int \rho \left(\delta-\nu \otimes \nu \right)\nabla^2\phi d \mu^\e_t+\int \rho \nu \otimes \nu : \nabla^2 \phi \left(\e|\nabla u|^2-\frac{\e|\nabla u|^2}{2}-\frac{W(u)}{\e}\right) dx\\
&=\int \rho \left(\delta-\nu \otimes \nu \right) : \nabla^2 \phi d \mu^\e_t+\int \rho \nu \otimes \nu : \nabla^2 \phi d\xi^\e_t. 
\end{align*}
Therefore inserting this into the equation \eqref{eqn_one}
\begin{align*}
\frac{d}{dt}\int \phi \rho d\mu^\e_t &=\int-\e \phi \rho \left(-\Delta u+\frac{W'(u)}{\e^2}\right)^2 dx+\int(\phi \partial_t \rho+\rho \partial_t \phi) d\mu^\e_t\\
&+2\e \int \rho \nabla \phi \cdot \nabla  u \left(-\Delta u+\frac{W'(u)}{\e^2}\right) dx\\
&+\int\left(\e \nabla_i u \nabla_j u-\frac{\e|\nabla u|^2}{2}\delta_{ij}-\frac{W(u)}{\e}\delta_{ij}\right)\phi \nabla_i\nabla_j \rho dx\\
&-\int \rho \left(\delta-\nu \otimes \nu \right) : \nabla^2 \phi d \mu^\e_t-\int \rho \nu \otimes \nu : \nabla^2 \phi d\xi^\e_t\\
&=\int-\e \rho \phi \left(-\Delta u+\frac{W'(u)}{\e^2}-\frac{\nabla u \cdot \nabla \phi}{\phi}\right)^2 dx\\
&+\e \int \rho \frac{(\nabla u\cdot \nabla \phi)^2}{\phi}dx-\int \rho \nu \otimes \nu : \nabla^2 \phi d \xi^\e_t\\
&+\int \rho \left(\delta-\nu \otimes \nu  \right) : \nabla^2 \phi d\mu^\e_t+\int \left(\phi \partial_t \rho+\rho \partial_t \phi \right) d \mu^\e_t\\
&+\int\left(\e \nabla_i u \nabla_j u-\frac{\e|\nabla u|^2}{2}\delta_{ij}-\frac{W(u)}{\e}\delta_{ij}\right)\phi \nabla_i\nabla_j \rho dx.
\end{align*}
Since 
\begin{align*}
\e \int \rho  \frac{(\nabla u \cdot \nabla \phi)^2}{\phi}dx &=\int \rho \frac{(\nu \cdot \nabla \phi)^2}{\phi}(d\mu^\e_t+d\xi^\e_t) 
\end{align*}
we get
\begin{align*}
\frac{d}{dt}\int \phi \rho d\mu^\e_t &=\int-\e \rho \phi \left(-\Delta u+\frac{W'(u)}{\e^2}-\frac{\nabla u \cdot \nabla \phi}{\phi}\right)^2 dx\\
&+\int \rho \frac{(\nu \cdot \nabla \phi)^2}{\phi}(d\mu^\e_t+d\xi^\e_t)-\int \rho \nu \otimes \nu : \nabla^2 \phi d \xi^\e_t\\
&+\int \rho \left(\delta-\nu \otimes \nu  \right) : \nabla^2 \phi d\mu^\e_t+\int \left(\phi \partial_t \rho+\rho \partial_t \phi \right) d \mu^\e_t\\
&+\int\left(\e \nabla_i u \nabla_j u-\frac{\e|\nabla u|^2}{2}\delta_{ij}-\frac{W(u)}{\e}\delta_{ij}\right)\phi \nabla_i\nabla_j \rho dx\\
&=\int-\e \rho \phi \left(-\Delta u+\frac{W'(u)}{\e^2}-\frac{\nabla u \cdot \nabla \phi}{\phi}\right)^2 dx\\
&+\int \rho \left(\frac{\nu \cdot \nabla  \phi}{\phi}-(\delta-\nu \otimes \nu) : \nabla^2 \phi+\frac{\partial \phi}{\partial t} \right) d \mu^\e_t\\
&+\int \rho \left(-\nu\otimes \nu : \nabla^2 \phi+\frac{(\nu \cdot \nabla \phi)^2}{\phi}\right) d\xi^\e_t\\
&+\int \phi \partial_t \rho d\mu^\e_t+\int\left(\e \nabla_i u \nabla_j u-\frac{\e|\nabla u|^2}{2}\delta_{ij}-\frac{W(u)}{\e}\delta_{ij}\right)\phi \nabla_i\nabla_j \rho dx.
\end{align*}
If we choose $\phi(x,t)=\Phi_{y,s}(x,t)=\frac{1}{(4\pi(s-t))^{n/2}}e^{-|x-y|^2/4(s-t)}$, we then get 
\begin{align*}
-\nu\otimes\nu : \nabla^2\Phi_{y,s}(x,t) +\frac{(\nu \cdot \nabla \Phi_{y,s}(x,t))^2}{\Phi_{y,s}(x,t)}=\frac{\Phi_{y,s}(x,t)}{2(s-t)}
\end{align*}
 and 
 \begin{align*}
\partial_t \Phi_{y,s}(x,t)+(\delta-\nu\otimes\nu):\nabla^2\Phi_{y,s}(x,t)+\frac{(\nabla \Phi_{y,s}(x,t)\cdot \nu)^2}{\Phi_{y,s}(x,t)}=0. 
\end{align*}
This then gives 
\begin{align*}
\frac{d}{dt}\int \Phi_{y,s}\rho d\mu^\e_t &=
\int-\e \rho \Phi_{y,s}\left(-\Delta u+\frac{W'(u)}{\e^2}-\frac{\nabla u \cdot \nabla \Phi_{y,s}}{\Phi_{y,s}}\right)^2 dx+\int  \frac{\rho \Phi_{y,s}}{2(s-t)}d\xi^\e_t\\
&+\int \Phi_{y,s}\partial_t \rho d\mu^\e_t+\int\left(\e \nabla_i u \nabla_j u-\frac{\e|\nabla u|^2}{2}\delta_{ij}-\frac{W(u)}{\e}\delta_{ij}\right)\Phi_{y,s}\nabla_i\nabla_j \rho dx.
\end{align*}
  and this is the weighted monotonicity formula for the parabolic Allen--Cahn equation.
  \end{proof}
  
\begin{proposition}[{$L^2-L^\infty$ inequality}] \label{eqn_L2Linfty}
Let $\{u^\e_t\}$ be a solution to the Allen--Cahn Flow \eqref{eqn_ACFe} then in the parabolic ball $P_r$ we have 
\begin{align*}
\int_{B_{r/2}}x_{n+1}^2 \Phi_{0,0}d \mu^\e_t \leq \frac{c(n)}{r^{n+2}}\int_{-r^2}^{0}\int_{B_r}x_{n+1}^2 d \mu^\e_t .
\end{align*}
where $\Phi_{0,0}=\frac{1}{(-4\pi t)^\frac{n}{2}}e^{-\frac{|x|^2}{4t}}, t<0$ is the backward heat kernel of dimension $n$.
\end{proposition}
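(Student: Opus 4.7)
The plan is to test the weighted monotonicity formula \eqref{eqn_WMF} at $(y,s)=(0,0)$ against the spatial weight
\[
\rho(x) = \varphi^2(x)\,x_{n+1}^2,
\]
where $\varphi\in C_c^\infty(\mathbb{R}^{n+1})$ is a standard cutoff with $\varphi\equiv 1$ on $B_{r/2}$, $\spt\varphi\subset B_r$, $|\nabla\varphi|\leq C/r$, and $|\nabla^2\varphi|\leq C/r^2$. Of the four contributions on the right-hand side of \eqref{eqn_WMF}, three vanish or have the correct sign: the squared Willmore-type term is $\le 0$; since $-t>0$, $\rho\,\Phi_{0,0}\ge 0$, and $d\xi^\e_t\le 0$ by the non-positive discrepancy hypothesis \eqref{eqn_discrepancy}, the discrepancy term is $\le 0$; and $\partial_t\rho\equiv 0$. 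This reduces the formula to
\[
\frac{d}{dt}\int \Phi_{0,0}\,\varphi^2 x_{n+1}^2\,d\mu^\e_t \;\le\; \int T_{ij}\,\Phi_{0,0}\,\partial_i\partial_j\rho\,dx,
\]
with $T_{ij}$ the Allen--Cahn stress-energy tensor.

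Expanding $\partial_i\partial_j\rho$ splits the right-hand side into one principal piece plus cutoff errors supported in the annulus $A_r:=B_r\setminus B_{r/2}$. The principal contribution is $2\varphi^2\,T_{n+1,n+1}\,\Phi_{0,0}$, which is non-positive because the non-positive discrepancy yields
\[
T_{n+1,n+1} = \e(\partial_{n+1}u)^2 - \bigl(\tfrac{\e|\nabla u|^2}{2}+\tfrac{W(u)}{\e}\bigr) \le 0,
\]
so it too is discarded. The remaining terms are controlled using the pointwise bound $|T_{ij}|\le 2\,d\mu^\e_t/dx$ (also a consequence of $\xi^\e\le 0$) and the uniform Gaussian estimate $\Phi_{0,0}(x,t)\le C(n)/r^n$ on $A_r$ for every $t<0$ (derived by maximising $\sigma^{n/2}e^{-\sigma/16}$ in $\sigma=r^2/(-t)$). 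The quadratic cutoff errors of the form $C(x_{n+1}^2/r^2)\,d\mu^\e$ collapse immediately to the desired $C(n)r^{-n-2}\int_{B_r} x_{n+1}^2\,d\mu^\e_t$.

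The main obstacle is the linear cross term $C(|x_{n+1}|/r)\,d\mu^\e$, which must be absorbed into an $x_{n+1}^2$-weighted right-hand side. For this I would split $A_r$ into $A_r^+:=A_r\cap\{x_{n+1}^2\ge r^2/16\}$, where $|x_{n+1}|/r\le 4\,x_{n+1}^2/r^2$ is elementary, and $A_r^-:=A_r\setminus A_r^+$, where $|\hat x|\ge \sqrt{3}\,r/4$ forces an additional exponential Gaussian factor $e^{-3\sigma/64}$ in $\Phi_{0,0}$; a Young-type inequality combined with this improved decay (and subsequent integration in $t$) reduces the $A_r^-$ contribution to the same $C(n)r^{-n-2}\iint x_{n+1}^2\,d\mu^\e$ form. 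Together this delivers the differential inequality
\[
\frac{d}{dt}\int \Phi_{0,0}\,\varphi^2 x_{n+1}^2\,d\mu^\e_t \;\le\; \frac{C(n)}{r^{n+2}}\int_{B_r} x_{n+1}^2\,d\mu^\e_t.
\]

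The proof concludes by integrating this inequality from $t_1\in (-r^2,-r^2/2)$ up to the target time $t$ and then averaging over $t_1$. The averaged initial value is bounded using $\Phi_{0,0}(x,t_1)\le C(n)/r^n$ on $B_r$ for $|t_1|\ge r^2/2$, producing the stated time-integrated right-hand side. The genuinely delicate ingredient is the annular absorption of the linear cross term, which is the Allen--Cahn counterpart of the most subtle manoeuvre in Ecker's classical $L^2$--$L^\infty$ estimate for mean curvature flow.
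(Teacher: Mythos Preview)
Your overall setup is correct and follows the same weighted-monotonicity strategy as the paper: test \eqref{eqn_WMF} with $\rho=\varphi^2 x_{n+1}^2$, discard the Willmore and discrepancy terms by sign, and reduce to controlling $\int T_{ij}\,\Phi_{0,0}\,\partial_i\partial_j\rho\,dx$. However, your treatment of the linear cross term $4\,T_{i,n+1}\,\partial_i\varphi^2\cdot x_{n+1}$ contains a genuine gap.

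The crude pointwise bound $|T_{ij}|\le 2\,d\mu^\e_t/dx$ throws away structure you actually need. The resulting integrand $C(|x_{n+1}|/r)\,\Phi_{0,0}\,d\mu^\e_t$ is \emph{not} controllable by $C\,r^{-n-2}\int_{B_r} x_{n+1}^2\,d\mu^\e_t$ via any annular splitting plus Young inequality. On $A_r^-$ the unweighted half of Young's inequality produces a term of the form $\frac{1}{\lambda r}\int_{A_r^-}\Phi_{0,0}\,d\mu^\e_t$, which after time-integration and an area-ratio bound is of order $r^2/\lambda$, not $x_{n+1}^2$-weighted. Test this against a solution whose transition layer sits near $\{x_{n+1}=\delta\}$ with $\delta\ll r$: your $A_r^-$ contribution is $\sim\delta r$ while the target right-hand side is $\sim\delta^2$. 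Note also that your ``additional'' Gaussian factor $e^{-3\sigma/64}$ on $A_r^-$ is in fact \emph{weaker} than the $e^{-\sigma/16}$ already available on all of $A_r$ from $|x|\ge r/2$, so it buys nothing extra.

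The paper's remedy is precisely the step you skipped: do \emph{not} discard the principal term $2\varphi^2\, T_{n+1,n+1}\,\Phi_{0,0}$. For $i\le n$ one has $T_{i,n+1}=\e\,\nu_i\nu_{n+1}|\nabla u|^2$, so the cross term carries a hidden factor of $|\nabla^T x_{n+1}|=\sqrt{1-\nu_{n+1}^2}$. Young's inequality then splits the cross term into a piece $\varphi^2\,|\nabla^T x_{n+1}|^2\,\e|\nabla u|^2$, which is absorbed by the negative $T_{n+1,n+1}$ term (recall $T_{n+1,n+1}=-(1-\nu_{n+1}^2)\e|\nabla u|^2+\xi^\e$), plus $x_{n+1}^2\,|\nabla\varphi|^2$-type pieces, which are already of the desired $x_{n+1}^2$-weighted form. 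The paper packages this absorption as a general mean-value inequality for subsolutions $f$ of
\[
\partial_t f-\Delta f+\frac{\e\,\nabla_i u\,\nabla_j u}{\tfrac{\e|\nabla u|^2}{2}+\tfrac{W(u)}{\e}}\,\nabla_i\nabla_j f\le 0,
\]
and then specialises to $f=x_{n+1}$.
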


\begin{remark}
This estimate is used later in the blow-up argument to relate the spatial $L^2$ excess and space-time $L^2$ excess.
\end{remark}

\begin{proof}
This is a consequence of the following mean value inequality by substituting $f=x_{n+1}$.
\end{proof}

\begin{theorem}[Mean Value Inequality]
Let $u^\e_t$ be a solution to the Allen--Cahn Flow \eqref{eqn_ACFe} and let $ f : \mathbb R^{n+1} \times \mathbb R  \rightarrow \mathbb R $ be a nonnegative function that satisfies the following inequality 
\begin{align*}
\partial_t f-\Delta f+\frac{\e \nabla_i u^\e \nabla_j u^\e}{\nicefrac {\e |\nabla u^\e|^2}{2}+ \nicefrac{W(u^\e)}{\e}} \nabla_i \nabla_j f  \leq 0 
\end{align*}
for all $ t \in ( t_1, t_0)$. Then we have 
\begin{align*}
\int_{B_{\rho/2}(x_0)}f^2 \Phi_{x_0,t_0}d \mu^\e_t \leq \frac{c(n)}{\rho^{n+2}}\int_{t_0-\rho^2}^{t_0}\int_{B_\rho(x_0)}f^2 d \mu^\e_t.
\end{align*}
\end{theorem}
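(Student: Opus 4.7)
The approach is the parabolic Allen--Cahn analogue of Ecker's mean value inequality for mean curvature flow \cite{Ecker2004}, with the weighted monotonicity formula (Theorem~\ref{thm_WMF}) playing the role of Brakke's inequality. By translating I may assume $(x_0, t_0) = (0, 0)$ and abbreviate $\Phi := \Phi_{0,0}$.

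The first observation is that under the non-positive discrepancy $\xi^\e \le 0$, the square $f^2$ inherits the hypothesis. Writing $e = \tfrac{\e|\nabla u|^2}{2} + \tfrac{W(u)}{\e}$, $A_{ij} = \tfrac{\e\nabla_i u \nabla_j u}{e}$, $L = \partial_t - \Delta + A_{ij}\nabla_i\nabla_j$, and $T_{ij} = \e\nabla_i u\nabla_j u - e\delta_{ij} = e(A_{ij} - \delta_{ij})$, a direct Leibniz computation gives
$$L(f^2) = 2 f\, Lf - 2(I - A) : \nabla f\otimes\nabla f.$$
The bound $\e|\nabla u|^2 \le e$ (from $\xi^\e \le 0$) makes $I - A$ positive semi-definite, hence $-T = e(I-A)$ is PSD with $0 \le -T : v\otimes v \le e|v|^2$ for every $v$; combined with $f \ge 0$ and $Lf \le 0$, this yields $L(f^2) \le 0$ and $T : \nabla f\otimes\nabla f \le 0$.

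To localize to $B_\rho$, choose $\tilde\phi \in C_c^\infty(B_\rho)$ with $\tilde\phi \equiv 1$ on $B_{\rho/2}$, $|\nabla\tilde\phi| \le C/\rho$, $|\nabla^2\tilde\phi| \le C/\rho^2$, and set $\phi := \tilde\phi^2$, so that $|\nabla\phi|^2/\phi = 4|\nabla\tilde\phi|^2 \le C/\rho^2$. Let $Q(t) := \int \Phi f^2 \phi\, d\mu^\e_t$ and apply Theorem~\ref{thm_WMF} with weight $f^2\phi$. Using $d\mu^\e_t = e\, dx$ and expanding by Leibniz, the Willmore and discrepancy contributions of Theorem~\ref{thm_WMF} are non-positive (the latter since $-t>0$ and $d\xi^\e \le 0$), and the pieces of $\partial_t(f^2\phi)$ and $\nabla^2(f^2\phi)$ not involving $\nabla\phi$ collect as $2\int \Phi\phi f e\, L f\, dx + 2 \int \Phi\phi\, T : \nabla f\otimes\nabla f\, dx \le 0$. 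The remaining ``$\nabla\phi$-error'' is
$$4\int \Phi f\, T_{ij}\nabla_i\phi\nabla_j f\, dx + \int \Phi f^2\, T_{ij}\nabla_i\nabla_j\phi\, dx.$$
The second summand is bounded by $(C/\rho^2)\int_{B_\rho\setminus B_{\rho/2}} \Phi f^2\, d\mu^\e_t$ since $|T| \le 2e$. For the mixed first summand, Cauchy--Schwarz for the PSD bilinear form $-T$ together with pointwise Young (parameter $\phi$) gives
$$\bigl|4 f\, T : \nabla\phi \otimes \nabla f\bigr| \le \frac{4 f^2 |\nabla\phi|^2}{\phi} e + \phi\, \bigl(-T : \nabla f \otimes \nabla f\bigr).$$
The first piece is $\le (C/\rho^2) f^2 e$ by the choice $\phi = \tilde\phi^2$; the second is absorbed into half of the non-positive $2\int \Phi\phi\, T : \nabla f \otimes \nabla f\, dx$, leaving the other half still $\le 0$. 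Summarizing,
$$Q'(t) \le \frac{C}{\rho^2}\int_{B_\rho\setminus B_{\rho/2}} \Phi f^2\, d\mu^\e_t, \qquad t \in (-\tfrac{\rho^2}{4}, 0).$$

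The conclusion follows by integrating this ODE from some $t_* \in (-\tfrac{\rho^2}{2},-\tfrac{\rho^2}{4})$ up to $t$, then averaging over $t_*$; on $B_\rho \times (-\rho^2,-\tfrac{\rho^2}{4})$ one has $\Phi(x,t') \le C(n)/\rho^n$ uniformly, so both $Q(t_*)$ and the time-integrated error are bounded by $C(n)\rho^{-(n+2)}\int_{-\rho^2}^{0}\int_{B_\rho} f^2\, d\mu^\e_t\, dt$. Since $\phi \equiv 1$ on $B_{\rho/2}$ we have $\int_{B_{\rho/2}} f^2\Phi\, d\mu^\e_t \le Q(t)$, which closes the argument. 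The main technical obstacle is the pointwise Young step used to absorb the mixed gradient error: it relies crucially on the choice $\phi = \tilde\phi^2$ (to make $|\nabla\phi|^2/\phi$ bounded) and on the non-positive discrepancy (so that $-T$ is PSD and Cauchy--Schwarz applies), and this uniform $\e$-independence of the constant $c(n)$ is the Allen--Cahn feature with no direct counterpart in Ecker's MCF proof.
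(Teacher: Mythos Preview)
Your proof is correct and follows essentially the same route as the paper: both localize with a squared cutoff, feed $\phi^2 f^2$ into the weighted monotonicity formula (Theorem~\ref{thm_WMF}), use the non-positive discrepancy to make the relevant quadratic form positive semidefinite, and absorb the $\nabla\phi$--$\nabla f$ cross term via Young's inequality before integrating in time. The only cosmetic difference is that the paper carries out the Leibniz expansion and Young step as a pointwise computation of $L(\phi^2 f^2)$ (splitting $\nabla f$ into tangential and $\nu$-components) \emph{before} inserting into the monotonicity formula, whereas you insert first and then expand using the abstract PSD structure of $-T$; one minor slip is that your differential inequality for $Q'$ actually holds for all $t<0$, not just $t\in(-\rho^2/4,0)$, so the integration from $t_*\in(-\rho^2/2,-\rho^2/4)$ is legitimate.
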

\begin{proof}
Let us consider firstly 
\begin{align*}
&\partial_t f^2-\Delta f^2+\frac{\e \nabla_i u \nabla_j u}{\nicefrac {\e |\nabla u|^2}{2}+ \nicefrac{W(u)}{\e}} \nabla_i \nabla_j f^2\\
 &\leq 2 f \partial_t f-2 f \Delta f+\frac{\e \nabla_i u \nabla_j u}{\nicefrac {\e |\nabla u|^2}{2}+ \nicefrac{W(u)}{\e}}( 2 f \nabla_i \nabla_j f+2 \nabla_if\nabla_jf)-2 |\nabla f|^2\\
&\leq -2 |\nabla f|^2+2  \frac{\e \nabla_i u \nabla_j u}{\nicefrac {\e |\nabla u|^2}{2}+ \nicefrac{W(u)}{\e}}\nabla_i f \nabla_j f\\
&=-2 |\nabla f|^2+2  \frac{\e |\nabla  u|^2}{\nicefrac {\e |\nabla u|^2}{2}+ \nicefrac{W(u)}{\e}}|\nabla_\nu f|^2\\
&=-2 |\nabla ^Tf|^2+\frac{2 \xi^\e_t | \nabla_\nu f|^2}{\nicefrac {\e |\nabla u|^2}{2}+ \nicefrac{W(u)}{\e}}
\end{align*}
where $\xi^\e_t=\nicefrac{\e|\nabla u|^2}{2}-\nicefrac{W(u)}{\e}\leq 0$ by the discrepancy inequality \eqref{eqn_discrepancy} and $ \nu=\frac{\nabla u}{|\nabla u|}$ and $ X^T=S: X, S=Id-\nu\otimes \nu $. 

Next we compute 
\begin{align*}
&\partial_t (\phi^2f^2) -\Delta (\phi^2f^2)+\frac{\e \nabla_i u \nabla_j u}{\nicefrac {\e |\nabla u|^2}{2}+ \nicefrac{W(u)}{\e}} \nabla_i \nabla_j (\phi^2f^2)\\
&=\phi^2 \left(\partial_t f^2-\Delta f^2+\frac{\e \nabla_i u \nabla_j u}{\nicefrac {\e |\nabla u|^2}{2}+ \nicefrac{W(u)}{\e}}\nabla_i \nabla_j f^2\right)\\
&+f^2 \left(\partial_t \phi^2-\Delta \phi^2+\frac{\e \nabla_i u \nabla_j u}{\nicefrac {\e |\nabla u|^2}{2}+ \nicefrac{W(u)}{\e}}\nabla_i \nabla_j \phi^2\right)\\
&-2 \nabla_i \phi^2\nabla_i f^2+2\frac{\e \nabla_i u \nabla_j u}{\nicefrac {\e |\nabla u|^2}{2}+ \nicefrac{W(u)}{\e}}\nabla_i \phi^2 \nabla_j f^2\\
&\leq \phi^2\left(-2 |\nabla ^Tf|^2+\frac{2 \xi^\e_t | \nabla_\nu f|^2}{\nicefrac {\e |\nabla u|^2}{2}+ \nicefrac{W(u)}{\e}}\right)\\
&+f^2 \left(\partial_t \phi^2-\Delta \phi^2+\frac{\e \nabla_i u \nabla_j u}{\nicefrac {\e |\nabla u|^2}{2}+ \nicefrac{W(u)}{\e}}\nabla_i \nabla_j \phi^2\right)\\
&-2 \nabla_i \phi^2\nabla_i f^2+2\frac{\e \nabla_i u \nabla_j u}{\nicefrac {\e |\nabla u|^2}{2}+ \nicefrac{W(u)}{\e}}\nabla_i \phi^2 \nabla_j f^2.
\end{align*}
By Young's inequality, we also have 
\begin{align*}
&-2 \nabla_i \phi^2\nabla_i f^2 +2\frac{\e \nabla_i u \nabla_j u}{\nicefrac {\e |\nabla u|^2}{2}+ \nicefrac{W(u)}{\e}}\nabla_i \phi^2 \nabla_j f^2\\
&=-8 \phi \nabla_i \phi f \nabla_j f+8  \phi \nabla_\nu \phi f \nabla_\nu f \frac{\e |\nabla u|^2}{\nicefrac {\e |\nabla u|^2}{2}+ \nicefrac{W(u)}{\e}}\\
&=-8 \phi \nabla_i^T\phi f \nabla_j^Tf+\frac{8 \phi \nabla_\nu \phi f \nabla_\nu f \xi^\e_t}{\nicefrac {\e |\nabla u|^2}{2}+ \nicefrac{W(u)}{\e}}\\
&\leq \phi^2 |\nabla ^Tf |^2+16 f^2 |\nabla ^T\phi|^2+\frac{|\xi^\e_t|}{\nicefrac {\e |\nabla u|^2}{2}+ \nicefrac{W(u)}{\e}}( \phi^2 |\nabla_\nu f|^2+16 f^2 |\nabla_\nu \phi|^2).
\end{align*}
So that 
\begin{align*}
&\partial_t (\phi^2f^2) -\Delta (\phi^2f^2)+\frac{\e \nabla_i u \nabla_j u}{\nicefrac {\e |\nabla u|^2}{2}+ \nicefrac{W(u)}{\e}} \nabla_i \nabla_j (\phi^2f^2)\\
\nonumber&\leq \phi^2\left(-2 |\nabla ^Tf|^2+\frac{2 \xi^\e_t | \nabla_\nu f|^2}{\nicefrac {\e |\nabla u|^2}{2}+ \nicefrac{W(u)}{\e}}\right)\\
\nonumber&+f^2 \left(\partial_t \phi^2-\Delta \phi^2+\frac{\e \nabla_i u \nabla_j u}{\nicefrac {\e |\nabla u|^2}{2}+ \nicefrac{W(u)}{\e}}\nabla_i \nabla_j \phi^2\right)\\
\nonumber&+ \phi^2 |\nabla ^Tf |^2+16 f^2 |\nabla ^T\phi|^2+\frac{|\xi^\e_t|}{\nicefrac {\e |\nabla u|^2}{2}+ \nicefrac{W(u)}{\e}}( \phi^2 |\nabla_\nu f|^2+16 f^2 |\nabla_\nu \phi|^2)\\
\nonumber&\leq f^2 \left(\partial_t \phi^2-\Delta \phi^2+\frac{\e \nabla_i u \nabla_j u}{\nicefrac {\e |\nabla u|^2}{2}+ \nicefrac{W(u)}{\e}}\nabla_i \nabla_j \phi^2+16|\nabla^T\phi|^2+16|\nabla_\nu\phi|^2\right)\\
\nonumber&\leq c_\phi f^2
\end{align*}
where we have used the non-positivity of the discrepancy \eqref{eqn_discrepancy} and $ c_\phi=C(n)( \partial_t \phi^2+|\nabla ^2\phi^2|+|\nabla \phi|^2)$. Multiplying both sides by $\left(\frac{\e|\nabla u|^2}{2}+\frac{W(u)}{\e}\right)$, we get
\begin{align}\label{PhifSquare}
&\partial_t (\phi^2f^2)\left(\frac{\e|\nabla u|^2}{2}+\frac{W(u)}{\e}\right) +\left(\e \nabla_i u \nabla_j u- \frac{\e|\nabla u|^2}{2}\delta_{ij}-\frac{W(u)}{\e}\delta_{ij}\right) \nabla_i \nabla_j (\phi^2f^2)\\
\nonumber&\leq c_\phi f^2\left(\frac{\e|\nabla u|^2}{2}+\frac{W(u)}{\e}\right).
\end{align}
Consider the cylinder  $ C_\rho=B_\rho(0) \times ( t_0-\rho^2 , t_0)$ with $\phi$ chosen so that 
\begin{align*}
(\rho |\nabla \phi|+\rho^2 |\nabla ^2 \phi|+|\partial_t \phi| ) \leq C' 
\end{align*}
and $ \spt \phi \subset C_\rho$ and $ \chi_{C_{\rho/2}}\leq \phi \leq \chi_{C_\rho}$. This shows $ c_\phi \leq \frac{c}{\rho^{n+2}}$ and that 
\begin{equation*}
\Phi_{(x_0,t_0)}=\frac{1}{( 4 \pi ( t_0-t))^{n/2}}e^{-|x-y|^2/(4 (t_0-t))}\implies | \Phi_{(x_0,t_0)}| \leq \frac{c}{\rho^{n+2}}\quad \text{on $(t_0-\rho^2, t_0) $}. 
\end{equation*} 
Therefore we can substitute \eqref{PhifSquare} into the Allen--Cahn weighted monotonicity formula \eqref{eqn_WMF} to get 
\begin{align*}
\frac{d}{dt}\int f^2 \phi^2 \Phi_{(x_0,t_0)}d \mu^\e_t \leq \frac{c}{\rho^{n+2}}\int_{B_\rho(x_0) \backslash B_{\rho/2}(x_0)}f^2 d \mu^\e_t 
\end{align*}
where we integrate in time over $ (t_0, t_0-\rho^2/2)$ and note that $ \phi(x_0,t_0)=1$ to get 
\begin{align*}
\int_{B_{\rho/2}(x_0)}f^2 \Phi_{(x_0,t_0)}d \mu^\e_t \leq \frac{c}{\rho^{n+2}}\int_{t_0-\rho^2}^{t_0-\rho^2/4}\int_{B_\rho(x_0) \backslash B_{\rho/2}(x_0)}f^2 d \mu^\e_t. 
\end{align*}
\end{proof}
\subsection{Excess Convergence}
In the following we show if the density ratio is converging to $\alpha$ then the excess converges to zero. In particular, small height excess is a consequence of small tilt excess, which again is a consequence of the area ratio being close to $\alpha$.
\begin{lemma} \label{lem_excess}
Let $ \{u^\e\}$ be a sequence of solutions of \eqref{eqn_ACFe} and suppose that we have the discrepancy inequality \eqref{eqn_discrepancy} in $B_4$. Then the excess of $u^\e$ with respect to $\mathbb R^n$ satisfies 
\begin{align}\label{ExcessConvergence}
\lim_{\e \rightarrow 0} E(C_2\times[t_1,t_2])=0.
\end{align}
\end{lemma}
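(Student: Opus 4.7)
The plan is to prove this via a compactness argument at the level of varifolds. The hypotheses as stated appear to be incomplete in the excerpt, and the result really requires, in addition to the non-positive discrepancy \eqref{eqn_discrepancy}, that the density ratios of $\mu^\e$ approach $\alpha\omega_n$ so that the limiting object is a flat multiplicity-one hyperplane; I will treat this as a contextual assumption, as suggested by the remark preceding the statement (``if the density ratio is converging to $\alpha$ then the excess converges to zero'').

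First I would apply Ilmanen's compactness theorem \cite{Ilmanen1993} to extract a subsequence $\e_k\to0$ such that $\mu^{\e_k}_t$ converges weakly for a.e.\ $t$ to $\alpha\mu_t$, where $\mu_t$ is a Brakke flow of Radon measures. By Tonegawa's integrality theorem \cite{tonegawa2003integrality}, the associated generalized varifolds $V^{\e_k}_t$ converge as varifolds to an integer rectifiable $V_t$, and the equipartition of energy holds in the limit. The weighted monotonicity formula, Theorem \ref{thm_WMF}, combined with the density hypothesis forces $V_t$ to be a hyperplane of multiplicity one in the relevant region; together with a small-height-excess input (also contextual), this hyperplane must be $\mathbb R^n\times\{0\}$.

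Next I would reinterpret the tilt-excess integrand as a varifold integral,
\begin{equation*}
\int_{C_2}(1-\nu_{n+1}^2)\e|\nabla u^\e|^2\,dx \;=\; \int \bigl(1-(e_{n+1}\cdot e(T))^2\bigr)\,dV^\e_t(x,T),
\end{equation*}
where $e(T)$ denotes a unit normal to the $n$-plane $T$. Passing to the varifold limit and using that $V_t$ has tangent plane $\mathbb R^n\times\{0\}$ everywhere on its support, the integrand $1-(e_{n+1}\cdot e(T))^2$ vanishes identically on $V_t$, which yields
\begin{equation*}
\lim_{k\to\infty}\int_{t_1}^{t_2}\int_{C_2}(1-\nu_{n+1}^2)\e|\nabla u^{\e_k}|^2\,dx\,dt=0.
\end{equation*}
Since every subsequence admits a further subsequence along which the excess tends to zero, the full limit vanishes.

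The main obstacle is establishing varifold convergence (as opposed to mere weak convergence of scalar measures) of the $V^{\e_k}_t$ to an integer rectifiable target. The diffuse Dirichlet density $\e|\nabla u^\e|^2$ oscillates rapidly within each transition layer, and naive weak convergence of $\nu=\nabla u^\e/|\nabla u^\e|$ tested against the energy measure is not strong enough to identify cross-terms like $\nu_{n+1}^2\e|\nabla u^\e|^2$. Tonegawa's integrality is essential here, as is the equipartition of energy which guarantees that $\e|\nabla u^\e|^2\,dx$ and $d\mu^\e$ have proportional limits, allowing the tilt-excess to inherit the rigid structure of the limit hyperplane.
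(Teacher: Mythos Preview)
Your argument via varifold compactness is correct and works, but it takes a genuinely different route from the paper. The paper gives a direct, self-contained PDE computation: it inserts the test function $\varphi=\eta(x)\tfrac{1}{2}x_{n+1}^2$ into the $\e$-Brakke identity \eqref{eqn_BrakkeAllenCahn1} and expands the stress-energy term \eqref{eqn_brakketwo}. All contributions carrying an explicit factor of $x_{n+1}$ or $x_{n+1}^2$ vanish in the limit because the limit measure is supported on $\{x_{n+1}=0\}$, and the boundary term $\int \tfrac{1}{2}\eta^2 x_{n+1}^2\,d\mu^\e_s\big|_{t_1}^{t_2}$ vanishes for the same reason. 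What survives is a sum of three sign-definite pieces: the Willmore-type term $-\mathcal W$, the tilt-excess term $-\tfrac{1}{2}E$, and a term bounded above by the discrepancy $\xi^\e\le 0$. Since their sum is zero, each must vanish separately, giving $E\to 0$ together with $\mathcal W\to 0$ and $|\xi|\to 0$ in one stroke (this is the content of the remark following the lemma).

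By contrast, your approach outsources the hard analysis to Tonegawa's integrality theorem and Ilmanen's compactness, then reads off $E\to 0$ as a soft consequence of varifold convergence to a flat multiplicity-one plane. This is perfectly valid and conceptually clean, but it imports heavier machinery and does not automatically yield the companion conclusions $\mathcal W\to 0$ and $|\xi|\to 0$ from the same identity. The paper's computation is more elementary within the framework already set up and is reused later (e.g.\ the same test-function trick with $x_{n+1}$ and $x_{n+1}^2$ drives the blow-up analysis in Section~\ref{sec_ExcessDecay}), so it pays dividends beyond this lemma.
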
 
\begin{proof} Let us consider the $\e$-Brakke formula \eqref{eqn_BrakkeAllenCahn1}, we then have
\begin{align*}
\frac{d}{dt} \int \varphi d\mu^\e_t &=\int - \e \varphi \left(- \Delta u - \frac{W'(u)}{\e^2}\right )^2 d x \\
&+\int \left(\e \nabla u\otimes \nabla u  - \frac{\e|\nabla u|^2}{2}\delta  - \frac{W(u)}{\e}\delta \right ):\nabla ^2\varphi dx .   
\end{align*}
We choose the test function $\varphi(x)=\eta(x) \frac{1}{2}x_{n+1}^2$. This gives 
\begin{align}\label{eqn_brakketwo}
\nonumber&\left(\e \nabla_i u \nabla_j u  - \frac{\e|\nabla u|^2}{2}\delta_{ij}  - \frac{W(u)}{\e} \delta_{ij} \right )\nabla_i\nabla_j\varphi\\
\nonumber&=\left(\e u_{n+1}^2  - \frac{\e|\nabla u|^2}{2}  - \frac{W(u)}{\e} \right )\eta\\
\nonumber&+ x_{n+1} \left(\e u_{n+1}\nabla_j \eta \nabla_j u - \left(\frac{\e|\nabla u|^2}{2} + \frac{W(u)}{\e} \right ) \nabla_{n+1}\eta\right) \\
\nonumber&+ \e \nabla_i u \nabla_j u \nabla_i\nabla_j \eta \frac{x_{n+1}^2}{2} - \frac{x_{n+1}^2}{2} \left(\frac{\e|\nabla u|^2}{2} + \frac{W(u)}{\e} \right ) \Delta \eta\\
&=\left(\e u_{n+1}^2  - \frac{\e|\nabla u|^2}{2}  - \frac{W(u)}{\e} \right )\eta\\
\nonumber&+ x_{n+1}\left(\nu_{n+1} \sum_{j=1}^n \nabla_j \eta \nu_j \e|\nabla u|^2 - \left(\frac{\e|\nabla u|^2}{2} + \frac{W(u)}{\e} \right )\nabla_{n+1}\eta \right )\\
\nonumber&+ \frac{x_{n+1}^2}{2} \nu_i \nu_j \nabla ^2_{ij} \eta \e|\nabla u|^2 - \frac{x_{n+1}^2}{2} \left(\frac{\e|\nabla u|^2}{2} + \frac{W(u)}{\e} \right ) \Delta \eta. 
\end{align}
We note that the terms 
\begin{align*}
  \nu_{n+1} \nabla_j \eta \nu_j \e|\nabla u|^2, \left(\frac{\e|\nabla u|^2}{2} + \frac{W(u)}{\e} \right )\nabla_{n+1}\eta
\end{align*}
are bounded and hence converge to measures supported on $\mathbb R^n\times \mathbb R $. Hence we have 
\begin{align*}
\lim_{\e\rightarrow 0} \int_{t_1}^{t_2} \int_{C_2} x_{n+1}\left(\nu_{n+1} \sum_{j=1}^n\nabla_j \eta \nu_j \e|\nabla u|^2 - \left(\frac{\e|\nabla u|^2}{2} + \frac{W(u)}{\e} \right )\nabla_{n+1}\eta \right )dxdt=0,
\end{align*}
and
\begin{align*}
\lim_{\e\rightarrow 0}\int_{t_1}^{t_2} \int_{C_2} \frac{x_{n+1}^2}{2} \nu_i \nu_j \nabla ^2_{ij} \eta \e|\nabla u|^2 &- \frac{x_{n+1}^2}{2} \left(\frac{\e|\nabla u|^2}{2} + \frac{W(u)}{\e} \right ) \Delta \eta=0.
\end{align*}
Therefore in the $\e$-Brakke formula \eqref{eqn_BrakkeAllenCahn1} we have the following equation
\begin{align*}
&\lim_{\e \rightarrow 0}  \int_{C_2} \frac{\eta^{2} x_{n+1}^2}{2} d \mu^\e_s \bigg|_{t_1}^{t_2} \\ 
&=-\lim_{\e\rightarrow 0} \int_{t_1}^{t_2} \int_{C_2} \e \phi \left(- \Delta u + \frac{W'(u)}{\e^2}\right )^2 dxdt \\
&+ \lim_{\e\rightarrow 0} \int_{t_1}^{t_2}\int_{C_2}\left(\e u_{n+1}^2 - \left(\frac{\e|\nabla u|^2}{2} + \frac{W(u)}{\e} \right ) \right )\eta dxdt \\
&=-\lim_{\e\rightarrow 0} \int_{t_1}^{t_2}\int_{C_2} \e \phi \left(- \Delta u + \frac{W'(u)}{\e^2}\right )^2 dxdt \\
&- \lim_{\e\rightarrow 0} \int_{t_1}^{t_2} \int_{C_2}\frac{1}{2}\left(1-\nu_{n+1}^2  \right )\e|\nabla u^2|\eta+\lim_{\e\rightarrow 0} \int_{t_1}^{t_2} \int_{C_2}\left(\frac{1}{2}\nu_{n+1}^2 \e|\nabla u^2|-\frac{W(u)}{\e}\right)\eta dxdt.
\end{align*}
Since 
\begin{align*}
\lim_{\e \rightarrow 0}  \int_{C_2} \frac{\eta^{2} x_{n+1}^2}{2} d \mu^\e_s \bigg|_{t_1}^{t_2}=0,
\end{align*}
and all three terms on the right hand side are non-positive, we get
\begin{align*}
0&=\lim_{\e\rightarrow 0} \int_{t_1}^{t_2}\int_{C_2} \e \phi \left(- \Delta u + \frac{W'(u)}{\e^2}\right )^2 dxdt+ \lim_{\e\rightarrow 0} \int_{t_1}^{t_2} \int_{C_2}\frac{1}{2}\left(1-\nu_{n+1}^2  \right )\e|\nabla u^2|\eta dxdt\\
&-\lim_{\e\rightarrow 0} \int_{t_1}^{t_2} \int_{C_2}\left(\frac{1}{2}\nu_{n+1}^2 \e|\nabla u^2|-\frac{W(u)}{\e}\right)\eta dxdt,
\end{align*}
namely
\begin{align*}
\lim_{\e\rightarrow0} \mathcal W(C_2\times\left(t_1,t_2\right))+\lim_{\e\rightarrow0} E(C_2\times\left(t_1,t_2\right))-\lim_{\e\rightarrow0} \xi(C_2\times\left(t_1,t_2\right))=0.
\end{align*}
Since the three terms are non-negative, this proves the theorem.
\end{proof} 
\begin{remark}
As we can see from the proof, it gives an alternative argument of the $L^1$ convergence of discrepancy measure to 0 under the non-positive discrepancy assumption \eqref{eqn_discrepancy}.
\end{remark}
\section{Energy Estimates} \label{sec_EnergyEstimates}
\subsection{Caccioppoli Inequality}

We will prove a Caccioppoli type inequality (which extends 4.7 of \cite{wang2014new} to general functions). It can interpreted as the Allen--Cahn version of Allard's Caccioppoli type inequality for minimal surfaces (8.13 of \cite{allard1972}).

The following inequality does not require $u$ to satisfy the Allen--Cahn equations and it applies to time slices of the parabolic Allen--Cahn equation. When $\Delta u-\frac{W'(u)}{\e^2}=0$, this inequality was obtained in \cite[4.7]{wang2014new}.

\begin{theorem}\label{Caccioppoli}
Assuming the non-positive discrepancy condition \eqref{eqn_discrepancy}, there exists universal constants $C_1,C_2,C_3$ such that
\begin{align*}
&\int_\Omega\phi^2(\hat x)\psi^2(x_{n+1})(1-\nu_{n+1}^2)\e|\nabla u|^2dx+\int_\Omega\phi^2(\hat x)\psi^2(x_{n+1})\left|\frac{\e|\nabla u|^2}{2}-\frac{W(u)}{\e}\right| dx\\
\leq &C_1\int_\Omega \phi^2(\hat x)\psi^2(x_{n+1})x_{n+1}^2\e|\nabla u|^2dx\\
&+C_2\left(\int_\Omega \phi^2(\hat x)\psi(x_{n+1})^2x_{n+1}^2\e|\nabla u|^2dx\cdot\int_\Omega\phi^2(\hat x)\psi^2(x_{n+1}) \left(\Delta u-\frac{W'(u)}{\e^2}\right)^2dx\right)^\frac{1}{2}\\
&+C_3\int_\Omega 2 (1-\nu_{n+1}^2)\phi^2(\hat x) |\psi'(x_{n+1})|x_{n+1}\e |\nabla u|^2dx,
\end{align*}
where $\phi\in C_c^\infty(\mathbb R^n)$ depends only on the first $n$ variables, $\psi\in C_c^\infty(\mathbb R)$ with $0\leq \psi\leq 1, \spt(\psi)\subset\subset(-1,1), \psi(\xi)\equiv1 \text{ for $\xi$ in $(-\frac{1}{2},\frac{1}{2})$}, |\psi'|\leq 3$. 
\end{theorem}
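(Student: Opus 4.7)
The plan is to derive this inequality by contracting the divergence identity of the Allen--Cahn stress-energy tensor against a carefully chosen vector field, integrating by parts, and extracting the left-hand side via a pointwise algebraic identity that holds under the non-positive discrepancy hypothesis \eqref{eqn_discrepancy}. Recall $T_{ij} = \e\nabla_i u\nabla_j u - \bigl(\tfrac{\e|\nabla u|^2}{2}+\tfrac{W(u)}{\e}\bigr)\delta_{ij}$ and the divergence identity $\nabla_i T_{ij} = \e H^\e\nabla_j u$, where I abbreviate $H^\e := \Delta u - W'(u)/\e^2$.

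I would test against the vector field $g^j := \delta^{j,n+1}\phi^2(\hat x)\psi^2(x_{n+1})x_{n+1}$, which is compactly supported in $\Omega$. Pairing the divergence identity with $g^j$ and integrating by parts gives $\int T_{ij}\nabla_i g^j\,dx = -\int \e H^\e\phi^2\psi^2 x_{n+1}u_{x_{n+1}}\,dx$. Expanding $\nabla_i(\phi^2\psi^2 x_{n+1})$ splits the left-hand side into three groups according to which factor is differentiated: a ``diagonal'' piece $\int T_{n+1,n+1}\phi^2\psi^2\,dx$ (from differentiating $x_{n+1}$), a tangential piece $2\sum_{i\le n}\int T_{i,n+1}\phi\psi^2 x_{n+1}\nabla_i\phi\,dx$ (from $\nabla_i\phi$), and a transverse piece $2\int T_{n+1,n+1}\phi^2\psi\psi' x_{n+1}\,dx$ (from $\psi'$).

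The central observation, enabled by $\xi^\e \leq 0$, is the pointwise identity
\begin{align*}
-T_{n+1,n+1} \;=\; \e|\nabla u|^2(1-\nu_{n+1}^2) \;+\; |\xi^\e|,
\end{align*}
which follows from $\e u_{x_{n+1}}^2 = \nu_{n+1}^2\e|\nabla u|^2$ and $|\xi^\e| = W(u)/\e - \e|\nabla u|^2/2$, as both sides reduce to $\tfrac{\e|\nabla u|^2}{2}(1-2\nu_{n+1}^2) + W(u)/\e$. Both summands on the right are non-negative, so the diagonal piece equals exactly the left-hand side of the theorem, and the same identity yields $|T_{n+1,n+1}| = \e|\nabla u|^2(1-\nu_{n+1}^2) + |\xi^\e|$ for controlling the transverse piece.

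It remains to bound the three ``error'' contributions. Cauchy--Schwarz on the Willmore term, combined with $|u_{x_{n+1}}|\le|\nabla u|$, produces the $C_2$ term directly. For the tangential piece, $|T_{i,n+1}| = \e|u_{x_i}u_{x_{n+1}}|$ for $i\le n$, so Cauchy--Schwarz in the $\hat x$-coordinates gives $\bigl|\sum_{i\le n}T_{i,n+1}\nabla_i\phi\bigr| \le \e|\nabla u|^2\sqrt{1-\nu_{n+1}^2}\,|\nabla\phi|$; Young's inequality with a small parameter absorbs a fraction of $\int\phi^2\psi^2(1-\nu_{n+1}^2)\e|\nabla u|^2\,dx$ into the LHS, leaving a remainder of the $C_1$ form. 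For the transverse piece, substituting the pointwise identity yields the $C_3$ contribution $2\int(1-\nu_{n+1}^2)\phi^2|\psi'|x_{n+1}\e|\nabla u|^2\,dx$ plus a $|\xi^\e|$-term that is absorbed into the LHS using $|\psi\psi'|\le \tfrac{1}{2}\psi^2 + \tfrac{1}{2}(\psi')^2$ together with $\psi\le 1$ and $|\psi'|\le 3$ on the compact support. The main obstacle is the pointwise identity, since every subsequent estimate hinges on it; the secondary technical issue is keeping track of the Young's inequality constants so that the cross-terms on the right really can be absorbed into the tilt-excess plus discrepancy density on the left.
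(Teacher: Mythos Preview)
Your proposal is correct and follows essentially the same route as the paper: both test the stress-energy divergence identity \eqref{eqn_ACDivergence} against the vector field $g=\phi^2(\hat x)\psi^2(x_{n+1})\,x_{n+1}\,e_{n+1}$, extract the diagonal term via the pointwise identity $-T_{n+1,n+1}=\e|\nabla u|^2(1-\nu_{n+1}^2)+|\xi^\e|$ (which is exactly the paper's observation that $\tfrac{\e|\nabla u|^2}{2}+\tfrac{W(u)}{\e}-\e\nu_{n+1}^2|\nabla u|^2$ equals the tilt-excess density plus the discrepancy), and then control the tangential, transverse and Willmore pieces by Young's inequality and Cauchy--Schwarz. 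Your stress-tensor packaging of the main identity is a bit cleaner, but it is the same computation.
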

\begin{remark}
Notice that this is stronger than the usual Caccioppoli type inequality in that we bound both the tilt excess and the discrepancy.
\end{remark}

\begin{proof}
We choose the test vector field 
\begin{align*}
g=\left(0,\dots,0,\phi^2(\hat x)\psi^2(x_{n+1})\cdot x_{n+1}\right)=\phi^2(\hat x)\psi^2(x_{n+1})\cdot x_{n+1}e_{n+1}
\end{align*}
and substitute this into \eqref{eqn_ACDivergence} to get
\begin{align*}
&\int_\Omega\frac{\partial u}{\partial x_{n+1}}\phi^2\psi^2 \cdot x_{n+1}\left(\Delta u-\frac{W'(u)}{\e^2}\right)dx\\
&=\int_\Omega\frac{1}{\e}\left(\frac{\e|\nabla u|^2}{2}+ \frac{W(u)}{\e}\right) \left(\phi^2\psi^2+\frac{\partial(\phi^2\psi^2)}{\partial x_{n+1}}\cdot x_{n+1}\right)dx-\int_\Omega\left|\frac{\partial u}{\partial x_{n+1}}\right|^2\phi^2\psi^2dx\\
&-\sum_{i=1}^{n+1}\int_\Omega\frac{\partial u}{\partial x_i}\frac{\partial u}{\partial x_{n+1}}\frac{\partial (\phi^2\psi^2)}{\partial x_i}x_{n+1}dx\\
&=\int_\Omega\frac{1}{\e}\left(\frac{\e|\nabla u|^2}{2}+ \frac{W(u)}{\e}\right) \left(\phi^2\psi^2+2\phi^2\psi\psi'x_{n+1}\right)dx-\int_\Omega\left|\frac{\partial u}{\partial x_{n+1}}\right|^2\phi^2\psi^2dx\\
&-2\sum_{i=1}^{n}\int_\Omega\frac{\partial u}{\partial x_i}\frac{\partial u}{\partial x_{n+1}}\phi\psi^2\frac{\partial \phi}{\partial x_i}x_{n+1}dx-2\int_\Omega \left|\frac{\partial u}{\partial x_{n+1}}\right|^2\phi^2\psi\psi'x_{n+1}dx\\
&=\int_\Omega \frac{1}{\e}\left(\frac{\e|\nabla u|^2}{2}+ \frac{W(u)}{\e}-\e\nu^2_{n+1}|\nabla u|^2\right)\phi^2\psi^2dx-\int_\Omega2\phi\psi^2|\nabla u|^2\left(\sum_{i=1}^{n}\nu_i\nu_{n+1}\frac{\partial\phi}{\partial x_i}\right)x_{n+1}dx\\
&+\int_\Omega\frac{1}{\e}\left(\frac{\e|\nabla u|^2}{2}+ \frac{W(u)}{\e}-\e\nu^2_{n+1}|\nabla u|^2\right) 2 \phi^2\psi'x_{n+1}dx.
\end{align*}
We collect terms and apply Young's inequality to estimate the last equation from below to get 
\begin{align*}
&\int_\Omega\frac{\partial u}{\partial x_{n+1}}\phi^2\psi^2 \cdot x_{n+1}\left(\Delta u-\frac{W'(u)}{\e^2}\right)dx\\
&\geq \frac{1}{2}\int_\Omega(1-\nu_{n+1}^2)\phi^2\psi^2|\nabla u|^2dx+\frac{1}{2}\int_\Omega\left(\frac{W(u)}{\e^2}-\nu^2_{n+1}|\nabla u|^2\right)\phi^2\psi^2dx\\ &-\frac{1}{4}\int_\Omega\sum_{i=1}^{n}\phi^2\psi^2\nu_i^2|\nabla u|^2dx-64\int_\Omega|\nabla\phi|^2\psi^2 x_{n+1}^2|\nabla u|^2dx\\
&+\int_\Omega(1-\nu_{n+1}^2)2\phi^2\psi'x_{n+1}|\nabla u|^2dx\\
&\geq \frac{1}{2}\int_\Omega(1-\nu_{n+1}^2)\phi^2\psi^2|\nabla u|^2dx+\frac{1}{2}\int_\Omega\left(\frac{W(u)}{\e^2}-|\nabla u|^2\right)\phi^2\psi^2dx\\
&-\frac{1}{4}\int_\Omega\sum_{i=1}^{n}\phi^2\psi^2\nu_i^2|\nabla u|^2dx -64\int_\Omega|\nabla\phi|^2\psi^2 x_{n+1}^2|\nabla u|^2dx\\
&+\int_\Omega(1-\nu_{n+1}^2)2\phi^2\psi'x_{n+1}|\nabla u|^2dx\\
&=\frac{3}{4}\int_\Omega(1-\nu_{n+1}^2)\phi^2\psi^2|\nabla u|^2dx-64\int_\Omega|\nabla\phi|^2\psi^2x_{n+1}^2 |\nabla u|^2dx\\
&+\int_\Omega(1-\nu_{n+1}^2)2\phi^2\psi'x_{n+1}|\nabla u|^2dx
\end{align*}
where we again used Young's inequality and the non-positivity of discrepancy \eqref{eqn_discrepancy} in the last line. Next, by H\"older's inequality, the left hand side can be estimated from above by
\begin{align*}
&\int_\Omega \e\frac{\partial u}{\partial x_{n+1}}\phi^2\psi^2 x_{n+1}\left(\Delta u-\frac{W'(u)}{\e^2}\right)dx\\
&\leq \left(\int_\Omega\e \left(\Delta u-\frac{W'(u)}{\e^2}\right)^2\phi^2\psi^2 dx \right)^\frac{1}{2}\left(\int_\Omega\phi^2\psi^2 x_{n+1}^2 \e|\nabla u|^2dx\right)^\frac{1}{2}.
\end{align*}
This in turn gives us 
\begin{align*}
&\int_\Omega\phi^2\psi^2 (1-\nu_{n+1}^2) \e|\nabla u|^2dx+\int_\Omega\phi^2\psi^2\left|\frac{\e|\nabla u|^2}{2}-\frac{W(u)}{\e}\right| dx\\
&\leq \frac{256}{3}\int_\Omega|\nabla\phi|^2\psi^2x_{n+1}^2 \e|\nabla u|^2dx+\frac{4}{3}\left(\int_\Omega\e \left(\Delta u-\frac{W'(u)}{\e^2}\right)^2\phi^2\psi^2dx\cdot\int_\Omega \phi^2\psi^2 x_{n+1}^2 \e|\nabla u|^2dx\right)^\frac{1}{2}\\
&-\frac{4}{3}\int_\Omega2(1-\nu_{n+1}^2)\phi^2\psi'x_{n+1}\e|\nabla u|^2dx\\
\leq &C_1\int_\Omega \phi^2(\hat x)\psi^2 x_{n+1}^2\e|\nabla u|^2dx\\
&+C_2\left(\int_\Omega \phi^2(\hat x)\psi^2x_{n+1}^2\e|\nabla u|^2dx\cdot\int_\Omega\phi^2(\hat x)\psi^2  \left(\Delta u-\frac{W'(u)}{\e^2}\right)^2dx\right)^\frac{1}{2}\\
&+C_3\int_\Omega 2 (1-\nu_{n+1}^2)\phi^2(\hat x) |\psi' |x_{n+1}\e |\nabla u|^2dx,
\end{align*}
and this completes the proof.
\end{proof}
The following estimate shows that away from the nodal set, we have exponential decay as $\e\rightarrow 0$.

\begin{lemma}[{cf. \cite[Proposition 4.4]{wang2014new}}]\label{ExponentialDecay}
For any $h>0$, there exists $C(h)>0$ such that if $u^\e:\mathbb R^{n+1}\times\mathbb R\rightarrow\mathbb R$ is a solution to \eqref{eqn_ACFe} which blows down to the static solution of mean curvature flow supported on the flat plane $\{x_{n+1}=0\}$ as $\e\rightarrow0$, then
\begin{align*}
\sup_{|x_{n+1}|\geq h}(|\nabla u^\e|, 1-|u^\e|^2)\leq e^{-\frac{C(h)}{\e^2}}.
\end{align*}
\end{lemma}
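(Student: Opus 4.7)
The plan is to prove this via a parabolic maximum-principle (barrier) argument, exploiting the strict convexity of $W$ at its minima $u=\pm 1$ (for the standard quartic, $W''(\pm 1)=2>0$).

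\textbf{Step 1 (Promoting blow-down to uniform closeness to $\pm 1$).} First we promote the blow-down hypothesis, which a priori yields only $u^\e \to +1$ on $\{x_{n+1}>0\}$ and $u^\e \to -1$ on $\{x_{n+1}<0\}$ in $L^1_{\mathrm{loc}}$, to a uniform pointwise statement: for any fixed $\eta>0$ small, there is $\e_1(h,\eta)>0$ such that for $\e<\e_1$,
\[
u^\e \geq 1-\eta \text{ on } \{x_{n+1}\geq h/2\}, \qquad u^\e \leq -1+\eta \text{ on } \{x_{n+1}\leq -h/2\}
\]
(inside any fixed bounded parabolic cylinder where we want the conclusion). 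This uses the $L^\infty$ bound $|u^\e|\leq 1$ from the maximum principle, together with the local energy bound and interior parabolic regularity at scale $\e$: if $|u^\e|$ stayed bounded away from $1$ on a definite set in $\{x_{n+1}\geq h/2\}$, the potential $W(u^\e)/\e$ would blow up, contradicting the local energy control inherited from the blow-down.

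\textbf{Step 2 (Parabolic inequality for the deficit).} On $\{x_{n+1}\geq h/2\}$, set $v:=1-u^\e\geq 0$. Using $W'(u)=u(u^2-1)$,
\[
\partial_t v - \Delta v = -\partial_t u^\e + \Delta u^\e = -\frac{W'(u^\e)}{\e^2} = -\frac{1}{\e^2}\bigl(2v - 3v^2 + v^3\bigr).
\]
Choosing $\eta$ in Step~1 small enough so that $2v-3v^2+v^3\geq v$ whenever $0\leq v\leq \eta$, we obtain the linear parabolic differential inequality
\[
\partial_t v - \Delta v + \tfrac{1}{\e^2}\, v \leq 0 \quad \text{on } \{x_{n+1}\geq h/2\}.
\]

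\textbf{Step 3 (Exponential barrier and comparison).} We compare $v$ to $\Psi(x):=2\exp\!\bigl(-\lambda (x_{n+1}-h/2)/\e\bigr)$ with $\lambda\in(0,1)$. A direct computation yields
\[
\partial_t \Psi - \Delta \Psi + \tfrac{1}{\e^2}\Psi = \tfrac{1-\lambda^2}{\e^2}\,\Psi \geq 0,
\]
while on the spatial boundary $\{x_{n+1}=h/2\}$ we have $\Psi\equiv 2 \geq v$, and $\Psi\to 0$ as $x_{n+1}\to\infty$. Working on a suitable bounded parabolic cylinder and using the uniform bound $v\leq 2$ on its parabolic boundary, the parabolic maximum principle gives $v\leq \Psi$, so $1-u^\e \leq 2\,e^{-\lambda h/(2\e)}$ on $\{x_{n+1}\geq h\}$; the analogous argument on the lower half handles $1+u^\e$. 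Together these control $1-(u^\e)^2$ by an exponential in $-1/\e$.

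\textbf{Step 4 (Gradient bound by rescaling).} Rescale $\tilde u(y,s):=u^\e(\e y,\e^2 s)$, which solves $\partial_s \tilde u=\Delta \tilde u-W'(\tilde u)$ on unit-scale parabolic balls. The $L^\infty$ smallness of $1-\tilde u^2$ established in Step~3 on a unit-scale neighborhood (of a point with rescaled height $\geq h/(2\e)$, say) together with standard interior parabolic Schauder estimates applied to the linearization around $\tilde u=\pm 1$ yields $|\nabla \tilde u|$ exponentially small; undoing the rescaling gives the gradient bound with the extra factor $\e^{-1}$ absorbed into the exponential.

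\textbf{Main obstacle.} The main difficulty is Step~1: upgrading the integral/$L^1$ information coming from blow-down into a uniform pointwise closeness to $\pm 1$ on $\{|x_{n+1}|\geq h/2\}$. Once this is in hand the remaining steps are a textbook maximum-principle barrier argument. A secondary technical point is justifying the comparison principle on an unbounded slab, which one handles by first working on a bounded parabolic subdomain and using the global $L^\infty$ bound $v\leq 2$ on the remaining boundary pieces, then letting the domain exhaust.
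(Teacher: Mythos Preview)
Your approach is correct in spirit and genuinely different from the paper's. The paper does not use a barrier argument at all: it constructs an explicit approximate solution $v^\e$ by truncating the one-dimensional profile $g^\e(x_{n+1})$ with a cutoff, cites estimates from \cite{trumper2008relaxation} to bound $\|u^\e-v^\e\|_{L^\infty}$ by a Duhamel/heat-kernel integral of the residual of $v^\e$, and reads off the decay of $1-(u^\e)^2$ from that of $1-(v^\e)^2$. The gradient bound is then obtained in one line from the non-positive discrepancy assumption $\tfrac{\e}{2}|\nabla u^\e|^2\le \tfrac{1}{\e}W(u^\e)$, rather than by rescaling and Schauder as in your Step~4. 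Your route is more self-contained (no external citation needed) and arguably more robust; the paper's is shorter once one accepts the quoted estimates.

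Two remarks. First, both your argument and the paper's actually produce decay of order $e^{-C(h)/\e}$, not $e^{-C(h)/\e^2}$; indeed the one-dimensional profile itself only decays like $e^{-2|x_{n+1}|/\e}$, so the exponent $\e^{-2}$ in the statement (and in the last displayed line of the paper's proof) is a typo. This is consistent with how the lemma is applied later in the paper, where only $e^{-\tilde C/\e}=o(\e^2)$ is used. Second, your Step~3 has a small gap: on the parabolic boundary of a bounded cylinder the barrier $\Psi$ is strictly less than $2$ at interior spatial points on the initial time slice, so the inequality $v\le\Psi$ fails there and the comparison as written does not go through. The standard fix is to add a transient term, e.g.\ replace $\Psi$ by $\Psi+\eta\, e^{-(t-t_0)/\e^2}$, which is still a supersolution of $\partial_t-\Delta+\e^{-2}$, dominates $v\le\eta$ at $t=t_0$, and whose extra term vanishes as $t_0\to-\infty$ (using that the solution is ancient). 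With that patch your argument is complete.
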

\begin{proof}
We will use some estimates from \cite{trumper2008relaxation}. Let us define
\begin{align*}
v^\e(x,t)=\chi_{\bar\tau}(x_{n+1})g^\e(x_{n+1})+(1-\chi_{\bar\tau})(x_{n+1})\frac{x_{n+1}}{|x_{n+1}|}
\end{align*}
where $g^\e$ is the static 1-d solution to the Allen--Cahn equation with flat level sets defined in section 2 and $x_{\bar\tau}:\mathbb R\rightarrow[-1,1]$ is a cutoff function supported in $[-\bar\tau,\bar\tau]$ such that $\chi_{\bar\tau}(x)\equiv1, x\in[-\frac{\bar\tau}{2},\frac{\bar\tau}{2}]$.

We have by definition (see (44) of \cite{trumper2008relaxation})
\begin{align*}
|(v^\e)^2-1|&\leq 2e^{-\frac{h}{\e}}, \text{for $|x_{n+1}|\geq h$},\\
\left|\frac{\partial}{\partial t}v^\e-\Delta v^\e+\frac{W'(v^\e)}{\e^2}\right|
&\begin{cases}
=0, |x_{n+1}|\leq |\frac{\bar\tau}{2}|,\\
\leq O(\frac{e^{-\frac{\gamma\bar\tau}{2\e}}}{\e^2}), |\frac{\bar\tau}{2}|\leq |x_{n+1}|\geq \bar\tau,\\
=0, |x_{n+1}|\geq \bar\tau,\\
\end{cases}
\end{align*}
where $\gamma>0$ is a universal constant that only depends on the potential function $W$.

Thus (39) of \cite{trumper2008relaxation} gives
\begin{align*}
&\sup_{P_r}|u^\e(x,t)-v^\e(x,t)|\\
&\leq C\sup_{(x,t)\in P_{2r}}\int_0^t\int_{\mathbb R^{n+1}}\mathcal H(x-y,t-s)\left|\left(\frac{\partial}{\partial t}v^\e-\Delta v^\e+\frac{W'(v^\e)}{\e^2}\right)(y,s)\right| dyds\\
&\leq C\frac{e^{-\frac{\gamma\bar\tau}{2\e}}}{\e^2}\iint\mathcal H(x-y,t-s) dyds\\
&=O\left(\frac{e^{-\frac{\gamma\bar\tau}{2\e}}}{\e^2}\right),
\end{align*}
where $\mathcal H(x,t)$ is the heat kernel on $\mathbb R^{n+1}\times\mathbb R$. So for $|x_{n+1}|\geq h$
\begin{align*}
|1-(u^\e)^2|&\leq |1-(v^\e)^2|+O\left(\frac{e^{-\frac{\gamma\bar\tau}{2\e}}}{\e^2}\right)\\
&=O(e^{-\frac{h}{\e}})+O\left(\frac{e^{-\frac{\gamma\bar\tau}{2\e}}}{\e^2}\right)\\
&=O\left(e^{-\frac{h}{\e^2}}\right).
\end{align*}

This gives the desired decay estimate for $|1-(u^\e)^2|$. Once we have the estimates for $|1-(u^\e)^2|$, the estimates for $|\nabla u^\e|$ follows by the non-positive discrepancy assumption \eqref{eqn_discrepancy}.
\end{proof}

For later analysis, we assume the following technical assumption
\begin{align}\label{TechnicalAssumption}
\frac{E^\e(\Omega)}{\e^2}\rightarrow\infty, (\text{as $\e\rightarrow0$})
\end{align}
\begin{remark}
This will be satisfied in our proof by contradiction that where we assume \eqref{LayerRepulsion} does not hold.
\end{remark}

We apply Theorem \ref{Caccioppoli} to time slices of solutions to the parabolic Allen--Cahn equation, we get
\begin{corollary}\label{TimeSliceCaccioppoli}
Let $u^\e:\mathbb R^{n+1}\times\mathbb R\rightarrow\mathbb R$ be a solution to the parabolic Allen--Cahn equation $\frac{\partial}{\partial t}u^\e=\Delta u^\e-\frac{W'(u^\e)}{\e^2}$ which satisfies the technical assumption \eqref{TechnicalAssumption}. Then there exists $\e_0>0$ such that, for any time slice $t$ and $\e\leq \e_0$, we have
\begin{align*}
E(B_{\frac{1}{2}})\leq C_1\mathbb H(B_1)+ C_2\sqrt{\mathcal W(B_1)\mathbb H(B_1)}.
\end{align*}
\end{corollary}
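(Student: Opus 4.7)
The plan is to apply Theorem~\ref{Caccioppoli} at a fixed time slice with a product cutoff $\phi(\hat x)\psi(x_{n+1})$ chosen so that (a) $\phi\psi\equiv 1$ on $B_{1/2}$, (b) $\spt(\phi\psi)\subset B_1$, (c) $|\nabla\phi|+|\psi'|\le C$, and (d) $\psi'$ vanishes on $\{|x_{n+1}|\le 1/2\}$. For instance, one can take $\phi\in C_c^\infty(\hat B_{2/3})$ with $\phi\equiv 1$ on $\hat B_{1/2}$, and $\psi$ as in the theorem supported in $(-2/3, 2/3)$; then $\spt(\phi\psi)\subset\hat C_{2/3}\times(-2/3, 2/3)\subset B_1$ since $(2/3)^2+(2/3)^2=8/9<1$.

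With this choice the positive term $\int\phi^2\psi^2(1-\nu_{n+1}^2)\e|\nabla u|^2\,dx$ on the left-hand side of the Caccioppoli inequality bounds $E(B_{1/2})$ from below. On the right, the first term is controlled by $C_1\mathbb H(B_1)$ and the cross term by $C_2\sqrt{\mathbb H(B_1)\mathcal W(B_1)}$ directly from the definitions. The main obstacle is the remaining boundary term
\begin{align*}
\mathcal E := C_3\int 2(1-\nu_{n+1}^2)\phi^2|\psi'|\,x_{n+1}\,\e|\nabla u|^2\,dx,
\end{align*}
which has precisely the same tilt-excess integrand as $E(B_{1/2})$ itself, and so cannot be absorbed by H\"older or Young alone.

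The key observation is that by construction $\psi'$ is supported in $\{1/2\le|x_{n+1}|\le 2/3\}$, i.e.\ at fixed positive distance from the nodal plane $\{x_{n+1}=0\}$. Lemma~\ref{ExponentialDecay} with $h=1/2$ then gives $\e|\nabla u^\e|^2\le e^{-C/\e^2}$ throughout $\spt\psi'$, so that $\mathcal E\le C'e^{-C/\e^2}$ is exponentially small in $\e$. Combined with the technical hypothesis \eqref{TechnicalAssumption}, which says $E^\e/\e^2\to\infty$ and in particular $e^{-C/\e^2}\ll E^\e(B_{1/2})$ for all sufficiently small $\e$, the error $\mathcal E$ can be absorbed as $\mathcal E\le\tfrac12 E(B_{1/2})$ into the left-hand side. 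Rearranging then yields the claimed estimate, with constants slightly redefined. The crux of the argument is exactly this interplay between the exponential decay of $|\nabla u^\e|$ away from the nodal set and the lower bound on the tilt-excess furnished by \eqref{TechnicalAssumption}: without either ingredient the cutoff error $\mathcal E$ would reintroduce the very quantity we are trying to estimate.
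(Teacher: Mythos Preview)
Your proof is correct and follows essentially the same argument as the paper: apply Theorem~\ref{Caccioppoli} with a product cutoff, bound the two main right-hand terms directly by $\mathbb H(B_1)$ and $\sqrt{\mathbb H(B_1)\mathcal W(B_1)}$, and dispose of the residual $\psi'$ term via the exponential decay of Lemma~\ref{ExponentialDecay} combined with the technical assumption~\eqref{TechnicalAssumption}. The only cosmetic difference is that the paper absorbs the resulting $o(\e^2)$ error into the right-hand side (implicitly using $\e^2\ll\mathbb H$), whereas you absorb it into the left via $\mathcal E\le\tfrac12 E(B_{1/2})$.
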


\begin{proof}
Choose a test function $\phi\in C_c^\infty(\mathbb R^n)$ that is supported in $\hat B_1$ and where $\phi\equiv1$, in $\hat B_\frac{1}{2}$.  By Lemma \ref{ExponentialDecay} and the fact $\psi'(x)=0, \text{for $x\in(-\frac{1}{2},\frac{1}{2})$}$, we get for $\e_0$ sufficiently small
\begin{align*}
&E(B_\frac{1}{2}\times\{t\})+|\xi|(B_\frac{1}{2}\times\{t\})\\
&\leq \int_{B_1}\phi^2(\hat x)\psi^2(x_{n+1})(1-\nu_{n+1}^2)\e|\nabla u^\e(x,t)|^2dx+\int_{B_1}\phi^2(\hat x)\psi^2(x_{n+1})\left(\frac{\e|\nabla u^\e(x,t)|^2}{2}+\frac{W(u^\e(x,t))}{\e}\right)dx\\
&\leq C_1\int_{B_1}\phi^2(\hat x)\psi^2(x_{n+1}) x_{n+1}^2\e|\nabla u^\e(x,t)|^2dx\\
&+C_2\left(\int_{B_1}\phi^2(\hat x)\psi^2(x_{n+1})\e \left(\Delta u^\e(x,t)-\frac{W'(u^\e(x,t))}{\e^2}\right)^2 dx\cdot\int_{B_1}\phi^2(\hat x)\psi^2(x_{n+1}) x_{n+1}^2\e|\nabla u^\e(x,t)|^2dx\right)^\frac{1}{2}\\
&+ \tilde Ce^{-\frac{\tilde C}{\e}}\\
&=C_1\int_{B_1}\phi^2(\hat x)\psi^2(x_{n+1}) x_{n+1}^2\e|\nabla u^\e(x,t)|^2dx\\
&+C_2\left(\int_{B_1}\phi^2(\hat x)\psi^2(x_{n+1})\e \left(\Delta u^\e(x,t)-\frac{W'(u^\e(x,t))}{\e^2}\right)^2 dx\cdot\int_{B_1}\phi^2(\hat x)\psi^2(x_{n+1}) x_{n+1}^2\e|\nabla u^\e(x,t)|^2dx\right)^\frac{1}{2}\\
&+ o(\e^2)\\
&=\tilde C_1\mathbb H(B_1\times\{t\})+\tilde C_2\sqrt{\mathbb H(B_1\times\{t\})\mathcal W(B_1\times\{t\})},
\end{align*}
where we used the technical assumption \eqref{TechnicalAssumption} in the last inequality.

\end{proof}

\subsection{Sobolev type inequality}

The following is a quantitative version of Proposition 4.6 in \cite{tonegawa2003integrality}. Geometrically, this measures the the difference of the Allen--Cahn energy from the flat solution in terms of the height excess, the $L^2$ norm of the diffuse mean curvature raised to the power $ \frac{n}{n-2}$ and the discrepancy.
\begin{proposition}\label{InteriorSobolev}
There exists $\e_0,\gamma_0,C_3>0$ such that if $u$ is a time slice of a solution to the parabolic Allen--Cahn equation \eqref{eqn_ACFe} with $\e<\e_0, u(0)=0$, then 
\begin{align*}
&\left|\int_{B_1}\left(\frac{\e|\nabla u|^2}{2}+ \frac{W(u)}{\e}\right) dx-\alpha\omega_{n}\right|\\
\leq &C_3\left(E(B_3)+|\xi|(B_3)+E(B_3)\mathcal W(B_3)+\mathcal W(B_3)^\frac{n}{n-2}\right)\\
\leq &C_3\left(E(B_3)+|\xi|(B_3)+\sqrt{E(B_3)\mathcal W(B_3)}+\mathcal W(B_3)^\frac{n}{n-2}\right),
\end{align*}
where $\xi=\frac{\e|\nabla u|^2}{2}-\frac{W(u)}{\e}$ is the discrepancy and $\omega_k$ is the area of unit $k$-ball.
\end{proposition}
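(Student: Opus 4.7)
My plan is to reduce the estimate, via the coarea formula, to a level-by-level comparison of $\mathcal H^n(\{u=s\}\cap B_1)$ with the planar area $\omega_n$, and then to bound the individual deviations using a graphical decomposition of the level sets (producing the tilt-excess $E$ and discrepancy $|\xi|$ contributions) together with a Michael--Simon type Sobolev inequality applied on each regular level set (which after H\"older produces the Willmore term $\mathcal W^{n/(n-2)}$).

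The first reduction uses the pointwise identity
\[
\frac{\e|\nabla u|^{2}}{2}+\frac{W(u)}{\e}=\sqrt{2W(u)}\,|\nabla u|+\Bigl(\sqrt{\tfrac{\e}{2}}\,|\nabla u|-\sqrt{W(u)/\e}\,\Bigr)^{2},
\]
together with the elementary bound $(\sqrt a-\sqrt b)^{2}\le|a-b|$ for $a,b\ge 0$, to replace the Allen--Cahn energy density by $\sqrt{2W(u)}\,|\nabla u|$ modulo an $O(|\xi|(B_{1}))$ error. By coarea,
\[
\int_{B_{1}}\sqrt{2W(u)}\,|\nabla u|\,dx=\int_{-1}^{1}\sqrt{2W(s)}\,\mathcal H^{n}(\{u=s\}\cap B_{1})\,ds,
\]
and since $\alpha=\int_{-1}^{1}\sqrt{2W(s)}\,ds$, the claim reduces to controlling $\int_{-1}^{1}\sqrt{2W(s)}\,\bigl(\mathcal H^{n}(\{u=s\}\cap B_{1})-\omega_{n}\bigr)\,ds$. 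The exponential decay from Lemma~\ref{ExponentialDecay} confines the relevant range of $s$ to a fixed compact subinterval of $(-1,1)$, and the normalisation $u(0)=0$ ensures that each relevant level set passes close to the origin so that the comparison with $\omega_{n}$ is meaningful.

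For a.e.\ regular $s$ (Sard), split $\Sigma_{s}:=\{u=s\}\cap B_{1}$ into the graphical piece $G_{s}$ where $\nu_{n+1}^{2}\ge\tfrac12$ and the remainder $N_{s}$. The Jacobian identity $\mathcal H^{n}(G_{s})=\int_{\pi(G_{s})}1/|\nu_{n+1}|\,d\hat x$ expanded in powers of $1-\nu_{n+1}^{2}$ gives $\mathcal H^{n}(G_{s})=|\pi(G_{s})|+O\bigl(\int_{G_{s}}(1-\nu_{n+1}^{2})\bigr)$, while $\mathcal H^{n}(N_{s})\lesssim\int_{N_{s}}(1-\nu_{n+1}^{2})$ holds trivially. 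Multiplying each by $\sqrt{2W(s)}$ and re-applying coarea converts both tilt integrals into a bulk contribution of size $\lesssim E(B_{3})$. The remaining defect $\omega_{n}-|\pi(G_{s})|$ is bounded via Michael--Simon applied to a cut-off on $\Sigma_{s}$, whose mean curvature is $H_{\Sigma_{s}}=\bigl(\Delta u-W'(u)/\e^{2}\bigr)/|\nabla u|$ up to a discrepancy-controlled remainder. Michael--Simon together with H\"older yields $\omega_{n}-|\pi(G_{s})|\lesssim\bigl(\int_{\Sigma_{s}}|H_{\Sigma_{s}}|^{2}\bigr)^{n/(n-2)}$, and integrating in $s$ against $\sqrt{2W(s)}\,ds$ (using the coarea Jacobian $|\nabla u|$ and the approximate equipartition $\e|\nabla u|^{2}\approx 2W(u)$ to absorb the weights) produces the $\mathcal W(B_{3})^{n/(n-2)}$ term. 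The mixed contributions $E\cdot\mathcal W$ and $\sqrt{E\,\mathcal W}$ arise from Cauchy--Schwarz applied to cross-terms that couple the tilt correction with the curvature correction in the Jacobian expansion.

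The main obstacle will be to rigorously bridge the codimension-one Michael--Simon estimates on each regular level set $\Sigma_{s}$ with the bulk quantities $E,|\xi|,\mathcal W$ defined on $B_{3}$. This transition requires the approximate equipartition $\e|\nabla u|^{2}\approx 2W(u)$ inherent in \eqref{eqn_discrepancy} to convert $\int_{-1}^{1}\sqrt{2W(s)}\cdots ds$ integrals against level-set quantities into honest bulk integrals on $B_{3}$, while carefully absorbing the singularity of $1/|\nabla u|$ at the critical set of $u$; the precise H\"older exponents used in that conversion are what produce the Sobolev exponent $n/(n-2)$.
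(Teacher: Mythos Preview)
Your reduction via coarea and the graphical/non-graphical split is fine and matches the paper's philosophy, but the step where you invoke Michael--Simon to produce the $\mathcal W^{n/(n-2)}$ term does not work as written, for two reasons.

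First, the pointwise claim $\omega_{n}-|\pi(G_{s})|\lesssim\bigl(\int_{\Sigma_{s}}|H_{\Sigma_{s}}|^{2}\bigr)^{n/(n-2)}$ is not a consequence of Michael--Simon. The monotonicity/Michael--Simon inequality at the origin gives a \emph{lower} bound $\mathcal H^{n}(\Sigma_{s}\cap B_{1})\ge\omega_{n}-C\int_{\Sigma_{s}}|H|$, which after H\"older yields an exponent $1/2$, not $n/(n-2)$; and in any case this controls the area of $\Sigma_{s}$, not the measure of the projection $\pi(G_{s})$. The projection defect measures how much of $\hat B_{1}$ the graphical piece fails to cover, which is governed by where the level set ceases to be graphical, not directly by its mean curvature.

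Second, even granting the per-level bound, you cannot recover the bulk estimate by integrating in $s$: the exponent $n/(n-2)>1$ is convex, so Jensen's inequality goes the wrong way, and $\int_{-1}^{1}\sqrt{2W(s)}\,F(s)^{n/(n-2)}\,ds$ is in general \emph{larger} than $\bigl(\int F(s)\,\sqrt{2W(s)}\,ds\bigr)^{n/(n-2)}$. There is no mechanism here to prevent the curvature from concentrating on a few level sets.

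The paper avoids both issues by working in the bulk rather than level-by-level. It defines a good set $A^{\e,\gamma}\subset B_{1}$ of points where, at \emph{every} scale $r\in(0,2)$, both $r^{2}\mathcal W(B_{r}(x))\le\gamma^{2}\mu(B_{r}(x))$ and $r^{n}E(B_{r}(x))+|\xi|(B_{r}(x))\le\gamma^{2}\mu(B_{r}(x))$. A compactness argument shows the level sets are Lipschitz graphs over $A^{\e,\gamma}$, reducing the problem to bounding $\mu(B_{1}\setminus A^{\e,\gamma})$ and $\mathcal H^{n}(\hat B_{1}\setminus\Pi(A^{\e,\gamma}))$. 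For the complement one runs a Besicovitch covering: each bad point $y$ has a radius $R_{y}$ where the defining inequality fails, and combining this with the density lower bound $\mu(B_{R_{y}})\ge K_{0}\omega_{n}R_{y}^{n}$ forces $R_{y}^{n-2}\lesssim\gamma^{-2}\mathcal W(B_{R_{y}})$. Summing $\mu(B_{R_{y}})\le\gamma^{-2}R_{y}^{2}\mathcal W(B_{R_{y}})\lesssim\gamma^{-2}\mathcal W(B_{3})^{2/(n-2)}\mathcal W(B_{R_{y}})$ over a disjoint cover produces $\mathcal W(B_{3})^{n/(n-2)}$ with the correct exponent arising from the dimensional count, not from a Sobolev inequality on individual hypersurfaces.
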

\begin{proof}

For any $\gamma>0$, define the set 
\begin{align*}
A^{\e,\gamma}=\left\{x\in B^{n+1}_1(0)\subset\mathbb R^{n+1}:
\begin{cases}
r^2\mathcal W(B_r(x))\leq \gamma^2\mu^\e(B_r(x))\\
r^nE(B_r(x))+|\xi|(B_r(x))\leq \gamma^2\mu^\e(B_r(x))
\end{cases}\quad \text{for all $r\in(0,2)$}\right\}
\end{align*}
and let $\Pi(A)\subset B^n_1\subset\mathbb R^n$ be the projection of $A$ to the plane $\{x_{n+1}=0\}$.

Recalling $g^\e(x,t)=\tanh(\frac{x}{\e})$ is the 1-d static solution with flat level sets, we define a distance type function 
\begin{align}\label{DistanceTypeFunction}
z(x,t)=z(\hat x,x_{n+1},t)=(g^\e)^{-1}(u^\e(x,t))=\e(g)^{-1}(u^\e(x,t)) 
\end{align}
and we let $\bar g^\e(x)=g^\e(x_{n+1})=g(\frac{x_{n+1}}{\e})$. We compute
\begin{align}\label{Identitiesforz}
\e(1-\nu_{n+1}^2)|\nabla u^\e|^2&=\e((g^\e)'(z))^2(|\nabla z|^2-z^2_{x_{n+1}})\\\nonumber
&=\frac{1}{\e}(g'(\tfrac{z}{\e}))^2(|\nabla z|^2-z^2_{x_{n+1}}),\\\nonumber
\left(\frac{\e|\nabla u^\e|^2}{2}-\frac{W(u^\e)}{\e}\right)&=\frac{1}{2}\e((g^\e)'(z))^2\left(|\nabla z|^2-1\right)\\\nonumber
&=\frac{1}{2\e}(g'(\tfrac{z}{\e}))^2\left(|\nabla z|^2-1\right).
\end{align}
By the non-positivity of discrepancy \eqref{eqn_discrepancy}, we have $|\frac{\partial z}{\partial x_{n+1}}|\leq |\nabla z|\leq 1$.

Before we proceed, we gather some relations between the derivatives of $u$ and derivatives of the graphical function $h^s$ representing the level sets of $u$.
\begin{align}\label{FunctionAndLevelSet}
\frac{\partial u}{\partial x_{n+1}}&=\left(\frac{\partial h^s}{\partial s}\right)^{-1},\\\nonumber
\frac{\partial u}{\partial x_{i}}&=-\left(\frac{\partial h^s}{\partial s}\right)^{-1}\frac{\partial h^s}{\partial x_i} ,i=1,...,n,\\\nonumber
\frac{\partial u}{\partial t}&=-\frac{\partial h^s}{\partial t}\left(\frac{\partial h^s}{\partial s}\right)^{-1}.
\end{align}

\begin{claim}
For any $b>0$, there exists $\gamma_0$ sufficiently small such that if $\gamma<\gamma_0$ the following holds : for $s\in(-1+b,1-b)$, the level sets $A^{\e,\gamma}\cap\{u^\e=s\}\subset B_1=\{(\hat x, x_{n+1})\in\mathbb R^{n+1}|x_{n+1}=h^s(\hat x)\}$ are Lipschitz graphs of functions $h^s$ with Lipschitz constants $\mathrm{Lip}(h^s)\leq \frac{1}{2}$.
\end{claim}
\begin{proof}[Proof of Claim]
First we prove that $\e|\frac{\partial u^\e}{\partial x_{n+1}}|\geq c_0$ for all $\gamma\leq \gamma_0$ when $\gamma_0$ is sufficiently small. 

Suppose not, there exists a sequence $\gamma_i\rightarrow0$, a sequence of functions $u^{\e_i}$ such that $\e|\frac{\partial u^{\e_i}}{\partial x_{n+1}}|\leq \frac{1}{i}\rightarrow0$ and $|u(0,0)|\leq 1-b$. We parabolically rescale the sequence and denote $v^{\e_i}=u^{\e_i}(\e_ix,\e_i^2t)$ so that the new sequence satisfies the same equation \eqref{eqn_ACF1} and also satisfies
\begin{align*}
\left |\frac{\partial v^{\e_i}}{\partial x_{n+1}}\right |\leq \frac{1}{i}\rightarrow0.
\end{align*}
By standard parabolic regularity, the sequence $v^{\e_i}$ converges smoothly to a limit $v^\infty$ which is a static (because $\mathcal W=0$) solution to \eqref{eqn_ACF1} that only depends on the $x_{n+1}$ variable and furthermore it satisfies
\begin{align*}
\left |\frac{\partial v^{\infty}}{\partial x_{n+1}}\right |=0.
\end{align*}
This is a contradiction to the fact that the only 1-d solution must be $v^{\infty}=\tanh(x_{n+1}+\tanh^{-1}(u(0,0)))$. This derivative lower bound shows the level sets in the $x_{n+1}$ direction are Lipschitz graphs.

Next, also by the smooth convergence, the level sets of $v^{\e_i}$ converges smoothly to hyperplanes parallel to $\{x_{n+1}=0\}$. Since $u^{\e_i}$ are rescalings of $v^{\e_i}$ and the Lipschitz constants are scale invariant, we conclude for sufficiently large $i$, the Lipschitz constants satisfy the bounds $\mathrm{Lip}(h^{s,\e_i})\leq \frac{1}{2}$.
\end{proof}
From the claim above we can conclude the level sets of $u^\e$ are Lipschitz graphs and hence we can estimate the difference of the energy of $u$ in a ball with the area of a flat hyperplane. First we consider the level sets that are far from the transition region that is $\{|u|\geq 1-b\}$. We first fix $b$ sufficiently small in order to apply  Lemma \ref{LevelSetCloseTo1} which gives us 
\begin{align}\label{FarFromTransitionEnergySmall1}
\int_{B_1^{n+1}\cap\{|u|\geq 1-b\}}\left(\frac{\e|\nabla u|^2}{2}+ \frac{W(u)}{\e}\right) dx\leq \left(\int_{B_3}(1-\nu_{n+1}^2)\e|\nabla u|^2 dx+\int_{B_3}|\xi| dx\right).
\end{align}
Furthermore since $g'(\frac{z}{\e})=\sqrt{2W(u)}$ by \eqref{Identitiesforz}, by the co-area formula and Young's inequality we have
\begin{align}\label{FarFromTransitionEnergySmall2}
&\left|\int_{-1}^{-1+b}\int_{\Pi(A\cap\{u=s\})\cap\hat B_1}g'(\frac{z}{\e}) d\hat xds+\int_{1-b}^{1}\int_{\Pi(A\cap\{u=s\})\cap\hat B_1}g'(\frac{z}{\e}) d\hat xds\right|\\\nonumber
&=\left|\int_{B_1\cap\{-1<u<-1+b\}}\sqrt{2W(u)}|\nabla u| dx+\int_{B_1\cap\{1-b<u<1\}}\sqrt{2W(u)}|\nabla u| dx\right|\\\nonumber
&\leq \left|\int_{B_1\cap\{-1<u<-1+b\}}\left(\frac{\e|\nabla u|^2}{2}+ \frac{W(u)}{\e}\right) dx+\int_{B_1\cap\{1-b<u<1\}}\left(\frac{\e|\nabla u|^2}{2}+ \frac{W(u)}{\e}\right) dx\right|\\\nonumber
&\leq \left(\int_{B_3}(1-\nu_{n+1}^2)\e|\nabla u|^2 dx+\int_{B_3}|\xi| dx\right), \text{by \eqref{FarFromTransitionEnergySmall1}}.
\end{align}
Using \eqref{FarFromTransitionEnergySmall1} and \eqref{FarFromTransitionEnergySmall2}, we estimate the energy difference
\begin{align}\label{EnergyDifference1}
&\left|\int_{B_1^{n+1}}\left(\frac{\e|\nabla u|^2}{2}+ \frac{W(u)}{\e}\right) dx-\alpha\omega_{n}\right|\\\nonumber
&\leq \left|\int_{B_1^{n+1}\cap\{|u|\leq 1-b\}}\left(\frac{\e|\nabla u|^2}{2}+ \frac{W(u)}{\e}\right) dx-\alpha\omega_{n}\right|+E(B_1^{n+1})+|\xi|(B_1^{n+1})\\\nonumber
&\leq \left|\int_{B_1^{n+1}\cap\{|u|\leq 1-b\}}\e|\nabla u|^2 dx-\alpha\omega_{n}\right|+E(B_1^{n+1})+2|\xi|(B_1^{n+1})\\\nonumber
&\leq \left|\int_{B_1^{n+1}\cap\{|u|\leq 1-b\}}\nu_{n+1}\e|\nabla u|^2 dx-\alpha\omega_{n}\right|+\left|\int_{B_1^{n+1}}(1-\nu_{n+1})\e|\nabla u|^2 dx\right|+E(B_1^{n+1})+2|\xi|(B_1^{n+1})\\\nonumber
&\leq \left|\int_{B_1^{n+1}\cap\{|u|\leq 1-b\}}\nu_{n+1}\e|\nabla u|^2 dx-\alpha\omega_{n}\right|+\left|\int_{B_1^{n+1}}(1-\nu^2_{n+1})\e|\nabla u|^2 dx\right|+E(B_1^{n+1})+2|\xi|(B_1^{n+1})
\end{align}
where we used $(1-\nu_{n+1})\e |\nabla u|^2\leq (1-\nu_{n+1}^2)\e |\nabla u|^2$ in the second last line. Next, in order to use our Lipschitz graphical representation, we will consider the integral over our set $A^{\e,\gamma}=A$. Furthermore since the second term in the last line is the tilt excess, we have the bound 
\begin{align}
&\left|\int_{B_1^{n+1}}\left(\frac{\e|\nabla u|^2}{2}+ \frac{W(u)}{\e}\right) dx-\alpha\omega_{n}\right|\\\nonumber
&\leq \left|\int_{B_1^{n+1}\cap\{|u|\leq 1-b\}}\nu_{n+1}\e|\nabla u|^2 dx-\alpha\omega_{n}\right|+2E(B_1^{n+1})+2|\xi|(B_1^{n+1})\\\nonumber
&=\left|\int_{A\cap\{|u|\leq 1-b\}}\nu_{n+1}\e|\nabla u|^2 dx-\alpha\omega_{n}\right|+\left|\int_{B_1^{n+1}\setminus A}\nu_{n+1}\e |\nabla u|^2\right|+2E(B_1^{n+1})+2|\xi|(B_1^{n+1})\\\nonumber
&\leq \left|\int_{A\cap\{|u|\leq 1-b\}}\nu_{n+1}\e|\nabla u|^2 dx-\alpha\omega_{n}\right|+\left|\int_{B_1^{n+1}\setminus A}\e |\nabla u|^2\right|+3E(B_1^{n+1})+2|\xi|(B_1^{n+1})\\\nonumber
&\leq \left|\int_{A\cap\{|u|\leq 1-b\}}\nu_{n+1}\e|\nabla u|^2 dx-\alpha\omega_{n}\right|+\mu(B_1^{n+1}\setminus A)+3E(B_1^{n+1})+2|\xi|(B_1^{n+1}),
\end{align}

We estimate the first term by using the co-area formula and using the Lipschitz graph representation of the level sets of $u$, \eqref{FunctionAndLevelSet}, 
\begin{align}
&\left|\int_{A\cap\{|u|\leq 1-b\}}\nu_{n+1}\e|\nabla u|^2 dx-\alpha\omega_{n}\right|\\\nonumber
&=\left|\int_{-1+b}^{1-b}\int_{A\cap\{u=s\}}\nu_{n+1}\e |\nabla u| d\mathcal H^nds-\alpha\omega_{n}\right|\\\nonumber
&=\left|\int_{-1+b}^{1-b}\int_{A\cap\{u=s\}}\frac{1}{\sqrt{1+|\hat\nabla h^s|^2}}\e |\nabla u| d\mathcal H^nds-\int_{-1}^1g'(g^{-1}(s)) ds\omega_{n}\right|\\\nonumber
&=\left|\int_{-1+b}^{1-b}\int_{A\cap\{u=s\}}\frac{1}{\sqrt{1+|\hat\nabla h^s|^2}}g'(\frac{z}{\e})|\nabla z| d\mathcal H^nds-\int_{-1}^1g'(g^{-1}(s)) ds\omega_{n}\right|.
\end{align}
Now using the Lipschitz bounds on $h^{s}$, \eqref{FarFromTransitionEnergySmall1}-\eqref{FarFromTransitionEnergySmall2}, as a consequence of the choice of $b$, we get from the last term  
\begin{align}
&\left|\int_{A\cap\{|u|\leq 1-b\}}\nu_{n+1}\e|\nabla u|^2 dx-\alpha\omega_{n}\right|\\\nonumber
&\leq \left|\int_{-1+b}^{1-b}\int_{\Pi(A\cap\{u=s\})\cap\hat B_1}g'(\frac{z}{\e})|\nabla z| d\hat xds-\int_{-1}^{1}\int_{\hat B_1}g'(\frac{z}{\e}) d\hat xds\right|\\\nonumber
&\leq \left|\int_{-1+b}^{1-b}\int_{\Pi(A\cap\{u=s\})\cap\hat B_1}g'(\frac{z}{\e})|\nabla z| d\hat xds-\int_{-1}^{1}\int_{\Pi(A\cap\{u=s\}\cap\hat B_1}g'(\frac{z}{\e}) d\hat xds\right|+\alpha\mathcal H^n(\hat B_1\setminus\pi(A))\\\nonumber
&\leq \left|\int_{-1+b}^{1-b}\int_{\Pi(A\cap\{u=s\})\cap\hat B_1}g'(\frac{z}{\e})|\nabla z| d\hat xds-\int_{-1+b}^{1-b}\int_{\Pi(A\cap\{u=s\})\cap\hat B_1}g'(\frac{z}{\e}) d\hat xds\right|+\alpha\mathcal H^n(\hat B_1\setminus\pi(A))\\\nonumber
&+\left|\int_{-1}^{-1+b}\int_{\Pi(A\cap\{u=s\})\cap\hat B_1}g'(\frac{z}{\e}) d\hat xds+\int_{1-b}^{1}\int_{\Pi(A\cap\{u=s\})\cap\hat B_1}g'(\frac{z}{\e}) d\hat xds\right|\\\nonumber
&\leq \left|\int_{-1+b}^{1-b}\int_{\Pi(A\cap\{u=s\})\cap\hat B_1}g'(\frac{z}{\e})|\nabla z| d\hat xds-\int_{-1+b}^{1-b}\int_{\Pi(A\cap\{u=s\})\cap\hat B_1}g'(\frac{z}{\e}) d\hat xds\right|+\alpha\mathcal H^n(\hat B_1\setminus\pi(A))\\\nonumber
&+E(B_3^{n+1})+|\xi|(B_3^{n+1})\\\nonumber
&\leq \left|\int_{-1+b}^{1-b}\int_{\Pi(A)\cap\hat B_1}g'(\frac{z}{\e})(|\nabla z|-1) d\hat xds\right|+\alpha\mathcal H^n(\hat B_1\setminus\pi(A))+E(B_3^{n+1})+|\xi|(B_3^{n+1}).
\end{align}
The non-positivity of the discrepancy \eqref{eqn_discrepancy} implies $|\nabla z|\leq 1$, and thus $\left||\nabla z|-1\right|\leq \left||\nabla z|^2-1\right|$. Hence the first term above can be estimated by
\begin{align}
&\left|\int_{-1+b}^{1-b}\int_{\Pi(A)\cap\hat B_1}g'(\frac{z}{\e})(|\nabla z|-1) d\hat xds\right|\\\nonumber
&\leq \left|\int_{-1+b}^{1-b}\int_{\Pi(A)\cap\hat B_1}\left(\frac{2\e}{|g'(\frac{z}{\e})|}\right)\left(\frac{1}{2\e}[g'(\frac{z}{\e})]^2(|\nabla z|^2-1)\right) d\hat xds\right|\\\nonumber
&=\left|\int_{-1+b}^{1-b}\int_{\Pi(A)\cap\hat B_1}\left(\frac{2\e}{|g'(\frac{z}{\e})|}\right)\left(\frac{\e|\nabla u^\e|^2}{2}-\frac{W(u^\e)}{\e}\right) d\hat xds\right|\\\nonumber
&=\left|\int_{-\infty}^\infty\int_{\Pi(A\cap\{|u|\leq 1-b\})\cap\hat B_1}\frac{2\e}{|g'(\frac{z}{\e})|}\left(\frac{\e|\nabla u^\e|^2}{2}-\frac{W(u^\e)}{\e}\right)\frac{\partial u}{\partial x_{n+1}} d\hat xdx_{n+1}\right|\\\nonumber
&=\left|\int_{-\infty}^\infty\int_{\Pi(A\cap\{|u|\leq 1-b\})\cap\hat B_1}\frac{2\e}{|g'(\frac{z}{\e})|}\left(\frac{\e|\nabla u^\e|^2}{2}-\frac{W(u^\e)}{\e}\right)\frac{g'(\frac{z}{\e})\frac{\partial z}{\partial x_{n+1}}}{\e} d\hat xdx_{n+1}\right|\\\nonumber
&\leq \left|\int_{-1}^1\int_{\Pi(A\cap\{|u|\leq 1-b\})\cap\hat B_1}2\left|\frac{\partial z}{\partial x_{n+1}}\right|\left(\frac{\e|\nabla u^\e|^2}{2}-\frac{W(u^\e)}{\e}\right) d\hat xdx_{n+1}\right|\\\nonumber
&\leq 2|\xi|(B_1^{n+1}),
\end{align}
where in the last inequality, we used. $\left|\frac{\partial z}{\partial x_{n+1}}\right|\leq |\nabla z|\leq 1$. Inserting this estimate back into \eqref{EnergyDifference1} we get
\begin{align}\label{EnergyDifference}
&\left|\int_{B_1^{n+1}}\left(\frac{\e|\nabla u|^2}{2}+ \frac{W(u)}{\e}\right) dx-\alpha\omega_{n}\right|\\ \nonumber
&\leq O\left(\mathcal H^n(B^n_1\setminus\Pi (A))+\mu(B_1^{n+1}\setminus A)+E(B_3^{n+1})+|\xi|(B_3^{n+1})\right).
\end{align}
Now, we turn to estimating the term $\mathcal H^n(B^n_1\setminus\Pi (A))$. By the definition of $A^{\e,\gamma}$, for any $(\hat x,x_{n+1})\not \in A^{\e,\gamma}$ either 
\begin{equation*}
(\hat x,x_{n+1})\in X_1^{\e,\gamma}=\{r^2 W(B_r)>\gamma^2\mu(B_r), \text{for some $r\in(0,2)$}\}
\end{equation*}
or 
\begin{align*}
(\hat x,x_{n+1})\in X_2^{\e,\gamma}=\{r^nE(B_r)+|\xi|(B_r(x))>\gamma^2\mu(B_r), \text{for some $r\in(0,2)$}\}.
\end{align*}
First we consider $X_1$, for any $y\in X_1$, there exists $0<R_y\leq 2$ such that 
\begin{align*}
R^2_y\mathcal{W}(B_{R_y}(y))>\gamma^2\mu(B_{R_y}(y)).
\end{align*}
Moreover, by Lemma \ref{DensityLowerBound} in the Appendix, there exists $K_0,R_0>0$ such that either 
\begin{align*}
R_y\leq 2\e R_0
\end{align*}
or 
\begin{align*}
\mu(B_{R_y}(y))\geq K_0\omega_nR_y^n
\end{align*}
is satisfied. In the later case,
\begin{align*}
K_0\gamma^2 \omega_nR_y^n<R_y^2\mathcal{W}(B_{R_y}(y))\implies K_0\gamma^2 \omega_nR_y^{n-2}&<\mathcal{W}(B_{R_y}(y)).
\end{align*}
Therefore, we have
\begin{align*}
R_y<\left(\frac{1}{K_0}\gamma^{-2}\omega_n^{-1}\int_{B_{R_y}(y)}\e\left(\Delta u-\frac{W'(u)}{\e^2}\right)^2 dx\right)^{\frac{1}{n-2}}.
\end{align*}
Since $y\in B_1$ and $R_y<2$, we get
\begin{align*}
R_y<\left(\frac{1}{K_0}\gamma^{-2}\omega_n^{-1}\mathcal{W}(B_3)\right)^{\frac{1}{n-2}}.
\end{align*}
Combining the two cases, we have by the technical assumption \eqref{TechnicalAssumption}
\begin{align}\label{eqn_Radius}
R_y^2&\leq \max\left\{4\e^2 R_0^2,\left(\frac{1}{K_0}\gamma^{-2}\omega_n^{-1}\int_{B_{R_y}(y)}\e\left(\Delta u-\frac{W'(u)}{\e^2}\right)^2 dx\right)^{\frac{2}{n-2}}\right\}\\\nonumber
&\leq \max\left\{\tilde CK_0^2E(B_3),\left(\frac{1}{K_0}\gamma^{-2}\omega_n^{-1}\int_{B_{R_y}(y)}\e\left(\Delta u-\frac{W'(u)}{\e^2}\right)^2 dx\right)^{\frac{2}{n-2}}\right\}.
\end{align}
We only consider the dimensions $n>2$ here (the case $n\leq 2$ has already been dealt with by the authors in Corollary 1.5 of \cite{Nguyen2020}). Substituting \eqref{eqn_Radius} into our definition of $X_1^{\e,\gamma}$, $\gamma^{-2}R^2_y\mathcal{W}(B_{R_y}(y))>\mu(B_{R_y}(y))$, we get
\begin{align}\label{CoveringX1}
\mu(B_{R_y}(y))&<\max\left\{\tilde CK_0^2E(B_3)\mathcal{W}(B_{R_y}(y)),\gamma^{-2}\left(\frac{1}{K_0}\gamma^{-2}\omega_n^{-1}\mathcal{W}(B_3)\right)^{\frac{2}{n-2}}\mathcal{W}(B_{R_y}(y))\right\}.
\end{align}
Since for any $y\in X_1$, there is a ball of radius $R_y$ satisfying the above \eqref{CoveringX1}, by the Besicovitch covering theorem, we have
\begin{align*}
\mu(X_1^{\e,\gamma})&\leq C(n)\max\left\{E(B_3)\mathcal W(B_3),\gamma^{-\frac{2n}{n-2}}\mathcal{W}(B_3)^\frac{n}{n-2}\right\}.
\end{align*}
For the set $X_2^{\e,\gamma}$, a similar covering argument gives
\begin{align*}
\mu(X_2^{\e,\gamma})&\leq C(n)\gamma^{-2}(E(B_3)+|\xi|(B_3)).
\end{align*}
Thus
\begin{align*}
\mu(B_3^{n+1}\setminus A)&\leq \mu(X_1^{\e,\gamma})+\mu(X_2^{\e,\gamma})\\
&\leq C(n)\left(E(B_3)+|\xi|(B_3)+E(B_3)\mathcal W(B_3)+\mathcal{W}(B_3)^\frac{n}{n-2}\right).
\end{align*}
Moreover, the projection on to $B_1^n$ gives us
\begin{align*}
\mathcal H^n(B^n_3\setminus\Pi(A))&\leq \mu(B_3^{n+1}\setminus A)+C(n)E(B_3)\\
&\leq \tilde C(n)\left(E(B_3)+|\xi|(B_3)+E(B_3)\mathcal W(B_3)+\mathcal{W}(B_3)^\frac{n}{n-2}\right).
\end{align*}
Substituting the above two estimates on the measure of bad sets $X_1^{\e,\gamma}, X_2^{\e,\gamma}$ into \eqref{EnergyDifference} and applying the technical assumption \eqref{TechnicalAssumption}, we have
\begin{align*}
&\left|\int_{B_1}\left(\frac{\e|\nabla u|^2}{2}+ \frac{W(u)}{\e}\right) dx-\alpha\omega_{n}\right|\\
\leq &C_3\left(E(B_3)+|\xi|(B_3)+E(B_3)\mathcal W(B_3)+\mathcal{W}(B_3)^\frac{n}{n-2}\right).
\end{align*}
\end{proof}

\subsection{Cylindrical growth estimate}
Since the Sobolev-type inequality in Proposition \ref{InteriorSobolev} is an interior estimate, we require a cylindrical growth estimate in order to extend it to balls of larger radius. The lemma below corresponds to \cite[Section 6.4]{brakke2015motion} and \cite[Theorem 11.3]{Kasai2014}. We will consider energy measures of functions with nearly flat level sets and nearly orthogonal cylinders over them. The following lemma estimates the growth of the Allen--Cahn energy as a function of radius. The estimates provide both upper and lower bounds. 
\begin{lemma}\label{CylindricalGrowthLem}
Suppose there exists $0<R_1<R_2<\infty$ and $W_2,E_2>0$ such that for any $R_1<r<R_2$
\begin{align*}
\mathcal W(B_r)&\leq W_2r^n,\\
E(B_r)&\leq E_2,
\end{align*}
then there exists $C(n, R_1,R_2)$ such that
\begin{align*}
\left|\frac{\mu(B_{R_2})}{R_2^n}-\frac{\mu(B_{R_1})}{R_1^n}\right|\leq C\left(E_2+\sqrt{\mathcal{W}_2E_2}+ |\xi|(B_{R_2})\right).
\end{align*}
\end{lemma}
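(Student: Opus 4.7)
My plan is to derive a first-variation identity for the mass ratio $r \mapsto \mu(B_r)/r^n$ by substituting the radial test vector field $g(x) = x\,\phi(|x|)$, with $\phi$ a smooth nonincreasing cutoff approximating $\chi_{[0,r]}$, into the stress-energy divergence identity \eqref{eqn_ACDivergence}. A direct computation gives
\[
(\delta-\nu\otimes\nu):\nabla g = n\phi + |x|\phi' - \tfrac{(\nu\cdot x)^2}{|x|}\phi', \qquad \nu\otimes\nu:\nabla g = \phi + \tfrac{(\nu\cdot x)^2}{|x|}\phi',
\]
so that letting $\phi \to \chi_{[0,r]}$ and treating $\phi'$ as the distributional surface measure on $\partial B_r$ yields the Allen--Cahn analogue of the varifold first-variation identity
\[
r\mu'(B_r) - n\mu(B_r) = I(r) - \xi^\e(B_r) + \frac{1}{r}\int_{\partial B_r}\e(\nabla u\cdot x)^2\,d\mathcal H^n,
\]
where $I(r) := \int_{B_r}\e(\nabla u\cdot x)(\Delta u - W'(u)/\e^2)\,dx$ and the last term is nonnegative. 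Dividing by $r^{n+1}$ identifies the LHS with $\tfrac{d}{dr}\!\left[\mu(B_r)/r^n\right]$, so integrating from $R_1$ to $R_2$ produces
\[
\frac{\mu(B_{R_2})}{R_2^n} - \frac{\mu(B_{R_1})}{R_1^n} = \int_{R_1}^{R_2}\frac{I(r)-\xi^\e(B_r)}{r^{n+1}}\,dr + \int_{R_1}^{R_2}\frac{1}{r^{n+2}}\int_{\partial B_r}\e(\nabla u\cdot x)^2\,d\mathcal H^n\,dr.
\]

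Next, I would estimate the three contributions. The $\xi$-term is immediate: monotonicity of $r\mapsto|\xi|(B_r)$ yields $\int_{R_1}^{R_2}|\xi^\e(B_r)|\,r^{-n-1}\,dr \le C(R_1,R_2)\,|\xi|(B_{R_2})$. For the $I(r)$-term, Cauchy--Schwarz gives $|I(r)|\le \mathcal W(B_r)^{1/2}\bigl(\int_{B_r}\e(\nabla u\cdot x)^2\,dx\bigr)^{1/2}$. The hypothesis $\mathcal W(B_r)\le W_2 r^n$, combined with the key pointwise decomposition
\[
(\nu\cdot x)^2 = (\nu\cdot\hat x + \nu_{n+1}x_{n+1})^2 \le 2(\nu\cdot\hat x)^2 + 2\nu_{n+1}^2 x_{n+1}^2 \le 2|x|^2(1-\nu_{n+1}^2) + 2x_{n+1}^2,
\]
(obtained by Cauchy--Schwarz $(\nu\cdot\hat x)^2\le|\hat x|^2(1-\nu_{n+1}^2)$) reduces the first part of the integral to the tilt excess and gives the $\sqrt{W_2 E_2}$ contribution after integration. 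The last ``good'' term is bounded by the same decomposition, swapping orders of integration and using $r^{-n-2}\le R_1^{-n-2}$, which yields a multiple of $E_2$ from the $(1-\nu_{n+1}^2)$ piece.

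The main technical obstacle is the residual $\int_{B_{R_2}}x_{n+1}^2\e|\nabla u|^2\,dx$ left over from the decomposition of $(\nu\cdot x)^2$, since the hypotheses control $\mathcal W$ and $E$ but not the height excess directly. To absorb it, I would invoke the non-positive discrepancy \eqref{eqn_discrepancy} (which gives $\e|\nabla u|^2 \le 2W(u)/\e$) together with the exponential-decay estimate of Lemma \ref{ExponentialDecay}: the integral splits into a transition-region part, supported where $|u^\e|\le 1-b$ and hence, by the 1-d profile analysis, where $|x_{n+1}|=O(\e)$, and a tail part where $\e|\nabla u|^2$ decays exponentially in $1/\e$. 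The former contributes at most $O(\e^2)$ and the latter is negligible, so both are absorbed into $C(n, R_1, R_2)$. Assembling the three estimates produces the claimed bound
\[
\left|\frac{\mu(B_{R_2})}{R_2^n} - \frac{\mu(B_{R_1})}{R_1^n}\right|\le C\bigl(E_2 + \sqrt{W_2 E_2} + |\xi|(B_{R_2})\bigr).
\]
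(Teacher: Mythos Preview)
Your approach has a genuine gap that stems from the choice of test vector field. You use the \emph{radial} field $g(x)=x\,\phi(|x|)$, which has a nonzero $e_{n+1}$-component; the paper instead uses the \emph{cylindrical} (tangential) field $g(x)=\frac{1}{r}\rho(|\hat x|/r)\,\hat x$, whose $(n{+}1)$st component vanishes identically. This single difference is decisive. With the cylindrical field, $\langle\nabla u,g\rangle$ involves only $\hat\nabla u$, so automatically $|\langle\nabla u,g\rangle|\le\sqrt{1-\nu_{n+1}^2}\,|\nabla u|$, and Cauchy--Schwarz yields the $\sqrt{\mathcal W\,E}$ term directly with no height-excess contamination. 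The remaining error terms in the paper's computation are then controlled by $(1-\nu_{n+1}^2)\e|\nabla u|^2$ and $|\xi|$ alone, exactly matching the right-hand side of the lemma.

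Your radial field forces the decomposition of $(\nu\cdot x)^2$ into a tilt piece and a height piece $\int x_{n+1}^2\e|\nabla u|^2$, and this last term is not controlled by the hypotheses: the lemma assumes bounds only on $\mathcal W$ and $E$, not on $\mathbb H$. Your proposed fix via Lemma~\ref{ExponentialDecay} does not close the gap. That lemma gives exponential decay of $|\nabla u^\e|$ at fixed distance $h$ from the plane $\{x_{n+1}=0\}$, but it does \emph{not} tell you that the transition region $\{|u^\e|\le 1-b\}$ lies within $|x_{n+1}|=O(\e)$ of that plane; knowing that would amount to an $L^\infty$ height bound, which is exactly what is missing. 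And even granting an $O(\e^2)$ residual, it cannot be ``absorbed into $C(n,R_1,R_2)$'': the conclusion is an inequality whose right-hand side must vanish with $E_2,\mathcal W_2,|\xi|$, so an additive $O(\e^2)$ is only acceptable if you also invoke the technical assumption $\e^2=o(E)$ of \eqref{TechnicalAssumption}, which is neither a hypothesis of this lemma nor mentioned in your argument. The clean fix is to switch to the cylindrical test field, after which the height-excess term never appears.
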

\begin{proof}
Consider $C_r=\{x=(\hat x,x_{n+1})\in\mathbb R^{n+1}:|\hat x|\leq r\}$ and choose a cylindrical vector field $g:\mathbb R^{n+1}\rightarrow\mathbb R^{n+1}: g(x)=(g^1,\dots,g^n,g^{n+1})=\frac{1}{r}\rho(\frac{x}{r})\hat x=\frac{1}{r}\rho(\frac{|\hat x|}{r})\hat x$, where $\rho\in C_c^\infty(\mathbb R^{n+1})$ satisfying $|\rho|\leq 1$, $\spt\rho\subset C_1$ and $ \rho(x)=\rho(|\hat x|)$ is a cylindrical test function which depends only on $|\hat x|$. We compute
\begin{align*}
\mathrm{div}(g)&=n\frac{1}{r}\rho(\frac{|\hat x|}{r})+\left\langle\hat\nabla \left(\frac{1}{r}\rho(\frac{|\hat x|}{r})\right),\hat x\right\rangle=n\frac{1}{r}\rho(\frac{|\hat x|}{r})+\frac{1}{r^2}\rho'(\frac{|\hat x|}{r})|\hat x|,\\
&\hat\nabla\rho(\frac{\hat x}{r})=\rho'(\frac{\hat x}{r})\frac{1}{r}\frac{\hat x}{|\hat x|}=-r\frac{\partial}{\partial r}\rho(\frac{\hat x}{r})\frac{\hat x}{|\hat x|^2},\\
\frac{\partial}{\partial x_i}g^j&=0, \text{$i$ or $j$ equals $n+1$},\\
\frac{\partial}{\partial x_i}g^j&=\frac{1}{r}\rho(\frac{\hat x}{r})\delta_{ij}+ \frac{1}{r^2}\rho'(\frac{\hat x}{r})\frac{x_ix_j}{|\hat x|}=\frac{1}{r}\rho(\frac{\hat x}{r})\delta_{ij}-\frac{\partial}{\partial r}\rho(\frac{\hat x}{r})\frac{x_ix_j}{|\hat x|^2},  \text{$1\leq i,j\leq n$}.
\end{align*}
The vector field $g$ corresponds to the velocity field of the variation $u_s=u+s\langle\nabla u, g\rangle$. Substituting into the first variation formula, we obtain
\begin{align*}
&\int_{C_r}\left(-\e\Delta u+\frac{W'(u)}{\e}\right)\langle\nabla u,g\rangle dx\\
&=\frac{d}{ds}\int_{C_r}\left(\frac{\e|\nabla u_s|^2}{2}+ \frac{W(u_s)}{\e}\right) dx\bigg|_{s=0}\\
&=-\int_{C_r}\left(I-\frac{\nabla u}{|\nabla u|}\otimes\frac{\nabla u}{|\nabla u|}\right):Dg\left(\frac{\e|\nabla u|^2}{2}+ \frac{W(u)}{\e}\right) dx\\
&+\int_{C_r}\frac{\nabla u}{|\nabla u|}\otimes\frac{\nabla u}{|\nabla u|}:Dg\left(\frac{\e|\nabla u|^2}{2}-\frac{W(u)}{\e}\right) dx\\
&=-\int_{C_r}\left(I-\frac{\nabla u}{|\nabla u|}\otimes\frac{\nabla u}{|\nabla u|}\right):\hat\delta_{ij}\frac{1}{r}\rho(\frac{\hat x}{r}) d\mu^\e-\int_{C_r}\left(I-\frac{\nabla u}{|\nabla u|}\otimes\frac{\nabla u}{|\nabla u|}\right):\left(\hat\nabla\rho(\frac{\hat x}{r})\otimes\hat x\right)\frac{1}{r} d\mu^\e\\
&+\int_{C_r}\frac{\nabla u}{|\nabla u|}\otimes\frac{\nabla u}{|\nabla u|}: Dg d\xi_\e\\
&=-\int_{C_r}\left(I-\frac{\nabla u}{|\nabla u|}\otimes\frac{\nabla u}{|\nabla u|}\right):\hat\delta_{ij}\frac{1}{r}\rho(\frac{\hat x}{r}) d\mu^\e+\int_{C_r}\left(I-\frac{\nabla u}{|\nabla u|}\otimes\frac{\nabla u}{|\nabla u|}\right):\left(\frac{\hat x}{|\hat x|}\otimes\frac{\hat x}{|\hat x|}\right)\frac{\partial}{\partial r}\rho(\frac{\hat x}{r}) d\mu^\e\\
&+\int_{C_r}\frac{\nabla u}{|\nabla u|}\otimes\frac{\nabla u}{|\nabla u|}:Dg d\xi_\e.
\end{align*}
Thus, by moving the other two terms on the last line to the left hand side, we have
\begin{align*}
&\frac{d}{dr}\int_{C_r}\left(I-\frac{\nabla u}{|\nabla u|}\otimes\frac{\nabla u}{|\nabla u|}\right):\left(\frac{\hat x}{|\hat x|}\otimes\frac{\hat x}{|\hat x|}\right)\rho(\frac{\hat x}{r}) d\mu^\e\\
&=\int_{C_r}\left(I-\frac{\nabla u}{|\nabla u|}\otimes\frac{\nabla u}{|\nabla u|}\right):\left(\frac{\hat x}{|\hat x|}\otimes\frac{\hat x}{|\hat x|}\right)\frac{\partial}{\partial r}\rho(\frac{\hat x}{r}) d\mu^\e\\
&=-\int_{C_r}\frac{\nabla u}{|\nabla u|}\otimes\frac{\nabla u}{|\nabla u|}:Dg d\xi_\e+\int_{C_r}\left(I-\frac{\nabla u}{|\nabla u|}\otimes\frac{\nabla u}{|\nabla u|}\right):\hat\delta_{ij}\frac{1}{r}\rho(\frac{\hat x}{r}) d\mu^\e\\
&-\int_{C_r}\left(\e\Delta u-\frac{W'(u)}{\e}\right)\langle\nabla u,g\rangle dx.
\end{align*}
Since $ g^{n+1}=0$ and by applying Cauchy-Schwarz (notice that $|g|\leq 1$), we get
\begin{align*}
&\left|\int_{C_r}\left(-\e\Delta u+\frac{W'(u)}{\e}\right)\langle\nabla u,g\rangle dx\right|\\
&\leq \left|\int_{C_r}\left(-\e\Delta u+\frac{W'(u)}{\e}\right)|\hat\nabla u| dx\right|\\
&=\left|\int_{C_r}\left(-\e\Delta u+\frac{W'(u)}{\e}\right)\sqrt{(1-\nu_{n+1}^2)}|\nabla u| dx\right|\\
&\leq \left(\int_{C_r}\e\left(\Delta u-\frac{W'(u)}{\e^2}\right)^2 dx\right)^\frac{1}{2}\cdot \left(\int_{C_r}\e(1-\nu_{n+1}^2)|\nabla u|^2 dx\right)^\frac{1}{2}.
\end{align*}
Combining the above two inequalities, we have
\begin{align}\label{CylindricalGrowth}
&\left|\frac{d}{dr}\int_{C_r}\left(I-\nu\otimes\nu\right):\left(\frac{\hat x}{|\hat x|}\otimes\frac{\hat x}{|\hat x|}\right)\rho(\frac{\hat x}{r}) d\mu^\e-\frac{n}{r}\int_{C_r}\left(I-\nu\otimes\nu\right):\left(\frac{\hat x}{|\hat x|}\otimes\frac{\hat x}{|\hat x|}\right)\rho(\frac{\hat x}{r}) d\mu^\e \right|\\\nonumber
&\leq \left(\int_{C_r}\e\left(\Delta u-\frac{W'(u)}{\e^2}\right)^2 dx\right)\cdot \left(\int_{C_r}(1-\nu_{n+1}^2) \e|\nabla u|^2 dx\right)+\left|\int_{C_r}\nu\otimes\nu:Dg d\xi_\e\right|\\\nonumber
&+\left|\int_{C_r}\left(I-\nu\otimes\nu\right):\hat\delta_{ij}\frac{1}{r}\rho(\frac{\hat x}{r}) d\mu^\e-\frac{n}{r}\int_{C_r}\left(I-\nu\otimes\nu\right):\left(\frac{\hat x}{|\hat x|}\otimes\frac{\hat x}{|\hat x|}\right)\rho(\frac{\hat x}{r}) d\mu^\e\right|.
\end{align}
An application of proof of \cite[(11.18)]{Kasai2014} (with $T=I-\nu\otimes\nu, S=\hat\delta_{ij}$) gives us the estimate on the last term of the last line
\begin{align*}
\left|\int_{C_r}\left(I-\nu\otimes\nu\right):\hat\delta_{ij}\frac{1}{r}\rho(\frac{\hat x}{r}) d\mu^\e-\frac{n}{r}\int_{C_r}\left(I-\nu\otimes\nu\right):\left(\frac{\hat x}{|\hat x|}\otimes\frac{\hat x}{|\hat x|}\right)\rho(\frac{\hat x}{r}) d\mu^\e\right| \\
\leq n\left|\int_{C_r}(1-\nu_{n+1}^2)\frac{1}{r}\rho(\frac{\hat x}{r}) d\mu^\e\right|.
\end{align*}
Hence we have 
\begin{align}
&\left|\frac{d}{dr}\int_{C_r}\left(I-\nu\otimes\nu\right):\left(\frac{\hat x}{|\hat x|}\otimes\frac{\hat x}{|\hat x|}\right)\rho(\frac{\hat x}{r}) d\mu^\e-\frac{n}{r}\int_{C_r}\left(I-\nu\otimes\nu\right):\left(\frac{\hat x}{|\hat x|}\otimes\frac{\hat x}{|\hat x|}\right)\rho(\frac{\hat x}{r}) d\mu^\e \right|\\\nonumber
&\leq \left(\int_{C_r}\e\left(\Delta u-\frac{W'(u)}{\e^2}\right)^2 dx\right)\cdot \left(\int_{C_r}(1-\nu_{n+1}^2)\e |\nabla u|^2 dx\right)+\int_{C_r}\nu\otimes\nu:Dg d\xi_\e\\\nonumber
&+n\left|\int_{C_r}(1-\nu_{n+1}^2)\frac{1}{r}\rho(\frac{\hat x}{r}) d\mu^\e\right|\\\nonumber
&\leq \left(\int_{C_r}\e\left(\Delta u-\frac{W'(u)}{\e^2}\right)^2 dx\right)\cdot \left(\int_{C_r}\e(1-\nu_{n+1}^2)|\nabla u|^2 dx\right)+\int_{C_r}\frac{\nabla u}{|\nabla u|}\otimes\frac{\nabla u}{|\nabla u|}:Dg d\xi_\e\\\nonumber
&+n\left|\int_{C_r}(1-\nu_{n+1}^2)\frac{1}{r}\rho(\frac{\hat x}{r}) \e|\nabla u|^2 dx\right|+\frac{1}{2}\left|\int_{C_r}(1-\nu_{n+1}^2)\frac{1}{r}\rho(\frac{\hat x}{r}) d\xi_\e\right|\\\nonumber
&\leq C_1\int d\xi_\e+C_2\left(\int_{C_r}\e\left(\Delta u-\frac{W'(u)}{\e^2}\right)^2 dx\right)\cdot \left(\int_{C_r}(1-\nu_{n+1}^2)\e |\nabla u|^2 dx\right)\\\nonumber
&+C_3\left(\int_{C_r}(1-\nu_{n+1}^2)\e|\nabla u|^2 dx\right).
\end{align}
Dividing \eqref{CylindricalGrowth} by $r^{n}$ and integrating with respect to $r$ from $R_1$ to $R_2$, we obtain the cylindrical growth estimate
\begin{align}
\left|\frac{\int_{B_{R_2}} d\mu^\e}{R_2^{n}}-\frac{\int_{B_{R_1}} d\mu^\e}{R_1^{n}}\right|\leq C_1 E(B_{R_2})+C_2\sqrt{E(B_{R_2})\mathcal W(B_{R_2})}+ C_3|\xi|(B_{R_2}), \forall r\in[0,2].
\end{align}
\end{proof}
\subsection{Energy inequality}
Fix any $\phi\in C^\infty([0,\infty),[0,1])$ such that 
\begin{align*}
\phi(x)
\begin{cases}
=1, \text{for $0\leq x\leq (\frac{2}{3})^\frac{1}{n}$},\\
>0, \text{for $0\leq x\leq (\frac{5}{6})^\frac{1}{n}$},\\
=0, \text{for $x\geq (\frac{5}{6})^\frac{1}{n}$}.
\end{cases}
\end{align*}
we denote by 
\begin{align*}
\phi_{T,R}(x)=\phi(R^{-1}|T(x)|), \phi_{T}(x)=\phi_{T,1}(x)=\phi(|T(x)|)
\end{align*}
for any $x\in\mathbb R^{n+1}$, $R>0$ and $T\in\mathbf G(n+1,n)$ in the Grassmannian. By this choice of cutoff function, we have
\begin{align*}
\frac{2}{3}\omega_n<\int_T\phi_T^2d\mathcal H^n<\frac{5}{6}\omega_n
\end{align*}
Without loss of generality we suppose $T=\{x_{n+1}=0\}$ here. Note that this means that $\phi_T$ depends only on the first $n$-variables $\hat x=(x_1, \cdots, x_n)$. Substituting in $\varphi=\phi_T^2$ into the Allen--Cahn Brakke identity \eqref{eqn_BrakkeAllenCahn1} and using Young's inequality and Corollary \ref{TimeSliceCaccioppoli}, we have
\begin{align}\label{DerivativeEnergy}
&\frac{d}{dt}\int\phi_T^2\left(\frac{\e |\nabla u|^2}{2}+ \frac{W(u)}{\e}\right) dx\\ \nonumber
&=-\int\e\phi_T^2\left(\Delta u-\frac{W'(u)}{\e^2}\right)^2dx-\int 2\e \phi_T\nabla\phi_T\cdot \nabla u\left(\Delta u-\frac{W'(u)}{\e^2}\right) dx\\ \nonumber
&\leq -\int\e\phi_T^2\left(\Delta u-\frac{W'(u)}{\e^2}\right)^2dx+\frac{1}{4}\int\e\phi_T^2\left(\Delta u-\frac{W'(u)}{\e^2}\right)^2dx\\ \nonumber
&+4\int|\nabla\phi_T|^2(1-\nu_{n+1}^2)\e|\nabla u|^2dx\\ \nonumber
&=-\frac{3}{4}\int\e\phi_T^2\left(\Delta u-\frac{W'(u)}{\e^2}\right)^2dx+4\int|\nabla\phi_T|^2(1-\nu_{n+1}^2)\e|\nabla u|^2dx\\\nonumber
&\leq -\frac{3}{4}\int\e\phi_T^2\left(\Delta u-\frac{W'(u)}{\e^2}\right)^2dx+C_1\int_{B_1}x_{n+1}^2\e|\nabla u|^2dx\\ \nonumber
&+C_2\left(\int_{B_1}\e \left(\Delta u-\frac{W'(u)}{\e^2}\right)^2dx\right)^\frac{1}{2}\left(\int_{B_1}x_{n+1}^2\e|\nabla u|^2dx\right)^\frac{1}{2} \text{(by Corollary \ref{TimeSliceCaccioppoli})}
\\ \nonumber
&\leq -\frac{3}{4}\int\e\phi_T^2\left(\Delta u-\frac{W'(u)}{\e^2}\right)^2dx+C_1\int_{B_1}x_{n+1}^2\e|\nabla u|^2dx\\\nonumber
&+\frac{1}{2}\int\e\phi_T^2\left(\Delta u-\frac{W'(u)}{\e^2}\right)^2dx+2C_2^2\int_{B_1}x_{n+1}^2\e|\nabla u|^2dx
\\\nonumber
&\leq -\frac{1}{4}\int_{B_1}\e\left(\Delta u-\frac{W'(u)}{\e^2}\right)^2dx+C_0\int_{B_1}x_{n+1}^2\e|\nabla u|^2dx.
\end{align}
Next, with the help of Caccioppoli inequality Theorem \ref{Caccioppoli} and Sobolev-type inequality Proposition \ref{InteriorSobolev} from the previous subsection, we have
\begin{lemma}[{cf \cite[Proposition 5.2]{Kasai2014}}]\label{lem_AreaWillmoreHeightBound}
For $n>2$ and under the assumptions of Theorem \ref{thm_ExcessDecay}, there exist constants $\mathbb H_1,E_1,\mathcal W_1,D_1>0$ such that the following holds.
If
\begin{align*}
\left|\mu(\phi_T^2)-\alpha\int_T\phi_T^2d\mathcal H^n\right|&=\left|\int\phi_T^2\left(\frac{\e |\nabla u|^2}{2}+ \frac{W(u)}{\e}\right) dx-\alpha\int_T\phi_T^2d\mathcal H^n\right|\leq \frac{1}{8}\alpha\int_T\phi_T^2d\mathcal H^n,\\
\mathcal W(B_1)&=\int_{B_1}\e\left(\Delta u-\frac{W(u)}{\e^2}\right)^2 dx\leq \mathcal W_1,\\
\mathbb H(B_1)&=\int_{B_1}\left|T^\perp(x)\right|^2\e|\nabla u|^2dx\leq \mathbb H_1,
\end{align*}
then
\begin{align*}
\left|\mu(\phi_T^2)-\alpha\int_T\phi_T^2d\mathcal H^n\right|\leq D_1\left(\mathcal{W}(B_1)^\frac{n}{n-2}+ \mathbb H(B_1)+\mathcal{W}(B_1)^\frac{3}{4}\mathbb H(B_1)^\frac{1}{4}\right).
\end{align*}
\end{lemma}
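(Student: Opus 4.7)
The plan is to combine three ingredients already established in the paper: the Caccioppoli inequality (Corollary \ref{TimeSliceCaccioppoli}), the interior Sobolev-type inequality (Proposition \ref{InteriorSobolev}), and the cylindrical growth estimate (Lemma \ref{CylindricalGrowthLem}), and then absorb the resulting cross terms by Young's inequality. The smallness hypotheses on $\mathcal{W}(B_1)$ and $\mathbb{H}(B_1)$ together with the closeness of $\mu(\phi_T^2)$ to $\alpha\int_T\phi_T^2$ ensure that all quantities stay in the regime where these inequalities can be iterated without losing control.

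The first step is to apply Corollary \ref{TimeSliceCaccioppoli} (to the time slice) on balls contained in the support of $\phi_T^2$ to get
\begin{align*}
E + |\xi| \leq C_1\,\mathbb{H}(B_1) + C_2\sqrt{\mathbb{H}(B_1)\,\mathcal{W}(B_1)},
\end{align*}
so that the tilt-excess and the discrepancy are absorbed into the height excess and Willmore term. The second step is to use Proposition \ref{InteriorSobolev} on a ball, producing the base estimate
\begin{align*}
\left|\mu^\e(B_r) - \alpha\omega_n r^n\right| \leq C\left(E + |\xi| + \sqrt{E\,\mathcal{W}} + \mathcal{W}^{n/(n-2)}\right)
\end{align*}
after rescaling. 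To pass from this ball-based comparison to the test-function formulation of the lemma, I would use a layer-cake decomposition: since $\phi_T$ depends only on $|T(x)|$, the superlevel sets $\{\phi_T^2 > s\}$ are cylinders $C(T,r(s))$, so that
\begin{align*}
\mu^\e(\phi_T^2) - \alpha\int_T\phi_T^2\,d\mathcal{H}^n = \int_0^1\left(\mu^\e(C(T,r(s))) - \alpha\omega_n r(s)^n\right) ds.
\end{align*}
The cylindrical growth estimate of Lemma \ref{CylindricalGrowthLem} then transfers the ball-based bound from Proposition \ref{InteriorSobolev} to each cylindrical slice, while the exponential decay of Lemma \ref{ExponentialDecay} confines the energy to a tubular neighbourhood of the nodal set, so the vertical non-compactness of the cutoff $\phi_T$ contributes only $o(\e^2)$ errors.

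Substituting the Caccioppoli bounds into the result produces the terms $\mathbb{H}$, $\mathcal{W}^{n/(n-2)}$, $\mathcal{W}^{3/4}\mathbb{H}^{1/4}$, and a leftover $\sqrt{\mathbb{H}\mathcal{W}} = \mathcal{W}^{1/2}\mathbb{H}^{1/2}$. To match the statement of the lemma, I would absorb this last term by the Young interpolation
\begin{align*}
\mathcal{W}^{1/2}\mathbb{H}^{1/2} = \left(\mathcal{W}^{3/4}\mathbb{H}^{1/4}\right)^{2/3}\mathbb{H}^{1/3} \leq \tfrac{2}{3}\,\mathcal{W}^{3/4}\mathbb{H}^{1/4} + \tfrac{1}{3}\,\mathbb{H},
\end{align*}
which eliminates the mixed term at the cost of adjusting the constants in front of $\mathbb{H}$ and $\mathcal{W}^{3/4}\mathbb{H}^{1/4}$. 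The main obstacle I anticipate is the bookkeeping in the layer-cake/cylinder step: the natural estimates are for balls, and transferring them to cylinders via Lemma \ref{CylindricalGrowthLem} generates additional $\sqrt{E\mathcal{W}}$-type cross terms that must themselves be re-absorbed using the Caccioppoli bound; making sure that at every step the smallness constants $\mathcal{W}_1$, $\mathbb{H}_1$ are chosen compatibly so that the absorption closes (rather than degenerating into a circular bound) is the delicate part of the argument.
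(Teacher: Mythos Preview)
Your proposal is correct and follows essentially the same approach as the paper: combine the interior Sobolev inequality (Proposition \ref{InteriorSobolev}) with the cylindrical growth estimate (Lemma \ref{CylindricalGrowthLem}) to control the energy difference, then substitute the Caccioppoli bound (Corollary \ref{TimeSliceCaccioppoli}) for $E+|\xi|$ and absorb $\mathcal{W}^{1/2}\mathbb{H}^{1/2}$ via Young's inequality. Your layer-cake step to pass from balls to the test function $\phi_T^2$ makes explicit a transition the paper's proof leaves implicit, but otherwise the logical structure and the key estimates are identical.
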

\begin{remark}
Notice that under the assumption of Theorem \ref{thm_ExcessDecay}, we are only considering situations with almost unit density. 
\end{remark}
\begin{proof}
Using Lemma \ref{CylindricalGrowthLem}, we can extend the interior Sobolev inequality in Proposition \ref{InteriorSobolev} to a global one. Namely, in the ball of radius $2$, we have
\begin{align}\label{EnergyDifference2}
\left|\frac{\mu(B_2(x))}{2^n}-\alpha\omega_n2^n\right|\leq C\left(E(B_2(x))+\sqrt{E(B_2(x))\mathcal{W}(B_2(x))}+ |\xi|(B_2(x))+\mathcal{W}(B_2(x))^\frac{n}{n-2}\right).
\end{align}
Next we use the Caccioppoli inequality Theorem \ref{Caccioppoli} and the technical assumption \ref{TechnicalAssumption} to bound the tilt excess by the height excess.
\begin{align*}
E(B_2(x))+|\xi|(B_2(x))&\leq O\left( \mathbb H(B_2(x))+ \sqrt{ \mathcal{W}(B_2(x)) \mathbb H(B_2(x))}\right)+o(\e^2)\\
&\leq O\left(\mathbb H(B_2(x))+\sqrt{ \mathcal{W}(B_2(x)) \mathbb H(B_2(x))}\right).
\end{align*}
Substituting this into \eqref{EnergyDifference2}, we have
\begin{align*}
&\left|\frac{\mu(B_2(x))}{2^n}-\alpha\omega_n2^n\right|\\
&\leq O\left(E(B_2(x))+\sqrt{E(B_2(x))\mathcal{W}(B_2(x))}+ |\xi|(B_2(x))+\mathcal{W}(B_2(x))^\frac{n}{n-2}\right)\\
&\leq O\left(\mathbb H(B_2(x))+\sqrt{\mathbb H(B_2(x))\mathcal{W}(B_2(x))}+\sqrt{\mathcal{W}(B_2(x))\cdot (\mathbb H(B_2(x))+\sqrt{ \mathcal{W}(B_2(x)) \mathbb H(B_2(x))})}\right)\\
&+O\left(\mathcal{W}(B_2(x))^\frac{n}{n-2}\right)\\
&\leq O\left(\mathbb H(B_2(x))+ \mathbb H(B_2(x))^\frac{1}{2}\mathcal W(B_2(x))^\frac{1}{2}+\mathbb H(B_2(x))^\frac{1}{4}\mathcal W(B_2(x))^\frac{3}{4}+\mathcal{W}(B_2(x))^\frac{n}{n-2}\right)\\
&\leq O\left(\mathbb H(B_2(x))+\mathbb H(B_2(x))^\frac{1}{4}\mathcal W(B_2(x))^\frac{3}{4}+\mathcal{W}(B_2(x))^\frac{n}{n-2}\right)
\end{align*}
 where the second last line follows since both the $\mathbb H$ and $\mathcal W$ terms are positive and the last line follows by Young's inequality.
\end{proof}
As a corollary, we see that the height excess gives a lower bound for the Willmore type term.
\begin{corollary}\label{BoundOnW}
For the same $\mathbb H_1, W_1, D_1$ as in the previous lemma, Lemma \ref{lem_AreaWillmoreHeightBound}, if
\begin{align*}
2D_1\mathbb H^2(B_1)&\leq \left|\mu(\phi_T^2)-\alpha\int_T\phi_T^2d\mathcal H^n\right|\leq \alpha\omega_n,
\end{align*}
then
\begin{align*}
\mathcal{W}(B_1)\geq \min\left\{(4D_1)^{-\frac{n-2}{n}}\hat\mu(B_1)^{\frac{n-2}{n}}, (4D_1)^{-\frac{4}{3}}\mathbb H^{-\frac{3}{2}}\hat\mu(B_1)^{\frac{4}{3}}\right\}
\end{align*}
where $\hat\mu(B_1):=\left|\mu(B_1)-\alpha\int_T\phi_T^2d\mathcal H^n\right|$.
\end{corollary}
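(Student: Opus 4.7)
The strategy is a one-line algebraic manipulation starting from Lemma \ref{lem_AreaWillmoreHeightBound}. First, observe that the upper bound $\hat\mu(B_1)\leq\alpha\omega_n$ in the hypothesis places us squarely in the regime where Lemma \ref{lem_AreaWillmoreHeightBound} applies (the $\tfrac18\alpha\int_T\phi_T^2\,d\mathcal H^n$ smallness condition), so we may invoke
\begin{align*}
\hat\mu(B_1)\leq D_1\left(\mathcal{W}(B_1)^{\frac{n}{n-2}}+\mathbb H(B_1)+\mathcal{W}(B_1)^{\frac{3}{4}}\mathbb H(B_1)^{\frac{1}{4}}\right).
\end{align*}

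Next, I would use the hypothesis controlling $\mathbb H$ by $\hat\mu$ to absorb the middle term on the right into the left: the assumption forces $D_1\mathbb H(B_1)\leq\tfrac12\hat\mu(B_1)$, so subtracting gives
\begin{align*}
\tfrac12\hat\mu(B_1)\leq D_1\,\mathcal{W}(B_1)^{\frac{n}{n-2}}+D_1\,\mathcal{W}(B_1)^{\frac{3}{4}}\mathbb H(B_1)^{\frac{1}{4}}.
\end{align*}
Now split into two cases according to which of the two surviving terms carries at least half of the right-hand side. In the first case, $D_1\mathcal{W}^{\frac{n}{n-2}}\geq\tfrac14\hat\mu$, which, after rearrangement, yields the first alternative $\mathcal{W}\geq(4D_1)^{-\frac{n-2}{n}}\hat\mu^{\frac{n-2}{n}}$. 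In the second case, $D_1\mathcal{W}^{\frac34}\mathbb H^{\frac14}\geq\tfrac14\hat\mu$; raising to the $\tfrac43$ power and isolating $\mathcal W$ produces the second alternative with the claimed (negative) power of $\mathbb H$ and the $\tfrac43$ power of $\hat\mu$. Taking the minimum of the two gives the stated lower bound.

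There is essentially no obstacle here: the only delicate point is bookkeeping the exponents when raising the mixed term to the power $\tfrac43$, and confirming that the upper bound $\hat\mu(B_1)\leq\alpha\omega_n$ indeed places us within the hypothesis range of Lemma \ref{lem_AreaWillmoreHeightBound}. The rest is a splitting-of-cases argument that does not require any further analytic input.
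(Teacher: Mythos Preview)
Your proposal is correct and follows essentially the same route as the paper: invoke Lemma \ref{lem_AreaWillmoreHeightBound}, use the lower hypothesis to absorb the $D_1\mathbb H$ term into the left-hand side, and then observe that one of the two surviving terms must carry at least a fixed fraction of $\hat\mu$ (the paper writes this as $\hat\mu\leq 4D_1\max\{\mathcal W^{n/(n-2)},\mathcal W^{3/4}\mathbb H^{1/4}\}$ rather than an explicit case split, but the content is identical). One small correction: the upper bound $\hat\mu\leq\alpha\omega_n$ does \emph{not} imply the $\tfrac18\alpha\int_T\phi_T^2\,d\mathcal H^n$ smallness hypothesis of Lemma \ref{lem_AreaWillmoreHeightBound}, since $\alpha\omega_n$ is strictly larger; the paper glosses over this point as well, and in the actual application (Theorem \ref{ParabolicCaccioppoli}) the full hypotheses of the lemma are already in force.
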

\begin{proof}
By the assumption $2D_1\mathbb H^2(B_1)\leq \left|\mu(\phi_T^2)-\alpha\int_T\phi_T^2d\mathcal H^n\right|$ and the Lemma \ref{lem_AreaWillmoreHeightBound}, we have
\begin{align*}
\hat\mu(B_1)&\leq D_1(\mathcal W(B_1)^\frac{n}{n-2}+\mathbb H(B_1)+\mathcal W(B_1)^\frac{3}{4}\mathbb H^\frac{1}{4})\\
&\leq 2D_1(\mathcal W(B_1)^\frac{n}{n-2}+\mathcal W(B_1)^\frac{3}{4}\mathbb H^\frac{1}{4})\\
&\leq 4D_1\max\{\mathcal W(B_1)^\frac{n}{n-2},\mathcal W(B_1)^\frac{3}{4}\mathbb H^\frac{1}{4}\}.
\end{align*}
And thus the conclusion follows by taking appropriate powers.
\end{proof}
Now we can substitute Corollary \ref{BoundOnW} into the energy inequality \eqref{DerivativeEnergy} and have the following space-time Caccioppoli inequality. This theorem bounds the energy difference between $u$ and the flat solution by the height excess. Compared to Allard's regularity, this corresponds to the control of the tilt excess in terms of the height excess which is a reverse Poincar\'e (or Caccioppoli inequality). This theorem shows the deviation of the Allen--Cahn energy (or diffuse area) of the moving generalized varifold from a flat plane may be controlled by the Allen--Cahn tilt excess. In terms of Allard regularity, this is a Caccioppoli type inequality where the tilt excess is bounded by the height excess. Compared to the Brakke flow, this is an energy type inequality and corresponds to \cite[Section 6.6 Popping Soap Films]{brakke2015motion}. Here we only have the $\e$-Brakke Allen--Cahn equation which controls the rate of change of the Allen--Cahn energy in time. However, there is a Willmore type term that corresponds to the square of the diffuse mean curvature. This means that if we are far from a $k$-plane then the diffuse mean curvature is large, then we have additional decay of the energy difference. 

\begin{theorem}[c.f. {\cite[Theorem 5.7]{Kasai2014}}]\label{ParabolicCaccioppoli}
Given a $\nu>0$ then there exists $\e_0,\e_1, K, \Lambda>0$ with the following properties: if $u^\e$ is a solution to the \eqref{eqn_ACFe} with $\e\leq \e_0$
\begin{align}\label{ParabolicCaccioppoliHeightCondition}
u^\e(0,0)&=0,\\\nonumber
\sup_{0\leq t\leq 2\Lambda+3}\int_{C_1}x_{n+1}^2\e|\nabla u^\e(\cdot,t)|^2 dx&\leq \e_1,
\end{align}
and there exists a $\hat t_1 \in [0,1)$ such that
\begin{align}\label{eqn_SingleLayer}
\mu_{\hat t_1}(\phi_T^2) \leq (2-\nu) \alpha\int_T\phi_T^2d\mathcal H^n.
\end{align}
and there exists a $\hat t_2\in [2\Lambda+2,2\Lambda+3]$ such that
\begin{align}\label{eqn_FinalMass}
\mu_{\hat t_2}(\phi_T^2)\geq \nu \alpha\int_T\phi_T^2d\mathcal H^n
\end{align}
then we have
\begin{align}\label{eqn_EnergyBound}
\sup_{1+\Lambda\leq t\leq 2+\Lambda}\left|\mu(\phi_T^2)-\alpha\int_T\phi_T^2d\mathcal H^n\right|\leq K\sup_{0\leq t\leq 2\Lambda+3}\int_{C_1}x_{n+1}^2\e|\nabla u^\e(\cdot,t)|^2 dx,
\end{align}
and 
\begin{align}\label{eqn_WillmoreBound}
\int_{1+\Lambda}^{2+\Lambda}\int_{C_1}\e\phi_T^2\left(\Delta u^\e-\frac{W'(u^\e)}{\e^2}\right)^2 dxdt\leq 12K\sup_{0\leq t\leq 2\Lambda+3}\int_{C_1}x_{n+1}^2\e|\nabla u^\e(\cdot,t)|^2 dx.
\end{align}
\end{theorem}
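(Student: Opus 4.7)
The plan is to couple the energy inequality \eqref{DerivativeEnergy} with the reverse Sobolev-type inequality of Corollary \ref{BoundOnW} to derive a sub-linear ODE for the mass deviation $E(t) := \mu_t(\phi_T^2) - \alpha\int_T \phi_T^2\,d\mathcal H^n$; its finite extinction time, combined with the constraints at $\hat t_1$ and $\hat t_2$, will pin $|E(t)|$ to $O(\mathbb H)$ on the central slab $[1+\Lambda, 2+\Lambda]$. Writing $\mathbb H := \sup_{0 \le t \le 2\Lambda+3} \int_{C_1} x_{n+1}^2 \e|\nabla u^\e|^2\, dx$ and $\mathcal W(t) := \int \e\phi_T^2 (\Delta u - W'(u)/\e^2)^2\, dx$, the energy inequality reads $E'(t) \le -\tfrac14 \mathcal W(t) + C_0 \mathbb H$. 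Whenever $2D_1\mathbb H^2 \le |E(t)| \le \tfrac18 \alpha \int_T \phi_T^2\, d\mathcal H^n$, Corollary \ref{BoundOnW} furnishes
\[
\mathcal W(t) \;\ge\; c_1 \min\!\bigl\{|E(t)|^{\frac{n-2}{n}},\; |E(t)|^{4/3} \mathbb H^{-1/2}\bigr\}.
\]

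For the upper bound $E(t) \le K\mathbb H$ I begin from $E(\hat t_1) \le (1-\nu)\alpha \int \phi_T^2$ and track the shifted quantity $\widetilde E(t) := E(t) - 2C_0(t - \hat t_1)\mathbb H$; inside the Lemma's regime, combining $\widetilde E'(t) \le -\tfrac14 \mathcal W(t)$ with the Corollary's lower bound gives $\widetilde E'(t) \le -\tfrac{c_1}{4} \widetilde E(t)^{(n-2)/n}$, whose solution has finite extinction time $\Lambda_1 = \Lambda_1(n,\alpha,\nu,c_1)$ independent of $\e$. If initially $E > \tfrac18\alpha\int\phi_T^2$ lies outside the Lemma's regime, the a priori total Willmore bound $\int_{\hat t_1}^{\hat t_2} \mathcal W \le 4(\mu_{\hat t_1} - \mu_{\hat t_2}) + 4 C_0 \mathbb H(\hat t_2 - \hat t_1)$ obtained by integrating \eqref{DerivativeEnergy} forces $E$ to enter the Lemma's regime within bounded time. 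For the matching lower bound $E(t) \ge -K\mathbb H$ I argue by contradiction: if $f(t^*) := -E(t^*) > K\mathbb H$ at some $t^* \in [1+\Lambda, 2+\Lambda]$, the same two-sided inequality yields $f'(t) \ge \tfrac{c_1}{4} f(t)^{(n-2)/n} - C_0 \mathbb H$, and since $\tfrac{n-2}{n}<1$ this ODE forces $f$ to grow past $(1-\nu)\alpha \int_T \phi_T^2\,d\mathcal H^n$ within a time horizon $\Lambda_2(n,\alpha,\nu,c_1)$ independent of $\mathbb H$, contradicting $\mu_{\hat t_2}(\phi_T^2) \ge \nu\alpha \int \phi_T^2$; picking $\Lambda \ge \max\{\Lambda_1,\Lambda_2\}$ ensures both procedures complete within the allotted time.

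Once \eqref{eqn_EnergyBound} is established, \eqref{eqn_WillmoreBound} is immediate by integrating $E'(t) \le -\tfrac14 \mathcal W(t) + C_0\mathbb H$ over $[1+\Lambda, 2+\Lambda]$:
\[
\int_{1+\Lambda}^{2+\Lambda} \mathcal W(t)\,dt \;\le\; 4\bigl(\mu_{1+\Lambda}(\phi_T^2) - \mu_{2+\Lambda}(\phi_T^2)\bigr) + 4 C_0 \mathbb H \;\le\; 8K\mathbb H + 4C_0\mathbb H,
\]
and absorbing the constant yields the factor $12K$. The \emph{main obstacle} is the lower bound: the energy inequality only controls the \emph{decrease} of mass, so a naive forward-in-time propagation of lower bounds from $\hat t_2$ cannot work. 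What rescues the argument is the two-sided nature of Lemma \ref{lem_AreaWillmoreHeightBound}, whereby the very dissipative mechanism that extinguishes excess mass also drives any deficit $f=-E$ to blow up super-linearly; the nontriviality assumption $\mu_{\hat t_2}(\phi_T^2) \ge \nu\alpha \int \phi_T^2$ then furnishes the obstruction that caps this blow-up and pins $\mu_t$ from below. Calibrating the extinction and blow-up times to fit inside $[\hat t_1, \hat t_2]$, while handling the dichotomy between ``inside'' and ``outside'' the Lemma's regime via the a priori Willmore bound, is where the delicate bookkeeping of $(\Lambda, K, \e_1)$ enters.
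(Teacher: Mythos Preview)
Your approach matches the paper's: shift $E(t)$ by a linear-in-$t$ multiple of $\mathbb H$ to obtain a monotone quantity (the paper's $e(t)=E(t)-K_2 t\,\mathbb H$), feed Corollary \ref{BoundOnW} into the energy inequality \eqref{DerivativeEnergy} to get a sublinear ODE, and exploit its finite extinction time forward from $\hat t_1$ for the upper bound and forward from $t^*$ toward $\hat t_2$ (by contradiction) for the lower bound; the paper delegates the ODE step itself to Lemma 5.5 of \cite{Kasai2014}, and your derivation of \eqref{eqn_WillmoreBound} by integrating \eqref{DerivativeEnergy} over $[1+\Lambda,2+\Lambda]$ is exactly what the paper does.

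The one misstep is your treatment of the case $E>\tfrac18\alpha\int_T\phi_T^2$: an \emph{a priori} upper bound on $\int_{\hat t_1}^{\hat t_2}\mathcal W$ cannot drive $E$ downward into the Lemma's regime --- an integral ceiling on the dissipation says nothing about pointwise decay of $E$. The correct dichotomy, which the paper encodes as the constant ``$1$'' in its lower bound $\mathcal W \ge 4P\min\{1,\,e(t)^{(n-2)/n},\,\mathbb H^{-2/3}e(t)^{4/3}\}$, is that either $\mathcal W(t)\le\mathcal W_1$ so Lemma \ref{lem_AreaWillmoreHeightBound} applies and yields the power bound, or $\mathcal W(t)>\mathcal W_1$, in which case $E'(t)\le -\tfrac14\mathcal W_1 + C_0\mathbb H$ already forces $E$ downward at a fixed rate (for $\e_1$ small). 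With that substitution your sketch is complete and coincides with the paper's argument.
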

\begin{remark}
This energy estimate can be viewed as a parabolic version of the Caccioppoli type inequality, which will be used in the proof of excess decay Theorem \ref{thm_ExcessDecay} in Section \ref{sec_ExcessDecay}. The energy bound conditions \eqref{eqn_SingleLayer} and \eqref{eqn_FinalMass} will be satisfied for almost unit density solutions of Allen-Cahn which is close enough in measure to a unit density Brakke flow when $\e$ is small enough.
\end{remark}
\begin{proof}
For this proof, we will drop the superscript $\e$ and write $ u=u^\e$. For $K_2$ a constant to be determined later, we define 
\begin{align*}
e(t):&=\int_{\mathbb R^{n+1}}\phi_T^2\left(\frac{\e|\nabla u(\cdot,t)|^2}{2}+ \frac{W(u(\cdot,t))}{\e}\right) dx-\alpha\int_T\phi_T^2d\mathcal H^n\\
&-K_2\cdot\left(\sup_{0\leq t\leq 2\Lambda+3}\int_{C_1}x_{n+1}^2\e|\nabla u(\cdot,t) |^2 dx\right)\cdot t.
\end{align*}
We will first prove the following inequality,  
\begin{claim}
\begin{align}\label{PushingEnergyDown}
e(t_2)-e(t_1)\leq -\frac{1}{4}\int_{t_1}^{t_2}\int_{\mathbb R^{n+1}}\phi_T^2\e\left(\Delta u-\frac{W'(u)}{\e^2}\right)^2 dxdt.
\end{align}
\end{claim}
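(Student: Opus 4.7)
The strategy is direct: differentiate $e(t)$ in time and invoke the pointwise-in-time energy inequality \eqref{DerivativeEnergy}, then choose $K_2$ large enough to absorb the height-excess error term using the supremum in the definition of $e(t)$.

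More precisely, since the term $\alpha\int_T \phi_T^2\,d\mathcal H^n$ is time-independent and the prefactor multiplying $t$ is a constant, differentiating gives
\begin{align*}
\frac{d}{dt}e(t)=\frac{d}{dt}\int \phi_T^2\,d\mu^\e_t - K_2\sup_{0\leq s\leq 2\Lambda+3}\int_{C_1}x_{n+1}^2\,\e|\nabla u(\cdot,s)|^2\,dx.
\end{align*}
Applying \eqref{DerivativeEnergy} with the test function $\phi_T^2$, which is supported in the cylinder $C_1$ (in the $\hat x$-variables) so that the error term there is bounded by the $C_1$-integral, we obtain
\begin{align*}
\frac{d}{dt}e(t)\leq -\tfrac{1}{4}\int_{\mathbb R^{n+1}}\phi_T^2\,\e\Bigl(\Delta u-\tfrac{W'(u)}{\e^2}\Bigr)^{\!2}dx+C_0\int_{C_1}x_{n+1}^2\,\e|\nabla u(\cdot,t)|^2\,dx - K_2\sup_{0\leq s\leq 2\Lambda+3}\int_{C_1}x_{n+1}^2\,\e|\nabla u(\cdot,s)|^2\,dx.
\end{align*}

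Now I would fix $K_2:=C_0$ (or any value $\geq C_0$ chosen so that later constants in the rest of the theorem work out). Since the instantaneous height integral is dominated by its supremum over $[0,2\Lambda+3]$, the last two terms combine to something $\leq 0$ for every $t$ in this interval, leaving
\begin{align*}
\frac{d}{dt}e(t)\leq -\tfrac{1}{4}\int_{\mathbb R^{n+1}}\phi_T^2\,\e\Bigl(\Delta u-\tfrac{W'(u)}{\e^2}\Bigr)^{\!2}dx.
\end{align*}
Integrating this from $t_1$ to $t_2$ yields the claim \eqref{PushingEnergyDown}.

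There is essentially no obstacle at this step: the work was already done in establishing \eqref{DerivativeEnergy} via the $\e$-Brakke identity together with Corollary \ref{TimeSliceCaccioppoli}. The only mild subtlety is that the height-excess error term in \eqref{DerivativeEnergy} appears as an integral over the (instantaneous) slice, while $e(t)$ subtracts the time-\emph{supremum}; this mismatch is precisely what forces the linear-in-$t$ correction term in the definition of $e(t)$ and is resolved by choosing $K_2\geq C_0$. The constant $K_2$ will then be inherited into the final constant $K$ appearing in \eqref{eqn_EnergyBound}--\eqref{eqn_WillmoreBound} in the subsequent steps of the proof of Theorem \ref{ParabolicCaccioppoli}.
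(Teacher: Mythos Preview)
Your proof is correct and follows essentially the same route as the paper. The only cosmetic difference is that the paper, rather than invoking \eqref{DerivativeEnergy} as a black box, re-derives the needed inequality inline by applying Theorem~\ref{Caccioppoli} directly to the tilt term $\int|\nabla\phi_T|^2(1-\nu_{n+1}^2)\e|\nabla u|^2$, which leads to the explicit choice $K_2=4C_1\sup|\nabla^2\phi_T|^2+4C_2^2\sup\frac{|\nabla\phi_T|^4}{\phi_T^2}$; your choice $K_2\geq C_0$ accomplishes the same absorption.
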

\begin{proof}[Proof of Claim]
We compute using the integrated form of the $\e$-Brakke identity \eqref{eqn_BrakkeAllenCahn1} for $0\leq t_1<t_2\leq 2\Lambda+3$,
\begin{align}\label{eqn_Difference}
&e(t_2)-e(t_1)\\\nonumber
&=\int_{\mathbb R^{n+1}}\phi_T^2\left(\frac{\e|\nabla u(\cdot,t_2)|^2}{2}+ \frac{W(u(\cdot,t_2))}{\e}\right) dx
-\int_{\mathbb R^{n+1}}\phi_T^2\left(\frac{\e|\nabla u(\cdot,t_1)|^2}{2}+ \frac{W(u(\cdot,t_1))}{\e}\right) dx\\\nonumber
&-K_2\cdot\left(\sup_{0\leq t\leq 2\Lambda+3}\int_{C_1}x_{n+1}^2\e|\nabla u(\cdot,t)|^2 dx\right)(t_2-t_1)\\\nonumber
&=-\int_{t_1}^{t_2}\int_{\mathbb R^{n+1}}\e\left(\Delta u-\frac{W'(u)}{\e^2}\right)^2\phi_T^2 dxdt+2\int_{t_1}^{t_2}\int_{\mathbb R^{n+1}}\e\phi_T\langle\nabla\phi_T,\nabla u\rangle\left(\Delta u-\frac{W'(u)}{\e^2}\right) dxdt\\\nonumber
&-K_2\cdot\left(\sup_{0\leq t\leq 2\Lambda+3}\int_{C_1}x_{n+1}^2\e|\nabla u(\cdot,t)|^2 dx\right)(t_2-t_1)\\\nonumber
&\leq -\int_{t_1}^{t_2}\int_{\mathbb R^{n+1}}\e \phi_T^2 \left(\Delta u-\frac{W'(u)}{\e^2}\right)^2 dxdt+\frac{1}{4}\int_{t_1}^{t_2}\int_{\mathbb R^{n+1}}\e\phi_T^2\left(\Delta u-\frac{W'(u)}{\e^2}\right)^2 dxdt\\\nonumber
&+4\int_{t_1}^{t_2}\int_{\mathbb R^{n+1}}|\nabla\phi_T|^2(1-\nu_{n+1}^2)\e |\nabla u|^2 dxdt-K_2\cdot\left(\sup_{0\leq t\leq 2\Lambda+3}\int_{C_1}x_{n+1}^2\e|\nabla u(\cdot,t)|^2 dx\right)(t_2-t_1)\\\nonumber
&\leq -\frac{3}{4}\int_{t_1}^{t_2}\int_{\mathbb R^{n+1}}\e \phi_T^2 \left(\Delta u-\frac{W'(u)}{\e^2}\right)^2 dxdt
+4\int_{t_1}^{t_2}\int_{\mathbb R^{n+1}}|\nabla\phi_T|^2(1-\nu_{n+1}^2)\e |\nabla u|^2 dxdt\\ \nonumber
&-K_2\cdot\left(\sup_{0\leq t\leq 2\Lambda+3}\int_{C_1}x_{n+1}^2\e|\nabla u(\cdot,t)|^2 dx\right)(t_2-t_1).
\end{align}
We then apply our a slight modification of our Caccioppoli inequality, Theorem \ref{Caccioppoli} to the second term of the last inequality to get
 \begin{align*}
 4\int_{t_1}^{t_2}&\int_{\mathbb R^{n+1}}|\nabla\phi_T|^2(1-\nu_{n+1}^2)\e |\nabla u|^2 dxdt\\
 &\leq 4C_1 \int_{t_1}^{t_2}\int_{\mathbb R^{n+1}}| \nabla |\nabla \phi_T||^2 x_{n+1}^2 \e |\nabla u|^2 dx dt\\
&+4 C_2 \int_{t_1}^{t_2}\left( \int_{\mathbb R^{n+1}}\e \phi_T^2 \left(\Delta u(\cdot,t)-\frac{W'(u(\cdot,t))}{\e^2}\right)^2  dx \cdot \int_{\mathbb R^{n+1}}\frac{|\nabla \phi_T|^4}{\phi_T^2}x_{n+1}^2 \e |\nabla u(\cdot,t)|^2 dx  \right)^{1/2}dt\\
 &\leq 4C_1 \int_{t_1}^{t_2}\int_{\mathbb R^{n+1}}| \nabla |\nabla \phi_T||^2 x_{n+1}^2 \e |\nabla u|^2 dx dt+\frac{1}{2}\int_{t_1}^{t_2}\int_{\mathbb R^{n+1}}\e \phi_T^2 \left(\Delta u-\frac{W'(u)}{\e^2}\right)^2  dxdt\\
&+4 C_2^2 \int_{t_1}^{t_2}\int_{\mathbb R^{n+1}}\frac{|\nabla \phi_T|^4}{\phi_T^2}x_{n+1}^2 \e |\nabla u|^2 dx dt\\
&\leq \left(4C_1 \sup |\nabla^2 \phi_T|^2+4C_2^2 \sup \frac{|\nabla \phi_T|^4}{\phi_T^2}\right)  \int_{C_1}x_{n+1}^2 \e |\nabla u|^2 dx dt\\
&+\frac{1}{2}\int_{t_1}^{t_2}\int_{\mathbb R^{n+1}}\e \phi_T^2 \left(\Delta u-\frac{W'(u)}{\e^2}\right)^2  dxdt.
\end{align*}
Inserting this inequality into \eqref{eqn_Difference}, we get 
\begin{align*}
&e(t_2)-e(t_1)\\ 
&\leq \left(4C_1 \sup |\nabla^2 \phi_T|^2+4C_2^2 \sup \frac{|\nabla \phi_T|^4}{\phi_T^2}\right)  \int_{C_1}x_{n+1}^2 \e |\nabla u|^2 dx dt\\
&-K_2\cdot\left(\sup_{0\leq t\leq 2\Lambda+3}\int_{C_1}x_{n+1}^2\e|\nabla u(\cdot,t)|^2 dx\right)(t_2-t_1)\\
&+\frac{1}{2}\int_{t_1}^{t_2}\int_{\mathbb R^{n+1}}\e \phi_T^2 \left(\Delta u-\frac{W'(u)}{\e^2}\right)^2  dxdt\\
&-\frac{3}{4}\int_{t_1}^{t_2}\int_{\mathbb R^{n+1}}\e \phi_T^2 \left(\Delta u-\frac{W'(u)}{\e^2}\right)^2 dxdt\\
 &\leq \left(4C_1 \sup |\nabla^2 \phi_T|^2+4C_2^2 \sup \frac{|\nabla \phi_T|^4}{\phi_T^2}-K_2\right)\left(\sup_{0\leq t\leq 2\Lambda+3}\int_{C_1}x_{n+1}^2\e|\nabla u(\cdot,t)|^2 dx\right)(t_2-t_1)\\  
&-\frac{1}{4}\int_{t_1}^{t_2}\int_{\mathbb R^{n+1}}\e \phi_T^2 \left(\Delta u-\frac{W'(u)}{\e^2}\right)^2  dxdt.
\end{align*}
Therefore, if we choose $K_2=4C_1 \sup |\nabla^2 \phi_T|^2+4C_2^2 \sup \frac{|\nabla \phi_T|^4}{\phi_T^2}$, we have \eqref{PushingEnergyDown}.
\end{proof}
By this claim and the single sheet hypothesis \eqref{eqn_SingleLayer}, we have for any $ t \in [\hat t_1, 2\Lambda+3]$,
\begin{align}\label{eqn_eUpperBound}
e(t)\leq e(\hat t_1) &=\mu_{\hat t_1}(\phi_T^2)-\alpha\int_T\phi_T^2d\mathcal H^n-K_2\hat t_1 \sup_{0\leq t\leq 2\Lambda+3}\int_{C_1}x_{n+1}^2\e|\nabla u(\cdot,t)|^2 dx\\
\nonumber&\leq (1-\nu)\mu_{\hat t_1}(\phi_T^2). 
\end{align} 
 This in turn gives us for all $t\in[\hat t_1,2 \Lambda+3]$
\begin{align*}
\mu_{t}(\phi_T^2)-\alpha\int_T\phi_T^2d\mathcal H^n \leq \left(1-\frac{\nu}{2}\right) \alpha\int_T\phi_T^2d\mathcal H^n.
\end{align*}
Furthermore, we can also conclude for $ t\leq \hat t_2$, we have
\begin{align}\label{eqn_eLowerBound}
e(t)&\geq e(\hat t_2)\\
\nonumber&=\mu_{\hat t_2}(\phi_T^2)-\alpha\int_T\phi_T^2d\mathcal H^n-K_2\hat t_2 \sup_{0\leq t\leq 2\Lambda+3}\int_{C_1}x_{n+1}^2\e|\nabla u(\cdot,t)|^2 dx\\
\nonumber&\geq \left(-1+\frac{\nu}{2}\right)\alpha\int_T\phi_T^2d\mathcal H^n,
\end{align}
where we chose $\e_1=\min \left\{\frac{1}{K_2(3+2\Lambda)}\frac{\nu}{4}\alpha\int_T\phi_T^2d\mathcal H^n, \frac{\nu}{4}\alpha\int_T\phi_T^2d\mathcal H^n\right\}$ in the conditions. This give us that for all $ t \in [0, \hat t_2]$ we get 
\begin{align*}
\mu_{t}(\phi_T^2)-\alpha\int_T\phi_T^2d\mathcal H^n\geq \left(-1+\frac{\nu}{2}\right) \alpha\int_T\phi_T^2d\mathcal H^n
\end{align*}
Now for the sake of contradiction, let us suppose the conclusion \eqref{eqn_EnergyBound} or \eqref{eqn_WillmoreBound} are false. If we assume \eqref{eqn_EnergyBound} is false, then for any $K >0$, there exists a $t_*\in[\Lambda+1,\Lambda+2]$ such that 
\begin{align*}
\left|\mu_{t_*}(\phi_T^2)-\alpha\int_T\phi_T^2d\mathcal H^n\right|> K\sup_{0\leq t\leq 2\Lambda+3}\int_{C_1}x_{n+1}^2\e|\nabla u(\cdot,t)|^2 dx
\end{align*}
This implies either
\begin{align}\label{eqn_first}
\mu_{t_*}(\phi_T^2)-\alpha\int_T\phi_T^2d\mathcal H^n> K\sup_{0\leq t\leq 2\Lambda+3}\int_{C_1}x_{n+1}^2\e|\nabla u(\cdot,t)|^2 dx
\end{align}
or
\begin{align}\label{eqn_second}
\mu_{t_*}(\phi_T^2)-\alpha\int_T\phi_T^2d\mathcal H^n<-K\sup_{0\leq t\leq 2\Lambda+3}\int_{C_1}x_{n+1}^2\e|\nabla u(\cdot,t)|^2 dx
\end{align}
Consider the case \eqref{eqn_first}. Then for any $t\in[0,t_*]$, we have
\begin{align*}
\mu_t&(\phi_T^2)-\alpha\int_T\phi_T^2d\mathcal H^n\\
 &\geq e(t) \geq e(t_*)\\
&=\int_{\mathbb R^{n+1}}\phi_T^2\left(\frac{\e|\nabla u(\cdot,t)|^2}{2}+ \frac{W(u(\cdot,t))}{\e}\right) dx-\alpha\int_T\phi_T^2d\mathcal H^n\\
&-K_2\cdot\left(\sup_{0\leq t\leq 2\Lambda+3}\int_{C_1}x_{n+1}^2\e|\nabla u(\cdot,t)|^2 dx\right)\cdot t_*\\
&>K\sup_{0\leq t\leq 2\Lambda+3}\int_{C_1}x_{n+1}^2\e|\nabla u(\cdot,t)|^2 dx-K_2\cdot\left(\sup_{0\leq t\leq 2\Lambda+3}\int_{C_1}x_{n+1}^2\e|\nabla u(\cdot,t)|^2 dx\right)(2\Lambda+3).
\end{align*}
So by choosing $K:=\max\{2,4D_1,2K_2(2\Lambda+3)\}$, we have
\begin{align}\label{FalseAssumption}
\mu(\phi_T^2)-\alpha\int_T\phi_T^2d\mathcal H^n&>\frac{K}{2}\sup_{0\leq t\leq 2\Lambda+3}\int_{C_1}x_{n+1}^2\e|\nabla u(\cdot,t)|^2 dx\\\nonumber
&>2D_1\sup_{0\leq t\leq 2\Lambda+3}\int_{C_1}x_{n+1}^2\e|\nabla u(\cdot,t)|^2 dx.
\end{align}
Thus the conditions in Corollary \ref{BoundOnW} are satisfied and we have
\begin{align*}
&\int_{C_1}\e\phi_T^2\left(\Delta u(\cdot,t)-\frac{W'(u(\cdot,t))}{\e^2}\right)^2 dx\\
&\geq 4P\min\left\{1,e(t)^\frac{n-2}{n},\left(\sup_{0\leq t\leq 2\Lambda+3}\int_{C_1}x_{n+1}^2\e|\nabla u(\cdot,t)|^2 dx\right)^{-\frac{2}{3}}e(t)^{\frac{4}{3}}\right\}.
\end{align*}
Here we chose $P:=\frac{1}{4\cdot2^\frac{4}{3}}\min\left\{\mathcal W_1,(4D_1)^{-\frac{(n-2)}{n}},(4D_1)^{-\frac{4}{3}}\right\}$, where $\mathcal W_1$ is from Lemma \ref{lem_AreaWillmoreHeightBound}. We will use this lower bound of the $L^2$ time derivative to show the energy mass will decrease at least a fixed amount after time $\Lambda$ to contradict the assumption \eqref{FalseAssumption}. This follows by applying Lemma 5.5 of \cite{Kasai2014} with $\Psi(t):=e(t)$ and $\Lambda$ chosen from this lemma. We have for $\Lambda\in[0,t_*]$
\begin{align*}
e(\Lambda)\leq 2D_1\sup_{0\leq t\leq 2\Lambda+3}\int_{C_1}x_{n+1}^2\e|\nabla u(\cdot,t)|^2 dx,
\end{align*}
which contradicts \eqref{FalseAssumption}. 

Now let us assume \eqref{eqn_second}. We then find
\begin{align*}
\mu_t(\phi_T^2)-\alpha\int_T\phi_T^2d\mathcal H^n& \leq e(t) \leq e(t_*)\\
&\leq \mu_{t^*}(\phi_T^2)-\alpha\int_T\phi_T^2d\mathcal H^n+K_2\cdot\left(\sup_{0\leq t\leq 2\Lambda+3}\int_{C_1}x_{n+1}^2\e|\nabla u(\cdot,t)|^2 dx\right)(2\Lambda+3)\\
&\leq \frac{K_2}{2}\left(\sup_{0\leq t\leq 2\Lambda+3}\int_{C_1}x_{n+1}^2\e|\nabla u(\cdot,t)|^2 dx\right)\\
&\leq -2 D_1 \left(\sup_{0\leq t\leq 2\Lambda+3}\int_{C_1}x_{n+1}^2\e|\nabla u(\cdot,t)|^2 dx\right).
\end{align*}
Therefore, we can apply Corollary \ref{BoundOnW}, and get for all $t\in [ t_*, 2 \Lambda+3]$, 
\begin{align*}
&\int_{C_1}\e\phi_T^2\left(\Delta u(\cdot,t)-\frac{W'(u(\cdot,t))}{\e^2}\right)^2 dx\\
& \geq 4 P \min\left\{1, \left(\mu_t(\phi_T^2)-\alpha\int_T\phi_T^2d\mathcal H^n\right)^{\frac{n-2}{n}}, \left(\sup_{0\leq t\leq 2\Lambda+3}\mathbb H_{u_{\cdot,t}}(C_1)\right)^{-\frac{2}{3}}\left(\mu_t(\phi_T^2)-\alpha\int_T\phi_T^2d\mathcal H^n\right)^{\frac{4}{3}} \right\}.
\end{align*}
Since we have 
\begin{align*} 
\left| \mu_{t^*}(\phi_T^2)-\alpha\int_T\phi_T^2d\mathcal H^n\right|\geq \sup_{0\leq t\leq 2\Lambda+3}\mathbb H_{u_{\cdot,t}}(C_1)=K\left(\sup_{0\leq t\leq 2\Lambda+3}\int_{C_1}x_{n+1}^2\e|\nabla u(\cdot,t)|^2 dx\right),
\end{align*}
we get 
\begin{align*}
e(t)&\leq \left| \mu_{t^*}(\phi_T^2)-\alpha\int_T\phi_T^2d\mathcal H^n\right|+K_2(2\Lambda+3) \left(\sup_{0\leq t\leq 2\Lambda+3}\int_{C_1}x_{n+1}^2\e|\nabla u(\cdot,t)|^2 dx\right)\\
&\leq 2 \left| \mu_{t^*}(\phi_T^2)-\alpha\int_T\phi_T^2d\mathcal H^n\right|. 
\end{align*}
This allows us to estimate
\begin{align*}
\int_{C_1}\e&\phi_T^2\left(\Delta u(\cdot,t)-\frac{W'(u(\cdot,t))}{\e^2}\right)dx\\ &\geq 4 P \min\left\{ 1, |e(t)|^{\frac{n-2}{n}}, \left(\sup_{0\leq t\leq 2\Lambda+3}\int_{C_1}x_{n+1}^2\e|\nabla u(\cdot,t)|^2 dx\right)^{-\frac{2}{3}}|e(t)|^{\frac{4}{3}}\right\}
\end{align*}
and therefore to apply Lemma 5.5 of \cite{Kasai2014} to get 
\begin{align*}
e(t_*+\Lambda) <-\alpha\omega_n<-\alpha\int_T\phi_T^2 d\mathcal H^n.
\end{align*}
But this contradicts the lower bound \eqref{eqn_eLowerBound}, which says that for $t^*+\Lambda\leq (\Lambda+2)+\Lambda\leq 2\Lambda+3$
\begin{align*}
e(t^*+\Lambda) \geq \left(-1+\frac{\nu}{2}\right)\alpha\int_T\phi_T^2d\mathcal H^n\geq -\alpha\int_T\phi_T^2 d\mathcal H^n.
\end{align*}
The contradiction argument for the second case \eqref{eqn_WillmoreBound} is similar to the first case.

The second conclusion on the Willmore term bound follows directly from \eqref{PushingEnergyDown} once we have the first conclusion on the energy difference.
\end{proof}
\section{Parabolic Lipschitz approximation}\label{sec_ParabolicLipschitz}
Through out this section, we assume all the conditions (\eqref{NonTrivialEnd}-\eqref{LayerRepulsion}) in the excess decay theorem, Theorem \ref{thm_ExcessDecay} hold. The argument here is the parabolic analogue of the Lipschitz approximation in Allard's regularity theory, the parabolic Lipschitz (with respect to the parabolic metric) refers to Lipschitz in space and $C^{0,\frac{1}{2}}$ in time. We use a maximal function argument for the space-time Lipschitz approximation. Consider the function $f^\e:\mathbb R^n\times\mathbb R\rightarrow\mathbb R$, given by
\begin{align*}
f^\e(\hat x,t)=\int_{-1}^1(1-\nu_{n+1}^2)\e|\nabla u^\e(\hat x,x_{n+1},t)|^2 dx_{n+1}.
\end{align*}
We define the Hardy-Littlewood parabolic maximal function 
\begin{align*}
Mf^\e(\hat x,t)&=\sup_{r\in(0,1)}r^{-n-2}\int_{t-r^2}^{t+r^2}\int_{B_r(\hat x)\cap\mathbb R^{n}}f^\e(\hat y,s) d\hat yds\\
&=\sup_{r\in(0,1)}r^{-n-2}\iint_{P_r(\hat x,t)\cap\mathbb R^{n}}f^\e(\hat y,s) d\hat yds.
\end{align*}
Then for any $l>0$, by the Hardy--Littlewood weak $L^1$ maximal theorem, there exists a $C$ such that
\begin{align*}
\mathcal {H}^{n+1}(\{Mf^\e\geq l\})&\leq \frac{C}{l}\int_{-1}^1\int_{B_1\cap\mathbb R^{n}}f^\e d\hat xdt\\
&\leq \int_{-1}^1\int_{-1}^1\int_{B_1\cap\mathbb R^n}(1-\nu_{n+1}^2)\e|\nabla u|^2 d\hat xdx_{n+1}dt\rightarrow0
\end{align*}
as $l\rightarrow\infty$ by Lemma \ref{lem_excess}, where $\mathcal{H}^k$ denotes the $k$-dimensional Lebesgue measure on the space-time $\mathbb R^n\times\mathbb R$.

For any fixed $l>0$ and $b\in(0,1)$, we partition the transition parts $\{|u|\geq 1-b\}$ into a set of good points (whose level sets are essentially Lipschitz graphs) and a set of bad points (of measure controlled by the tilt excess): 

We define the set of good points as
\begin{align*}
&A_{\e,l}
=\left \{(\hat x,x_{n+1},t)| \sup_{0<r<1}\frac{1}{r^{n+2}}\iint_{P_r(\hat x,x_{n+1},t)\cap\mathbb R^{n+1}}(1-\nu_{\e,n+1}^2)\e|\nabla u^\e(y,\tau)|^2 dyd\tau <l\right \}\cap\{|u|<1-b \}
\end{align*}
and the set of bad points as
\begin{align*}
B_{\e,l}=\left(P_1\cap\{|u|<1-b\}\right)\setminus A_{\e,l}.
\end{align*}

Note that $A_{\e,l}$ is an open set and $B_{\e,l}$ is relatively closed. Define the projections, $\Pi$, by
\begin{align*}
\Pi(A_{\e,l})&=\left\{(\hat x,t)\in\mathbb R^n\times\mathbb R|(\hat x,x_{n+1},t)\in A_{\e,l} \text{for some $x_{n+1}$}\right\},\\
\Pi(B_{\e,l})&=\left\{(\hat x,t)\in\mathbb R^n\times\mathbb R|(\hat x,x_{n+1},t)\in B_{\e,l} \text{for some $x_{n+1}$}\right\}.
\end{align*}
Notice the projection of the good and bad sets might not be disjoint. We will use a covering argument to estimate the measure of the bad set and its projections.
\begin{lemma}\label{lem_SetWeakL1} 
There exists a constant $C$ such that 
\begin{align*}
\iint_{B_{\e,l}}\e|\nabla u|^2 dxdt\leq \frac{C}{l}\iint_{P_2}(1-\nu_{n+1}^2)\e|\nabla u|^2 dxdt,
\end{align*}
and 
\begin{align*}
\mathcal {H}^{n+1}(\Pi (B_{\e,l}))\leq \frac{C}{l}\sup_{-1\leq t\leq 1}\iint_{P_2}(1-\nu_{n+1}^2)\e|\nabla u|^2 dxdt.
\end{align*}
\end{lemma}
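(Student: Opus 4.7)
The plan is a parabolic Vitali-covering argument driven by the very definition of $B_{\e,l}$, in the spirit of the maximal-function version of Allard's Lipschitz approximation. For each $(x_0,t_0)\in B_{\e,l}$, the failure of the good-set condition provides a radius $r=r(x_0,t_0)\in(0,1)$ with
\begin{align*}
\iint_{P_{r}(x_0,t_0)}(1-\nu_{n+1}^2)\e|\nabla u^\e|^2\,dyd\tau\;\geq\;l\,r^{n+2}.
\end{align*}
Applying the Vitali covering lemma in the parabolic metric to $\{P_{r(x_0,t_0)}(x_0,t_0)\}_{(x_0,t_0)\in B_{\e,l}}$, extract a countable pairwise disjoint subfamily $\{P_{r_i}(x_i,t_i)\}$ with $B_{\e,l}\subset\bigcup_i P_{5r_i}(x_i,t_i)$; since $(x_i,t_i)\in P_1$ and $r_i<1$, each $P_{r_i}(x_i,t_i)\subset P_2$. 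Disjointness combined with the lower bound above yields the master estimate
\begin{align*}
\sum_i r_i^{n+2}\;\leq\;\frac{1}{l}\sum_i\iint_{P_{r_i}(x_i,t_i)}(1-\nu_{n+1}^2)\e|\nabla u^\e|^2\,dyd\tau\;\leq\;\frac{1}{l}\iint_{P_2}(1-\nu_{n+1}^2)\e|\nabla u^\e|^2\,dxdt,
\end{align*}
which will drive both inequalities.

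For the first inequality, bound
\begin{align*}
\iint_{B_{\e,l}}\e|\nabla u^\e|^2\,dxdt\;\leq\;\sum_i\iint_{P_{5r_i}(x_i,t_i)}\e|\nabla u^\e|^2\,dxdt\;\leq\;C\sum_i r_i^{n+2},
\end{align*}
where the second step invokes the parabolic density upper bound $\mu^\e(P_\rho(y,\tau))\leq C\rho^{n+2}$ for the Allen--Cahn energy. This density estimate is precisely what the weighted monotonicity formula of Theorem \ref{thm_WMF} delivers, applied with a suitable spatial cutoff $\rho$ and using that the discrepancy is non-positive by \eqref{eqn_discrepancy} together with the almost-unit-density input \eqref{SingleLayer}. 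Inserting the master estimate gives the first assertion.

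For the projection estimate, observe that $\Pi(P_{5r_i}(x_i,t_i))\subseteq\hat B_{5r_i}(\hat x_i)\times(t_i-25r_i^2,t_i+25r_i^2)$, whose $(n+1)$-dimensional Lebesgue measure is comparable to $r_i^{n+2}$. Hence
\begin{align*}
\mathcal H^{n+1}(\Pi(B_{\e,l}))\;\leq\;\sum_i\mathcal H^{n+1}(\Pi(P_{5r_i}(x_i,t_i)))\;\leq\;C\sum_i r_i^{n+2},
\end{align*}
and a final appeal to the master estimate finishes the proof (if the time integral on the right of the stated inequality is to be replaced by a supremum over time slices, one uses the trivial bound $\int_{-1}^{1}\int_{B_2}\leq 2\sup_t\int_{B_2}$). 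The only substantive point beyond Vitali covering and disjointness is the parabolic density bound for the diffuse energy; this is the main obstacle, and all the remaining steps are routine measure-theoretic manipulations.
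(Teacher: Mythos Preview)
Your proof is correct and follows essentially the same covering argument as the paper: select, for each bad point, a parabolic cylinder on which the tilt-excess exceeds $l\,r^{n+2}$, apply a covering lemma, and combine disjointness with the energy-density upper bound $\mu^\e(P_\rho)\leq C\rho^{n+2}$. The only cosmetic difference is that you use Vitali (with $5r$-dilations) whereas the paper uses Besicovitch (with $C_n$ bounded-overlap families); both work equally well here, and your derivation of the projection estimate is in fact more explicit than the paper's, which simply declares it ``similar''.
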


\begin{proof}
For any $X=(\hat x,x_{n+1},t)\in B_{\e,l}$, there exists $r_{X}\in(0,1)$ such that
\begin{align}
r_{X}^{n+2}\leq \frac{1}{l}\iint_{P_r(\hat x,x_{n+1},t)}(1-(\nu_{\e}\cdot e_{n+1})^2)\e|\nabla u^\e(y,s)|^2 dyds.
\end{align}
The collection $\{P_{r(X)}(X)\}_{X\in B_{\e,l}}$ forms a covering of $B_{\e,l}$. By the Besicovitch covering lemma on space-time equipped with the parabolic metric, we have a finite family of countable subsets $\mathcal B_1, ... \mathcal B_{C_n}$, such that for each family $\mathcal B_{i}=\{P_{r(X_{i,j})}, i=1,...,C_n\}$ the parabolic cylinders $P_{r(X_{i,j})}$ within each of the families are mutually disjoint and the union of the families covers $B_{\e,l}$, that is 
\begin{align}
B_{\e,l}\subset\bigcup_{i}^{C_n}\bigcup_{j=1}^\infty P_{r(X_{i,j})}.
\end{align}
Using this cover, and using the estimate $\int_{B_{r}(x)}\left(\frac{\e|\nabla u^\e|^2}{2}+ \frac{W(u^\e)}{\e}\right) dx\leq Cr^n$ for any $(x,r)$)
\begin{equation}
\begin{split}
\iint_{(x,t)\in B_{\e,l}}&\left(\frac{\e|\nabla u^\e|^2}{2}+ \frac{W(u^\e)}{\e}\right) dxdt
\leq \sum_{i=1}^{C_n}\sum_{j}\iint_{P_{r(X_{i,j})}}\left(\frac{\e|\nabla u^\e|^2}{2}+ \frac{W(u^\e)}{\e}\right) dxdt\\
&\leq C\sum_{i=1}^{C_n}\sum_j r_{X_{i,j}}^{n+2}\\
&\leq C\sum_{i=1}^{C_n}\sum_j\frac{1}{l}\iint_{P_r(X_{i,j})}(1-(\nu_{\e}\cdot e_{n+1})^2)\e|\nabla u^\e(y,s)|^2 dyds\\
&\leq \frac{C_n}{l}\sup_{-1\leq t\leq 1}E^\e(u(\cdot,t))(B_2).
\end{split}
\end{equation}
To estimate the measure of the projection $\Pi(B_{\e,l})$, we have the following similar argument
\begin{align*}
\mathcal {H}^{n+1}(\Pi (B_{\e,l}))&=\mathcal {H}^{n+1}(\{(\hat x, t)|(\hat x,x_{n+1},t)\in B_{\e,l}  \text{for some $x_{n+1}$}\})\\
&\leq \frac{C_n}{l}\sup_{-1\leq t\leq 1}E^\e(u(\cdot,t))(B_2).
\end{align*}
\end{proof}

Next, for any $0<b<1$, we show in the set of good points (namely away from $B_{\e,l}$), the level sets $\{u^\e=s\}, s\in[-1+b,1-b]$ are essentially Lipschitz graphs with Lipschitz constants $C(b,\e,l)\rightarrow0$ as $l\rightarrow0$.
\begin{lemma}
For any $b\in(0,1)$ fixed and $l>0$ sufficiently small, the level sets $\{u^\e=s\}\cap A_{\e,l}$ can be locally represented by a Lipschitz graph $x_{n+1}=h^{\e,s}(\hat x,t)$ for $s\in(-1+b,1-b)$ where the Lipschitz constant of $h^b$ is bounded by $c(b,l)$ and tends to zero as $l\rightarrow0$, that is $ \lim_{l\rightarrow 0} c(b,l)=0$.
\end{lemma}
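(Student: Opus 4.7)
The plan is to locally represent the level sets via the implicit function theorem, with the smallness of the Lipschitz constant coming from a blow-up/contradiction argument against the integral tilt bound. Fix $X_0 = (\hat x_0, x_{n+1,0}, t_0) \in A_{\e,l}\cap\{u^\e = s\}$ with $s\in(-1+b,1-b)$. Parabolically rescale by $v(y,\tau):=u^\e(X_0+(\e y, \e^2\tau))$, so that $v$ solves the $\e=1$ equation \eqref{eqn_ACF1}, $|v(0,0)|\leq 1-b$, $v$ inherits the non-positive discrepancy, and membership in $A_{\e,l}$ translates into
\begin{equation*}
\tilde r^{-n-2}\iint_{P_{\tilde r}(0)}(1-\nu_{n+1}^2)|\nabla v|^2\,dy\,d\tau < l \qquad\forall\,\tilde r\in(0,1/\e),
\end{equation*}
while the global hypothesis \eqref{AlmostUnit} yields the density bound $\int_{B_R}\bigl(\tfrac{1}{2}|\nabla v|^2+W(v)\bigr)dy\leq CR^n$ for all $R\leq 1/\e$. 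Since $v$ is a bounded solution of a uniformly parabolic semilinear PDE, interior parabolic Schauder estimates bound $\|v\|_{C^{k,\alpha}_P}$ locally uniformly in $\e$.

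Next I would establish the pointwise dichotomy: for every $\eta>0$ there exist $l_0(\eta,b),\e_0>0$ such that whenever $l\leq l_0,\,\e\leq\e_0$,
\begin{equation*}
|\hat\nabla v(0,0)|\leq\eta \quad\text{and}\quad |\partial_{n+1} v(0,0)|\geq c(b)>0.
\end{equation*}
Argue by contradiction: extract sequences $l_j\to 0,\,\e_j\to 0,\,s_j\to s_\infty\in[-1+b,1-b]$ with $v_j$ violating one bound. By parabolic regularity a subsequence converges $v_j\to v_\infty$ in $C^\infty_{\mathrm{loc}}(\mathbb R^{n+1}\times\mathbb R)$ to a bounded eternal solution of \eqref{eqn_ACF1} with non-positive discrepancy, $|v_\infty(0,0)|\leq 1-b$, and $\int_{B_R}\bigl(\tfrac12|\nabla v_\infty|^2+W(v_\infty)\bigr)\leq CR^n$. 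Passing to the limit in the tilt bound forces $(1-\nu_{n+1}^2)|\nabla v_\infty|^2\equiv 0$, so $v_\infty=v_\infty(y_{n+1},\tau)$, eliminating the first alternative. For the second, $\partial_{n+1}v_\infty(0,0)=0$; but $v_\infty$ cannot be independent of $y_{n+1}$ (else $W(v_\infty)$ generates $R^{n+1}$ energy growth unless $v_\infty\equiv\pm1$, contradicting $|v_\infty(0,0)|\leq 1-b$), so $v_\infty$ is a nontrivial bounded 1D solution of parabolic Allen--Cahn with non-positive discrepancy. By a classification via the standing-wave analysis (cf.\ Lemma \ref{ExponentialDecay} and \cite{trumper2008relaxation}) such a solution must be a translate of $\tanh(y_{n+1}+a)$, whose $y_{n+1}$-derivative never vanishes, yielding the contradiction.

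Given this pointwise estimate, the implicit function theorem applied to $v$ at $(0,0)$ produces a smooth local graph $y_{n+1}=\tilde h^s(\hat y,\tau)$ for $\{v=s\}$, with $|\hat\nabla\tilde h^s(0,0)|=|\hat\nabla v|/|\partial_{n+1} v|\leq\eta/c(b)$ and $|\partial_\tau\tilde h^s(0,0)|\leq C/c(b)$. Since the parabolic tilt bound holds in \emph{every} cylinder $P_{\tilde r}(0)$, re-centering this argument at nearby points and iterating propagates the estimate uniformly over a parabolic neighborhood of fixed size. Scaling back via $h^{\e,s}(\hat x,t)=x_{n+1,0}+\e\,\tilde h^s\bigl(\e^{-1}(\hat x-\hat x_0),\,\e^{-2}(t-t_0)\bigr)$ preserves the spatial Lipschitz constant and the temporal $C^{0,1/2}$ constant by parabolic scale invariance, giving $\|h^{\e,s}\|_{\mathrm{Lip}_P}\leq c(b,l):=\eta(l)/c(b)$ with $c(b,l)\to 0$ as $l\to 0$.

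The main obstacle is the classification step ruling out a degenerate $y_{n+1}$-derivative for the 1D limit $v_\infty$. This requires combining the non-positive discrepancy, the linear-in-$R^n$ spatial energy growth inherited from \eqref{SingleLayer}, and the eternal nature of $v_\infty$ to force alignment with a standing-wave profile. The remaining verifications---propagating the pointwise estimate to a neighborhood and handling the parabolic time regularity via $\partial_\tau\tilde h^s=-\partial_\tau v/\partial_{n+1}v$---are routine once the non-degeneracy at the base point is secured.
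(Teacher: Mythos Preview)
Your proposal is correct and follows essentially the same approach as the paper: parabolically rescale by $\e$ so the rescaled function solves \eqref{eqn_ACF1}, run a compactness/contradiction argument via interior parabolic regularity, and use the vanishing tilt-excess to identify the limit as a one-dimensional solution whose $x_{n+1}$-derivative is nonvanishing at the origin. The paper's proof is terser---it simply asserts that the limit is the static 1-d solution $\tanh(x_{n+1})$ and deduces the derivative lower bound $c_2(b)=\tfrac12 g'(g^{-1}(b))$---whereas you separate the argument into the two bounds $|\hat\nabla v|\leq\eta$ and $|\partial_{n+1}v|\geq c(b)$ and spell out more of the classification step (ruling out constants by energy growth, then reducing to a standing wave); both routes lead to the same Lipschitz bound $\mathrm{Lip}(h^{\e,s})\lesssim \eta(l)/c(b)\to 0$.
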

\begin{proof}

For $X_0=(x_0,t_0)$ such that $u^\e(X_0)=s$, we define a rescaled function 
\begin{align*}
v^\e(X)&=v^\e(x,t)=u^\e(x_0+\e x,t_0+\e^2t)
\end{align*}
where $v^\e$ satisfies the equation \eqref{eqn_ACF1}.
\begin{claim}
There exists a $c_2(b)>0$ independent of $\e$ such that
\begin{align*}
\left|\frac{\partial v^\e}{\partial x_{n+1}}\right|\geq c_2.
\end{align*}
\end{claim}
\begin{proof}[Proof of Claim]
We will give a proof by contradiction. Therefore suppose the conclusion is false, that is, there exists a sequence of solutions $v_i$ to the equation \eqref{eqn_ACF1} satisfying
\begin{align*}
|v_i(0,0)|&\leq 1-b,\\
\iint_{P_1(0,0)}\left(1-\left(\frac{\nabla v_i}{|\nabla v_i|}\cdot e_{n+1}\right)^2\right)|\nabla v_i|^2 dxdt&\leq l_i\rightarrow0,\\
\left|\frac{\partial v_i}{\partial x_{n+1}}\right|&\leq \frac{1}{i}\rightarrow0.
\end{align*}
However, by interior regularity of parabolic equations, $v_i\rightarrow v_\infty$ in $C^2_{loc}(\mathbb R^{n+1}\times\mathbb R)$ to the 1-d static solution $v_\infty(\hat x,x_{n+1},t)=g(x_{n+1})=\tanh(x_{n+1})$. So for $i$ sufficiently large, we must have $|\frac{\partial v_i}{\partial x_{n+1}}|\geq \frac{1}{2}g'(g^{-1}(b)):=c_2(b)$, which is a contradiction. \end{proof}
The lower bound of $|\frac{\partial v^\e}{\partial x_{n+1}}|$ gives an 
 upper bound for $|\frac{\partial h^s}{\partial s}|$. And combining with the relations between derivatives of $u$ and derivatives of the graphing functions $h^s$ \eqref{FunctionAndLevelSet}, we have the Lipschitz bound of $h^s$ 
\begin{align*}
\mathrm{Lip}(h^{\e,s})\leq \frac{1}{c_2(b)}\mathrm{Lip}(u^\e)\rightarrow0
\end{align*}
as $l\rightarrow0$.
\end{proof}

To show the level sets of $h^{\e,l}$ are globally Lipschitz graphs, we establish an $L^\infty$ bound. This estimate shows, when the tilt excess is small, the level sets are restricted in a narrow neighbourhood of the hyperplane.
\begin{lemma}\label{LInftyBound}
For any $b\in(0,1)$ and $\delta>0$, there exists $R_0>0$ large and $\tau_0,l_0$ sufficiently small such that the following holds: suppose $u^\e$ is a solution to \eqref{eqn_ACFe} in $P_{R_0}$, with $\e\leq 1, |u^\e(0,0)|\leq 1-b$  and satisfies the bounds 
\begin{align*}
R_0^{-n-2}\int_{B_{R_0}}\left(\frac{\e|\nabla u^\e|^2}{2}+ \frac{W(u^\e)}{\e}\right) dx&\leq (1+\tau_0)\alpha\omega_n,\\
R_0^{-n-2}\iint_{P_{R_0}}\left(1-\nu_{n+1}^2\right)\e |\nabla u^\e|^2 dxdt&\leq l_0.
\end{align*}
Then the level sets $\{u^\e=u^\e(0,0)\}\cap P_1$ are contained in a $\delta$ neighbourhood of the static plane $(\mathbb R^n\times\mathbb R)\cap P_1\subset\mathbb R^{n+1}\times\mathbb R$.
\end{lemma}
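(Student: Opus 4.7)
The plan is to argue by contradiction via a compactness argument. I would assume the conclusion fails, producing sequences $\e_k\in(0,1]$, $R_k\to\infty$, $\tau_k\downarrow 0$, $l_k\downarrow 0$, solutions $u^{\e_k}$ of \eqref{eqn_ACFe} on $P_{R_k}$ satisfying the hypotheses with $|u^{\e_k}(0,0)|\leq 1-b$, together with points $(y_k,s_k)\in P_1$ such that $u^{\e_k}(y_k,s_k)=u^{\e_k}(0,0)$ and $|y_{k,n+1}|\geq\delta$. Passing to subsequences, $\e_k\to\e_\infty\in[0,1]$, $u^{\e_k}(0,0)\to s_\infty\in[-1+b,1-b]$, and $(y_k,s_k)\to(y_\infty,t_\infty)$ with $|y_{\infty,n+1}|\geq\delta$. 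I would then split on whether $\e_\infty>0$ or $\e_\infty=0$.

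If $\e_\infty>0$, the uniform bound $|u^{\e_k}|\leq 1$ together with $\e_k\geq\e_\infty/2$ for large $k$ yields uniform interior $C^m_{loc}$ estimates for \eqref{eqn_ACFe}, so along a subsequence $u^{\e_k}\to u^\infty$ in $C^\infty_{loc}(\mathbb R^{n+1}\times\mathbb R)$, where $u^\infty$ is an eternal Allen--Cahn solution with parameter $\e_\infty$. Using the scale-invariant tilt-excess hypothesis combined with the weighted monotonicity Theorem \ref{thm_WMF} to propagate smallness from scale $R_k$ down to every fixed finite scale should force $\hat\nabla u^\infty\equiv 0$ in the limit, so $u^\infty(x,t)=f(x_{n+1},t)$ satisfies the one-dimensional parabolic Allen--Cahn equation. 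Uniqueness of bounded eternal $1$-D solutions then gives $u^\infty(x,t)=\tanh((x_{n+1}-c)/\e_\infty)$ for some $c\in\mathbb R$, and a direct computation shows $\{u^\infty=s_\infty\}=\{x_{n+1}=0\}$. Since $\partial_{x_{n+1}}u^\infty\neq 0$ on this level set, the implicit function theorem combined with smooth convergence gives Hausdorff convergence of $\{u^{\e_k}=u^{\e_k}(0,0)\}\cap\overline{P_1}$ to $\{x_{n+1}=0\}\cap\overline{P_1}$, forcing $y_{\infty,n+1}=0$ and contradicting $|y_{\infty,n+1}|\geq\delta$.

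If instead $\e_k\to 0$, Ilmanen's convergence theorem \cite{Ilmanen1993} and Tonegawa's integrality \cite{tonegawa2003integrality} give (along a subsequence) $\mu^{\e_k}_t\to\alpha V_t$ for an integral Brakke flow $\{V_t\}$. The tilt-excess hypothesis will force the tangent planes of $V_t$ to be parallel to $\mathbb R^n\times\{0\}$, while the energy bound combined with integer density forces unit multiplicity, so $V_t$ is a union of parallel hyperplanes $\{x_{n+1}=c_i(t)\}$ with unit density. The condition $|u^{\e_k}(0,0)|\leq 1-b$ prevents $u^{\e_k}$ from tending to $\pm 1$ at the origin, forcing the origin to lie in $\spt V$; combined with the near-unit density this pins $V_t\equiv\mathbb R^n\times\{0\}$ as a single static plane on $P_1$. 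Applying the exponential-decay Lemma \ref{ExponentialDecay}, for any fixed $\eta\in(0,\delta)$ I would obtain $k_0$ such that $|u^{\e_k}(x,t)|>1-b/2$ for all $(x,t)\in\overline{P_1}$ with $|x_{n+1}|\geq\eta$ and $k\geq k_0$. Evaluating at $(y_k,s_k)$ with $|y_{k,n+1}|\geq\delta>\eta$ yields $|u^{\e_k}(y_k,s_k)|>1-b/2>|u^{\e_k}(0,0)|$, contradicting $u^{\e_k}(y_k,s_k)=u^{\e_k}(0,0)$.

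The hard part is the first case, namely propagating the smallness of the tilt excess from the diverging scale $R_k$ down to unit scale for the smooth limit $u^\infty$; the naive bound $\iint_{P_R}(1-\nu_{n+1}^2)\e_k|\nabla u^{\e_k}|^2\leq l_k R_k^{n+2}$ on a fixed $P_R$ is useless as $k\to\infty$. Either the weighted monotonicity of Theorem \ref{thm_WMF} applied to a suitable time-dependent weight, or a rescaling $\tilde u_k(y,s)=u^{\e_k}(R_k y,R_k^2 s)$ (which reduces everything to the small-$\tilde\e_k=\e_k/R_k\to 0$ regime on $P_1$ and then applies the Case B machinery to the rescaled sequence) should provide the needed control, since both encode the fact that the hypotheses are scale-invariant in the correct normalization.
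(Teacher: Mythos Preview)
Your contradiction setup and the split into the two cases $\e_\infty>0$ and $\e_\infty=0$ mirror the paper's proof, and your Case $\e_\infty=0$ is essentially identical to the paper's (they cite \cite{tonegawa2003integrality} and \cite{trumper2008relaxation} in place of your Lemma \ref{ExponentialDecay}, but the logic is the same). The gap is in your Case $\e_\infty>0$, and it is self-inflicted: you send $R_k\to\infty$, which is what creates the problem of propagating the tilt-excess bound down to unit scale. Neither of your proposed fixes works. The rescaling $\tilde u_k(y,s)=u^{\e_k}(R_ky,R_k^2s)$ does land you in the small-$\tilde\e$ regime on $P_1$, but the bad point $(y_k,s_k)\in P_1$ rescales to $(y_k/R_k,s_k/R_k^2)$, and the crucial separation $|y_{k,n+1}|\geq\delta$ becomes $|\tilde y_{k,n+1}|\geq\delta/R_k\to 0$, so the Case B argument yields no contradiction. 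The weighted monotonicity formula controls Gaussian density, not tilt excess, and you give no indication of which weight would transfer the bound; this suggestion is too vague to count as a plan.

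The paper avoids the issue entirely by \emph{not} sending $R_0\to\infty$: it fixes a single (eventually large) $R_0$ and runs the contradiction only in $\tau_i,l_i\to 0$. Then on the fixed domain $P_{R_0}$ one has $\iint_{P_{R_0}}(1-\nu_{n+1}^2)\e_i|\nabla u^{\e_i}|^2\leq l_i R_0^{n+2}\to 0$, so the $C^2_{loc}$ limit $u^\infty$ has identically zero tilt excess on $P_{R_0}$ and hence depends only on $(x_{n+1},t)$. The classification of bounded one-dimensional solutions (static $g$ or constants, the latter ruled out by choosing $R_0$ large against the energy bound) then contradicts the existence of the second level-set point with $|y_{\infty,n+1}|\geq\delta$, since $g$ is strictly monotone. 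If you prefer to keep $R_k\to\infty$, the clean repair is to exploit your freedom in choosing the diagonal: take $l_k$ decaying fast enough that $l_kR_k^{n+2}\to 0$, which again forces vanishing tilt excess on every fixed $P_R$ in the limit.
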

\begin{proof}
The proof is by contradiction. Therefore let us suppose the conclusion is false, that is for any $R_0>0$, there exists $b_0\in(0,1),\delta_0>0,\tau_i,l_i\rightarrow0,\e_i>0$ such that $u^{\e_i}$ is a solution of \eqref{eqn_ACFe} with $\e_i$ in place of $\e$, $|u^{\e_i}(0,0)|\leq 1-b_0$ and which satisfies
\begin{align*}
R_0^{-n-2}\int_{B_{R_0}}\left(\frac{\e_i|\nabla u^{\e_i}|^2}{2}+ \frac{W(u^{\e_i})}{\e_i}\right) dx&\leq (1+\tau_i)\alpha\omega_n,\\
R_0^{-n-2}\iint_{P_{R_0}}\left(1-\nu_{n+1}^2\right)\e_i |\nabla u^{\e_i}|^2 dxdt&\leq l_i\rightarrow0,
\end{align*}
but there exists $(\hat y_i,y_{i,n+1},t_i)$ with $|y_{i,n+1}|\geq \delta_0$ such that $u^{\e_i}(\hat y_i,y_{i,n+1},t_i)=u^{\e_i}(0,0,0)$.

There are 2 cases: \newline
\textbf{Case 1):}If $\e_i\rightarrow\e_\infty>0$, then up to subsequence $u^{\e_i}$ converges in $C^2_{loc}(P_{R_0-1})$ to a limit $u^\infty$ by standard parabolic estimates, and $u^\infty$ satisfies the equation \eqref{eqn_ACFe} with $\e_\infty$ in place of $\e$. And $(\hat y_i,y_{i,n+1},t_i)\rightarrow(\hat y_\infty,y_{\infty,n+1},t_\infty)$ after passing to a subsequence. In this case, we have
\begin{align}
\label{Linfty1}|u^\infty(0,0)|&\leq 1-b,\\
\label{Linfty2}u^\infty(\hat y_\infty,y_{\infty,n+1},t_{\infty})&=u^\infty(0,0), \quad \text{where $|y_{\infty,n+1}|\geq \delta_0$},\\
\label{Linfty3}R_0^{-n-2}\iint_{P_{R_0}}\left(\frac{\e_\infty|\nabla u^{\infty}|^2}{2}+ \frac{W(u^{\infty})}{\e_\infty}\right) dxdt&\leq 2\alpha\omega_n,\\
\label{Linfty4}R_0^{-n-2}\iint_{P_{R_0}}\left(1-\nu_{n+1}^2\right)\e_\infty |\nabla u^{\infty}|^2 dxdt&=0.
\end{align}
By equation \eqref{Linfty4}, the limit $u^\infty(\hat x,x_{n+1},t)$ only depends on $(x_{n+1},t)$, and by equations \eqref{Linfty1} and \eqref{Linfty3}, it satisfies $u^\infty(\hat x,x_{n+1},t)=g(\frac{x_{n+1}}{\e_\infty})$ where $g$ is the $1$-d solution defined in section \ref{sec_prelim} (this follows since in dimension $1$ the only possibilities are $g$ or constant solutions). For constant solutions we can choose $R_0$ sufficiently large to violate the energy bound condition \eqref{Linfty3}). 
\newline
\textbf{Case 2):} If $\e_i\rightarrow0$, then by the assumptions $\tau_i,l_i\rightarrow0$ and the main theorem of \cite{tonegawa2003integrality} the limit is a multiplicity $1$ static plane. Then by \cite{trumper2008relaxation}, $u^{\e_i}(\hat y_i,y_{i,n+1},t_i)\rightarrow\pm1$ because $|y_{i,n+1}|\geq \delta_0$. This is a contradiction to the assumption that $u^{\e_i}(\hat y_i,y_{i,n+1},t_i)=u^{\e_i}(0,0,0)\in(-1+b_0,1-b_0)$.
\end{proof}

\begin{remark}
Obtaining an $L^\infty$ bound for the level set graphing function is simpler in the Allen--Cahn setting than in the mean curvature flow situation, since we can use parabolic regularity for the underlying function $u$ and combining this with a lower bound on $e_{n+1}$ directional derivatives, \eqref{FunctionAndLevelSet} gives a bound on the graphing function $h$.
\end{remark}

Next we recall a Lemma from \cite{tonegawa2003integrality}, we only need part of the conclusions and will state the part we need.
\begin{lemma}[Lemma 4.6 of \cite{tonegawa2003integrality}]\label{OnlyOnePoint}
For any $b\in(0,1)$, there exists $\eta\in(0,1)$ and $L\in(0,\infty)$ with the following property:

If $u^\e$ ($\e\leq 1$) is a solution of \eqref{eqn_ACFe} in the parabolic ball $P_1\subset\mathbb R^{n+1}\times\mathbb R$ with $|u(0,0)|\leq 1-b$, and 
\begin{align*}
(4\e L)^{-n}\int_{B_{4\e L}(0)\times\{0\}}\left|\frac{\e|\nabla u^\e|^2}{2}-\frac{W(u^\e)}{\e}\right| dx&\leq \eta,\\
(4\e L)^{-n}\int_{B_{4\e L}(0)\times\{0\}}(1-\nu_{n+1}^2)\e|\nabla u^\e|^2 dx&\leq \eta.
\end{align*}
Then we have 
\begin{align*}
\{x_{n+1}| (\hat x,x_{n+1})\in B_{3\e L}(0), u(\hat{0},x_{n+1},0)=u(\hat{0},0,0)\}=\{0\}.
\end{align*}
\end{lemma}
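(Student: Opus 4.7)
I would prove this by a parabolic rescaling and compactness argument. Assume for contradiction that the conclusion fails for some fixed $b \in (0,1)$: then for any $L$ and any sequence $\eta_i \downarrow 0$ there is a sequence $u^{\e_i}$ of solutions to \eqref{eqn_ACFe} with $|u^{\e_i}(0,0)| \leq 1-b$, satisfying both smallness hypotheses with $\eta_i$ in place of $\eta$, together with a bad point $y_i \in (-3\e_i L, 3\e_i L) \setminus \{0\}$ satisfying $u^{\e_i}(\hat 0, y_i, 0) = u^{\e_i}(0,0,0)$. Parabolically blow up by setting $v^i(x,t) := u^{\e_i}(\e_i x, \e_i^2 t)$, which solves the $\e = 1$ equation \eqref{eqn_ACF1}; a direct change of variables shows that the rescaled hypotheses become
\begin{align*}
(4L)^{-n} \int_{B_{4L}(0)} \left|\tfrac{|\nabla v^i|^2}{2} - W(v^i)\right| dx &\leq \eta_i, \\
(4L)^{-n} \int_{B_{4L}(0)} \sum_{j=1}^n (v^i_{x_j})^2 \, dx &\leq \eta_i,
\end{align*}
at time $t=0$, with rescaled bad point $\tilde y_i := y_i/\e_i \in (-3L, 3L) \setminus \{0\}$ obeying $v^i(\hat 0, \tilde y_i, 0) = v^i(0,0,0)$.

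Since $|v^i| \leq 1$ by the maximum principle and $W'$ is smooth on $[-1,1]$, interior parabolic Schauder estimates yield uniform $C^{2,\alpha}$ bounds on compact sets. Extract a subsequence with $v^i \to v^\infty$ in $C^2_{\mathrm{loc}}$, where $v^\infty$ also solves \eqref{eqn_ACF1}. Passing to the limit in the two smallness conditions yields $v^\infty_{x_j}(\cdot, 0) \equiv 0$ for $j = 1, \dots, n$ on $B_{4L}$ together with the equipartition identity $\tfrac{|\nabla v^\infty|^2}{2} = W(v^\infty)$ on the same slab. Hence $v^\infty(\hat x, x_{n+1}, 0) = f(x_{n+1})$ on $B_{4L}$, with $f$ solving the standing wave ODE $\tfrac{(f')^2}{2} = W(f)$. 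Combined with $|f(0)| = |v^\infty(0,0)| \leq 1-b < 1$, this identifies $f(x_{n+1}) = g(x_{n+1} - c_0)$ on the connected component of $\{|f|<1\}$ through $0$, for some $|c_0| \leq \tanh^{-1}(1-b)$; we then choose $L$ large enough in terms of $b$ so that $(-3L, 3L)$ contains this component.

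Strict monotonicity $f' = 1 - f^2 > 0$ on this interval now yields the contradiction. After extracting a further subsequence $\tilde y_i \to \tilde y_\infty \in [-3L, 3L]$, if $\tilde y_\infty \neq 0$ then $C^0_{\mathrm{loc}}$ convergence gives $f(\tilde y_\infty) = f(0)$, violating strict monotonicity; if $\tilde y_\infty = 0$, the mean value theorem applied to $x_{n+1} \mapsto v^i(\hat 0, x_{n+1}, 0)$ produces $z_i$ between $0$ and $\tilde y_i$ with $\partial_{x_{n+1}} v^i(\hat 0, z_i, 0) = 0$, and $C^1_{\mathrm{loc}}$ convergence then forces $f'(0) = 0$, contradicting $f'(0) = 1 - \tanh^2(c_0) > 0$. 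The main obstacle is ruling out the flat profiles $v^\infty \equiv \pm 1$ arising from a collapse of the transition layer; this is exactly where the non-triviality hypothesis $|u^\e(0,0)| \leq 1 - b$ enters, and it also fixes $L$ purely in terms of $b$.
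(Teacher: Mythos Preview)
The paper does not give its own proof of this lemma; it is quoted directly from \cite{tonegawa2003integrality} (as Lemma~4.6 there) and used as a black box in the subsequent arguments. Your rescaling-and-compactness proof is correct and is precisely the standard way to establish such a statement: blow up to the $\varepsilon=1$ scale, use interior parabolic estimates to extract a $C^2_{\mathrm{loc}}$ limit, and read off from the vanishing tilt-excess and discrepancy that the time-$0$ slice of the limit is a one-dimensional profile satisfying $(f')^2=2W(f)$, hence strictly monotone where $|f|<1$. The dichotomy between $\tilde y_\infty\neq 0$ (contradicting injectivity of $f$) and $\tilde y_\infty=0$ (mean value theorem plus $C^1$ convergence forcing $f'(0)=0$) is handled cleanly.

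One small simplification: you do not actually need to ``choose $L$ large enough in terms of $b$ so that $(-3L,3L)$ contains this component.'' The first-order relation $(f')^2=2W(f)$ together with $|f(0)|\leq 1-b<1$ forces $|f|<1$ on the entire interval where $f$ is defined (the heteroclinic never attains $\pm 1$), so the component of $\{|f|<1\}$ through $0$ is the whole available domain and the strict monotonicity argument works for any $L>0$. The only genuine constraint on $L$ is the compatibility $4\varepsilon L\leq 1$ needed for $B_{4\varepsilon L}\subset B_1$, which after rescaling reads $4L\leq 1/\varepsilon_i$ and gives enough room for the interior parabolic estimates on $P_{3L}$.
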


Combining the Lemma \ref{OnlyOnePoint} and Lemma \ref{LInftyBound}, we see that the nodal sets are in fact global graphs by showing that there is essentially one sheet.

\begin{lemma}\label{LipGraphical}
Given any $b\in(0,1)$ and $\in(-1+b,1-b)$, we can choose $l$ sufficiently small such that, for any $(\hat x,t)\in\Pi(A_{\e,l})$, there exists exactly 1 point $(\hat x,h^{\e,s}(\hat x),t)$ in $\Pi^{-1}(x)\cap\{u^\e=s\}$ and where $h^{\e,l}$ is Lipschitz on $\Pi(A_{\e,l})$.
\end{lemma}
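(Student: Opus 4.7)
My plan is to establish the pointwise existence and uniqueness of $h^{\e,s}(\hat x,t)$ on $\Pi(A_{\e,l})$, and then to paste together the local Lipschitz graphs already produced in the previous lemma.

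For existence, given $(\hat x,t)\in\Pi(A_{\e,l})$, pick a witness $(\hat x,y^*,t)\in A_{\e,l}$, so that $|u^\e(\hat x,y^*,t)|<1-b$. The lower bound $\e|\partial_{n+1}u^\e|\geq c_2(b)$ established by blow-up in the proof of the preceding lemma shows that $u^\e(\hat x,\cdot,t)$ is strictly monotone and traverses the whole transition interval $(-1+b,1-b)$ in an $x_{n+1}$-window of size $O(\e)$ around $y^*$; hence for every $s\in(-1+b,1-b)$ there is a locally unique value $h^{\e,s}(\hat x,t)$ with $u^\e(\hat x,h^{\e,s}(\hat x,t),t)=s$.

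For uniqueness, suppose for contradiction that $y_1<y_2$ are both solutions of $u^\e(\hat x,\cdot,t)=s$ with $(\hat x,y_1,t)\in A_{\e,l}$. If $y_2-y_1\leq 3\e L$, then choosing $l\leq \eta$ (with $\eta$ from Lemma \ref{OnlyOnePoint}) and using that $(\hat x,y_1,t)\in A_{\e,l}$ provides small tilt-excess on the parabolic ball of radius $4\e L$, combined with the non-positive discrepancy assumption \eqref{eqn_discrepancy} and the Caccioppoli bound in Theorem~\ref{Caccioppoli} to control the discrepancy integrally, a pigeonhole in the time variable produces a slice on which the hypotheses of Lemma~\ref{OnlyOnePoint} are satisfied; this forces $y_1=y_2$, a contradiction. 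If instead $y_2-y_1>3\e L$, then the previous lemma applied at each of $(\hat x,y_1,t)$ and $(\hat x,y_2,t)$, combined with Lemma~\ref{LInftyBound} which confines each sheet to its own narrow slab, produces two disjoint transition layers persisting over a fixed $\hat x$-neighborhood of $\hat x$. Each such layer contributes essentially $\alpha$ per unit $\hat x$-area to the integrated energy, so the total energy in a suitable cylinder exceeds $(2-o(1))\alpha\omega_n$, contradicting the single-layer hypothesis \eqref{SingleLayer} once $\tau_0<\tfrac{1}{2}$.

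The function $h^{\e,s}$ is then well-defined on $\Pi(A_{\e,l})$, and the local Lipschitz bounds from the previous lemma give a uniform parabolic Lipschitz estimate $|\hat\nabla h^{\e,s}|+|\partial_t h^{\e,s}|^{1/2}\leq c(b,l)$ with $c(b,l)\to 0$ as $l\to 0$; since the local bounds are uniform and $\Pi(A_{\e,l})$ is open, this propagates to a global Lipschitz bound in the standard way. The main obstacle I anticipate is the uniqueness step, specifically converting the integrated space-time smallness of tilt-excess built into the definition of $A_{\e,l}$ into the single-time-slice hypotheses required by Lemma~\ref{OnlyOnePoint}, which will involve a careful pigeonhole in time together with the time-slice Caccioppoli inequality of Corollary~\ref{TimeSliceCaccioppoli} and a calibration of $l$ against the constant $\eta$.
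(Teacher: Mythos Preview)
Your overall strategy is sound, but you have overcomplicated the uniqueness step and introduced a gap in your ``far'' case. The paper's route is shorter: it applies Lemma~\ref{LInftyBound} \emph{first and globally}. Under the standing hypotheses of Theorem~\ref{thm_ExcessDecay}, that lemma confines the \emph{entire} level set $\{u^\e=s\}\cap P_1$ to a single $\delta$-slab around the plane $\{x_{n+1}=0\}$; choosing $\delta=\e L$ forces any two crossings $y_1,y_2$ over the same $(\hat x,t)$ to satisfy $|y_1-y_2|\leq 2\e L<3\e L$. Thus your ``far'' case $y_2-y_1>3\e L$ is simply vacuous, and Lemma~\ref{OnlyOnePoint} handles the remaining situation directly. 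For existence, the paper uses the global fact (multiplicity-one convergence together with \cite{trumper2008relaxation}) that $u^\e(\hat x,x_{n+1},t)\to\pm1$ as $x_{n+1}\to\pm\infty$, and then the intermediate value theorem; your local monotonicity argument also works but is not needed.

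Your ``far'' case argument, as written, has two problems. First, you invoke Lemma~\ref{LInftyBound} as if it confines ``each sheet to its own narrow slab''; that is not what it says---it confines the whole level set to one slab, which is precisely why the paper can dispense with the case altogether. Second, you apply the previous (local Lipschitz) lemma at $(\hat x,y_2,t)$, but you have only assumed $(\hat x,y_1,t)\in A_{\e,l}$; nothing guarantees $(\hat x,y_2,t)\in A_{\e,l}$, so you cannot invoke the local graph structure there, and the two-layer energy count is not justified. The pigeonhole/Caccioppoli manoeuvre you anticipate for the ``near'' case is a reasonable way to pass from the space-time smallness in the definition of $A_{\e,l}$ to the time-slice hypotheses of Lemma~\ref{OnlyOnePoint}; the paper glosses over this point, so your added care there is not misplaced.
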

\begin{proof}
By Lemma \ref{LInftyBound}, for any $\delta$, the level sets of $u^\e$ with value $u^\e(\hat x,0,t)$ are restricted to a $\delta$ neighbourhood of the static plane $\mathbb R^{n+1}$ by choosing $l$ sufficiently small in $A_{\e,l}$. 

By choosing $\delta=\e L<<1$ where $L$ is given by Lemma \ref{OnlyOnePoint}, we have for any $s\in(-1+b,1-b)$ and $(\hat x,t)\in \Pi(A_{\e,l})$, there is at most one point in $\Pi^{-1}(\hat x,t)\cap\{u^\e=s\}$.

On the other hand, by the multiplicity 1 convergence and \cite{trumper2008relaxation}, for any $(\hat x,t)\in A_{\e,l}$, $\lim_{x_{n+1}\rightarrow+\infty}u^\e(\hat x,x_{n+1},t)=1$ and $\lim_{x_{n+1}\rightarrow-\infty}u^\e(\hat x,x_{n+1},t)=-1$. By the intermediate value theorem, there must be at least one $x_{n+1}$ such $u^\e(\hat x, x_{n+1},t)=s$. 

And thus the level $\{u=s\}$ is a global Lipschitz graph on the projection $\Pi(A_{\e,l})$.
\end{proof}

Finally, we can extend the Lipschitz graph from $\Pi(A_{\e,l})$ to all of $\Pi(P_1)$ preserving the Lipschitz constant by a standard Lipschitz extension lemma.

Before we proceed, we also gather some $L^2$ estimates of this Lipschitz graphical approximation, which will be used in the blow-up argument in the next section.

The first estimate is to bound the space-time $L^2$ norm of the gradient of graphing functions $h^{\e,s}$ by the tilt excess.
\begin{proposition}\label{TiltAndGradient}
For any fixed $b\in(0,1)$, there exists a $C(b)$ such that 
\begin{align*}
\int_{-1+b}^{1-b}\int_{-1}^1\int_{\hat B_1^n}|\hat\nabla h^{\e,s}|^2 d\hat xdtds\leq C(b)\int_{-4}^4\int_{B_2^{n+1}}(1-\nu_{n+1}^2)\e|\nabla u^\e|^2 dxdt.
\end{align*}
\end{proposition}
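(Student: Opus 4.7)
The plan is to combine a pointwise identity for $|\hat\nabla h^{\e,s}|^2$ with the spatial coarea formula on the graphical set $A_{\e,l}$, and then handle the complement via the weak-type estimate of Lemma \ref{lem_SetWeakL1}. From the chain-rule relations \eqref{FunctionAndLevelSet}, at any point $(\hat x,h^{\e,s}(\hat x,t),t)$ of the level set $\{u^\e=s\}$ lying in the graph region,
\begin{align*}
|\hat\nabla h^{\e,s}|^2 \;=\; \frac{|\hat\nabla u^\e|^2}{(\partial_{x_{n+1}}u^\e)^2} \;=\; \frac{1-\nu_{n+1}^2}{\nu_{n+1}^2}.
\end{align*}
For each fixed time $t$, the map $(\hat x,s)\mapsto(\hat x,h^{\e,s}(\hat x,t))$ is a bijection of $\Pi(A_{\e,l}(t))\cap \hat B_1^n\times(-1+b,1-b)$ onto $A_{\e,l}(t)\cap B_1\cap\{|u^\e|\le 1-b\}$ with Jacobian $|\partial_{x_{n+1}}u^\e|=|\nu_{n+1}|\,|\nabla u^\e|$, so the coarea formula gives
\begin{align*}
\int_{-1+b}^{1-b}\int_{\Pi(A_{\e,l}(t))\cap \hat B_1^n}|\hat\nabla h^{\e,s}|^2\, d\hat x\, ds \;=\; \int_{A_{\e,l}(t)\cap B_1\cap\{|u^\e|\le 1-b\}}\frac{1-\nu_{n+1}^2}{|\nu_{n+1}|}|\nabla u^\e|\, dx.
\end{align*}

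To turn the right-hand side into tilt excess I will invoke the uniform lower bound $\e\,|\partial_{x_{n+1}}u^\e|\ge c_2(b)>0$ available on $A_{\e,l}$, which is precisely the content of the Claim proved inside Lemma \ref{LipGraphical} (the parabolic rescaling $v^\e(x,t)=u^\e(x_0+\e x,t_0+\e^2 t)$ combined with standard interior parabolic regularity forces convergence to the one-dimensional profile $\tanh$, producing a pointwise lower bound on $|\partial_{x_{n+1}}v^\e|$). Thus
\begin{align*}
\frac{|\nabla u^\e|}{|\nu_{n+1}|} \;=\; \frac{|\nabla u^\e|^2}{|\partial_{x_{n+1}}u^\e|}\;\le\;\frac{\e|\nabla u^\e|^2}{c_2(b)},
\end{align*}
so after integrating in $t\in(-1,1)$ the good-set contribution is bounded by $c_2(b)^{-1}\iint_{P_2}(1-\nu_{n+1}^2)\e|\nabla u^\e|^2\,dx\,dt$.

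On the complementary set $\hat B_1^n\setminus\Pi(A_{\e,l}(t))\subset\Pi(B_{\e,l}(t))$, the Lipschitz extension from Lemma \ref{LipGraphical} satisfies $|\hat\nabla h^{\e,s}|\le c(b,l)$ pointwise; integrating in $\hat x$, $t$, $s$ and invoking Lemma \ref{lem_SetWeakL1} yields
\begin{align*}
\int_{-1+b}^{1-b}\!\int_{-1}^{1}\!\int_{\hat B_1^n\setminus\Pi(A_{\e,l}(t))}|\hat\nabla h^{\e,s}|^2\, d\hat x\, dt\, ds \;\le\; 2(1-b)\, c(b,l)^2\,\mathcal H^{n+1}(\Pi(B_{\e,l})) \;\le\; \frac{C(b,l)}{l}\iint_{P_2}(1-\nu_{n+1}^2)\e|\nabla u^\e|^2\, dx\, dt.
\end{align*}
Fixing $l$ once and for all (as in Lemma \ref{LipGraphical}) and summing the two contributions yields the claim with $C(b)$ independent of $\e$. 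The only genuinely delicate ingredient is the pointwise lower bound $\e|\partial_{x_{n+1}}u^\e|\ge c_2(b)$ on the good region; the rest of the argument is coarea-formula and Lipschitz-extension bookkeeping.
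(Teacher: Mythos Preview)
Your proof is correct and follows essentially the same route as the paper: split into the good set $A_{\e,l}$ and its complement, apply the coarea formula (graphical change of variables) together with the compactness-based lower bound $\e|\partial_{x_{n+1}}u^\e|\ge c_2(b)$ on the good set, and use the Lipschitz extension bound plus the weak-type measure estimate of Lemma~\ref{lem_SetWeakL1} on the bad set. The only difference is cosmetic---the paper starts from the tilt-excess integral and bounds it below by $\iint|\hat\nabla h^{\e,s}|^2$, whereas you start from $\iint|\hat\nabla h^{\e,s}|^2$ and bound it above; the algebraic identities $1-\nu_{n+1}^2=\tfrac{|\hat\nabla h|^2}{1+|\hat\nabla h|^2}$ and $|\hat\nabla h|^2=\tfrac{1-\nu_{n+1}^2}{\nu_{n+1}^2}$ make the two directions equivalent. (One minor reference slip: the $c_2(b)$ lower bound is established in the Claim inside the Lipschitz lemma preceding Lemma~\ref{LInftyBound}, not inside Lemma~\ref{LipGraphical} itself.)
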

\begin{proof}
This is essentially the same argument using coarea formula as in the proof of the Interior Sobolev inequality Proposition \ref{InteriorSobolev}.

By Lemma \ref{LipGraphical}, we have extended the Lipschitz graph to all of $\hat B_1^n$ with Lipschitz constant bounded by  $C(b,l)$. Moreover, the measure of the set of bad points is bounded by $\mathcal H^{n+1}(\hat B_1^n\setminus\Pi(A_{\e,l}))\leq C(\int_{-4}^4\int_{B_2^{n+1}}(1-\nu_{n+1}^2)\e|\nabla u^\e|^2 dxdt)$. 

Next consider the bounds on the good sets $A_{\e,l}$:
\begin{align*}
&\int_{-4}^4\int_{A_{\e,l}\cap B_2^{n+1}}(1-\nu_{n+1}^2)\e|\nabla u^\e|^2 dxdt\\
&=\int_{-1}^1\int_{-4}^4\int_{B_2^{n+1}\cap\{u=s\}}(1-\nu_{n+1}^2)\e|\nabla u^\e| d\mathcal H^n\mres\{u=s\}dtds\\
&=\int_{-1}^1\int_{-4}^4\int_{\Pi(A_{\e,l})\cap\hat B_2^n}\left(1-\frac{1}{1+|\hat\nabla h^{\e,s}|^2}\right)\e|\nabla u^\e|\sqrt{1+|\hat\nabla h^{\e,s}|^2} d\hat xdtds\\
&\geq C(b)\int_{-1}^1\int_{-4}^4\int_{\Pi(A_{\e,l})\cap\hat B_2^n}\left(1-\frac{1}{1+|\hat\nabla h^{\e,s}|^2}\right)\sqrt{1+|\hat\nabla h^{\e,s}|^2} d\hat xdtds\\
&\geq \tilde C(b,l)\int_{-1}^1\int_{-4}^4\int_{\hat B_2^n}|\hat\nabla h^{\e,s}|^2 d\hat xdtds,
\end{align*}
where we used that in the good set $A_{\e,l}$, we have gradient lower bounds depending only on $b$ by parabolic regularity and compactness as argued before in the proof of interior Sobolev inequality Proposition \ref{InteriorSobolev}.
\end{proof}

We also have the following estimate as a corollary of Proposition 4.1 of \cite{tonegawa2003integrality}. This shows the excess is arbitrarily small on the set where the values of $u^\e$ are close to $\pm1$.
\begin{lemma}\label{LevelSetCloseTo1}
For any $\sigma>0$, there exists $b\in(0,1),\e_0>0$ sufficiently small such that
\begin{align*}
\iint_{\{|u^\e|\geq 1-b\}\cap P_1}\e|\nabla u|^2dxdt&\leq \sigma,
\end{align*}
for $\e\leq \e_0$.
\end{lemma}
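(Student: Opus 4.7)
The plan is to exploit the non-positive discrepancy \eqref{eqn_discrepancy} together with the coarea formula to reduce the estimate to a weight integral on the slice variable $s$ near $s=\pm 1$, which vanishes. Since $\tfrac{\e|\nabla u^\e|^2}{2}\leq \tfrac{W(u^\e)}{\e}$ is equivalent to $\e|\nabla u^\e|\leq \sqrt{2W(u^\e)}$, multiplying by $|\nabla u^\e|$ yields the pointwise bound $\e|\nabla u^\e|^2\leq \sqrt{2W(u^\e)}\,|\nabla u^\e|$. Applying the coarea formula slicewise in time,
\begin{align*}
\iint_{\{|u^\e|\geq 1-b\}\cap P_1}\e|\nabla u^\e|^2\,dxdt \leq \int_{\{|s|\geq 1-b\}}\sqrt{2W(s)}\int_{-1}^{1}\mathcal H^n\bigl(\{u^\e(\cdot,t)=s\}\cap B_1\bigr)\,dt\,ds.
\end{align*}

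The crux is a uniform bound $\mathcal H^n(\{u^\e(\cdot,t)=s\}\cap B_1)\leq K$, independent of $s\in[-1,1]$, $t\in[-1,1]$, and $\e$ sufficiently small. For $s$ in a fixed interior range $[-1+b_0,1-b_0]$ this is immediate from Lemma \ref{LipGraphical}: the level set is a Lipschitz graph $x_{n+1}=h^{\e,s}(\hat x,t)$ over $\hat B_1$, with slope controlled by the tilt excess (small by Lemma \ref{lem_excess}), so its area is uniformly bounded. For $|s|>1-b_0$ the level set lies outside the transition region; here I will use the exponential decay Lemma \ref{ExponentialDecay}, which says that for any $h>0$ and $\e$ small, $\sup_{|x_{n+1}|\geq h}(1-|u^\e|^2)\leq e^{-C(h)/\e^2}$. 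Fixing $h$ small, this confines $\{u^\e(\cdot,t)=s\}$ with $|s|$ close to $1$ inside the thin strip $\{|x_{n+1}|\leq h\}$, and together with Lemma \ref{LInftyBound} and the single-sheet property Lemma \ref{OnlyOnePoint} one obtains a single graphical level set of bounded area.

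Given the uniform bound $K$, the estimate reduces to
\[
2K\int_{1-b}^{1}\sqrt{2W(s)}\,ds,
\]
which vanishes as $b\to 0$ (being $O(b^2)$ for $W(u)=(1-u^2)^2/4$, and in general vanishing since $W(\pm 1)=0$). Choosing $b$ so that this weight integral is $<\sigma/(2K)$, and then $\e_0$ small enough for the preceding level-set analysis to apply, gives the conclusion.

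The main obstacle is the uniform area bound for level sets with $|s|$ close to $1$, because the Lipschitz graph machinery of Lemma \ref{LipGraphical} is only stated for the interior range $s\in(-1+b_0,1-b_0)$. The fix is to combine the exponential decay Lemma \ref{ExponentialDecay} with the $L^\infty$-estimate Lemma \ref{LInftyBound} to trap these ``near-boundary'' level sets in an arbitrarily thin neighbourhood of the nodal set, where the graphical representation is inherited from the interior analysis at $s=0$.
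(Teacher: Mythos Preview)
Your reduction via the discrepancy inequality and the coarea formula is correct up to the displayed inequality
\[
\iint_{\{|u^\e|\geq 1-b\}\cap P_1}\e|\nabla u^\e|^2\,dxdt \leq \int_{\{|s|\geq 1-b\}}\sqrt{2W(s)}\int_{-1}^{1}\mathcal H^n\bigl(\{u^\e(\cdot,t)=s\}\cap B_1\bigr)\,dt\,ds,
\]
but the claimed uniform bound $\mathcal H^n(\{u^\e(\cdot,t)=s\}\cap B_1)\leq K$ for $|s|$ close to $1$ is a genuine gap. Every lemma you invoke for this step---Lemma~\ref{LipGraphical}, Lemma~\ref{OnlyOnePoint}, Lemma~\ref{LInftyBound}---is stated and proved only for $s$ in a fixed interior range $(-1+b_0,1-b_0)$, and each of them rests on a compactness argument producing a lower bound $|\partial_{x_{n+1}}u^\e|\geq c(b_0)>0$. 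This lower bound degenerates as $|s|\to 1$ (indeed $|\nabla u^\e|\to 0$ there), so the graph representation is lost and the level set may fail to be single-sheeted. Lemma~\ref{ExponentialDecay} does confine $\{u^\e=s\}$ to the strip $\{|x_{n+1}|\leq h\}$, but only provided $1-s^2>e^{-C(h)/\e^2}$; for $s$ closer to $\pm 1$ than this exponential threshold you obtain no confinement, and in any case confinement to a strip does not by itself bound $\mathcal H^n$ without monotonicity of $u^\e$ in $x_{n+1}$, which you have not established outside the interior range.

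The paper avoids this difficulty entirely: it simply quotes \cite[Proposition~4.1]{tonegawa2003integrality}, which gives directly
\[
\int_{B_1\cap\{|u^\e|\geq 1-b\}}\frac{W(u^\e)}{\e}\,dx\leq \sigma
\]
for $b,\e$ small, and then applies the discrepancy inequality $\tfrac{\e|\nabla u^\e|^2}{2}\leq \tfrac{W(u^\e)}{\e}$ pointwise and integrates in time. Tonegawa's argument does not go through level-set area; it uses a monotonicity-type estimate for the potential energy. If you want a self-contained argument along your lines, you would need an \emph{integrated} bound on $\int_{\{|s|\geq 1-b\}}\sqrt{2W(s)}\,\mathcal H^n(\{u^\e=s\}\cap B_1)\,ds$ rather than a pointwise one---but undoing the coarea formula this is exactly $\int_{\{|u^\e|\geq 1-b\}\cap B_1}\sqrt{2W(u^\e)}\,|\nabla u^\e|\,dx$, and bounding that quantity is essentially equivalent to the original problem.
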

\begin{proof}
The hypotheses allow us to apply \cite[Proposition 4.1]{tonegawa2003integrality}, and obtain the estimate
\begin{align*}
\int_{B_1(0)\times\{t\}\cap \{|u|\geq 1-b\}} \frac{W(u)}{\e}dx \leq \sigma.
\end{align*}
Applying the discrepancy inequality, \eqref{eqn_discrepancy} $ \frac{\e |\nabla u|^2}{2} \leq \frac{W(u)}{\e}$ and integrating the resulting estimate in time, we get
\begin{align*}
\iint_{\{|u^\e|\geq 1-b\}\cap P_1}\e|\nabla u|^2dxdt&\leq \sigma \leq \iint_{ \{|u|\geq 1-b\}\cap P_1} \frac{W(u)}{\e}dxdt \leq \sigma
\end{align*}
as desired.

\end{proof}

We will also need uniform estimates of $\frac{\partial h^{\e,s}}{\partial s}$ in a good set $E^\e$, together with control on the measure of the complement of this set.
\begin{lemma}\label{AUniformEstimate}
For a fixed $b\in(0,1)$, there exists a subset $E^\e\subset\Pi(A^\e)$ with 
\begin{align*}
\mathcal H^{n+1}(\Pi(A^\e)\setminus E^\e)\leq C\iint_{P_1}(1-\nu_{n+1}^2)\e|\nabla u|^2 dxdt
\end{align*}
such that the following holds : for any $s\in(-1+b,1-b)$ and $(x,t)\in\Pi^{-1}(E^\e)\cap A^\e\subset\mathbb R^{n+1}\times\mathbb R$ with $u^\e(x_0,t_0)=s$, we have
\begin{align*}
\e\left(\frac{\partial h^s}{\partial s}(x,t)\right)^{-1}=g'(g^{-1}(s))+o_\e(1).
\end{align*}
\end{lemma}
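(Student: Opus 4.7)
The plan is to define $E^\e$ by a parabolic Hardy--Littlewood maximal-function truncation of the vertically integrated tilt excess, and then run a blow-up at the natural scale $\e$. Setting
\[
\tilde F^\e(\hat x,t) := \int_{-1}^{1}(1-\nu_{n+1}^{2})\,\e|\nabla u^\e|^{2}\,dx_{n+1}
\]
and letting $M$ denote the parabolic Hardy--Littlewood maximal operator on $(\hat x,t)\in\mathbb R^n\times\mathbb R$ (as already used in Section~\ref{sec_ParabolicLipschitz}), I put
\[
E^\e := \bigl\{(\hat x,t)\in \Pi(A^\e)\,:\, M\tilde F^\e(\hat x,t) < \sigma\bigr\}
\]
for a small constant $\sigma>0$ to be fixed below. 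The weak $(1,1)$ maximal inequality together with Fubini yields
\[
\mathcal H^{n+1}\bigl(\Pi(A^\e)\setminus E^\e\bigr)\;\leq\;\frac{C}{\sigma}\iint_{P_1}(1-\nu_{n+1}^{2})\,\e|\nabla u^\e|^{2}\,dxdt,
\]
which gives the stated measure bound after absorbing $C/\sigma$ into the constant.

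Next, fix $(x_0,t_0)\in \Pi^{-1}(E^\e)\cap A^\e$ with $u^\e(x_0,t_0)=s\in(-1+b,1-b)$ and perform the parabolic blow-up $v^\e(y,\tau):= u^\e(x_0+\e y,\,t_0+\e^{2}\tau)$, which solves the unit-scale equation \eqref{eqn_ACF1}. By \eqref{FunctionAndLevelSet},
\[
\e\Bigl(\tfrac{\partial h^{\e,s}}{\partial s}\Bigr)^{-1}(x_0,t_0)\;=\;\e\,\partial_{x_{n+1}} u^\e(x_0,t_0)\;=\;\partial_{y_{n+1}} v^\e(0,0),
\]
so it suffices to show $\partial_{y_{n+1}} v^\e(0,0)=g'(g^{-1}(s))+o_\e(1)$ uniformly over such base points. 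The hypothesis $M\tilde F^\e<\sigma$ translates, after rescaling, into a tilt-excess density bound of order $\sigma$ for $v^\e$ on every parabolic cylinder $P_R$ with $R\in(0,1/\e)$.

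I argue by contradiction: assume there exist $\delta>0$ and sequences $\e_i\to 0$, $(x_i,t_i)\in\Pi^{-1}(E^{\e_i})\cap A^{\e_i}$ with $s_i:=u^{\e_i}(x_i,t_i)\to s_\infty\in[-1+b,1-b]$ satisfying $|\partial_{y_{n+1}}v^{\e_i}(0,0)-g'(g^{-1}(s_i))|\geq\delta$. Standard parabolic interior regularity for \eqref{eqn_ACF1}, applied to the uniformly bounded sequence $\{v^{\e_i}\}$, produces a subsequential $C^{2}_{\mathrm{loc}}(\mathbb R^{n+1}\times\mathbb R)$-limit $v^\infty$ which is an eternal entire solution of \eqref{eqn_ACF1} with $v^\infty(0,0)=s_\infty$, $|v^\infty|\leq 1$, and tilt-excess density bounded by a multiple of $\sigma$ at every scale. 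The almost-unit density ratio~\eqref{SingleLayer} is transported to $v^\infty$ through the rescaling via the weighted monotonicity formula (Theorem~\ref{thm_WMF}), and the smallness of the height excess~\eqref{SmallHeight} is transported similarly.

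The hard part will be the rigidity step, namely identifying $v^\infty$ as the translated 1-D standing wave $g(y_{n+1}+g^{-1}(s_\infty))$. Here I run a diagonal argument across $\sigma\to 0$ in the definition of $E^\e$: Corollary~\ref{TimeSliceCaccioppoli} applied at every scale to $v^\infty$ bounds the Willmore integrand by the tilt and height excesses, so letting $\sigma\to 0$ forces both $(1-\nu_{n+1}^{2})|\nabla v^\infty|^{2}$ and $(\Delta v^\infty-W'(v^\infty))^{2}$ to vanish identically. Consequently $v^\infty$ depends only on $y_{n+1}$ and is static; the autonomous ODE $v''=W'(v)$ with $|v|\leq 1$ and $v(0)=s_\infty\in(-1+b,1-b)$ admits only the translated standing waves (the constants $\pm 1$ being excluded by $|s_\infty|<1-b$), pinning down $v^\infty(y_{n+1})=g(y_{n+1}+g^{-1}(s_\infty))$. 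Then $C^{2}_{\mathrm{loc}}$ convergence gives $\partial_{y_{n+1}}v^{\e_i}(0,0)\to g'(g^{-1}(s_\infty))=\lim_i g'(g^{-1}(s_i))$, contradicting the standing assumption and completing the proof.
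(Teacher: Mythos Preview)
Your overall plan---maximal-function truncation of the vertically integrated tilt excess, parabolic blow-up at scale $\e$, and rigidity of the limit---matches the paper's, and the weak $(1,1)$ bound for the measure of the complement is correct. The genuine gap is in the rigidity step. With a \emph{fixed} threshold $\sigma$, the blow-up limit $v^\infty$ only inherits tilt-excess density $\leq C\sigma$ at every scale, not zero, and from this alone you cannot conclude that $v^\infty$ is the one-dimensional wave. Your proposed ``diagonal argument across $\sigma\to 0$'' does not make sense as written: the limit $v^\infty$ was extracted from the particular sets $E^{\e_i}$ defined with that $\sigma$, and lowering $\sigma$ changes those sets and hence the contradiction sequence; you cannot vary $\sigma$ after having already passed to the limit. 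Your appeal to Corollary~\ref{TimeSliceCaccioppoli} at this point is also misplaced: that corollary is an asymptotic statement as $\e\to 0$ under the technical assumption~\eqref{TechnicalAssumption}, not an inequality one can apply to a single fixed unit-scale solution $v^\infty$, and in any case it bounds the tilt excess by height and Willmore terms rather than the converse you claim.

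The paper closes this gap by taking the threshold itself to depend on $\e$ and to tend to zero: it sets
\[
E^\e \;=\; \Pi(A^\e)\cap\Bigl\{M f^\e \leq \iint_{P_1}(1-\nu_{n+1}^{2})\,\e|\nabla u|^{2}\,dxdt\Bigr\},
\]
so the threshold equals the total tilt excess, which vanishes as $\e\to 0$ by Lemma~\ref{lem_excess}. Then at any base point in $E^\e$ the rescaled solution $v^\e$ has tilt-excess density bounded by this vanishing quantity at every scale $R\in(0,1/\e)$, hence the $C^1_{\mathrm{loc}}$ limit $v^\infty$ has \emph{identically zero} tilt excess, depends only on $x_{n+1}$, and by the one-dimensional ODE classification together with $|v^\infty(0,0)|\leq 1-b$ must equal $g(x_{n+1}+g^{-1}(s_\infty))$. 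No diagonal argument and no Caccioppoli or Willmore input is needed at this stage. Replacing your fixed $\sigma$ by any $\e$-dependent threshold $\sigma(\e)\to 0$ repairs your argument immediately; the paper's choice is simply the most natural one.
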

\begin{proof}
Let $E^\e=\Pi(A^\e)\cap\{Mf^\e\leq |\iint_{P_1}(1-\nu_{n+1}^2)\e|\nabla u|^2 dxdt|\}$. By properties of the Hardy-Littlewood maximal function, we have
\begin{align*}
\mathcal H^{n+1}(\Pi(A^\e)\setminus E^\e)&\leq C\iint_{\Pi(A^\e)}f^\e dxdt\\
&\leq C\iint_{P_1}(1-\nu_{n+1}^2)\e|\nabla u|^2 dxdt.
\end{align*}
In the set $E^\e$, we scale $u^\e$ parabolically and get $v^\e(x,t)=u^\e(x_0+\e x,t_0+\e^2t)$ where $v^\e$ satisfies the equation \eqref{eqn_ACF1}. By standard parabolic regularity, we have $v^\e\rightarrow v^\infty$ where $v^\e(x,t)=g(x_{n+1}+g^{-1}(s))$ in $C^1_{loc}$ as $\e\rightarrow0$ since the tilt excess converges to 0. Therefore
\begin{align*}
\e\left(\frac{\partial h^{\e,s}(\hat x_0,t_0)}{\partial s}\right)^{-1}=\e\frac{\partial u^\e(x_0,t_0)}{\partial x_{n+1}}=\frac{\partial v^\e(0,0)}{\partial x_{n+1}}=g'(g^{-1}(s))+o_\e(1).
\end{align*}
\end{proof}

\section{Approximation by heat equation and excess decay}\label{sec_ExcessDecay}

In this section we prove Theorem \ref{thm_ExcessDecay}, and thus we assume all the conditions \eqref{NonTrivialEnd}-\eqref{LayerRepulsion} hold in this section. To prove the theorem, we show the graphical functions $h^s$ (where $|s|\leq 1-b$) of the intermediate level sets are very close to heat equation as $\e\rightarrow0$ when rescaled by the space-time $L^2$ excess. The excess decay then follows from the decay property of linear heat equations. 

This technique is known as tilt-excess decay to prove H\"older regularity and goes back to De Giorgi \cite{DeGiorgi1961}. It has been used in various settings of elliptic and parabolic problems including minimal surfaces, mean curvature flow, elliptic Allen--Cahn equation, etc. \cite{allard1972, Kasai2014, wang2014new}. However, in the Allen--Cahn situation, the excess decay holds only up to $\e$ scale (due to the technical assumption \eqref{TechnicalAssumption}) and as a consequence extra steps are required to obtain H\"older regularity.

The proof of the excess decay theorem will be conducted at the end of this section, where we will assume on the contrary that:

After passing to a subsequence $\e_m$ of $\e$, there exists a sequence of solutions $u^{\e_m}(x,t)$ to the parabolic Allen--Cahn equation \eqref{eqn_ACFe} with $\e_m\rightarrow0$ in the parabolic cylinder $P_1\subset\mathbb R^{n+1}\times\mathbb R$. The energy density of this sequence in the parabolic cylinder satisfies
\begin{align}\label{SequenceDensityGoingTo1}
\mu^{\e_m}(P_1)\leq \alpha\omega_n(1+\tau_m), \tau_m\rightarrow0.
\end{align}
The height excess of this sequence in the parabolic cylinder satisfies
\begin{align}\label{SequenceHeightGoingTo0}
\iint_{P_1}x_{n+1}^2\e_m|\nabla u^{\e_m}|^2dxdt&\leq \frac{1}{m}.
\end{align}
And that for any $T\in G(n+1,n)$ with normal vector $e\in\mathbb S^{n}$ satisfying 
\begin{align}\label{NormalVectorConverging}
\|e-e_{n+1}\|\leq C\left (\iint_{P_1}x_{n+1}^2\e|\nabla u^{\e_m}|^2dxdt\right )^\frac{1}{2}
\end{align}
(here the uniform $C$ is determined by \eqref{PointwiseGradient}), the following inequality holds
\begin{align}\label{ExcessDecayNotHold}
\theta^{-n-4}\iint_{P_{\theta}}\mathrm{dist}(x,A)^2\e|\nabla u^{\e_m}|^2dxdt&>\frac{\theta}{2}\iint_{P_1}x_{n+1}^2\e|\nabla u^{\e_m}|^2dxdt,
\end{align}
where $A\in A(n,n-1)$ is parallel to $T$ and that $\|A-\{x_{n+1}=0\}\|\leq \frac{1}{m}$.

A contradiction will then be derived assuming the above conditions \eqref{SequenceDensityGoingTo1}-\eqref{SequenceHeightGoingTo0}.

\subsection{$L^2-L^\infty$ estimate}

Before considering blow-up analysis, we state the relations between the space-time $L^2$ norm of tilt excess, space-time $L^2$ norm of height excess and the energy difference of time slices to flat solutions and show they are comparable. 

Under the conditions \eqref{SequenceDensityGoingTo1}-\eqref{SequenceHeightGoingTo0}, we already know that the sequence of energy measures satisfies $d\mu^\e\rightarrow\alpha d\mu$ where $d\mu$ is a static mean curvature flow of flat planes, with unit density. Thus for small enough $\e$ in the sequence (for simplicity we dropped the subscript $m$ here), the conditions \eqref{ParabolicCaccioppoliHeightCondition}-\eqref{eqn_FinalMass} in Theorem \ref{ParabolicCaccioppoli} are all satisfied, and thus
\begin{align}\label{L2LInfty}
\sup_{-\frac{1}{2}\leq t\leq \frac{1}{2}}\left|\mu(\phi_T^2)-\alpha\int_T\phi_T^2d\mathcal H^n\right|&\leq K\sup_{-1\leq t\leq 1}\int_{C_1}x_{n+1}^2\e|\nabla u^\e(\cdot,t)|^2 dx,\\\label{L2LInfty2}
\int_{-\frac{1}{2}}^{\frac{1}{2}}\int_{C_1}\phi_T^2\e\left(\Delta u^\e-\frac{W'(u^\e)}{\e^2}\right)^2 dxdt&\leq 12K\sup_{-1\leq t\leq 1}\int_{C_1}x_{n+1}^2\e|\nabla u^\e(\cdot,t)|^2 dx,\\\label{L2LInfty3}
\int_{-\frac{1}{2}}^{\frac{1}{2}}\left|\mu(\phi_{T,\frac{1}{2}}^2)-\alpha\int_T\phi_{T,\frac{1}{2}}^2d\mathcal H^n\right| dt&\leq K\sup_{-1\leq t\leq 1}\int_{C_1}x_{n+1}^2\e|\nabla u^\e(\cdot,t)|^2 dx,\\\label{L2LInfty4}
\sup_{-1\leq t\leq 0} \int_{C_1}x_{n+1}^2\e|\nabla u^\e(\cdot,t)|^2 dx&\leq C\int_{-2}^0\int_{C_2}x_{n+1}^2\e|\nabla u^\e|^2 dxdt.
\end{align}

The inequalities \eqref{L2LInfty},\eqref{L2LInfty2} and \eqref{L2LInfty3} were proved in Theorem \ref{ParabolicCaccioppoli}.  The inequality \eqref{L2LInfty4} is \eqref{eqn_L2Linfty}, which is a $L^2$-$L^\infty$ type estimate (c.f. 6.4 of \cite{Kasai2014} and 4.26 of \cite{Ecker2004}). 

\begin{proposition}[{\cite[6.3]{wang2014new}}]\label{PoincareTypeInequality}
Let us assume the technical assumption \eqref{TechnicalAssumption}.Then there exists a $\lambda\in\mathbb R$ (to be chosen later in the subsection carrying out excess decay estimates), so that
\begin{align*}
\int_{C_1}(x_{n+1}-\lambda)^2\e|\nabla u^\e(\cdot,t)|^2 dx\leq C\int_{C_1}(1-\nu_{n+1}^2)\e|\nabla u|^2 dx.
\end{align*}
\end{proposition}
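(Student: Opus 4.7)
The plan is to adapt the elliptic Allen--Cahn argument of \cite[6.3]{wang2014new} to a single time slice of the parabolic equation, using the Lipschitz graphical approximation of the level sets established in Section~\ref{sec_ParabolicLipschitz} together with the classical Poincar\'e inequality applied to the graphing functions. Fix a time $t$ and, for each $s\in(-1+b,1-b)$, represent the level set $\{u^\e(\cdot,t)=s\}\cap C_1$ as a Lipschitz graph $x_{n+1}=h^{\e,s}(\hat x,t)$ on the projection of the good set $\Pi(A_{\e,l})$ via Lemma~\ref{LipGraphical}, with the complementary bad set having measure controlled by the tilt excess through Lemma~\ref{lem_SetWeakL1}. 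A natural candidate for $\lambda$ is the mean height of a reference level set,
\begin{align*}
\lambda=\frac{1}{|\hat B_1|}\int_{\hat B_1}h^{\e,0}(\hat x,t)\,d\hat x,
\end{align*}
which by Lemma~\ref{LInftyBound} remains small as the excess tends to zero.

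Next I would decompose the left-hand side via the co-area formula as
\begin{align*}
\int_{C_1}(x_{n+1}-\lambda)^2\e|\nabla u^\e|^2\,dx=\int_{-1}^{1}\int_{\{u^\e=s\}\cap C_1}(x_{n+1}-\lambda)^2\,\e|\nabla u^\e|\,d\mathcal H^n\,ds,
\end{align*}
split each slice into its intersections with $A_{\e,l}$ and $B_{\e,l}$, and on the good part parametrize by $h^{\e,s}$. The classical Poincar\'e inequality applied on $\hat B_1$ to the Lipschitz function $h^{\e,s}(\cdot,t)$ gives
\begin{align*}
\int_{\hat B_1}(h^{\e,s}-\bar h^{\e,s})^2\,d\hat x\leq C\int_{\hat B_1}|\hat\nabla h^{\e,s}|^2\,d\hat x,
\end{align*}
where $\bar h^{\e,s}$ denotes the mean over $\hat B_1$, and the time-slice analogue of Proposition~\ref{TiltAndGradient} converts the right-hand side into $\int_{C_1}(1-\nu_{n+1}^2)\e|\nabla u^\e|^2\,dx$. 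To upgrade $\bar h^{\e,s}$ to the $s$-independent constant $\lambda=\bar h^{\e,0}$ one writes $\bar h^{\e,s}-\bar h^{\e,0}=\int_0^s\partial_\sigma\bar h^{\e,\sigma}\,d\sigma$ and exploits Lemma~\ref{AUniformEstimate}, which on the sub-good set $E^\e$ gives $\e(\partial h^{\e,s}/\partial s)^{-1}=g'(g^{-1}(s))+o_\e(1)$; the complement $\Pi(A_{\e,l})\setminus E^\e$ has measure bounded by the tilt excess, so its contribution is absorbed on integration in $s$.

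The contributions from the bad set $B_{\e,l}$ and from the transition region $\{|u^\e|\geq 1-b\}$ are handled by combining the uniform $L^\infty$ bound on $x_{n+1}-\lambda$ from Lemma~\ref{LInftyBound} with the measure estimate of Lemma~\ref{lem_SetWeakL1} and the energy bound of Lemma~\ref{LevelSetCloseTo1}; in each case the factor $(x_{n+1}-\lambda)^2$ is bounded by a small constant while the remaining energy integral is itself dominated by the tilt excess.

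The main obstacle is the mismatch between the $s$-dependent mean $\bar h^{\e,s}$ naturally produced by the Poincar\'e inequality on each slice and the single constant $\lambda$ required by the statement; closing this gap without losing the tilt-excess bound requires the uniform derivative estimate in Lemma~\ref{AUniformEstimate} and the technical assumption \eqref{TechnicalAssumption} so that the $\e$-scale fluctuations of $\partial_s h^{\e,s}$ are controlled and absorbed into the tilt excess after the $s$-integration.
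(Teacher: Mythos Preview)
The paper does not supply its own proof of this proposition; it simply cites \cite[6.3]{wang2014new}, so there is nothing in the paper to compare against beyond that citation. Your reconstruction follows the same architecture as the elliptic argument in \cite{wang2014new}: co-area formula, Lipschitz graphical representation of level sets, classical Poincar\'e on each graphing function $h^{\e,s}$, and absorption of the $s$-dependence of the mean via $|h^{\e,s_1}-h^{\e,s_2}|=O(\e)$ together with the technical assumption \eqref{TechnicalAssumption} to kill the resulting $O(\e^2)$ error against the tilt excess. That is the correct strategy.

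Two points to tighten. First, the lemmas you invoke from Section~\ref{sec_ParabolicLipschitz} (Lemma~\ref{lem_SetWeakL1}, Lemma~\ref{LipGraphical}, Proposition~\ref{TiltAndGradient}, Lemma~\ref{AUniformEstimate}) are stated in space-time with a parabolic maximal function; for a fixed time slice you need their purely spatial analogues, which are exactly what \cite{wang2014new} provides and which apply to $u^\e(\cdot,t)$ without using the equation. Second, your treatment of the region $\{|u^\e|\geq 1-b\}$ is not quite right: Lemma~\ref{LevelSetCloseTo1} bounds the energy there by an arbitrarily small constant $\sigma$, not by the tilt excess. The clean way to handle it is to split this region into $\{|x_{n+1}|\geq h\}$, where Lemma~\ref{ExponentialDecay} gives an $e^{-C/\e^2}=o(\e^2)=o(E)$ contribution, and $\{|x_{n+1}|<h\}\cap\{|u^\e|\geq 1-b\}$; on the latter, one can argue via \cite[Proposition~4.1]{tonegawa2003integrality} (as in the proof of Lemma~\ref{LevelSetCloseTo1}) that for $b$ small the energy is bounded by the discrepancy plus tilt excess, both controlled by Theorem~\ref{Caccioppoli}. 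With these adjustments your outline matches the cited proof.
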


Combining \eqref{L2LInfty}-\eqref{L2LInfty4} and Proposition \ref{PoincareTypeInequality}, we have
\begin{proposition}\label{Comparable}
There exists $C_1,C_2,C_3,C_4>0$ depending only on then dimension $n$ such that
\begin{align*}
\mathbb H(P_1)\leq C_1 E(P_1)\leq C_2 \mathcal W(P_1)\leq C_3 \left|\mu(P_1)-2\alpha\omega_n\right|\leq C_4 \mathbb H(P_1).
\end{align*}
\end{proposition}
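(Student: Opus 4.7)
The plan is to establish the four inequalities as a cyclic chain, where each link uses a distinct ingredient from the preceding sections, and the circularity itself forces the absorption of lower-order error terms.

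First I would prove the link $|\mu(P_1)-2\alpha\omega_n|\leq C_4\mathbb H(P_1)$ as it is the most direct. Under the standing hypotheses \eqref{SequenceDensityGoingTo1}-\eqref{SequenceHeightGoingTo0} the assumptions of Theorem \ref{ParabolicCaccioppoli} are verified for $\e$ small, so the $L^2$-$L^\infty$ type estimate \eqref{L2LInfty3} integrated in time bounds the mean mass difference by the supremum-in-time of the slice height excess, and \eqref{L2LInfty4} then upgrades this supremum to a full space-time integral. A standard covering/rescaling argument converts the cutoff $\phi_{T,1/2}^2$ estimate to one over $P_1$, giving $|\mu(P_1)-2\alpha\omega_n|\leq C\,\mathbb H(P_1)$.

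Next I would tackle $\mathcal W(P_1)\leq C_3|\mu(P_1)-2\alpha\omega_n|$ by integrating the energy inequality \eqref{DerivativeEnergy} between the extremal times of the slice mass. This yields $\mathcal W(P_1) \leq 4[\mu_{t_1}(\phi_T^2)-\mu_{t_2}(\phi_T^2)] + C\,\mathbb H(P_1)$, and since $\mu_t$ hovers near $\alpha\omega_n$ the mass difference is controlled by $|\mu(P_1)-2\alpha\omega_n|$; the residual $\mathbb H$-term is absorbed using the previous link. For $E(P_1)\leq C_2\mathcal W(P_1)$ I would apply the Caccioppoli inequality Theorem \ref{Caccioppoli} with cutoff $\phi(\hat x)\psi(x_{n+1})$ on each time slice and integrate, producing $E\leq C\,\mathbb H+ C\sqrt{\mathbb H\,\mathcal W}$; Young's inequality splits this as $E\leq \tfrac{1}{2}\mathcal W+C\,\mathbb H$, and the $\mathbb H$ term is eliminated by chaining $\mathbb H\leq C|\mu-2\alpha\omega_n|\leq C\,\mathcal W$ through the two links already established.

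For the final link $\mathbb H(P_1)\leq C_1 E(P_1)$ I would apply the Poincar\'e-type Proposition \ref{PoincareTypeInequality} on each time slice, obtaining $\int_{C_1}(x_{n+1}-\lambda(t))^2\e|\nabla u^\e|^2 dx\leq C\int_{C_1}(1-\nu_{n+1}^2)\e|\nabla u^\e|^2 dx$, and integrate in $t$. To convert the centred excess back to $\mathbb H$ I bound $\lambda(t)$ by the $L^1$ mean of $x_{n+1}$ against the energy, which by Cauchy-Schwarz is at most $C\,\mathbb H(P_1)^{1/2}$; expanding $x_{n+1}^2=(x_{n+1}-\lambda)^2+2\lambda x_{n+1}-\lambda^2$ allows the $\lambda^2\mu$ term to be absorbed on the left since $\mu(P_1)$ is essentially unit.

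The main obstacle is the circular absorption: none of the four inequalities is clean in isolation, so the proof must be run once through the chain to establish an a priori weak equivalence, and then a second pass with sharp constants closes the estimates. Technically this requires (a) that the smallness of $\tau_m$ and $m^{-1}$ together with the technical assumption \eqref{TechnicalAssumption} produce uniform, $\e$-independent constants in the Caccioppoli and $L^2$-$L^\infty$ bounds, and (b) that the translation parameter $\lambda(t)$ from the Poincar\'e step and the cutoff-truncation between $P_1$ and $P_{1/2}$ be reconciled without breaking the chain. This is where the unit-multiplicity hypothesis \eqref{SingleLayer} becomes indispensable.
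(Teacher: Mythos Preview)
Your ingredient list matches the paper's one-line proof exactly (the $L^2$--$L^\infty$ estimates \eqref{L2LInfty}--\eqref{L2LInfty4}, the time-slice Caccioppoli inequality, and the Poincar\'e-type Proposition~\ref{PoincareTypeInequality}), but the absorption arguments in your steps~2 and~3 run the inequalities in the wrong direction. In step~2 you want to prove $\mathcal W\leq C|\mu-2\alpha\omega_n|$ and end up with $\mathcal W\leq 4[\mu_{t_1}-\mu_{t_2}]+C\,\mathbb H$; to absorb the $\mathbb H$-term you invoke ``the previous link'', but that link is $|\mu-2\alpha\omega_n|\leq C\,\mathbb H$, which bounds the mass defect \emph{by} $\mathbb H$, not the other way. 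The same reversal occurs in step~3: you write ``chaining $\mathbb H\leq C|\mu-2\alpha\omega_n|\leq C\,\mathcal W$ through the two links already established'', but the two established links give exactly the reverse chain $\mathcal W\leq C|\mu-2\alpha\omega_n|\leq C\,\mathbb H$. Your closing remark about a ``second pass with sharp constants'' does not rescue this, because no listed ingredient ever produces a lower bound of the form $\mathbb H\leq C\,\mathcal W$ or $\mathbb H\leq C|\mu-2\alpha\omega_n|$.

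The paper's route avoids this trap by not attempting the intermediate links $E\leq C\,\mathcal W$ and $\mathcal W\leq C|\mu-2\alpha\omega_n|$ in isolation. It first obtains $\mathcal W\leq C\,\mathbb H$ \emph{directly} from \eqref{L2LInfty2} combined with \eqref{L2LInfty4}; this immediately upgrades the time-integrated Caccioppoli bound $E\leq C\,\mathbb H+C\sqrt{\mathbb H\,\mathcal W}$ to $E\leq C\,\mathbb H$, and the Poincar\'e inequality gives $\mathbb H\leq C\,E$. Together with \eqref{L2LInfty3}+\eqref{L2LInfty4} for the mass defect one gets $\mathbb H\sim E$ and $\mathcal W,\ |\mu-2\alpha\omega_n|\leq C\,\mathbb H$, which is the content the paper actually uses downstream. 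The cyclic chain as displayed should be read as ``all four quantities are comparable'' rather than four independent one-sided bounds; with that reading your strategy and the paper's coincide once the direction error is fixed.
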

By this proposition, the height excess, tilt excess, Willmore term and the energy difference with the flat solution are comparable quantities and we do not distinguish them up to multiplication by universal constant.
\subsection{The blow-up analysis}

In this section we prove the excess decay up to scale $\e$. The idea is to show after appropriate scaling, the intermediate level sets 
\begin{align*}
\{u(\hat x,x_{n+1},t)=s\}=\{(\hat x,h^{s}(\hat x,t),t)\in\mathbb R^{n+1}\times\mathbb R\}
\end{align*}
for $|s|\leq 1-b$ are graphs of $h^s$ which are very well approximated by solutions to the heat equation.

We normalize the graphing function $h^{\e,s}$ and define
\begin{align}
\bar h^{\e,s}:=\frac{h^{\e,s}}{(\iint_{P_1}x_{n+1}^2\e|\nabla u^\e(\cdot,t)|^2 dxdt)^\frac{1}{2}}-\lambda^{\e,s}
\end{align}
where $\lambda^{\e,s}$ is chosen so that $\iint_{P_1}\bar h^{\e,s}=0$. Notice that in the Caccioppoli inequality Theorem \ref{Caccioppoli} in section 5, we can replace $x_{n+1}$ by $x_{n+1}-\lambda$ for arbitrary fixed constant $\lambda$.
\begin{theorem}\label{thm_L2Bound}
As $\e\rightarrow0$, up to passing to subsequences, the function defined above converges in $L^2$ as follows 
\begin{align*}
\bar h^{\e,s}\rightarrow\bar h^\infty \text{in $L^2\left(\hat B_\frac{1}{2}^n\times\left[-\tfrac{1}{2},\tfrac{1}{2}\right]\right)$}
\end{align*}
and $\bar h^\infty$ satisfies the heat equation $\frac{\partial \bar h^\infty}{\partial t}-\hat\Delta\bar h^\infty=0$.
\end{theorem}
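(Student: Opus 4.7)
The plan is to follow the blow-up strategy outlined at the end of Section~3: establish uniform bounds in a parabolic Sobolev space for the normalised functions $\bar h^{\e,s}$, extract a limit, and identify its equation. For step one (uniform bounds), the spatial gradient estimate comes from Proposition~\ref{TiltAndGradient}, which bounds $\int_{-1+b}^{1-b}\iint |\hat\nabla h^{\e,s}|^2\,d\hat xdtds$ by a multiple of $E^\e(P_2)$. By Proposition~\ref{Comparable} the tilt excess is comparable to $\mathbb H^\e(P_1)$, so after dividing by $\mathbb H^\e(P_1)$ we obtain a uniform $L^2$ bound on $\hat\nabla\bar h^{\e,s}$. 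The $L^2$ bound on $\bar h^{\e,s}$ itself follows by combining Proposition~\ref{PoincareTypeInequality} (which dictates the appropriate choice of the constant $\lambda^{\e,s}$) with the co-area identification of $\iint x_{n+1}^2\e|\nabla u^\e|^2$ in terms of $\int (h^{\e,s})^2 g'(g^{-1}(s))$ on the good set $\Pi(A_{\e,l})$, the bad-set error being controlled by Lemma~\ref{lem_SetWeakL1}.

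For step two (temporal regularity), to upgrade the resulting weak convergence to strong $L^2$ convergence I would derive a uniform $L^1_t$ bound on the time-difference quotient of $\bar h^{\e,s}$. Testing the $\e$-Brakke identity \eqref{eqn_BrakkeAllenCahn1} against $\varphi(\hat x)=\phi^2(\hat x)$ and using the Willmore-term bound \eqref{L2LInfty2} together with Corollary~\ref{TimeSliceCaccioppoli} shows that $\tfrac{d}{dt}\int\phi^2\,d\mu^\e_t$ is controlled in $L^1_t$ by a universal multiple of $\mathbb H^\e(P_1)$; translating this to the graphical variable via the co-area representation gives Aubin--Lions-type equicontinuity in time. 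Hence, along a subsequence, $\bar h^{\e,s}\to \bar h^{\infty,s}$ strongly in $L^2(\hat B_{1/2}\times(-1/2,1/2))$. Independence of $s$ is then forced by the layer-repulsion hypothesis \eqref{LayerRepulsion}: Lemma~\ref{AUniformEstimate} gives $\e(\partial h^{\e,s}/\partial s)^{-1}\to g'(g^{-1}(s))$ on a good set, so $|h^{\e,s_1}-h^{\e,s_2}|=O(\e)$, whence $|\bar h^{\e,s_1}-\bar h^{\e,s_2}|=O(\e/\sqrt{\mathbb H^\e(P_1)})\to 0$ since $\mathbb H^\e(P_1)\ge K_1\e^2$. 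Write the common limit as $\bar h^\infty(\hat x,t)$.

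For step three (identifying the heat equation), I test \eqref{eqn_BrakkeAllenCahn1} against $\varphi(x)=\phi(\hat x)\psi(x_{n+1})\,x_{n+1}$, with $\phi\in C_c^\infty(\hat B_{1/2})$ and $\psi\in C_c^\infty((-1,1))$ identically one in a neighbourhood of the origin, and then divide by $\sqrt{\mathbb H^\e(P_1)}$. The Willmore term on the right is bounded in $L^1_t$ by $O(\mathbb H^\e)$ by \eqref{L2LInfty2}, so after normalisation it is $O(\sqrt{\mathbb H^\e})\to 0$. The stress-energy term splits exactly as in \eqref{eqn_ACDivergence} into $-\int(I-\nu\otimes\nu):\nabla^2\varphi\,d\mu^\e+\int\nu\otimes\nu:\nabla^2\varphi\,d\xi^\e$; the discrepancy contribution vanishes by \eqref{eqn_discrepancy} and Lemma~\ref{lem_excess}, while the principal contribution yields, via the co-area representation of $d\mu^\e$ and the fact that $\nu\to e_{n+1}$ in the tilt, the limit $-\alpha\int\phi(\hat x)\hat\Delta\bar h^\infty\,d\hat x$. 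The left-hand side $\tfrac{d}{dt}(\int\varphi\,d\mu^\e_t/\sqrt{\mathbb H^\e})$ similarly converges in distribution to $\alpha\int\phi(\hat x)\partial_t\bar h^\infty\,d\hat x$ by Step~two. Integrating against a test function in $t$ supported in $(-1/2,1/2)$ produces the distributional heat equation $\partial_t\bar h^\infty-\hat\Delta\bar h^\infty=0$.

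The main obstacle will be the careful book-keeping in Step~three: the bad-set errors (controlled by Lemma~\ref{lem_SetWeakL1}) must be shown to vanish after normalisation; the cross terms in $\nabla^2\varphi$ involving $\psi'$ and $\psi''$ must be shown to contribute negligibly, using that the concentration measure of $\mu^\e$ localises near $\{x_{n+1}=0\}$ where $\psi\equiv 1$; and the replacement of $\e\partial_{x_{n+1}}u^\e$ by $g'(g^{-1}(s))$ inside the co-area representation requires Lemma~\ref{AUniformEstimate} on a subset whose complement has measure $o(1)$. Once these technical points are handled, identification of the distributional heat equation and the desired $L^2$ convergence follow.
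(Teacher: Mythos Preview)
Your overall strategy aligns with the paper's: uniform spatial $W^{1,2}$ bounds from Proposition~\ref{TiltAndGradient} and Proposition~\ref{Comparable}, $s$-independence of the limit from the bound $|h^{\e,s_1}-h^{\e,s_2}|=O(\e)$ combined with \eqref{LayerRepulsion}, and identification of the heat equation by testing \eqref{eqn_BrakkeAllenCahn2} with $\eta=\phi(\hat x,t)\psi(x_{n+1})(x_{n+1}-\lambda)$. Your list of technical obstacles in Step~3 (the $\psi',\psi''$ cross terms killed by Lemma~\ref{ExponentialDecay}, the bad-set errors, the replacement $\e\,\partial_{x_{n+1}}u^\e\to g'(g^{-1}(s))$ via Lemma~\ref{AUniformEstimate}) matches the content of the paper's two Claims almost exactly.

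There is, however, a slip in Step~2. Testing Brakke with $\varphi=\phi^2(\hat x)$ controls $\tfrac{d}{dt}\int\phi^2\,d\mu^\e_t$, but under the co-area formula this integral is, to leading order, $\alpha\int\phi^2\,d\hat x$ --- a constant --- with next-order correction $\tfrac{1}{2}\int\phi^2|\hat\nabla h|^2$; it carries no first-order information about $h$ itself and therefore cannot yield time-equicontinuity of $\bar h^{\e,s}$. For an Aubin--Lions argument you need a test function carrying a factor of $x_{n+1}$ (for instance the same $\eta$ as in Step~3), so that the co-area representation produces a weighted $\int\phi\,h^{\e,s}\,d\hat x$ whose time derivative is then bounded after normalisation. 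The paper handles compactness more tersely: it simply invokes Rellich on a space-time $W^{1,2}$ bound. That is legitimate once one observes that the Willmore bound \eqref{L2LInfty2} together with \eqref{FunctionAndLevelSet} also controls $\int_{-1+b}^{1-b}\iint|\partial_t h^{\e,s}|^2\,d\hat x\,dt\,ds$ on the good set by $C(b)\mathcal W^\e\le C(b)\mathbb H^\e$, though the paper does not spell this out.
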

\begin{proof}
First, by the estimates in Proposition \ref{TiltAndGradient}, the graphing functions $\bar h^{\e,s}$ are bounded in $W^{1,2}$. Thus by Rellich compactness theorem, we can extract a subsequence so that $\bar h^{\e,s}$ converges strongly in $L^2$ to $\bar h^\infty$. Moreover, by the construction of the graphical function $h$ (see the extension of definition of $h$ from $\Pi(A)$ to $\Pi(P_1)$ after Lemma \ref{LipGraphical}), we have
\begin{align*}
c(b)^{-1}\e\leq \frac{\partial h^{\e,s}}{\partial s}=\left(\frac{\partial u^\e}{\partial x_{n+1}}\right)^{-1}\leq c(b)\e.
\end{align*}
Thus
\begin{align*}
|h^{\e,s_1}-h^{\e,s_2}|\leq 2c(b)\e,
\end{align*}
and
\begin{align*}
\iint_{P_1}|h^{\e,s_1}-h^{\e,s_2}|^2 dxdt\leq C(b)\e^2=o\left(\iint_{P_1}(1-\nu_{n+1}^2)\e|\nabla u^\e|^2 dxdt\right),
\end{align*}
for $s_1,s_2\in(-1+b,1-b)$, where the last inequality follows because of the technical assumption \eqref{TechnicalAssumption}.

Since we can compare the height excess and the tilt excess by Proposition \ref{Comparable}, we have
\begin{align*}
\iint_{P_1}|\bar h^{\e,s_1}-\bar h^{\e,s_2}|^2 dxdt&=\frac{\iint_{P_1}|h^{\e,s_1}-h^{\e,s_2}|^2 dxdt}{(\iint_{P_1}x_{n+1}^2\e|\nabla u^\e(\cdot,t)|^2 dxdt)}\\
&=O\left(\frac{\iint_{P_1}|h^{\e,s_1}-h^{\e,s_2}|^2 dxdt}{\iint_{P_1}(1-\nu_{n+1}^2)\e|\nabla u^\e|^2 dxdt}\right)=o(1)\rightarrow0,
\end{align*}
and the limit $\bar h^\infty$ is independent of $s\in(-1+b,1-b)$. Next, let us choose the following cylindrical test function
\begin{equation*}
\eta(\hat x,x_{n+1},t)=\phi(\hat x,t)\psi(x_{n+1})(x_{n+1}-\lambda),
\end{equation*}
where $\phi\in C_0^\infty(\mathbb R^{n}\times\mathbb R)$ is a space--time test function that depends only on the first $n$ spatial variables $(x_1,...,x_n)$ and time $t$, and $\psi\in C_0^\infty(-1,1)$, with $\psi(\xi)\equiv1 \text{ for $\xi\in(-\frac{1}{2},\frac{1}{2})$}$. We calculate
\begin{align}\label{DerivativeEta}
\nabla\eta&=\hat\nabla\phi\psi\cdot(x_{n+1}-\lambda)+\phi\psi'\cdot(x_{n+1}-\lambda)\vec{e}_{n+1}+ \phi\psi\vec{e}_{n+1},\\\nonumber
\nabla^2\eta&=\hat\nabla^2\phi\psi\cdot(x_{n+1}-\lambda)+2\psi'\cdot(x_{n+1}-\lambda)\hat\nabla\phi\otimes\vec{e}_{n+1}+ 2\psi\hat\nabla\phi\otimes\vec{e}_{n+1},\\\nonumber
&+\phi\psi''(x_{n+1}-\lambda)\vec{e}_{n+1}\otimes\vec{e}_{n+1}+ 2\phi\psi'\vec{e}_{n+1}\otimes\vec{e}_{n+1},\\\nonumber
\Delta\eta&=\hat\Delta\phi\psi(x_{n+1}-\lambda)+\phi\psi''+2\phi\psi'\\\nonumber
\partial_t\eta&=\partial_t\phi\psi\cdot(x_{n+1}-\lambda).
\end{align}
Substituting our test function $\eta$ into the the time dependent integral form of the $\e$-Brakke identity \eqref{eqn_BrakkeAllenCahn2}, we get
\begin{align*}
&\int\eta d\mu^\e_{t_2}-\int\eta d\mu^\e_{t_1}\\
&=\int_{t_1}^{t_2}\int-\e\eta\left(\Delta u-\frac{W'(u)}{\e^2}\right)^2 dxdt-\int_{t_1}^{t_2}\int\e\langle\nabla\eta,\nabla u\rangle\left(\Delta u^\e-\frac{W'(u^\e)}{\e^2}\right) dxdt+\int_{t_1}^{t_2}\int\partial_t\eta d\mu^\e_t\\
&=\int_{t_1}^{t_2}\int-\e\eta\left(\Delta u-\frac{W'(u)}{\e^2}\right)^2 dxdt+\int_{t_1}^{t_2}\int\left(\e\nabla u\otimes\nabla u-\left(\frac{\e|\nabla u|^2}{2}+ \frac{\e W(u)}{2}\right) I\right):\nabla^2\eta dxdt\\
&+\int_{t_1}^{t_2}\int\partial_t\eta d\mu^\e_t,\\
&=\int_{t_1}^{t_2}\int-\e\eta\left(\Delta u-\frac{W'(u)}{\e^2}\right)^2 dxdt+\int_{t_1}^{t_2}\int\e(\nabla u\otimes\nabla u):\nabla^2\eta  dxdt\\
&-\int_{t_1}^{t_2}\int\left(\frac{\e|\nabla u|^2}{2}+ \frac{W(u)}{\e}\right)\Delta\eta dxdt
+\int_{t_1}^{t_2}\int\partial_t\eta d\mu^\e_t.
\end{align*}
And thus by moving the Willmore term $\left(\int_{t_1}^{t_2}\int\e\eta\left(\Delta u-\frac{W'(u)}{\e^2}\right)^2 dxdt\right)$ to the left hand side and expand the derivative terms using computations in \eqref{DerivativeEta}, we have
\begin{align}\label{EpsilonHeat}
&\int\eta d\mu^\e_{t_2}-\int\eta d\mu^\e_{t_1}+\int_{t_1}^{t_2}\int\e\eta\left(\Delta u-\frac{W'(u)}{\e^2}\right)^2 dxdt\\\nonumber
&=\int_{t_1}^{t_2}\int\e\sum_{i,j=1}^n\frac{\partial^2\phi}{\partial x_{i}\partial x_{j}}\frac{\partial u}{\partial x_i}\frac{\partial u}{\partial x_j}\psi(x_{n+1}-\lambda) dxdt+2\int_{t_1}^{t_2}\int\e\sum_{i=1}^n\frac{\partial\phi}{\partial x_i}\frac{\partial u}{\partial x_i}\frac{\partial u}{\partial x_{n+1}}\psi'(x_{n+1}-\lambda) dxdt\\\nonumber
&+2\int_{t_1}^{t_2}\int\e\sum_{i=1}^n\frac{\partial\phi}{\partial x_i}\frac{\partial u}{\partial x_i}\frac{\partial u}{\partial x_{n+1}}\psi dxdt+\int_{t_1}^{t_2}\int\phi\psi''(x_{n+1}-\lambda)\e\left|\frac{\partial u}{\partial x_{n+1}}\right|^2 dxdt\\\nonumber
&+2\int_{t_1}^{t_2}\int\phi\psi'\e\left|\frac{\partial u}{\partial x_{n+1}}\right|^2 dxdt-\int_{t_1}^{t_2}\int(\phi\psi''(x_{n+1}-\lambda)+2\phi\psi')\left(\frac{\e|\nabla u|^2}{2}+ \frac{W(u)}{\e}\right) dxdt\\\nonumber
&-\int_{t_1}^{t_2}\int\hat\Delta\phi\psi\cdot(x_{n+1}-\lambda)\left(\frac{\e|\nabla u|^2}{2}+ \frac{W(u)}{\e}\right) dxdt\\\nonumber
&+\int_{t_1}^{t_2}\int\partial_t\phi\psi\cdot(x_{n+1}-\lambda)\left(\frac{\e|\nabla u|^2}{2}+ \frac{W(u)}{\e}\right) dxdt\\\nonumber
&=\int_{t_1}^{t_2}\int\e\sum_{i,j=1}^n\frac{\partial^2\phi}{\partial x_{i}\partial x_{j}}\frac{\partial u}{\partial x_i}\frac{\partial u}{\partial x_j}\psi(x_{n+1}-\lambda) dxdt+2\int_{t_1}^{t_2}\int\e\sum_{i=1}^n\frac{\partial\phi}{\partial x_i}\frac{\partial u}{\partial x_i}\frac{\partial u}{\partial x_{n+1}}\psi'(x_{n+1}-\lambda) dxdt\\\nonumber
&+\int_{t_1}^{t_2}\int\phi\psi''(x_{n+1}-\lambda)\e\left|\frac{\partial u}{\partial x_{n+1}}\right|^2 dxdt+2\int_{t_1}^{t_2}\int\phi\psi'\e\left|\frac{\partial u}{\partial x_{n+1}}\right|^2 dxdt\\\nonumber
&-\int_{t_1}^{t_2}\int(\phi\psi''(x_{n+1}-\lambda)+2\phi\psi')\left(\frac{\e|\nabla u|^2}{2}+ \frac{W(u)}{\e}\right) dxdt\\\nonumber
&+\int_{t_1}^{t_2}\int2\e\sum_{i=1}^n\frac{\partial\phi}{\partial x_i}\frac{\partial u}{\partial x_i}\frac{\partial u}{\partial x_{n+1}}\psi dxdt-\int_{t_1}^{t_2}\int\hat\Delta\phi\psi\cdot(x_{n+1}-\lambda)\left(\frac{\e|\nabla u|^2}{2}+ \frac{W(u)}{\e}\right) dxdt\\\nonumber
&+\int_{t_1}^{t_2}\int\partial_t\phi\psi\cdot(x_{n+1}-\lambda)\left(\frac{\e|\nabla u|^2}{2}+ \frac{W(u)}{\e}\right) dxdt.
\end{align}
Now we estimate the terms on the right hand side of \eqref{EpsilonHeat}, first using the estimates in the previous subsection (tilt excess, height excess, Willmore term are essentially of the same order). We show the the first five terms  are small compared to the excess, which will still go to zero after dividing by the height excess. (as we will see in the following claims). 

\begin{claim}
\begin{align*}
&\left|\int_{t_1}^{t_2}\int\e\sum_{i,j=1}^n\frac{\partial^2\phi}{\partial x_{i}\partial x_{j}}\frac{\partial u}{\partial x_i}\frac{\partial u}{\partial x_j}\psi(x_{n+1}-\lambda) dxdt\right|\\
&+2\left|\int_{t_1}^{t_2}\int\e\sum_{i=1}^n\frac{\partial\phi}{\partial x_i}\frac{\partial u}{\partial x_i}\frac{\partial u}{\partial x_{n+1}}\psi'(x_{n+1}-\lambda) dxdt\right|\\
&+\left|\int_{t_1}^{t_2}\int\phi\psi''(x_{n+1}-\lambda)\e\left|\frac{\partial u}{\partial x_{n+1}}\right|^2 dxdt+2\int_{t_1}^{t_2}\int\phi\psi'\e\left|\frac{\partial u}{\partial x_{n+1}}\right|^2 dxdt\right|\\
&+\left|\int_{t_1}^{t_2}\int(\phi\psi''(x_{n+1}-\lambda)+2\phi\psi')\left(\frac{\e|\nabla u|^2}{2}+ \frac{W(u)}{\e}\right) dxdt\right|\\
&\leq O\left(\iint_{P_2}x_{n+1}^2\e|\nabla u^\e|^2dxdt \right).
\end{align*}
\end{claim}
\begin{proof}[Proof of Claim]
Since we know that the space-time $L^2$ norm of height excess and tilt excess are comparable and thus we will not distinguish them up to $O\left(\left(\iint_{P_2}x_{n+1}^2\e|\nabla u^\e|^2dx\right)^\frac{1}{2}\right)$. We have
\begin{align*}
&\left|\int_{t_1}^{t_2}\int\e\sum_{i,j=1}^n\frac{\partial^2\phi}{\partial x_{i}\partial x_{j}}\frac{\partial u}{\partial x_i}\frac{\partial u}{\partial x_j}\psi(x_{n+1}-\lambda) dxdt\right|\\
&\leq \left|\int_{t_1}^{t_2}\int C(n)|\nabla^2\phi|(1-\nu_{n+1}^2)\psi(x_{n+1}-\lambda)\e|\nabla u|^2 dxdt\right|\\
&\leq C(n,\phi)\left(\iint(1-\nu_{n+1}^2)^2\e|\nabla u|^2 dxdt\right)^\frac{1}{2}\left(\iint(x_{n+1}-\lambda)^2\e|\nabla u|^2 dxdt\right)^\frac{1}{2}\\
&\leq C(n,\phi)\left(\iint(1-\nu_{n+1}^2)\e|\nabla u|^2 dxdt\right)^\frac{1}{2}\left(\iint(x_{n+1}-\lambda)^2\e|\nabla u|^2 dxdt\right)^\frac{1}{2}\\
&=O\left(\iint_{P_2}x_{n+1}^2\e|\nabla u^\e|^2dxdt \right).
\end{align*}
For the other 3 terms, since $\psi\equiv1$ in $(-\frac{1}{2},\frac{1}{2})$, we have
\begin{align*}
&2\left|\int_{t_1}^{t_2}\int\e\sum_{i=1}^n\frac{\partial\phi}{\partial x_i}\frac{\partial u}{\partial x_i}\frac{\partial u}{\partial x_{n+1}}\psi'(x_{n+1}-\lambda) dxdt\right|\\
&+\left|\int_{t_1}^{t_2}\int\phi\psi''(x_{n+1}-\lambda)\e\left|\frac{\partial u}{\partial x_{n+1}}\right|^2 dxdt+2\int_{t_1}^{t_2}\int\phi\psi'\e\left|\frac{\partial u}{\partial x_{n+1}}\right|^2 dxdt\right|\\
&+\left|\int_{t_1}^{t_2}\int(\phi\psi''(x_{n+1}-\lambda)+2\phi\psi')\left(\frac{\e|\nabla u|^2}{2}+ \frac{W(u)}{\e}\right) dxdt\right|\\
&=2\left|\int_{t_1}^{t_2}\int_{|x_{n+1}|\geq \frac{1}{2}}\e\sum_{i=1}^n\frac{\partial\phi}{\partial x_i}\frac{\partial u}{\partial x_i}\frac{\partial u}{\partial x_{n+1}}\psi'(x_{n+1}-\lambda) dxdt\right|\\
&+\left|\int_{t_1}^{t_2}\int_{|x_{n+1}|\geq \frac{1}{2}}\phi\psi''(x_{n+1}-\lambda)\e\left|\frac{\partial u}{\partial x_{n+1}}\right|^2 dxdt+2\int_{t_1}^{t_2}\int\phi\psi'\e\left|\frac{\partial u}{\partial x_{n+1}}\right|^2 dxdt\right|\\
&+\left|\int_{t_1}^{t_2}\int_{|x_{n+1}|\geq \frac{1}{2}}(\phi\psi''(x_{n+1}-\lambda)+2\phi\psi')\left(\frac{\e|\nabla u|^2}{2}+ \frac{W(u)}{\e}\right) dxdt\right|\\
&\leq e^{-\frac{C}{\e^2}}=o(\e^2)\\
&=o\left(\iint_{P_2}x_{n+1}^2\e|\nabla u^\e|^2dxdt \right),
\end{align*}
where the second last inequality is due the exponential decay away from the transition region from Lemma \ref{ExponentialDecay} and the last bound by the height excess follows from \eqref{TechnicalAssumption}.
\end{proof}

For the last three terms of the right hand side of \eqref{EpsilonHeat}, we have
\begin{claim}
For a fixed $b\in(0,1)$, we have
\begin{align*}
&\iint_{\{|u|\leq 1-b\}\cap P_1}2\e\sum_{i=1}^n\frac{\partial\phi}{\partial x_i}\frac{\partial u}{\partial x_i}\frac{\partial u}{\partial x_{n+1}}\psi dxdt \\
&+ \iint_{\{|u|\leq 1-b\}\cap P_1}(\partial_t\phi-\hat\Delta\phi)(x_{n+1}-\lambda)\psi\left(\frac{\e|\nabla u|^2}{2}+ \frac{W(u)}{\e}\right) dxdt\\
&=\int_{-1+b}^{1-b}\iint_{\Pi(\hat P_1)}\left((\partial_t\phi-\hat\Delta\phi)(h^{\e,s}-\lambda)-2\langle\hat\nabla\phi,\hat\nabla (h^s-\lambda)\rangle \right)\psi g'(g^{-1}(s)) d\hat xdtds\\
&+O\left(\iint_{P_1}(x_{n+1}-\lambda)^2\e|\nabla u^\e|^2dx\right).
\end{align*}

Furthermore there exists $b\in(0,1)$ such that
\begin{align*}
&\iint_{\{|u|\geq 1-b\}\cap P_1}2\e\sum_{i=1}^n\frac{\partial\phi}{\partial x_i}\frac{\partial u}{\partial x_i}\frac{\partial u}{\partial x_{n+1}}\psi-\hat\Delta\phi\psi(h^s-\lambda) d\mu^\e+\partial_t\phi\psi(h^s-\lambda) d\mu^\e\\
&\leq o_b(1)O\left (\left(\iint_{P_2}(x_{n+1}-\lambda)^2\e|\nabla u^\e|^2dx\right)^\frac{1}{2}\right ),
\end{align*}
where $o_b(1)\rightarrow 0$ as $b\rightarrow0$.
\end{claim}
\begin{proof}[Proof of Claim]
In order to prove these estimates, we write our integrals using the graphical representations of the level sets given by $h$. Hence for a fixed $b\in(0,1)$, by \eqref{FunctionAndLevelSet}, the coarea formula and integration by parts,
\begin{align*}
&\iint_{\{|u|\leq 1-b\}\cap P_1}2\e\sum_{i=1}^n\frac{\partial\phi}{\partial x_i}\frac{\partial u}{\partial x_i}\frac{\partial u}{\partial x_{n+1}}\psi dxdt\\
&=\iint_{\{|u|\leq 1-b\}\cap P_1}2\e\langle\hat\nabla\phi,\hat\nabla u\rangle\frac{\partial u}{\partial x_{n+1}}\psi dxdt\\
&=\int_{-1+b}^{1-b}\iint_{P_1\cap\{u=s\}}2\e\langle\hat\nabla\phi,\hat\nabla u\rangle\frac{\partial u}{\partial x_{n+1}}\frac{1}{|\nabla u|}\psi d\mathcal H^n\llcorner\{u=s\}dtds\\
&=-\int_{-1+b}^{1-b}\iint_{P_1\cap\{u=s\}}2\e\langle\hat\nabla\phi,\hat\nabla h\rangle\left(\frac{\partial h^s}{\partial s}\right)^{-1}\left(\frac{\partial h^s}{\partial s}\right)^{-1}\frac{1}{\sqrt{1+|\hat\nabla h|^2}}\left(\frac{\partial h^s}{\partial s}\right)\psi d\mathcal H^n\llcorner\{u=s\}dtds\\
&\text{(By \eqref{FunctionAndLevelSet})}\\
&=-\int_{-1+b}^{1-b}\iint_{P_1\cap\{u=s\}}2\e\langle\hat\nabla\phi,\hat\nabla h^s\rangle\left(\frac{\partial h^s}{\partial s}\right)^{-1}\frac{1}{\sqrt{1+|\hat\nabla h|^2}}\psi d\mathcal H^n\llcorner\{u=s\}dtds\\
&=-\int_{-1+b}^{1-b}\iint_{\Pi(P_1\cap\{u=s\})}2\e\langle\hat\nabla\phi,\hat\nabla h^s\rangle\left(\frac{\partial h^s}{\partial s}\right)^{-1}\psi d\hat xdtds\\
&=-\int_{-1+b}^{1-b}\iint_{E^\e\subset \Pi(\hat P_1\cap\{u=s\})}2\e\langle\hat\nabla\phi,\hat\nabla h^s\rangle\left(\frac{\partial h^s}{\partial s}\right)^{-1}\psi d\hat xdtds+o\left(\iint_{P_1}(1-\nu_{n+1}^2)\e|\nabla u|^2 dxdt\right)\\
&\text{(By Lemma \ref{AUniformEstimate})}\\
&=-\int_{-1+b}^{1-b}\iint_{E^\e\subset \Pi(\hat P_1\cap\{u=s\})}2\langle\hat\nabla\phi,\hat\nabla h^s\rangle g'(g^{-1}(s))\psi d\hat xdtds+o\left(\iint_{P_1}(1-\nu_{n+1}^2)\e|\nabla u|^2 dxdt\right)\\
&=-\int_{-1+b}^{1-b}\iint_{\Pi(\hat P_1)}2\langle\hat\nabla\phi,\hat\nabla h^s\rangle g'(g^{-1}(s))\psi d\hat xdtds+o\left(\iint_{P_1}(1-\nu_{n+1}^2)\e|\nabla u|^2 dxdt\right).
\end{align*}
Using Proposition \ref{Comparable}, this then gives the following expansion
\begin{align}
\label{eqn_TermOne}&\iint_{\{|u|\leq 1-b\}\cap P_1}2\e\sum_{i=1}^n\frac{\partial\phi}{\partial x_i}\frac{\partial u}{\partial x_i}\frac{\partial u}{\partial x_{n+1}}\psi dxdt \\\nonumber
&=-\int_{-1+b}^{1-b}\iint_{\Pi(\hat P_1)}2\langle\hat\nabla\phi,\hat\nabla h^s\rangle g'(g^{-1}(s))\psi d\hat xdtds+O\left(\iint_{P_1}(x_{n+1}-\lambda)^2\e|\nabla u|^2 dxdt\right).
\end{align}
For the second term in the claim, by the Caccioppoli inequality Theorem \ref{Caccioppoli} and Proposition \ref{Comparable}, we can estimate the potential term 
\begin{align*}
&\iint_{\{|u|\leq 1-b\}\cap P_1}(\partial_t\phi-\hat\Delta\phi)(x_{n+1}-\lambda)\psi\left(\frac{\e|\nabla u|^2}{2}+ \frac{W(u)}{\e}\right) dxdt\\
&=\iint_{\{|u|\leq 1-b\}\cap P_1}(\partial_t\phi-\hat\Delta\phi)(x_{n+1}-\lambda)\psi\e|\nabla u|^2 dxdt+o(|\xi|(P_1))\\
&=\iint_{\{|u|\leq 1-b\}\cap P_1}(\partial_t\phi-\hat\Delta\phi)(x_{n+1}-\lambda)\psi\e|\nabla u|^2 dxdt+o(E(P_1)).
\end{align*}
We then apply the co-area formula and write the resulting integral above in terms of the graphing functions $h^s$
\begin{align*}
&\iint_{\{|u|\leq 1-b\}\cap P_1}(\partial_t\phi-\hat\Delta\phi)(x_{n+1}-\lambda)\psi\e|\nabla u|^2 dxdt \\
&=\int_{-1+b}^{1-b}\iint_{\{u=s\}\cap P_1}(\partial_t\phi-\hat\Delta\phi)(x_{n+1}-\lambda)\psi\e|\nabla u| d\mathcal H^n\llcorner\{u=s\}dtds \\
&=\int_{-1+b}^{1-b}\iint_{\{u=s\}\cap P_1}(\partial_t\phi-\hat\Delta\phi)(x_{n+1}-\lambda)\psi\e\sqrt{1+|\hat\nabla h|^2}\left(\frac{\partial h^s}{\partial s}\right)^{-1} d\mathcal H^n\llcorner\{u=s\}dtds   +o(E(P_1))\\
&=\int_{-1+b}^{1-b}\iint_{\Pi(P_1)}(\partial_t\phi-\hat\Delta\phi)(x_{n+1}-\lambda)\psi\e(1+|\hat\nabla h|^2)\left(\frac{\partial h^s}{\partial s}\right)^{-1} d\hat{x}dtds   +o(E(P_1))\\
&=\int_{-1+b}^{1-b}\iint_{\Pi(P_1)}(\partial_t\phi-\hat\Delta\phi)(x_{n+1}-\lambda)\psi\e\left(\frac{\partial h^s}{\partial s}\right)^{-1} d\hat xdtds   +O(E(P_1))
\end{align*}
where in the last line we used the bound in Lemma \ref{TiltAndGradient}. By Lemma \ref{AUniformEstimate}
\begin{align*}
&\int_{-1+b}^{1-b}\iint_{\Pi(P_1)}(\partial_t\phi-\hat\Delta\phi)(x_{n+1}-\lambda)\psi\e\left(\frac{\partial h^s}{\partial s}\right)^{-1} d\hat xdtds\\
&=\int_{-1+b}^{1-b}\iint_{E^\e}(\partial_t\phi-\hat\Delta\phi)(x_{n+1}-\lambda)\psi g'(g^{-1}(s)) d\hat xdtds +O(\mathcal H^{n+1}(\Pi(P_1)\setminus E^\e))\\
&=\int_{-1+b}^{1-b}\iint_{E^\e}(\partial_t\phi-\hat\Delta\phi)(x_{n+1}-\lambda)\psi g'(g^{-1}(s)) d\hat xdtds+O(E(P_1))\\
&=\int_{-1+b}^{1-b}\iint_{\hat P_1}(\partial_t\phi-\hat\Delta\phi)(x_{n+1}-\lambda)\psi g'(g^{-1}(s)) d\hat xdtds+O(E(P_1)).
\end{align*}
Therefore we get, 
\begin{align}
\label{eqn_TermTwo}&\iint_{\{|u|\leq 1-b\}\cap P_1}(\partial_t\phi-\hat\Delta\phi)(x_{n+1}-\lambda)\psi\left(\frac{\e|\nabla u|^2}{2}+ \frac{W(u)}{\e}\right) dxdt\\ \nonumber 
&=\int_{-1+b}^{1-b}\iint_{\hat P_1}(\partial_t\phi-\hat\Delta\phi)(x_{n+1}-\lambda)\psi g'(g^{-1}(s)) d\hat xdtds +O\left(\iint_{P_1}(x_{n+1}-\lambda)^2\e|\nabla u^\e|^2dxdt\right)
\end{align}
Putting together \eqref{eqn_TermOne} and \eqref{eqn_TermTwo}, we have
\begin{align*}
&\iint_{\{|u|\leq 1-b\}\cap P_1}2\e\sum_{i=1}^n\frac{\partial\phi}{\partial x_i}\frac{\partial u}{\partial x_i}\frac{\partial u}{\partial x_{n+1}}\psi dxdt\\&+\iint_{\{|u|\leq 1-b\}\cap P_1}(\partial_t\phi-\hat\Delta\phi)\psi\cdot(x_{n+1}-\lambda)\left(\frac{\e|\nabla u|^2}{2}+ \frac{W(u)}{\e}\right) dxdt\\
&=\int_{-1+b}^{1-b}\iint_{\Pi(\hat P_1)}\left((\partial_t\phi-\hat\Delta\phi)(h^{\e,s}-\lambda)-2\langle\hat\nabla\phi,\hat\nabla (h^s-\lambda)\rangle \right)\psi g'(g^{-1}(s)) d\hat xdtds\\
&+O\left(\iint_{P_1}(1-\nu_{n+1}^2)\e|\nabla u^\e|^2dxdt\right)+O\left(\iint_{P_1}\left|\frac{\e|\nabla u^\e|^2}{2}-\frac{W(u^\e)}{\e}\right|dxdt\right)\\
&=\int_{-1+b}^{1-b}\iint_{\Pi(\hat P_1)}\left((\partial_t\phi-\hat\Delta\phi)(h^{\e,s}-\lambda)-2\langle\hat\nabla\phi,\hat\nabla (h^s-\lambda)\rangle \right)\psi g'(g^{-1}(s)) d\hat xdtds\\
&+O\left(\iint_{P_1}(x_{n+1}-\lambda)^2\e|\nabla u^\e|^2dxdt\right)
\end{align*}
where the last line follows by Proposition \ref{Comparable}. This gives the first part of the claim for the set$\{|u|\leq 1-b\}$.

For the second part of the claim, we estimate as follows
\begin{align*}
&\iint_{\{|u|\geq 1-b\}\cap P_1}2\e\sum_{i=1}^n\frac{\partial\phi}{\partial x_i}\frac{\partial u}{\partial x_i}\frac{\partial u}{\partial x_{n+1}}\psi-\hat\Delta\phi\psi(x_{n+1}-\lambda) d\mu^\e+\partial_t\phi\psi(x_{n+1}-\lambda) d\mu^\e\\
&\leq C(\phi,\psi)\iint_{\{|u|\geq 1-b\}\cap P_1}\sqrt{1-\nu_{n+1}^2}\e|\nabla u^\e|^2 dxdt+C(\phi,\psi)\iint_{\{|u|\geq 1-b\}\cap P_1}(x_{n+1}-\lambda)\e|\nabla u^\e|^2 dxdt\\
&\leq C(\phi,\psi)\left(\iint_{|u|\geq 1-b}(1-\nu_{n+1}^2)\e|\nabla u^\e|^2 dxdt\right)^\frac{1}{2}\left(\iint_{|u|\geq 1-b}\e|\nabla u^\e|^2 dxdt\right)^\frac{1}{2}\\
&+C(\phi,\psi)\left(\iint_{|u|\geq 1-b}(x_{n+1}-\lambda)^2\e|\nabla u^\e|^2 dxdt\right)^\frac{1}{2}\left(\iint_{|u|\geq 1-b}\e|\nabla u^\e|^2 dxdt\right)^\frac{1}{2}\\
&\leq O\left(\iint_{|u|\geq 1-b}(x_{n+1}-\lambda)^2\e|\nabla u^\e|^2 dxdt\right)^\frac{1}{2}o_b(1)
\end{align*}
\text{(by Lemma \ref{LevelSetCloseTo1} and Proposition \ref{Comparable}}).
\end{proof}

Now by the above two claims, we have
\begin{align*}
&\int\eta\left(\frac{\e|\nabla u(\cdot, t_2)|^2}{2}+ \frac{W(u(\cdot,t_2))}{\e}\right) dx-\int\eta\left(\frac{\e|\nabla u(\cdot, t_1)|^2}{2}+ \frac{W(u(\cdot,t_1))}{\e}\right) dx\\
&+\int_{t_1}^{t_2}\int\e\eta\left(\Delta u-\frac{W'(u)}{\e^2}\right)^2 dxdt\\
&=\int_{-1+b}^{1-b}\iint_{\Pi(\hat P_1)}\left((\partial_t\phi-\hat\Delta\phi)(h^{\e,s}-\lambda)-2\langle\hat\nabla\phi,\hat\nabla (h^s-\lambda)\rangle \right)\psi g'(g^{-1}(s)) d\hat xdtds\\
&+O\left(\iint_{P_1}(x_{n+1}-\lambda)^2\e|\nabla u^\e|^2dx\right)+o_b(1)O\left(\iint_{P_1}(x_{n+1}-\lambda)^2\e|\nabla u^\e|^2dx\right)^\frac{1}{2}.
\end{align*}
We divide both sides by $\left(\iint_{P_1}(x_{n+1}-\lambda)^2\e|\nabla u^\e|^2dx\right)^\frac{1}{2}$ and use Proposition \ref{Comparable}, we have
\begin{align*}
&\frac{\int_{-1+b}^{1-b}\iint_{\Pi(\hat P_1)}\left((\partial_t\phi-\hat\Delta\phi)(h^{\e,s}-\lambda)-2\langle\hat\nabla\phi,\hat\nabla (h^s-\lambda)\rangle \right)\psi g'(g^{-1}(s)) d\hat xdtds}{\left(\iint_{P_1}(x_{n+1}-\lambda)^2\e|\nabla u^\e|^2dx\right)^\frac{1}{2}}\\
&=O\left(\iint_{P_1}(x_{n+1}-\lambda)^2\e|\nabla u^\e|^2dx\right)^\frac{1}{2}+o_b(1),
\end{align*}
namely
\begin{align*}
&\int_{-1+b}^{1-b}\iint_{\Pi(\hat P_1)}\left((\partial_t\phi-\hat\Delta\phi)(\bar h^{\e,s}-\lambda)-2\langle\hat\nabla\phi,\hat\nabla (\bar h^s-\lambda)\rangle \right)\psi g'(g^{-1}(s)) d\hat xdtds\\
&=O\left(\iint_{P_1}(x_{n+1}-\lambda)^2\e|\nabla u^\e|^2dx\right)^\frac{1}{2}+o_b(1).
\end{align*}
We compute (using that $\bar h^{\e,s}\rightarrow\bar h^\infty$ in $L^2$ is independent of $s\in(-1+b,1-b)$)
\begin{align*}
&\lim_{\e\rightarrow 0}\int_{-1+b}^{1-b}\iint_{\Pi(\hat P_1)}\left((\partial_t\phi-\hat\Delta\phi)(\bar h^{\e,s}-\lambda)-2\langle\hat\nabla\phi,\hat\nabla (\bar h^s-\lambda)\rangle \right)\psi g'(g^{-1}(s)) d\hat xdtds\\
&=\iint_{\Pi(\hat P_1)}\int_{-1+b}^{1-b}\left((\partial_t\phi-\hat\Delta\phi) \bar h^\infty-2\langle\hat\nabla\phi,\hat\nabla \bar h^\infty\rangle \right)\psi g'(g^{-1}(s)) dsd\hat xdt\\
&=\iint_{\Pi(\hat P_1)}\left((\partial_t\phi-\hat\Delta\phi) \bar h^\infty-2\langle\hat\nabla\phi,\hat\nabla \bar h^\infty\rangle \right)\psi\int_{-1+b}^{1-b}g'(g^{-1}(s)) dsd\hat xdt\\
&=\alpha\iint_{\Pi(\hat P_1)}\left((\partial_t\phi-\hat\Delta\phi) \bar h^\infty-2\langle\hat\nabla\phi,\hat\nabla \bar h^\infty\rangle \right)\psi d\hat xdt+o_b(1)\\
&=\alpha\iint_{\Pi(\hat P_1)}\left((\partial_t\phi+\hat\Delta\phi) \bar h^\infty \right)\psi d\hat xdt+o_b(1).
\end{align*}
where the last line follows by integration by parts and where $\alpha$ is the total energy of the $1$-d static solution. This equation holds for any test function $\phi$. Furthermore the height excess term $O\left(\iint_{P_1}(x_{n+1}-\lambda)^2\e|\nabla u^\e|^2dx\right)^\frac{1}{2}\rightarrow0$ by \eqref{ExcessConvergence} and Proposition \ref{Comparable}. Also since the blowup limit $\bar h^\infty$ is independent of $b$, by letting $b\rightarrow0$ and $\e\rightarrow0$, this limit $\bar h^\infty$ satisfies the heat equation weakly  in $\hat P_{\frac{1}{2}}\subset\mathbb R^n\times\mathbb R$, standard parabolic regularity then shows that it satisfies the heat equation in a strong sense. 
\end{proof}

We recall the following $L^2$ decay property of the heat equation (see for example (8.58) of \cite{Kasai2014}).
\begin{proposition}\label{prop_HeatDecay}
There exists a constant $C_1>0$ such that if $\bar h$ satisfies the heat equation in $\mathbb R^n\times\mathbb R$, then for any $\theta\in(0,\frac{1}{4})$
\begin{align}\label{HeatDecay}
\iint_{\hat B_\theta\times(-\theta^2,\theta^2)}|\bar h(\hat x,t)-\bar h(0,0)-\hat x\cdot\hat\nabla\bar h(0,0)|^2 d\hat xdt
\leq C_1\theta^{n+6}.
\end{align}
\end{proposition}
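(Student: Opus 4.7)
The plan is to combine interior regularity for the heat equation with a Taylor expansion at the space--time origin. By hypothesis $\bar h$ satisfies $\partial_t \bar h = \hat\Delta \bar h$ on $\mathbb R^n \times \mathbb R$, and in the intended application $\bar h$ inherits a uniform $L^2(\hat P_{1/2})$ bound coming from Theorem~\ref{thm_L2Bound}; the constant $C_1$ here will implicitly absorb such a background bound.

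First I would invoke the standard interior parabolic estimates for the heat equation to obtain the pointwise control
\begin{equation*}
\sup_{\hat P_{1/4}} \Big( |\bar h| + |\hat\nabla \bar h| + |\partial_t \bar h| + |\hat\nabla^2 \bar h| \Big) \leq C,
\end{equation*}
with $C$ depending only on $n$ and on $\|\bar h\|_{L^2(\hat P_{1/2})}$. Next I would Taylor-expand $\bar h$ at $(0,0)$ to first order in $\hat x$ and zeroth order in $t$, writing
\begin{equation*}
\bar h(\hat x, t) - \bar h(0,0) - \hat x \cdot \hat\nabla \bar h(0,0) = t \, \partial_t \bar h(0,0) + R(\hat x, t),
\end{equation*}
where Taylor's theorem together with the second-derivative bound above yields the remainder estimate $|R(\hat x, t)| \leq C(|\hat x|^2 + |t|)$ for $(\hat x, t) \in \hat P_{1/4}$.

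Fix $\theta \in (0, 1/4)$ and $(\hat x, t) \in \hat B_\theta \times (-\theta^2, \theta^2)$. Using $|t| \leq \theta^2$ and the pointwise control of $\partial_t \bar h(0,0)$, a triangle-inequality estimate gives
\begin{equation*}
\big| \bar h(\hat x, t) - \bar h(0,0) - \hat x \cdot \hat\nabla \bar h(0,0) \big|^2 \leq C \big( \theta^4 + |\hat x|^4 + t^2 \big).
\end{equation*}
I would then integrate over $\hat B_\theta \times (-\theta^2, \theta^2)$, which has volume of order $\theta^{n+2}$. The three summands contribute $O(\theta^4) \cdot O(\theta^{n+2}) = O(\theta^{n+6})$, $O(\theta^{n+4}) \cdot O(\theta^2) = O(\theta^{n+6})$, and $O(\theta^n) \cdot O(\theta^6) = O(\theta^{n+6})$ respectively, producing the desired inequality.

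The proof is essentially routine: there is no serious obstacle. The only subtlety worth flagging is that the proposition as stated is really a \emph{normalized} decay estimate --- $C_1$ must implicitly depend on an $L^2$ norm of $\bar h$ on a fixed larger parabolic region, since without such a normalization the inequality is vacuous (e.g.~adding a large harmonic polynomial of degree $\geq 2$ to $\bar h$ destroys it). This normalization is exactly what the blow-up construction preceding the proposition supplies uniformly in $\e$.
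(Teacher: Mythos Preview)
Your argument is correct and is exactly the standard derivation of this estimate. Note that the paper does not actually supply a proof of this proposition: it is stated as a recalled fact with a reference to (8.58) of \cite{Kasai2014}, so there is nothing to compare your proof against in the paper itself.

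Your observation about the implicit normalization is also well taken and worth making explicit. As literally written, the inequality cannot hold with a universal $C_1$: replacing $\bar h$ by $\lambda\bar h$ scales the left-hand side by $\lambda^2$. What is really meant (and what Kasai--Tonegawa and the blow-up construction in Theorem~\ref{thm_L2Bound} provide) is that $C_1$ depends only on $n$ and on $\|\bar h\|_{L^2(\hat P_{1/2})}$, which in the application is uniformly bounded by the choice of normalization $\bar h^{\e,s}$. Your proof handles this correctly by invoking interior parabolic estimates that produce $\sup_{\hat P_{1/4}}(|\partial_t\bar h|+|\hat\nabla^2\bar h|)\leq C\|\bar h\|_{L^2(\hat P_{1/2})}$; everything downstream then depends only on that quantity.

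One small streamlining: you do not need to separate out the $t\,\partial_t\bar h(0,0)$ term. Writing directly
\[
\bar h(\hat x,t)-\bar h(0,0)-\hat x\cdot\hat\nabla\bar h(0,0)
=\big[\bar h(\hat x,t)-\bar h(\hat x,0)\big]+\big[\bar h(\hat x,0)-\bar h(0,0)-\hat x\cdot\hat\nabla\bar h(0,0)\big],
\]
the first bracket is bounded by $|t|\sup|\partial_t\bar h|$ and the second by $\tfrac12|\hat x|^2\sup|\hat\nabla^2\bar h|$, giving the pointwise bound $C(\theta^2+|\hat x|^2)\leq C\theta^2$ on $\hat B_\theta\times(-\theta^2,\theta^2)$ and hence $C\theta^4$ after squaring; integration over the parabolic cylinder of volume $O(\theta^{n+2})$ finishes the job. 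This avoids the three-term split in your integration step, but the result is of course the same.
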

We are now ready to prove the height excess decay property \eqref{ExcessDecay} by a contradiction argument.
\begin{proof}[Proof of Theorem \ref{thm_ExcessDecay}]
We will prove this theorem by contradiction. Hence we suppose as stated at the beginning of this section that \eqref{SequenceDensityGoingTo1}-\eqref{ExcessDecayNotHold} hold. (Notice by Lemma \ref{ExcessConvergence} and Proposition \ref{Comparable}, \eqref{SequenceDensityGoingTo1} implies \eqref{SequenceHeightGoingTo0}).

We define $T^m\in G(n+1,n)$ to be the tangent space at $x=0$ of the graph 
\begin{align*}
\left(\hat x,\left(\iint_{P_1}x_{n+1}^2\e|\nabla u^{\e_m}|^2dxdt\right)^\frac{1}{2}\cdot \bar h^\infty(\hat x,0)\right)=\left(\hat x,\mathbb H_m \bar h^\infty(\hat x,0)\right),
\end{align*}
where we denote $\mathbb H_m=\left(\iint_{P_1}x_{n+1}^2\e|\nabla u^{\e_m}|^2dxdt\right)^\frac{1}{2}$. Furthermore we define $A^m\in A(n+1,n)$ to be
\begin{align*}
A^m=T^m+\left(0,\mathbb H_m \bar h^\infty(\hat x,0)\right).
\end{align*}
By the definition above, Proposition \ref{TiltAndGradient} and Proposition \ref{Comparable}, we have
\begin{align}\label{PointwiseGradient}
|\nabla\bar h(0,0)|\leq \left(\iint_{P_1}|\nabla\bar h|^2+\bar h^2\right)^\frac{1}{2}\leq C\left(\iint_{P_1}x_{n+1}^2\e|\nabla u^{\e_m}|^2dxdt\right)^\frac{1}{2}.
\end{align}
We want to show a contradiction with \eqref{ExcessDecayNotHold}. By Theorem \ref{thm_L2Bound}, we have strong $L^2$ bounds on the sequence of $h^{\e_m,s}\rightarrow h^\infty$. After passing to a subsequence, we have for sufficiently large $m$
\begin{align*}
&(\mathbb H_m)^{-2}\iint_{P_{\theta}}\mathrm{dist}(x,A^m)^2\e_m | \nabla u^{\e_m}|^2dxdt\\
&=(\mathbb H_m)^{-2}\int_{-1}^1\iint_{\{u^{\e_m}=s\}\cap P_{\theta}}\mathrm{dist}(x,A^m)^2\e_m | \nabla u^{\e_m}|d\mathcal H_{\{u^{\e_m}=s\}}dtds\\
&=(\mathbb H_m)^{-2}\int_{-1}^1\iint_{\{u^{\e_m}=s\}\cap P_{\theta}}|(T^m)^\perp\left((\hat x, h^{\e_m,s})-(0,\mathbb H_m \bar h^\infty(0,0))\right)|^2\e_m | \nabla u^{\e_m}|d\mathcal H_{\{u^{\e_m}=s\}}dtds.
\end{align*}
We now estimate
\begin{align*}
&|(T^m)^\perp\left((\hat x, h^{\e_m,s})-(0,\mathbb H_m \bar h^\infty(0,0))\right)|^2 \\
&=|(T^m)^\perp\left((\hat x, h^{\e_m,s})- (\hat x,\mathbb H_m \bar h^\infty(\hat x,t))+(\hat x,\mathbb H_m \bar h^\infty(\hat x,t))-(0,\mathbb H_m \bar h^\infty(0,0))\right)|^2\\
&\leq 2|(T^m)^\perp\left((\hat x, h^{\e_m,s})-(\hat x,\mathbb H_m \bar h^\infty(\hat x,t))\right)|^2 \\
&+2|(T^m)^\perp\left( (\hat x,\mathbb H_m \bar h^\infty(\hat x,t))-(0,\mathbb H_m \bar h^\infty(0,0))-(\hat x, \langle\hat x,\mathbb H_m \nabla \bar h^\infty(0,0)\rangle)\right)|^2\\
&\leq 2|(T^m)^\perp\left((\hat x, h^{\e_m,s})-(\hat x,\mathbb H_m \bar h^\infty(\hat x,t))\right)|^2 +2|(T^m)^\perp\left( X\right)|^2,
\end{align*}
where the second last line follows because $T^m$ is the tangent space at $(0,0)$ of the graph $\left(\hat x,\mathbb H_m h^\infty(\hat x,0)\right)$ and in the last line we let 
\begin{align*}
X=\left( (\hat x,\mathbb H_m \bar h^\infty(\hat x,t))-(0,\mathbb H_m \bar h^\infty(0,0))-(\hat x, \langle\hat x,\mathbb H_m \nabla \bar h^\infty(0,0)\rangle)\right). 
\end{align*}
This then gives us 

\begin{align*}
&(\mathbb H_m)^{-2}\iint_{P_{\theta}}\mathrm{dist}(x,A^m)^2\e_m | \nabla u^{\e_m}|^2dxdt\\
&\leq 2(\mathbb H_m)^{-2}\int_{-1}^1\iint_{\{u^{\e_m}=s\}\cap P_{\theta}}|(T^m)^\perp\left((\hat x, h^{\e_m,s})-(\hat x,\mathbb H_m \bar h^\infty(\hat x,t))\right)|^2\e_m | \nabla u^{\e_m}|d\mathcal H_{\{u^{\e_m}=s\}}dtds\\
&+2(\mathbb H_m)^{-2}\int_{-1}^1\iint_{\{u^{\e_m}=s\}\cap P_{\theta}}
\left |(T^m)^\perp\left( X \right)\right |^2
\e_m | \nabla u^{\e_m}|d\mathcal H_{\{u^{\e_m}=s\}}dtds.
\end{align*}

Therefore using our Lipschitz parameterisation of the level sets to express the Haussdorff measure, we get
\begin{align*}
&(\mathbb H_m)^{-2}\iint_{P_{\theta}}\mathrm{dist}(x,A^m)^2\e_m|\nabla u^{\e_m}|^2dxdt\\
&\leq 2(\mathbb H_m)^{-2}\int_{-1}^1\iint_{\Pi(\{u=s\})\cap \hat P_{\theta}}|(T^m)^\perp\left((\hat x, h^{\e_m,s})-(\hat x,\mathbb H_m \bar h^\infty(\hat x,t))\right)|^2\sqrt{1+|\nabla h^{\e_m,s}|^2}\e_m|\nabla u^{\e_m}|d\hat{x}dtds\\
&+2(\mathbb H_m)^{-2}\int_{-1}^1\iint_{\Pi(\{u=s\})\cap \hat P_{\theta}}
\left |(T^m)^\perp\left( X \right)\right |^2
\sqrt{1+|\nabla h^{\e_m,s}|^2}\e_m|\nabla u^{\e_m}|d\hat{x}dtds.
\end{align*}
Hence from the $L^2$ convergence of $\frac{h^{\e_m,s}}{\mathbb H_m}$ to $\bar h^\infty(\hat x,t)$ from Theorem \ref{thm_L2Bound} and the uniform Lipschitz bound of $  h^{\e_m,s}$, together with the gradient bound $ \e_m |\nabla u^{\e_m}|\leq C$ we get
\begin{align*}
2(\mathbb H_m)^{-2}&\int_{-1}^1\iint_{\Pi(\{u=s\})\cap \hat P_{\theta}}|(T^m)^\perp\left((\hat x, h^{\e_m,s})-(\hat x,\mathbb H_m \bar h^\infty(\hat x,t))\right)|^2\sqrt{1+|\nabla h^{\e_m,s}|^2}\e_m|\nabla u^{\e_m}|d\hat{x}dtds\\
&=o(1)
\end{align*}
Similarly, the Lipschitz bound on $ h^{\e_m,s}$, the apriori gradient bound on $\e_m |\nabla u^{\e_m}|$ (from standard parabolic regularity) and the heat decay bounds \eqref{HeatDecay} gives us  
\begin{align*}
&2(\mathbb H_m)^{-2}\int_{-1}^1\iint_{\Pi(\{u=s\})\cap \hat P_{\theta}}\left |(T^m)^\perp\left( X \right)\right |^2\sqrt{1+|\nabla h^{\e_m,s}|^2}\e_m|\nabla u^{\e_m}|d\hat{x}dtds \\
&\leq C_1 \theta^{n+6}.
\end{align*}

Therefore we get 
\begin{align*}
(\mathbb H_m)^{-2}\iint_{P_{\theta}}\mathrm{dist}(x,A^m)^2\e_m | \nabla u^{\e_m}|^2dxdt \leq o(1) + C_1 \theta^{n+6}.
\end{align*}
This is a contradiction to \eqref{ExcessDecayNotHold}, 
\begin{align*}
(\mathbb H_m)^{-2}\iint_{P_{\theta}}\mathrm{dist}(x,A^m)^2\e_m | \nabla u^{\e_m}|^2dxdt\geq \theta^{n+5}
\end{align*}
which completes the proof the excess decay.
\end{proof}
\section{The gap theorem}\label{sec_Gap}
Here we prove the rigidity theorem for eternal solutions of the parabolic Allen--Cahen equation \eqref{eqn_ACF1} using the conditional excess decay theorem, Theorem \ref{thm_ExcessDecay} (under the technical condition \eqref{TechnicalAssumption}). The idea is analogous to \cite{wang2014new}: after finite many steps of iterations the technical condition will always be violated for entire eternal solutions. First, we make use the excess decay for solutions to the \eqref{eqn_ACFe} and an appropriate rescaling to obtain an excess growth estimate for solutions to the \eqref{eqn_ACF1}.

\begin{proposition}\label{ExcessGrowthControl}
There exists a constant $C(n)>0$ large and $\tau_n>0$ small such that the following holds: if $u$ is an entire eternal solution of \eqref{eqn_ACF1} in $\mathbb R^{n+1}\times\mathbb R$ such that $\frac{1}{2}R^{-n-2}\iint_{P_R}\left(\frac{|\nabla u|^2}{2}+W(u)\right) dxdt\leq \alpha(1+\tau_n)$ for any $R>0,(x_0,t_0)\in\mathbb R^{n+1}\times\mathbb R$, then for any $r\geq 1$, there exists a unit vector $e_r$, so that the tilt excess has the following growth bound
\begin{align*}
\iint_{P_r}(1-\langle\nu ,e_r\rangle^2)|\nabla u|^2 dxdt\leq C(n)r^n.
\end{align*}
\end{proposition}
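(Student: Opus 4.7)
The plan is to iterate Theorem \ref{thm_ExcessDecay} on dyadic scales, stopping at the scale where the technical hypothesis \eqref{LayerRepulsion} first fails, and then transfer the resulting height-excess bound to a tilt-excess bound at scale $r$ via the $L^2$-$L^\infty$ mean value inequality (Proposition \ref{eqn_L2Linfty}) and the Caccioppoli inequality (Theorem \ref{Caccioppoli}). The scale-$R$ version of Theorem \ref{thm_ExcessDecay} is obtained by rescaling: for $v_R(x,t):=u(Rx,R^2 t)$, which solves \eqref{eqn_ACFe} with $\e=1/R$, the hypotheses \eqref{NonTrivialEnd}--\eqref{SmallHeight} at $P_1$ correspond to the analogous conditions for $u$ at $P_R$. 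The density bound on $u$ is inherited globally, while the normalized height excess of $u$ at scale $R$ can be made arbitrarily small for large $R$: since $v_R$ is an $(1/R)$-solution with $1/R\to 0$ and uniform density bound, Lemma \ref{lem_excess} together with Proposition \ref{Comparable} forces its tilt and height excess on $P_1$ to vanish.

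For each $r\ge 1$, I would choose an initial scale $R_0=R_0(r)$ sufficiently large and iterate Theorem \ref{thm_ExcessDecay} at scales $R_0,\theta R_0,\theta^2 R_0,\dots$. At each step a tilted affine hyperplane $A_k$ is produced (with $\|e_k-e_{k-1}\|$ controlled by \eqref{PointwiseTilt}) and the normalized height excess decays geometrically, $\bar{\mathbb H}(P_{\theta^k R_0})\le (\theta/2)^k \bar{\mathbb H}(P_{R_0})$. The rescaled form of \eqref{LayerRepulsion} at $R_k:=\theta^k R_0$ reads $\bar{\mathbb H}(P_{R_k})\ge K_1 R_k^{-2}$; as $R_k$ shrinks its right-hand side grows while the upper bound on $\bar{\mathbb H}$ decays, so the iteration must halt in finitely many steps at some scale $R^*$, yielding directly
\begin{align*}
\iint_{P_{R^*}}|T^\perp(x)-\lambda|^2|\nabla u|^2\,dx\,dt\le K_1(R^*)^{n+2}.
\end{align*}
Using the decay exponent $\beta=\log(\theta/2)/\log\theta>1$ combined with the blowdown smallness of $\bar{\mathbb H}(P_{R_0})$ as $R_0\to\infty$, the parameter $R_0(r)$ can be tuned so that $R^*\ge 2r$.

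To conclude, I would apply Proposition \ref{eqn_L2Linfty} with the non-negative subsolution $f(x)=|T^\perp(x)-\lambda|$ (affine off $\{T^\perp=\lambda\}$ and convex globally, hence a distributional subsolution of the parabolic inequality required there). Since $r\le R^*/2$ and $\Phi_{(0,0)}(x,t)\gtrsim r^{-n}$ on $B_r\times\{t=-r^2/2\}$, the mean value inequality yields the time-slice bound $\int_{B_r}|T^\perp-\lambda|^2\,d\mu_t\le c(n)K_1 r^n$; varying the base time $t_0$ and integrating over $t\in[-r^2,r^2]$ gives $\iint_{P_r}|T^\perp(x)-\lambda|^2|\nabla u|^2\le c(n)K_1 r^{n+2}$. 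Applying Theorem \ref{Caccioppoli} at scale $r$ (with $\phi$ supported in $\hat B_r$, so $C_1\sim r^{-2}$) then bounds the tilt excess on $P_r$ by this height excess together with a Willmore-type term which is itself controlled by the height excess via Proposition \ref{Comparable}, delivering
\begin{align*}
\iint_{P_r}(1-\langle\nu,e_r\rangle^2)|\nabla u|^2\,dx\,dt\le C(n)\,r^n,
\end{align*}
with $e_r$ the unit normal to $T$. The hard part will be the scale matching $R^*\ge 2r$ in the middle step, where both the geometric decay rate of the iteration and the blowdown smallness of $\bar{\mathbb H}(P_{R_0})$ must be tuned carefully to $r$; a minor additional point is that \eqref{NonTrivialEnd} requires $|u(0,0)|\le 1-b$, and if this fails at the origin one recenters the iteration at a nearby transition point or invokes Lemma \ref{ExponentialDecay} to handle the bulk region directly.
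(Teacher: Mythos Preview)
Your approach is correct and rests on the same core ingredient (Theorem \ref{thm_ExcessDecay}) as the paper, but the paper runs the iteration in the \emph{opposite} direction and thereby bypasses your mean-value transfer step entirely. Setting $r_k=\theta^{-k}$ and $\mathbb H_k:=\min_{T,\lambda}r_k^{-n-2}\iint_{P_{r_k}}|T^\perp(x)-\lambda|^2|\nabla u|^2\,dxdt$, the paper argues by contradiction going \emph{outward}: if $\mathbb H_{\tilde k}>K_1\theta^{-n-2}$ for some $\tilde k$, then the trivial inclusion bound $\mathbb H_{\tilde k+1}\ge\theta^{n+2}\mathbb H_{\tilde k}>K_1$ verifies \eqref{LayerRepulsion} at the larger scale $r_{\tilde k+1}$, so Theorem \ref{thm_ExcessDecay} (applied there) yields $\mathbb H_{\tilde k}\le(\theta^3/2)\mathbb H_{\tilde k+1}$; inductively $\mathbb H_k\ge(2/\theta^3)^{k-\tilde k}\mathbb H_{\tilde k}\to\infty$, which forces the rescaled tilt excess $E(P_1)$ for $u^{1/r_k}$ to blow up, contradicting Lemma \ref{lem_excess}. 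Hence $\mathbb H_k\le K_1\theta^{-n-2}$ at \emph{every} dyadic scale, and the tilt-excess bound at scale $r$ follows immediately from Proposition \ref{Comparable} after rescaling --- no auxiliary scale $R^*$, no $L^2$--$L^\infty$ transfer, no time-slice integration.

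Your inward scheme does go through --- the scale matching $R^*\ge 2r$ you flag as ``the hard part'' succeeds because the normalized height excess decays by the factor $\theta/2$ per step while the \eqref{LayerRepulsion} threshold grows only like $\theta^{-2}$, and crucially $\bar{\mathbb H}(P_{R_0})$ is bounded by the \emph{fixed} constant $\delta_0$ (not merely $o(1)$; this is exactly the content of the Remark after Theorem \ref{thm_ExcessDecay}, which derives \eqref{SmallHeight} from the density hypothesis \eqref{SingleLayer}). Combining these gives $R^*\gtrsim R_0^{1-2\gamma}$ with $\gamma=\log(1/\theta)/\log(2/\theta^3)<1/3$, so $R_0\sim r^{1/(1-2\gamma)}$ suffices. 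Your subsequent use of Proposition \ref{eqn_L2Linfty} with $f=|T^\perp(x)-\lambda|$ is also legitimate (the subsolution inequality holds distributionally by the discrepancy condition), and the time-integration wrinkle can be handled by centering at $(0,2r^2)$. But all of this is extra machinery that the paper's outward contradiction makes unnecessary: it delivers the height-excess bound directly at scale $r$ rather than at an auxiliary $R^*$ requiring transfer.
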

\begin{proof}
Since our Theorem \ref{thm_ExcessDecay} is stated in terms of height excess, we will use that to prove a growth bound for height excess. And it will imply the corresponding growth bound for tilt excess.

First, we parabolically scale $u$ by $\e$ (to be chosen later) to get $u^\e(x,t)=u(\frac{x}{\e},\frac{t}{\e^2})$ so that $u^\e$ satisfies the equation \eqref{eqn_ACFe} and which allows us to apply the excess decay theorem, Theorem \ref{thm_ExcessDecay}, to $u^\e$. 

For the $\theta$ obtained from the excess decay theorem, Theorem \ref{thm_ExcessDecay}, we denote $r_k=\frac{1}{\theta^k}$ for $k\geq 0$ and define
\begin{align*}
\mathbb H_k:=\min_{T\in G(n+1,n),\lambda\in\mathbb R} r_k^{-n-2}\iint_{P_{r_k}}|T^\perp(x)-\lambda|^2|\nabla u|^2 dxdt.
\end{align*}
We have that if
\begin{align}\label{ExcessTooLarge}
\mathbb H_k\geq K_1, 
\end{align}
then after rescaling by $\frac{1}{r_k}$, for any $T_k\in G(n+1,n),\lambda_k\in\mathbb R$,
\begin{align*}
\iint_{P_{1}}|T_k^\perp(x)-\lambda_k|^2\frac{1}{r_k}|\nabla u^\frac{1}{r_k}|^2 dxdt\leq K_1\left(\frac{1}{r_k}\right)^2.
\end{align*}
The technical condition \eqref{LayerRepulsion} in Theorem \ref{thm_ExcessDecay} is satisfied for $\e=\frac{1}{r_k}$, and thus the excess decay theorem applies, and there exists $T_{k-1}\in G(n+1,n),\tilde\lambda_{k-1}\in\mathbb R$ such that
\begin{align*}
\theta^{-n-4}\iint_{P_{\theta}}|T_{k-1}^\perp(x)-\tilde\lambda_{k-1}|^2\frac{1}{r_k}|\nabla u^\frac{1}{r_k}|^2 dxdt&\leq \frac{\theta}{2}\iint_{P_{1}}|T_k^\perp(x)-\lambda_k|^2\frac{1}{r_k}|\nabla u^\frac{1}{r_k}|^2 dxdt.
\end{align*}
By rescaling back,
\begin{align*}
r_{k-1}^{-n-4}\iint_{P_{r_{k-1}}}|T_{k-1}^\perp(x)-r_k\tilde\lambda_{k-1}|^2|\nabla u|^2 dxdt&\leq \frac{\theta}{2}r_k^{-n-4}\iint_{P_{r_k}}|T_k^\perp(x)-r_k\lambda_k|^2|\nabla u|^2 dxdt,
\end{align*}
where $\lambda_{k-1}=r_k\tilde\lambda_{k-1}$. Dividing both sides by $r_{k-1}$, we have
\begin{align*}
r_{k-1}^{-n-2}\iint_{P_{r_{k-1}}}|T_{k-1}^\perp(x)-\lambda_{k-1}|^2|\nabla u|^2 dxdt&\leq \frac{\theta^3}{2}r_k^{-n-2}\iint_{P_{r_k}}|T_k^\perp(x)-r_k\lambda_k|^2|\nabla u|^2 dxdt,
\end{align*}
Since $T_k,\lambda_k$ are arbitrary, this gives
\begin{align*}
\mathbb H_{k-1}&\leq \frac{\theta^3}{2}\mathbb H_k.
\end{align*}
Moreover, by definition, for every $k$, we always have
\begin{align*}
\mathbb H_{k-1}\leq \theta^{-n-2}\mathbb H_k.
\end{align*}
We have
\begin{claim}
If for some finite integer $\tilde k$, there holds
\begin{align*}
\mathbb H_{\tilde k}>K_1\theta^{-n-2},
\end{align*}
then for all integers $k\geq \tilde k$, there holds
\begin{align*}
\mathbb H_k\geq \mathbb H_{\tilde k}>K_1\theta^{-n-2}.
\end{align*}
\end{claim}
\begin{proof}[Proof of Claim]
Since $\mathbb H_{\tilde k+1}\geq \theta^{n+2}\mathbb H_k>\theta^{n+2}K_1\theta^{-n-2}=K_1$, condition \eqref{ExcessTooLarge} is satisfied for $\tilde k+1$ and so
\begin{align}\label{ForcedExcessGrowth}
\mathbb H_{\tilde k+1}\geq \frac{2}{\theta^3}\mathbb H_k\geq \mathbb H_{\tilde k}> K_1\theta^{-n-2},
\end{align}
since $\theta\in(0,\frac{1}{4})$.

Now by induction, this holds for every $k\geq \tilde k$.
\end{proof}

Now suppose there is an integer $\tilde k$ such that $\mathbb H_{\tilde k}>K_1\theta^{-n-2}$, then from \eqref{ForcedExcessGrowth} in the proof of the above claim, for every $k\geq \tilde k$,
\begin{align*}
\mathbb H_k\geq \frac{2^{k-\tilde k}}{\theta^{3(k-\tilde k)}}\mathbb H_{\tilde k}=\left(\frac{2}{\theta^3}\right)^k\left[\frac{2^{-\tilde k}}{\theta^{-3\tilde k}}\mathbb H_{\tilde k}\right]\geq \tilde C\frac{2^k}{\theta^{3k}}\mathbb H_{\tilde k}.
\end{align*}
Therefore for each $k\geq \tilde k$, there exists $\bar T_k\in G(n+1,n),\bar\lambda_k\in\mathbb R$ such that
\begin{align*}
\mathbb H_k&\geq \tilde C\frac{2^k}{\theta^{3k}}\theta^{(n+2)\tilde k}\iint_{P_{\theta^{-\tilde k}}}|\bar T_{\tilde k}^\perp(x)-\bar\lambda_{\tilde k}|^2|\nabla u|^2 dxdt \\
&\geq \tilde C\left(\frac{2}{\theta}\right)^k\theta^{-2\tilde k}\theta^{(n+4)\tilde k}\iint_{P_{\theta^{-\tilde k}}}|\bar T_{\tilde k}^\perp(x)-\bar\lambda_{\tilde k}|^2|\nabla u|^2 dxdt \\
&\geq \tilde C_1\left(\frac{2}{\theta}\right)^k\theta^{(n+4)\tilde k}\iint_{P_{\theta^{-\tilde k}}}|\bar T_{\tilde k}^\perp(x)-\bar\lambda_{\tilde k}|^2|\nabla u|^2 dxdt \\
&\geq \tilde C_1\left(\frac{2}{\theta}\right)^k(\theta^{2\tilde k}\mathbb H_{\tilde k})\\
&\geq \tilde C_2\left(\frac{2}{\theta}\right)^k\rightarrow\infty.
\end{align*}
By definition of $\mathbb H_k$, for any $T_k\in G(n+1,n)$ and any $\lambda_k\in\mathbb R$,
\begin{align*}
\theta^{(n+2)k}\iint_{P_{\theta^{-k}}}|T_k^\perp(x)-\lambda_k|^2|\nabla u^\e|^2 dxdt &\geq \tilde C_2\left(\frac{2}{\theta}\right)^k\rightarrow\infty.
\end{align*}
Rescaling by $\e=\frac{1}{r_k}=\theta^k$, we get for any $T_k$
\begin{align*}
\iint_{P_{1}}| T_k^\perp(x)-\theta^k\lambda_k|^2\e|\nabla u^\e|^2 dxdt &\rightarrow\infty,
\end{align*}
as $k\rightarrow\infty$. 

Since $T_k,\lambda_k$ are arbitrary, by Proposition \eqref{Comparable} the tilt-excess satisfies
\begin{align*}
E(P_1)&\rightarrow\infty,
\end{align*}
where $\bar e_k$ is a unit normal to the plane $ T_k$. This is a contradiction to \eqref{ExcessConvergence}, which states that $E(P_1)\rightarrow0$. 

Thus the assumption that $\mathbb H_{\tilde k}>K_1\theta^{-n-2}$ for some $\tilde k$ is false and we must have for every $k>0$, $\mathbb H_k\leq K_1\theta^{-n-2}$, namely there exists  $T_k\in G(n+1,n),\lambda_k\in\mathbb R$ such that
\begin{align*}
\iint_{P_{r_k}}|T_k^\perp(x)-\lambda_k|^2|\nabla u|^2 dxdt\leq K_1\theta^{-n-2}r_k^{n+2}\leq c(n)r_k^{n+2}.
\end{align*}
By interpolating between $r_k$, we easily have for every $r\geq 1$, there exists $T_r\in G(n+1,n),\lambda_r\in\mathbb R$ such that
\begin{align*}
r^{-n-4}\iint_{P_{r}}|T_r^\perp(x)-\lambda_r|^2|\nabla u|^2 dxdt\leq c(n)r^{-2}.
\end{align*}

Scaling by $\frac{1}{r}$ and applying Proposition \eqref{Comparable} in $P_1$, we  have for some $e_r\in\mathbb S^n$
\begin{align*}
r^{-n-2}\iint_{P_{r}}(1-\langle\nu,e_r\rangle^2)|\nabla u|^2 dxdt&\leq C(n)r^{-2}\\
\iint_{P_{r}}(1-\langle\nu,e_r\rangle^2)|\nabla u|^2 dxdt&\leq C(n)r^n.
\end{align*}
which completes the proof.
\end{proof}

Furthermore, we show the choice of this unit vector can be made independent of $r$.
\begin{proposition}\label{ExcessGrowthBound}
There exists a constant $C(n)>0$ large and $\tau_n>0$ small such that the following holds : if $u$ is an entire eternal solution of \eqref{eqn_ACF1} in $\mathbb R^{n+1}\times\mathbb R$ with entropy below $1+\tau_n$, then there exists a unit vector $e_\infty$, so that the tilt excess has the following growth bound
\begin{align*}
\iint_{P_r}(1-\langle\nu ,e_\infty\rangle^2)|\nabla u|^2 dxdt\leq C(n)r^n
\end{align*}
\end{proposition}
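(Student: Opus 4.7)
The strategy is to construct $e_\infty$ as the common limit of iterating the excess decay theorem starting from every large scale, and then convert the direction-dependent tilt bound of Proposition \ref{ExcessGrowthControl} into one with respect to the fixed vector $e_\infty$ via an elementary pointwise inequality.

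First I would apply Proposition \ref{ExcessGrowthControl} to produce, at each scale $r\geq 1$, a direction $e_r$ with $\iint_{P_r}(1-\langle\nu,e_r\rangle^2)|\nabla u|^2\,dxdt\leq C(n)r^n$; by Proposition \ref{Comparable} the associated normalised height excess $\mathbb{H}_r$ at scale $r$ satisfies $\mathbb{H}_r\le C(n)/r^2$ for large $r$. After rescaling to the unit parabolic cylinder (so $\e=1/r$), the hypotheses of Theorem \ref{thm_ExcessDecay} hold, and I would iterate the excess decay to obtain directions $T_k^{(r)}$ at the scales $\theta^k r$ whose normalised height excesses decay geometrically as $(\theta/2)^k\mathbb{H}_r$, and whose consecutive differences satisfy $|T_{k+1}^{(r)}-T_k^{(r)}|\leq C(\theta/2)^{k/2}\sqrt{\mathbb{H}_r}$ by the pointwise tilt estimate \eqref{PointwiseTilt}. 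Summing the geometric series gives convergence to a limit $e_\infty^{(r)}$ with $|e_r-e_\infty^{(r)}|\leq C\sqrt{\mathbb{H}_r}\leq C/r$ for all large $r$. To show $e_\infty^{(r)}$ is independent of $r$, I would compare iterations started at $r_1<r_2$ at any very small common scale $s$: both give directions with excess $O(\mathbb{H}_s)$, and a stability estimate for the convex quadratic functional $T\mapsto \iint_{P_s}|T^{\perp}(x)-\lambda|^2|\nabla u|^2$ forces any two such near-minimisers to differ by $O(\sqrt{\mathbb{H}_s})$, so letting $s\to 0$ yields $e_\infty^{(r_1)}=e_\infty^{(r_2)}=:e_\infty$.

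The last step is an elementary pointwise inequality: decomposing $\nu=\langle\nu,e_r\rangle e_r+\nu^{\perp}$, writing $\langle\nu,e_\infty\rangle=\langle\nu,e_r\rangle+\langle\nu,e_\infty-e_r\rangle$, and applying AM--GM gives, for $|e_r-e_\infty|\le 1$,
\begin{align*}
1-\langle\nu,e_\infty\rangle^2\leq 4\bigl(1-\langle\nu,e_r\rangle^2\bigr)+\tfrac{5}{2}|e_r-e_\infty|^2.
\end{align*}
Integrating against $|\nabla u|^2$ on $P_r$ and using the energy hypothesis $\iint_{P_r}|\nabla u|^2\,dxdt\leq Cr^{n+2}$ together with the tilt bound for $e_r$ from Proposition \ref{ExcessGrowthControl} produces
\begin{align*}
\iint_{P_r}\bigl(1-\langle\nu,e_\infty\rangle^2\bigr)|\nabla u|^2\,dxdt \leq 4C(n)r^n+\tfrac{5}{2}\cdot\tfrac{C}{r^2}\cdot Cr^{n+2}\leq C'(n)r^n,
\end{align*}
exactly the claimed bound. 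The finitely many bounded scales on which $|e_r-e_\infty|$ is not already below $1$ are absorbed into a larger constant.

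The main technical obstacle is the stability estimate needed to show $e_\infty^{(r)}$ is independent of the starting scale: one must quantify that the minimiser of the height-excess functional is uniquely determined up to an $O(\sqrt{\mathbb{H}_s})$ error, which requires the mass $\iint|\nabla u|^2$ to be non-degenerate transverse to the minimising plane. A secondary point to verify is that the layer repulsion condition \eqref{LayerRepulsion} is maintained along the iteration; if it ever fails, the iteration simply terminates and the scale-invariant excess is already below $K_1\e^2 = K_1/r^2$, which directly translates to the desired $O(r^n)$ tilt bound without any further work.
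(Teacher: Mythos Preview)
Your final step---the pointwise inequality comparing $1-\langle\nu,e_\infty\rangle^2$ with $1-\langle\nu,e_r\rangle^2+|e_r-e_\infty|^2$ and then integrating---is correct and is essentially how the paper finishes as well. The problem is everything before that: your construction of $e_\infty$ via \emph{downward} iteration from each starting scale, followed by a ``stability estimate'' to identify the limits $e_\infty^{(r)}$, is a significant detour and the stability step is a genuine gap you have not filled. Note also that after $k$ iterations the effective parameter is $\e=1/(r\theta^k)$, not $1/r$, so your sentence ``$K_1\e^2=K_1/r^2$'' is incorrect, and the iteration necessarily terminates at scale $\sim 1/\e_0$ in the original picture rather than at arbitrarily small $s$; your proposed comparison ``at any very small common scale $s$'' is therefore not available.

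The paper avoids all of this by a one-line observation that makes the downward iteration and the stability estimate unnecessary. Having already established Proposition~\ref{ExcessGrowthControl}, one simply compares $e_R$ with $e_{R/\theta}$ by integrating \emph{both} tilt excesses over the smaller ball $P_R$: since $1-\langle\nu,e\rangle^2$ is comparable to $\mathrm{dist}_{\mathbb{RP}^n}(\nu,e)^2$, the triangle inequality gives
\[
\iint_{P_R}\bigl[(1-\langle\nu,e_R\rangle^2)+(1-\langle\nu,e_{R/\theta}\rangle^2)\bigr]|\nabla u|^2\,dxdt
\;\ge\; c\,\mathrm{dist}_{\mathbb{RP}^n}(e_R,e_{R/\theta})^2\iint_{P_R}|\nabla u|^2\,dxdt.
\]
The left side is $O(R^n)$ by Proposition~\ref{ExcessGrowthControl} (using $P_R\subset P_{R/\theta}$), and the right-hand mass is $\ge cR^{n+2}$ by the density lower bound, so $\mathrm{dist}_{\mathbb{RP}^n}(e_R,e_{R/\theta})\le C/R$. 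Along $R=\theta^{-k}$ this is a geometric series, hence $e_{\theta^{-k}}\to e_\infty$ with $\mathrm{dist}(e_{\theta^{-k}},e_\infty)\le C\theta^{k}$. Interpolating to all $R\ge 1$ and plugging into your final inequality gives the result. No downward iteration, no stability estimate, no dependence on the layer repulsion condition beyond what is already absorbed into Proposition~\ref{ExcessGrowthControl}.
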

\begin{remark}
Recall the tilt excess scales like $r^{n+2}$ for Allen--Cahn equations  so this proposition implies the scale invariant excess $r^{-n-2}\iint_{P_r}(1-\nu_{n+1}^2)|\nabla u|^2 dxdt\leq O(\frac{1}{r^2})$ is decaying.

This decay  implies $u$ is monotonic in any direction $e$ in the cone $\{e\in\mathbb S^n||\langle e,e_{n+1}\rangle|>\delta\}$ for any $\delta>0$. Then a sliding method (moving plane method) argument shows the solution must be the 1-d solution with flat level sets.
\end{remark}

\begin{proof}
We denote by $u^\e(x,t):=u(\frac{x}{\e},\frac{t}{\e^2})$. For any $R\geq 1$, 
\begin{align*}
&\iint_{P_{\frac{R}{\theta}}}(1-\langle\nu_{n+1},e_{\frac{R}{\theta}}\rangle^2)\e|\nabla u^\e|^2 dxdt+\iint_{P_{R}}(1-\langle\nu_{n+1},e_{R}\rangle^2)\e|\nabla u^\e|^2 dxdt\\
&\geq C\iint_{P_R}(\mathrm{dist}_{\mathbb{RP}^n}(\nu_{n+1},e_{R})^2+\mathrm{dist}_{\mathbb{RP}^n}(\nu_{n+1},e_{\frac{R}{\theta}})^2)\e|\nabla u^\e|^2dxdt\\
&\geq C\mathrm{dist}_{\mathbb{RP}^n}(e_{\frac{R}{\theta}},e_{R})^2\iint_{P_R}\e|\nabla u^\e|^2dxdt\\
&\geq C\mathrm{dist}_{\mathbb{RP}^n}(e_{\frac{R}{\theta}},e_{R})^2 R^{n+2},
\end{align*}
where the first inequality is by Cauchy-Schwarz, the second inequality is due to the triangle inequality and the last inequality uses the density lower bound Lemma \ref{DensityLowerBound}. By Proposition \ref{ExcessGrowthControl}, we have
\begin{align*}
\mathrm{dist}_{\mathbb{RP}^n}(e_{\frac{R}{\theta}},e_{R})^2&\leq \frac{C(n)R^n}{R^{n+2}}\leq C(n)R^{-2}.
\end{align*}
And thus
\begin{align*}
\mathrm{dist}_{\mathbb{RP}^n}(e_{\theta^{-k-1}},e_{\theta^{-k}})&\leq c(n)\theta^{k},
\end{align*}
which is a geometric series (since $\theta<\frac{1}{4}$). Summing from $k_0$ to $\infty$, we can choose a $e_\infty$ so that
\begin{align}\label{FixingUnitVector}
\mathrm{dist}_{\mathbb{RP}^n}(e_{\theta^{-k}},e_{\infty})^2\leq \tilde c(n)\theta^{2k_0},
\end{align}
for any $k\geq k_0$.

Interpolating for all $R\in[\theta^{-k},\theta^{-k-1}]$ and integrating we can thus choose a fixed unit vector $e_\infty$ such that
\begin{align*}
\iint_{P_R}(1-\langle\nu_{n+1},e_\infty\rangle^2)\e|\nabla u|^2&\leq \iint_{P_R}(1-\langle\nu_{n+1},e_R\rangle^2)\e|\nabla u|^2+\iint_{P_R}\mathrm{dist}_{\mathbb{RP}^n}(e_R,e_\infty)\e|\nabla u|^2\\
&\leq c(n)R^n+C(n)R^{-2}R^{n+2}\\
&\leq c(n)R^n.
\end{align*}
where the first term is estimated by Proposition \ref{ExcessGrowthControl} and the second term is estimated by interpolating \eqref{FixingUnitVector}.
\end{proof}

Now with the excess growth bound from Proposition \ref{ExcessGrowthBound} and applying the sliding method, we have the following gap (rigidity) theorem 
\begin{theorem}\label{GapTheoremTwo}
There exists $\tau_0>0$ such that if $u:\mathbb R^{n+1}\times\mathbb R$ is an entire eternal solution to \eqref{eqn_ACF1} satisfying the almost unit density condition
\begin{align*}
\frac{1}{2}R^{-n-2}\iint_{P_{R}(0,0)}\left(\frac{\e|\nabla u^\e|^2}{2}+\frac{W(u^\e)}{\e}\right) dxdt\leq (1+\tau(n))\alpha\omega_n, \forall R>0,
\end{align*}
then 
\begin{align*}
u(x,t)=u(x)=g(x\cdot e+s_0)=\tanh\left(x\cdot e+s_0\right)
\end{align*}
for some unit vector $e\in\mathbb S^n$ and $s\in\mathbb R$.
\end{theorem}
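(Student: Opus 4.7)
The plan is to leverage Proposition \ref{ExcessGrowthBound} to reduce to a one-dimensional problem via a parabolic sliding method. After a rotation, we may assume the unit vector $e_\infty$ from that proposition equals $e_{n+1}$, so the scale-invariant tilt excess satisfies
\begin{align*}
r^{-n-2}\iint_{P_r}(1-\nu_{n+1}^2)|\nabla u|^2\,dxdt \leq C(n)r^{-2} \longrightarrow 0
\end{align*}
as $r\to\infty$. Combined with Proposition \ref{Comparable} (applied at each scale $r$), this forces the scale-invariant height excess $r^{-n-2}\iint_{P_r}|x_{n+1}-\lambda_r|^2|\nabla u|^2\,dxdt$ to tend to $0$ after appropriate centering $\lambda_r \in \mathbb{R}$.

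First, I would consider the blowdown sequence $u_R(x,t) := u(Rx, R^2 t)$, which solves \eqref{eqn_ACFe} with $\e = 1/R$ on parabolic cylinders of arbitrarily large radius, and preserves the non-positive discrepancy condition. On any fixed $P_\rho$, all hypotheses of Theorem \ref{MainRegularityTheorem} are verified for $R$ large enough: the energy bound follows from the entropy hypothesis with $\tau(n)$ chosen at most $\tau_0$, and the smallness of the height excess comes from the decay above. Passing to a subsequence and translating in $x_{n+1}$, the $u_R$ converge in $C^{2,\alpha}_P$ on compact sets to the static planar solution $g(x_{n+1}+s_0)$.

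Second, smooth convergence of the blowdowns, combined with the strong maximum principle for the linearized parabolic equation $\partial_t v = \Delta v - \tfrac{W''(u)}{\e^2}v$ satisfied by $v = \partial_{n+1} u$, forces $\partial_{n+1}u > 0$ on all of $\mathbb{R}^{n+1}\times\mathbb{R}$, since the blowdown limit $g'(\cdot+s_0)$ is strictly positive and $\partial_{n+1}u$ cannot vanish in the interior without being identically zero. For each tangential direction $i \leq n$, the ratio $\sigma_i := \partial_i u / \partial_{n+1}u$ is then well-defined and satisfies the linear parabolic equation
\begin{align*}
\partial_t \sigma_i = \Delta \sigma_i + \frac{2\,\nabla(\partial_{n+1}u) \cdot \nabla \sigma_i}{\partial_{n+1}u}
\end{align*}
in which the potential $-W''(u)/\e^2$ has cancelled because numerator and denominator solve the same linearized equation. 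By $C^{2,\alpha}$ bounds from \cite{Nguyen2020} and the tilt-excess decay, $\sigma_i$ is globally bounded and satisfies $r^{-n-2}\iint_{P_r}\sigma_i^2(\partial_{n+1}u)^2\,dxdt \to 0$. A Liouville-type theorem for bounded ancient solutions of such drift-diffusion equations (of De Giorgi/Berestycki--Caffarelli--Nirenberg type) then forces $\sigma_i$ to be constant, and the integral decay forces that constant to be $0$. Hence $\partial_i u \equiv 0$ for all $i \leq n$.

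Finally, $u$ depends only on $(x_{n+1},t)$ and satisfies the 1-d Allen--Cahn equation $\partial_t u = \partial_{n+1}^2 u - W'(u)$ with $|u|\leq 1$; the unique bounded eternal solutions compatible with the almost unit-density entropy bound are the translations of the heteroclinic connection, giving $u(x,t)=\tanh(x \cdot e_\infty + s_0)$. The main obstacle is the Liouville step: the ratio $\sigma_i$ is only a priori smooth where $\partial_{n+1}u>0$, so one must carefully quantify the lower bound on $\partial_{n+1}u$ using the smooth blowdown convergence, and then convert the averaged tilt-excess decay into the pointwise rigidity $\sigma_i \equiv 0$ via a parabolic Liouville argument tailored to this drift-diffusion structure.
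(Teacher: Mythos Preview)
Your proposal has two genuine problems.

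\textbf{Circularity.} You invoke Theorem \ref{MainRegularityTheorem} (and the $C^{2,\alpha}$ bounds from \cite{Nguyen2020}) on the blowdown sequence. In this paper's logical structure, the Gap Theorem is proved \emph{first}, using only the excess decay Theorem \ref{thm_ExcessDecay}, and is then used as an ingredient in the proof of Theorem \ref{MainRegularityTheorem} (via Theorem \ref{InteriorLipschitz}). The remark after Theorem \ref{GapTheorem} makes this order explicit. So you cannot call the Main Regularity Theorem here.

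\textbf{Positivity of $\partial_{n+1}u$.} Your strong maximum principle step is not justified. You write that ``$\partial_{n+1}u$ cannot vanish in the interior without being identically zero,'' but the strong maximum principle only gives this once you already know $\partial_{n+1}u \geq 0$ globally. Convergence of the blowdowns to $g'(\cdot+s_0)>0$ controls the solution only at large scales after rescaling; it says nothing about the sign of $\partial_{n+1}u$ at a fixed finite point of the original solution. Establishing global monotonicity is exactly the nontrivial step, and it is what the paper's sliding (moving plane) argument accomplishes: one first gets monotonicity in the far region $\{x_{n+1}\geq L\}$ from the $C^1$ convergence of the distance-type function $z$ (via \cite[Proposition 4.6]{tonegawa2003integrality}), then slides translates back and uses the parabolic maximum principle at a first touching point to propagate monotonicity to all of $\mathbb{R}^{n+1}\times\mathbb{R}$. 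Only after this does $\partial_{n+1}u>0$ hold.

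Once monotonicity is in hand, your Berestycki--Caffarelli--Nirenberg-style ratio argument is a reasonable alternative finishing device, but it is not needed: since the sliding method applies to every direction $e$ with $\langle e,e_{n+1}\rangle>\delta$ and $\delta>0$ is arbitrary, one directly obtains $\nabla z\equiv e_{n+1}$, hence $u=\tanh(x_{n+1}+s_0)$.
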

\begin{proof}
The proof follows from an application of the sliding method analagous to the elliptic case in \cite{wang2014new}. First, by the excess growth bound Proposition \ref{ExcessGrowthBound}, any blow down sequence 
\begin{align*}
u_{x_0,t_0}^\e(x,t)=u\left(\frac{x-x_0}{\e},\frac{t-t_0}{\e^2}\right)
\end{align*}
converges to the same limit $u^\infty=2\chi_{\{\langle x,e^\infty\rangle\geq 0\}}-1$, where $e^\infty\in\mathbb S^n$ is a fixed unit vector independent of the choice of subsequences $\e$ and translations $(x_0,t_0)$. Without loss of generality, we can assume $e^\infty=e_{n+1}$ and thus $u^\infty=2\chi_{\{x_{n+1}\geq 0\}}-1$.
 
By \eqref{ExcessConvergence}, Proposition \ref{Comparable} and Theorem \ref{Caccioppoli}, the discrepancy and tilt excess converges to $0$ for every time slice.  As a consequence, the proof of \cite[Proposition 4.6]{tonegawa2003integrality} shows the distance type function $z^\e$ defined in \eqref{DistanceTypeFunction} converges in $C^1$ to $\langle e_{n+1},x\rangle$. Namely, for every $\delta>0$, there exists $\e_0(\delta)>0$ such that for every $\e\leq \e_0$ there holds 
\begin{align*}
|\nabla z^\e-e_{n+1}|\leq \delta.
\end{align*}
After rescaling, this reads that for every $\delta>0$, there exists $L(\delta)>0$ such that for $(x,t)\in\{x_{n+1}\geq L\}$ there holds
\begin{align*}
|\nabla z-e_{n+1}|\leq \delta.
\end{align*}
And so $u$ is monotonic increasing in $\{x_{n+1}\geq L\}$ along any direction $e\in\mathbb S^n$ such that $\langle e_{n+1},e\rangle\geq \delta$.

By the sliding method, this monotonicity of $u$ can be extended from $\{x_{n+1}\geq L\}$ to the whole $\mathbb R^{n+1}\times\mathbb R$. (Otherwise, we can translate $u$ in the direction of $e$ for distance $L$ so that the translated graph is disjoint from the untranslated one. If $u$ is not increasing along this direction, we then slide back in $-e$ direction until we get a first interior contact point. This is a violation of the maximum principle for semilinear parabolic partial differential equations).

Since $\delta$ is arbitrary, this then gives $\nabla z=e_{n+1}$ and thus 
\begin{align*}
u=g\circ z=\tanh (x_{n+1}+s_0),
\end{align*}
for some $s_0\in\mathbb R$.
\end{proof}
\section{$C^{2,\alpha}$ regularity of transition layers} \label{sec_Holder}

We  will use a similar iteration argument in Proposition \ref{ExcessGrowthControl} in the proof of gap theorem above to prove a Morrey type bound up to scale $\e$.
\begin{proposition}\label{MorreyType}
There exists constants $K_3,K_4$ such that the following holds: if $u^\e$ is a solution to \eqref{eqn_ACFe} in $P_1\subset\mathbb R^{n+1}\times\mathbb R$ such that $|u^\e(0,0)|\leq 1-b$ and satisfies all the conditions in Theorem \ref{MainRegularityTheorem} then for any $r\in(K_3\e,\theta)$, we can find a unit vector $e_r$ such that 
\begin{align*}
r^{-n-2}\iint_{P_r}(1-\langle\nu_\e,e_r\rangle^2)\e|\nabla u^\e|^2 dxdt\leq K_4\max\left\{\frac{\e^2}{r^2},\delta_0^2r^\alpha\right \},
\end{align*}
where $\alpha=\frac{\log\frac{\theta}{2}}{\log\theta}\in(1,2)$.
\end{proposition}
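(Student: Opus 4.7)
The plan is to iterate the excess decay theorem, Theorem \ref{thm_ExcessDecay}, at the dyadic parabolic scales $r_k = \theta^k$. At each step one produces a slightly tilted affine $n$-plane $A_k$ with unit normal $e_k$ and controls the scale-invariant height excess
$$ H_k := r_k^{-n-4} \iint_{P_{r_k}} \mathrm{dist}(x, A_k)^2 \, \e|\nabla u^\e|^2 \, dx\, dt, $$
showing that $H_k$ decays by the factor $\theta/2$ per step as long as the layer-repulsion hypothesis \eqref{LayerRepulsion} holds on the rescaled cylinder $P_1$. The identity $(\theta/2)^k = \theta^{k\alpha}$ with $\alpha = \log(\theta/2)/\log\theta$ then turns this geometric decay into the advertised H\"older-type bound $H_k \lesssim r_k^\alpha$. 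After the rescaling $P_{r_k} \to P_1$ the layer-repulsion hypothesis reads $H_k \gtrsim \e^2/r_k^2$, so iteration must break down as $r_k$ approaches $\e$, and the $\e^2/r^2$ term in the conclusion records precisely this floor.

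First I would initialise with $A_0 = \{x_{n+1} = 0\}$, $e_0 = e_{n+1}$, and $H_0 \leq \delta_0^2$ (with the paper's normalization for the smallness assumption). Given $A_k$, rescale $P_{r_k} \to P_1$, rotate $A_k$ to horizontal, and verify the four hypotheses of Theorem \ref{thm_ExcessDecay}: \eqref{NonTrivialEnd} is scale-invariant and inherited from the hypothesis of Theorem \ref{MainRegularityTheorem}; the unit-density condition \eqref{SingleLayer} follows from \eqref{AlmostUnit} via the cylindrical growth estimate of Lemma \ref{CylindricalGrowthLem}; the smallness \eqref{SmallHeight} becomes $H_k \leq \delta_0^2$, preserved inductively; and \eqref{LayerRepulsion} becomes $H_k \geq K_1 \e^2/r_k^2$. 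Whenever all four hold, Theorem \ref{thm_ExcessDecay} yields $A_{k+1}$ with normal $e_{k+1}$ satisfying $\|e_{k+1}-e_k\| \leq C\sqrt{H_k}$ and $H_{k+1} \leq (\theta/2) H_k$; the tilt bound \eqref{PointwiseTilt} makes $\{e_k\}$ Cauchy, and its limit will serve as the unit vector $e_r$ in the conclusion. Letting $k^*$ be the last index at which all four hypotheses hold, one obtains $H_k \leq \delta_0^2 r_k^\alpha$ for $k \leq k^*+1$ together with the terminal inequality $H_{k^*+1} < K_1 \e^2/r_{k^*+1}^2$.

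To pass from dyadic to arbitrary scales $r \in (K_3\e, \theta)$ and from height excess to the tilt excess stated in the proposition, I would interpolate on parabolic annuli: for $r \in [r_{k+1}, r_k]$ with $k+1 \leq k^*+1$, the inclusion $P_r \subset P_{r_k}$ bounds the height excess at scale $r$ relative to $A_k$ losing only the universal factor $\theta^{-(n+4)}$, and Proposition \ref{Comparable} converts this to the tilt-excess bound. This yields the $\delta_0^2 r^\alpha$ bound for every scale $r \in [r_{k^*+1}, \theta]$. For scales $r \in (K_3\e, r_{k^*+1}]$ below the iteration threshold, I would combine the terminal bound at $r_{k^*+1}$ with the parabolic $L^2$-$L^\infty$ mean-value inequality, Proposition \ref{eqn_L2Linfty}, applied with $f = \mathrm{dist}(\cdot, A_{k^*+1})$, together with the standard blow-up fact that $u^\e(r\cdot, r^2\cdot)$ is $C^1$-close to the one-dimensional profile $g$ on $P_1$ whenever $\e/r$ is small. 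These together produce the $\e^2/r^2$ bound, with $K_3$ chosen large enough (in terms of $K_1$, $\delta_0$, $\theta$) that $K_3 \e \leq r_{k^*+1}$ uniformly in admissible $\e$, and $K_4$ chosen as an appropriate combination of the two regime constants.

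The principal obstacle is the handling of scales $r \in (K_3\e, r_{k^*+1})$ beneath the iteration threshold, because the naive spatial inclusion argument produces a loss $(r_{k^*+1}/r)^{n+4}$ which cannot by itself be absorbed into the target $\e^2/r^2$. Avoiding this loss requires both the parabolic $L^2$-$L^\infty$ control, which transfers a spacetime integral average into a uniform pointwise-in-time bound, and the blow-up fact that $u^\e$ is $C^1$-close to the 1-d profile on its intrinsic scale $\e$. Only these together produce the correct $\e^2/r^2$ floor with $K_3$ an absolute constant, and the max in the conclusion then cleanly encodes the crossover between the H\"older-decay regime at scales $r \geq r_{k^*+1}$ and the $\e$-scale floor regime at scales $K_3\e < r < r_{k^*+1}$.
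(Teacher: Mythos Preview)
Your overall strategy—iterating Theorem \ref{thm_ExcessDecay} at the dyadic scales $r_k=\theta^k$, using the dichotomy between the layer-repulsion hypothesis \eqref{LayerRepulsion} holding or failing, and converting height excess to tilt excess via Proposition \ref{Comparable}—is correct and is exactly what the paper intends (it refers the reader to the analogous iteration in Proposition \ref{ExcessGrowthControl} and to \cite[10.1]{wang2014new}).

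Where you go wrong is in the ``below-threshold'' regime. The dichotomy does \emph{not} terminate at the first failure index $k^*+1$: at every step $k$ one simply re-checks whether \eqref{LayerRepulsion} holds after rescaling $P_{r_k}\to P_1$. If it holds, apply excess decay and obtain $\mathbb H_{k+1}\leq(\theta/2)\mathbb H_k$; if it fails, the failure itself says $\mathbb H_k<K_1\e^2/r_k^2$, which is already the desired bound at that scale. The point is that in the failure case the bound on $\mathbb H_k$ comes directly from the failure condition, not from propagating down from $r_{k^*+1}$ via inclusion, so there is no accumulation of the factor $\theta^{-(n+4)}$ that concerned you. A single induction step (trivial inclusion in the failure case, excess decay in the other) yields
\[
\mathbb H_k\;\leq\;\theta^{-n-2}\max\Bigl\{K_1\frac{\e^2}{r_k^2},\;\mathbb H_0\,r_k^\alpha\Bigr\}
\]
for every $k$ with $r_k\geq K_3\e$, where $K_3$ is chosen only to guarantee the smallness hypothesis $\mathbb H_k\leq\delta_0$ via $\theta^{-n-2}K_1/K_3^2\leq\delta_0$. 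Your proposed detour through the $L^2$–$L^\infty$ inequality and a blow-up argument for scales below $r_{k^*+1}$ is therefore unnecessary; moreover, the blow-up fact you invoke (that $u^\e(r\cdot,r^2\cdot)$ is $C^1$-close to the one-dimensional profile) already presupposes smallness of the excess at scale $r$, which is precisely what you are trying to establish, so that route would be circular as stated. The claim that $K_3$ can be chosen so that $K_3\e\leq r_{k^*+1}$ uniformly is also not correct: $r_{k^*+1}$ depends on the particular solution and need not be comparable to $\e$.
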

Similar to Proposition \ref{ExcessGrowthBound}, we can fix the choice of unit vector and obtain
\begin{proposition}\label{ImprovedMorreyType}
For any $\sigma>0$, there exists constants $K_5(\sigma),K_6$ such that the following holds : let $u^\e$ be a solution to \eqref{eqn_ACFe} in $P_1\subset\mathbb R^{n+1}\times\mathbb R$ such that $|u^\e(0,0)|\leq 1-b$ and which satisfies all the conditions in Theorem \ref{MainRegularityTheorem}. Then we can find a unit vector $\bar e_0$ such for any $r\in(K_5\e,\theta)$ we have
\begin{align*}
r^{-n-2}\iint_{P_r}(1-\langle\nu_\e,\bar e_0\rangle^2)\e|\nabla u^\e|^2 dxdt\leq \sigma+K_6\delta_0r^\frac{\alpha}{2},
\end{align*}
where $\alpha=\frac{\log\frac{\theta}{2}}{\log\theta}\in(1,2)$. Here $e_0$ is independent of $r$. Moreover, the difference of $\bar e_0$ and $e_{n+1}$ can be estimated by
\begin{align*}
|\bar e_0-e_{n+1}|\leq C(\sigma^\frac{1}{2}+\delta_0^\frac{1}{2}).
\end{align*}
\end{proposition}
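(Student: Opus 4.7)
The proof proposal is modeled on the passage from Proposition~\ref{ExcessGrowthControl} to Proposition~\ref{ExcessGrowthBound} in the gap-theorem section: one uses Proposition~\ref{MorreyType} at dyadic scales together with a Cauchy--Schwarz comparison between consecutive scales to chain the scale-dependent optimal directions $e_r$ into a single reference direction $\bar e_0$. The plan is to fix $r_k=\theta^k$ and, writing $F(r):=r^{-n-2}\iint_{P_r}(1-\langle\nu_\e,e_r\rangle^2)\e|\nabla u^\e|^2\,dxdt$, observe that
\begin{align*}
\mathrm{dist}^2_{\mathbb{RP}^n}(e_{r_k},e_{r_{k+1}})\int_{P_{r_{k+1}}}\e|\nabla u^\e|^2
\leq C\!\!\int_{P_{r_k}}\!\bigl[(1-\langle\nu_\e,e_{r_k}\rangle^2)+(1-\langle\nu_\e,e_{r_{k+1}}\rangle^2)\bigr]\e|\nabla u^\e|^2,
\end{align*}
which, combined with the density lower bound (Lemma~\ref{DensityLowerBound}) and Proposition~\ref{MorreyType}, yields
\[
\mathrm{dist}(e_{r_k},e_{r_{k+1}})\leq C\sqrt{F(r_k)+F(r_{k+1})}
\leq C\max\Bigl\{\tfrac{\e}{r_k},\;\delta_0 r_k^{\alpha/2}\Bigr\}.
\]
Let $r_\ast:=(\e/\delta_0)^{2/(\alpha+2)}$, where the two terms in the max coincide, so that the first bound gives $C\delta_0 r_k^{\alpha/2}$ for $r_k\geq r_\ast$ and $C\e/r_k$ for $r_k\leq r_\ast$.

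Next I would define $\bar e_0:=e_{r_\sigma}$ with $r_\sigma:=\e/\sqrt{\sigma}$ and set $K_5(\sigma)=1/\sqrt{\sigma}$. The point is that in the upper regime the estimates $C\delta_0 r_k^{\alpha/2}$ are geometrically summable with sum dominated by the largest term, while in the lower regime $r_k\leq r_\ast$ the estimates $C\e/r_k$ are geometrically summable with sum dominated by the smallest $r_k$. A direct telescoping then gives
\begin{align*}
|\bar e_0-e_r|\leq\begin{cases} C(\sqrt{\sigma}+\delta_0 r^{\alpha/2}), & r\in[r_\ast,\theta], \\[2pt] C\sqrt{\sigma}, & r\in[r_\sigma,r_\ast].\end{cases}
\end{align*}
Using the elementary inequality $1-\langle\nu,\bar e_0\rangle^2\leq 2(1-\langle\nu,e_r\rangle^2)+C|\bar e_0-e_r|^2$ (which follows from the triangle inequality for the angle together with $\sin(a+b)\leq \sin a+\sin b$) and the density upper bound $r^{-n-2}\iint_{P_r}\e|\nabla u^\e|^2\leq C$, I would then conclude
\begin{align*}
r^{-n-2}\!\iint_{P_r}\!(1-\langle\nu_\e,\bar e_0\rangle^2)\e|\nabla u^\e|^2\,dxdt
\leq 2F(r)+C|\bar e_0-e_r|^2
\leq C\sigma+K_6\,\delta_0 r^{\alpha/2},
\end{align*}
where in the lower regime I use $F(r)\leq K_4\e^2/r^2\leq K_4\sigma$ for $r\geq r_\sigma$, and in the upper regime I use $\delta_0^2 r^\alpha\leq \delta_0 r^{\alpha/2}$ for $r,\delta_0\leq 1$.

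For the final assertion on $|\bar e_0-e_{n+1}|$, I would compare $e_{n+1}$ (which, by the hypotheses \eqref{SmallHeight} of Theorem~\ref{MainRegularityTheorem} and Proposition~\ref{Comparable}, satisfies tilt excess $\leq C\delta_0$ at scale $1$) with $e_{r_0}$ by the same Cauchy--Schwarz step, obtaining $|e_{n+1}-e_{r_0}|\leq C\sqrt{\delta_0}$, and then chain from $e_{r_0}$ down to $\bar e_0$ via the telescoping estimates above to arrive at $|\bar e_0-e_{n+1}|\leq C(\sigma^{1/2}+\delta_0^{1/2})$. The main technical obstacle is the bookkeeping across the transition scale $r_\ast$: the bound on $\mathrm{dist}(e_{r_k},e_{r_{k+1}})$ changes character there, so one must verify that the geometric summation on both sides of $r_\ast$ still yields the same $\sqrt{\sigma}+\delta_0 r^{\alpha/2}$ control uniformly, and that the choice of the terminal scale $r_\sigma=\e/\sqrt{\sigma}$ is compatible with the range $r>K_5(\sigma)\e$ stated in the conclusion. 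Once this bookkeeping is in place, the estimate is a direct combination of Proposition~\ref{MorreyType}, Proposition~\ref{Comparable}, and the density bounds already established.
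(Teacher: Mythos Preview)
Your proposal is correct and follows precisely the approach the paper indicates: the paper does not give a proof of Proposition~\ref{ImprovedMorreyType} at all, stating only that it is ``very similar to that of Proposition~\ref{ExcessGrowthControl} and \ref{ExcessGrowthBound}'' and referring to \cite[10.1, 10.4]{wang2014new} for the elliptic analogue. Your argument---dyadic iteration of Proposition~\ref{MorreyType}, Cauchy--Schwarz comparison of consecutive optimal directions via the density lower bound of Lemma~\ref{DensityLowerBound}, geometric summation on either side of the crossover scale $r_\ast$, and the choice $\bar e_0=e_{r_\sigma}$ with $r_\sigma=\e/\sqrt{\sigma}$---is exactly the intended adaptation of Proposition~\ref{ExcessGrowthBound} to the finite-scale setting, with the only new bookkeeping being the handling of the $\e/r$ branch of the $\max$ in Proposition~\ref{MorreyType}, which you treat correctly.
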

Since proof of Proposition \ref{MorreyType} and Proposition \ref{ImprovedMorreyType} are very similar to that of Proposition \ref{ExcessGrowthControl} and \ref{ExcessGrowthBound} (see also \cite[10.1 and 10.4]{wang2014new} for the proof in elliptic setting), we skip the proof the above two propositions here.

As a consequence of the gap theorem Theorem \ref{GapTheorem}, we have the following interior regularity theorem.
\begin{theorem}\label{InteriorLipschitz}
For any $b\in(0,1)$, $\sigma>0$ small and let $K_5$ be given by Proposition \ref{ImprovedMorreyType} then there exists a $K_7>K_5$ so that the following holds : let $u^\e$ be a solution to \eqref{eqn_ACFe} with $|u(0,0)|\leq 1-b$ and which satisfies the non-positive discrepancy condition \eqref{eqn_discrepancy}. Moreover suppose
\begin{align}\label{InteriorCond1}
\frac{1}{2}(K_7\e)^{-n-2}\iint_{P_{K_7\e}}\left(\frac{\e|\nabla u^\e|^2}{2}+\frac{W(u^\e)}{2}\right) dxdt\leq \alpha(1+\tau(n))\omega_n,
\end{align}
where $\tau(n)$ is as in Theorem \ref{MainRegularityTheorem} and that there exists $\bar e_0$ (obtained from Proposition \ref{ImprovedMorreyType}) such that for any $r\in(K_5\e,K_7\e)$, there holds
\begin{align}\label{InteriorCond2}
\iint_{P_{r}}\left(1-\langle\nu_\e,\bar e_0\rangle^2\right)\e|\nabla u^\e|^2 dxdt\leq \sigma^2r^{n+2}.
\end{align}
Assume that $u^\e>0$ when $x_{n+1}\gg0$. Then we have $\nabla u^\e\neq0$ and
\begin{align*}
|\nu_\e-\bar e_0|\leq \frac{1}{4}
\end{align*}
in  $P_{K_5\e}$. In particular, the level sets of $u$ are uniformly Lipschitz in $B_{K_5\e}$.
\end{theorem}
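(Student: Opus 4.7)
The plan is to establish the conclusion by a parabolic blow-up argument at scale $\e$ and then invoke the Gap Theorem \ref{GapTheorem}. Suppose for contradiction that no such $K_7$ works. For each integer $i\geq 1$ one then finds $\e_i>0$, a solution $u^{\e_i}$ of \eqref{eqn_ACFe} satisfying the hypotheses of the theorem with $K_7=i$, a unit vector $\bar e_0^{(i)}$ (necessarily close to $e_{n+1}$ by Proposition \ref{ImprovedMorreyType}), and a point $(x_i,t_i)\in P_{K_5\e_i}$ at which $|\nu_{\e_i}(x_i,t_i)-\bar e_0^{(i)}|>\tfrac14$. Perform the parabolic rescaling $v^i(y,s):=u^{\e_i}(\e_i y,\e_i^2 s)$, so that $v^i$ solves \eqref{eqn_ACF1} on $P_i$ with $|v^i(0,0)|\leq 1-b$. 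In these coordinates \eqref{InteriorCond1} becomes the almost-unit-density bound at radius $i$, and \eqref{InteriorCond2} becomes
\begin{align*}
\iint_{P_{\bar r}}(1-\langle\nu_{v^i},\bar e_0^{(i)}\rangle^2)|\nabla v^i|^2\,dyds\leq \sigma^2\bar r^{n+2},\qquad \bar r\in(K_5,i),
\end{align*}
while the bad point becomes $(y_i,s_i):=(x_i/\e_i,t_i/\e_i^2)\in \overline{P_{K_5}}$.

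Standard parabolic regularity (using $|v^i|\leq 1$) together with the Allen--Cahn Huisken monotonicity formula, Theorem \ref{thm_WMF}, and the non-positive discrepancy \eqref{eqn_discrepancy} propagate the density bound at radius $i$ to every smaller radius, so along a subsequence $v^i\to v^\infty$ in $C^k_{loc}(\mathbb R^{n+1}\times\mathbb R)$, $\bar e_0^{(i)}\to\bar e_0^\infty\in\mathbb S^n$, and $(y_i,s_i)\to (y_\infty,s_\infty)\in\overline{P_{K_5}}$. The limit $v^\infty$ is an entire eternal solution of \eqref{eqn_ACF1} satisfying the almost-unit-density bound at every scale, so the Gap Theorem (Theorem \ref{GapTheoremTwo}) forces
\begin{align*}
v^\infty(y,s)=\tanh(e\cdot y+s_0)
\end{align*}
for some $e\in\mathbb S^n$ and $s_0\in\mathbb R$. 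Passing the rescaled tilt-excess inequality to the limit at each fixed $\bar r\geq K_5$, and combining with the density lower bound $\iint_{P_{\bar r}}|\nabla v^\infty|^2\,dyds\geq c_0\bar r^{n+2}$ for the flat solution $v^\infty$ (valid for $\bar r$ large), yields $1-\langle e,\bar e_0^\infty\rangle^2\leq C\sigma^2$. The sign hypothesis $u^{\e_i}>0$ for $x_{n+1}\gg 0$ together with $|\bar e_0^{(i)}-e_{n+1}|\leq C(\sigma^{1/2}+\delta_0^{1/2})$ from Proposition \ref{ImprovedMorreyType} rules out $e\approx -\bar e_0^\infty$, so $|e-\bar e_0^\infty|\leq C\sigma$.

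Finally, $\nabla v^\infty$ never vanishes and $\nu_{v^\infty}\equiv e$ is smooth, so the $C^1_{loc}$-convergence $v^i\to v^\infty$ gives $\nu_{v^i}(y_i,s_i)\to e$. Hence
\begin{align*}
\tfrac14\leq\limsup_i|\nu_{v^i}(y_i,s_i)-\bar e_0^{(i)}|=|e-\bar e_0^\infty|\leq C\sigma,
\end{align*}
contradicting the smallness of $\sigma$. This yields the existence of $K_7$; the bound $|\nu_\e-\bar e_0|\leq\tfrac14$ in $P_{K_5\e}$ implies $|\nabla u^\e|>0$ there, and since $\bar e_0$ is fixed, each level set $\{u^\e=s\}$ is represented as a Lipschitz graph over the hyperplane orthogonal to $\bar e_0$ inside $B_{K_5\e}$, with Lipschitz constant controlled by $|\nu_\e-\bar e_0|\leq\tfrac14$. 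The main technical obstacle is ensuring that the single-scale density hypothesis \eqref{InteriorCond1} at $K_7\e$ passes in the blow-up limit to an almost-unit-density bound at \emph{all} scales, which is exactly the role of the Allen--Cahn weighted monotonicity formula \eqref{eqn_WMF} combined with $\xi^\e\leq 0$.
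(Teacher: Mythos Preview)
Your proof is correct and follows essentially the same blow-up/compactness/Gap Theorem strategy as the paper. You are somewhat more careful than the paper on two points: (i) you explicitly invoke the weighted monotonicity formula to propagate the single-scale density bound \eqref{InteriorCond1} to all scales in the limit (the paper simply asserts the limit ``satisfies the conditions of Theorem \ref{GapTheorem}''), and (ii) you use the sign hypothesis to rule out $e\approx -\bar e_0^\infty$, which the paper leaves implicit. One minor wrinkle: the failure of the conclusion could also be $\nabla u^{\e_i}=0$ at some point of $P_{K_5\e_i}$ (where $\nu_{\e_i}$ is undefined), not only $|\nu_{\e_i}-\bar e_0|>\tfrac14$; this case is handled by the same $C^1_{loc}$ convergence to the nowhere-degenerate $v^\infty$, so it causes no trouble, but it is worth stating explicitly rather than deducing $\nabla u^\e\neq 0$ from the bound on $\nu_\e$ at the end.
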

\begin{proof}
Suppose not, there exists a sequence $\bar K_i\rightarrow\infty$ and a sequence of solutions $u^{\e_i}$ to \eqref{eqn_ACFe} with $\e=\e_i$ satisfying $|u^{\e_i}(0,0)|\leq 1-b$ and non-positive discrepancy condition \eqref{eqn_discrepancy}. Moreover
\begin{align*}
\frac{1}{2}(\bar K_i\e_i)^{-n-2}\iint_{P_{\bar K_i\e_i}}\left(\frac{\e|\nabla u^{\e_i}|^2}{2}+\frac{W(u^{\e_i})}{2}\right) dxdt\leq \alpha(1+\tau_0)\omega_n,
\end{align*}
and that for any $r\in(K_5\e_i,\bar K_i\e_i)$, there holds
\begin{align*}
\iint_{P_{r}}\left(1-\langle\nu_{\e_i},\bar e_0\rangle^2\right)\e_i|\nabla u^{\e_i}|^2 dxdt\leq \sigma^2r^{n+2}.
\end{align*}
But either $\nabla u^{\e_i}=0$ in $P_{\bar K_i\e_i}$ or
\begin{align*}
\sup_{P_{\bar K_i\e_i}}|\nu_{\e_i}-\bar e_0|>\frac{1}{4}.
\end{align*}
Rescaling by $\frac{1}{\e_i}$, we have $\bar u_i(x,t)=u^{\e_i}(\e_ix,\e_i^2t)$ satisfies equation \eqref{eqn_ACF1} with $|u_i(0,0)|\leq 1-b$ and that
\begin{align*}
\frac{1}{2}(\bar K_i)^{-n-2}\iint_{P_{\bar K_i}}\left(\frac{\e|\nabla u_i|^2}{2}+\frac{W(u_i)}{2}\right) dxdt\leq \alpha(1+\tau(n))\omega_n,
\end{align*}
and that for any $r\in(K_5,\bar K_i)$, there holds
\begin{align*}
\iint_{P_{r}}\left(1-\langle\nu,\bar e_0\rangle^2\right)|\nabla u_i|^2 dxdt\leq \sigma^2r^{n+2}.
\end{align*}
Therefore either $\nabla u_i=0$ in $P_{\bar K_i}$ or
\begin{align*}
\sup_{P_{\bar K_i}}|\nu_i-\bar e_0|>\frac{1}{4}.
\end{align*}
Since $\bar K_i\rightarrow\infty$, after passing to a subsequence, we obtain a limit $u_\infty:\mathbb R^{n+1}\times\mathbb R$ which is a solution to \eqref{eqn_ACF1} and satisfies the conditions of Theorem \ref{GapTheorem}. Thus $u_\infty(x,t)=g(e\cdot x+g^{-1}(u_\infty(0,0)))$ for some $e\in\mathbb S^n$. And moreover, by choosing sufficiently small $\sigma$, we have $|e-\bar e_0|\leq \frac{1}{8}$.

For sufficiently large $i$, we then have 
\begin{align*}
|\nu_i-e|\leq \frac{1}{8},
\end{align*}
which gives a contradiction by triangle inequality.
\end{proof}

By Proposition \ref{ImprovedMorreyType}, the conditions in Proposition \ref{InteriorLipschitz} are satisfied and consequently we have uniform Lipschitz regularity.
\begin{corollary}\label{UniformLipschitzRegularity}
For $\tau_0$ as in Theorem \ref{MainRegularityTheorem} and for any $b\in(0,1)$, suppose that $u^\e$ is a solution to \eqref{eqn_ACFe} in the parabolic ball $P_1\subset\mathbb R^{n+1}\times\mathbb R$, with $\e\leq \frac{1}{2K_5}$ ($K_5$ as in Theorem \ref{InteriorLipschitz}), and furthermore let us assume we have $|u^\e(0,0)|\leq 1-b$ and that \eqref{SingleLayer} holds with $\tau\leq \tau_0$. Then for any $(x_0,t_0)\in\{u^\e(x_0,t_0)=u^\e(0,0)\}\cap P_{\frac{1}{2}}$, we have $\nabla u^\e\neq0$ and
\begin{align*}
|\nu_\e- e_{n+1}|\leq \frac{1}{2}
\end{align*}
in $P_{K_5\e}(x_0,t_0)$. In particular we have uniform Lipschitz regularity of the level sets in $P_{\frac{1}{2}}$.
\end{corollary}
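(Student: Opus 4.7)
The plan is to derive Corollary \ref{UniformLipschitzRegularity} as a direct application of Proposition \ref{ImprovedMorreyType} followed by Theorem \ref{InteriorLipschitz}, applied at each point $(x_0,t_0)$ of the level set $\{u^\e = u^\e(0,0)\}\cap P_{1/2}$. The main input from the preceding sections is already packaged: Proposition \ref{ImprovedMorreyType} supplies a fixed reference direction $\bar e_0$ with the dyadic Morrey decay and with $|\bar e_0-e_{n+1}|\le C(\sigma^{1/2}+\delta_0^{1/2})$, while Theorem \ref{InteriorLipschitz} converts that decay into a pointwise closeness $|\nu_\e-\bar e_0|\le \tfrac14$ on the sub-$\e$ scale $P_{K_5\e}$. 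A triangle inequality then closes the argument.

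First I would translate to the chosen base point. Fix $(x_0,t_0)\in\{u^\e=u^\e(0,0)\}\cap P_{1/2}$ and set $\tilde u^\e(x,t):=u^\e(x_0+x,t_0+t)$, which again solves \eqref{eqn_ACFe} on $P_{1/2}$, obeys the non-positive discrepancy condition, and satisfies $|\tilde u^\e(0,0)|\le 1-b$. By the weighted monotonicity formula (Theorem \ref{thm_WMF}) together with the non-positive discrepancy assumption, the almost-unit-density bound \eqref{SingleLayer} at the origin of $u^\e$ propagates, with a harmless loss in constants, to the translated origin of $\tilde u^\e$ on some $P_{R_0}$; by Remark (iii) following Theorem \ref{thm_ExcessDecay} this in turn yields the small height excess \eqref{SmallHeight} with respect to a suitable hyperplane once $\tau_0$ is chosen small enough. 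Thus the full hypothesis set of Theorem \ref{MainRegularityTheorem}/Proposition \ref{ImprovedMorreyType} is available at $(x_0,t_0)$.

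Next I would feed this into Proposition \ref{ImprovedMorreyType} with a parameter $\sigma>0$ to be fixed. This produces a unit vector $\bar e_0=\bar e_0(x_0,t_0)$ such that for every $r\in(K_5\e,\theta)$,
\[
r^{-n-2}\iint_{P_r(x_0,t_0)}(1-\langle\nu_\e,\bar e_0\rangle^2)\e|\nabla u^\e|^2\,dxdt\;\le\;\sigma+K_6\delta_0 r^{\alpha/2},
\]
and moreover $|\bar e_0-e_{n+1}|\le C(\sigma^{1/2}+\delta_0^{1/2})$. Shrinking $\tau_0$ (and hence $\delta_0$, via Remark (iii)) and shrinking $\sigma$ further ensures simultaneously that the right-hand side above is bounded by the threshold $\sigma_0^2$ required as the hypothesis \eqref{InteriorCond2} of Theorem \ref{InteriorLipschitz}, and that $C(\sigma^{1/2}+\delta_0^{1/2})\le \tfrac14$. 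The energy bound \eqref{InteriorCond1} at scale $K_7\e$ follows from the already-established almost-unit-density at the translated origin (scales below $K_7\e$ are contained in the scale at which the energy hypothesis holds). Invoking Theorem \ref{InteriorLipschitz} then gives $\nabla u^\e\neq 0$ on $P_{K_5\e}(x_0,t_0)$ and
\[
|\nu_\e-e_{n+1}|\;\le\;|\nu_\e-\bar e_0|+|\bar e_0-e_{n+1}|\;\le\;\tfrac14+\tfrac14=\tfrac12
\]
throughout $P_{K_5\e}(x_0,t_0)$.

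The step I expect to be most delicate is the second one: carefully tracking the constants $\sigma,\delta_0,\tau_0$ through Remark (iii) and the monotonicity formula so that (a) the Morrey tail $\sigma+K_6\delta_0 r^{\alpha/2}$ falls below the quantitative threshold of Theorem \ref{InteriorLipschitz}, (b) the tilt of $\bar e_0$ away from $e_{n+1}$ is at most $\tfrac14$, and (c) the hypotheses are uniform over base points $(x_0,t_0)\in P_{1/2}$, not merely at the origin. Once these constants are synchronized, the conclusion at the single point $(x_0,t_0)$ is immediate, and the global conclusion (uniform Lipschitz regularity of the level sets in $P_{1/2}$) follows by covering $\{u^\e=u^\e(0,0)\}\cap P_{1/2}$ with the parabolic cylinders $P_{K_5\e}(x_0,t_0)$: on each such cylinder $|\langle\nu_\e,e_{n+1}\rangle|\ge\tfrac12$, so $\partial_{x_{n+1}}u^\e\ne 0$ and the level set is the graph of a function $x_{n+1}=h^{\e,s}(\hat x,t)$ with $|\hat\nabla h^{\e,s}|,|\partial_t h^{\e,s}|^{1/2}$ bounded by a universal constant via the identities \eqref{FunctionAndLevelSet}.
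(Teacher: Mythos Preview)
Your proposal is correct and follows exactly the route the paper intends: the paper's own justification is the single sentence preceding the corollary, namely that Proposition \ref{ImprovedMorreyType} supplies the hypotheses \eqref{InteriorCond1}--\eqref{InteriorCond2} of Theorem \ref{InteriorLipschitz}, from which the conclusion follows. You have simply fleshed out the details (translation to the base point, propagation of the density bound via monotonicity, the triangle inequality combining $|\nu_\e-\bar e_0|\le\tfrac14$ with $|\bar e_0-e_{n+1}|\le\tfrac14$), all of which are implicit in the paper's one-line argument.
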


\begin{proof}[Proof of Theorem \ref{MainRegularityTheorem}]

Now we apply the improvement of regularity result for intermediate level sets in \cite{Nguyen2020} here, which improves the uniform Lipschitz regularity in Corollary \ref{UniformLipschitzRegularity} to a uniform $C^{2,\alpha}$ regularity. 
\end{proof}

\section{Strong Convergence Theorem}\label{PfNoCancelation}
Let us recall the construction in \cite[1.4]{Ilmanen1993} for $M_0^n\subset\mathbb R^{n+1}$ a smooth strictly mean convex closed hypersurface. This shows that there is a sequence of smooth functions $u_0^\e:\mathbb R^{n+1}\rightarrow[-1,1]$ such that they approximate $M_0$ in the following sense:
\begin{itemize}
\item The discrepancy $\frac{\e|\nabla u^\e|^2}{2}-\frac{W(u^\e)}{\e}\leq 0$;
\item The energy measure $d\mu^\e=\frac{\e|\nabla u^\e|^2}{2}+\frac{W(u^\e)}{\e}\rightarrow\alpha\mathcal H^{n}\llcorner M_0$;
\item $u_0^\e\rightarrow2\chi_{E_0}-1$ in $\mathbf{BV}_{loc}$, where $E_0$ is the region enclosed by $M_0$;
\item There is a uniform density bound $\frac{\mu_o^\e(B_r(x))}{r^n}\leq C$ for any $x\in\mathbb R^{n+
1}, r>0$;
\item $\|u_0^\e\|_{C^2}\leq C(\e)$.
\end{itemize}
After passing to subsequence of solutions $u^{\e_i}:\mathbb R^{n+1}\times\mathbb R^+\rightarrow[-1,1]$ to \eqref{eqn_ACFe} with initial condition $u^\e(\cdot,0)=u^\e_0$ satisfies
\begin{align*}
u^{\e_i}\rightarrow2\chi_{E}-1
\end{align*}
in $\mathbf{BV}_{loc}(\mathbb R^{n+1}\times\mathbb R^+)$ for some open subset $E\subset \mathbb R^{n+1}\times\mathbb R^+, E\cap\mathbb R^{n+1}\times\{0\}=E_0\times\{0\}$. Moreover $d\mu=\lim_{\e_i\rightarrow0} d\mu^{\e_i}=\alpha\mathcal H^{n+1}\llcorner E$ is a Brakke flow in $\mathbb R^{n+1}\times\mathbb R^+$.

For such a sequence of solutions $u^{\e_i}$ satisfying conditions in Theorem \ref{NoCancelation}, we denote the defect measure (cf \cite{Metzger2008})
\begin{align*}
\gamma(t):&=\lim_{\e_i\rightarrow 0}|\nabla u^{\e_i}(\cdot,t)|-|\nabla u(\cdot,t)|\\
\gamma:&=\gamma(t)dt
\end{align*}
Here we have $u=2\chi_{E}-1$ as the limit of $u^{\e_i}$ under BV convergence. Furthermore $|\nabla u|=\mathcal H^{n}\llcorner\partial^*E$ by the theory of BV functions and Caccioppoli set. We want to show $\gamma(t)=0$ a.e. $t$.

First, using White's partial regularity theory \cite{White2000}, we show
\begin{proposition}\label{RegularNoCancelation}
For every $t$, $\spt\gamma(t)\subset S(t)\subset\mathbb R^{n+1}$ where $S$ is a set of Hausdorff dimension at most $n-1$, and thus $\mathcal H^n(\spt(\gamma(t))=0$.
\end{proposition}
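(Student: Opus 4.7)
The plan is to identify $S(t)$ as the time slice of the singular set of the limit mean convex Brakke flow, and then invoke the main regularity theorem (Theorem \ref{MainRegularityTheorem}) to show that no cancellation can occur at regular space-time points. Since the initial hypersurface $M_0$ is smooth and strictly mean convex, the limit $d\mu=\alpha\mathcal{H}^{n+1}\llcorner\partial E$ is a mean convex unit-density Brakke flow. White's partial regularity theorem \cite{White2000} then supplies a closed space-time singular set $S\subset\mathbb{R}^{n+1}\times\mathbb{R}^+$ of parabolic Hausdorff dimension at most $n-1$, outside of which $d\mu_t$ is supported on a smooth multiplicity-one classical mean curvature flow $\{M_t\}$. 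Each time slice $S(t)\subset\mathbb{R}^{n+1}$ then has spatial Hausdorff dimension at most $n-1$, and in particular $\mathcal{H}^n(S(t))=0$.

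Next, fix a regular space-time point $(x_0,t_0)\in\spt d\mu\setminus S$ with $x_0\in M_{t_0}$. By smoothness of the limit flow near $(x_0,t_0)$, for any prescribed $\tau>0$ there exists $r=r(\tau)>0$ so that the parabolic density ratio of $d\mu$ on $P_r(x_0,t_0)$ is bounded above by $(1+\tau/2)\alpha\omega_n$. By the convergence $d\mu^{\e_i}\to d\mu$ as Radon measures on $\mathbb{R}^{n+1}\times\mathbb{R}^+$, the same bound with $\tau$ in place of $\tau/2$ holds for the Allen--Cahn energy density once $i$ is sufficiently large. For each such $i$ one may choose a base point $x_0^i\to x_0$ lying on the interface $\{u^{\e_i}=0\}$ so that $|u^{\e_i}(x_0^i,t_0)|\leq 1-b$ for any $b\in(0,1)$, and so that the hypothesis \eqref{AlmostUnit} of Theorem \ref{MainRegularityTheorem} is satisfied uniformly in $i$ after a small spatial translation.

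Applying Theorem \ref{MainRegularityTheorem} then yields uniform $C^{2,\alpha}_P$ graphical representations for the intermediate level sets $\{u^{\e_i}=s\}$, $s\in(-1+b,1-b)$, on $P_{r/2}(x_0^i,t_0)$, which converge smoothly to $M_{t}$ in this neighbourhood. Combining this uniform regularity with the exponential decay estimate Lemma \ref{ExponentialDecay} for $|\nabla u^{\e_i}|$ outside any fixed neighbourhood of the interface gives strong convergence
\begin{align*}
|\nabla u^{\e_i}(\cdot,t_0)|\to |\nabla u(\cdot,t_0)|=\mathcal{H}^n\llcorner M_{t_0}
\end{align*}
in $L^1$ of a spatial neighbourhood of $x_0$, so that the defect measure $\gamma(t_0)$ vanishes near $x_0$. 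Hence $\spt\gamma(t_0)\subset S(t_0)$, which combined with the dimension bound from Step~1 gives the claim.

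The main obstacle is verifying the hypotheses of Theorem \ref{MainRegularityTheorem} uniformly in $i$ at a regular point. This requires upgrading the weak convergence $d\mu^{\e_i}\to d\mu$ to a quantitative energy density bound on a definite parabolic cylinder, which we propose to carry out via Huisken's monotonicity formula (Theorem \ref{thm_WMF}) applied to both the approximating solutions and the limit, using the smoothness of the limit flow near $(x_0,t_0)$ to control the Gaussian density from above by $1+\tau_0/2$, and the almost-monotonicity of Allen--Cahn to transfer this bound to $u^{\e_i}$ on a slightly smaller cylinder.
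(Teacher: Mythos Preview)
Your proposal is correct and follows essentially the same approach as the paper: identify $S$ via White's partial regularity for the mean convex limit flow, and at regular points invoke Theorem~\ref{MainRegularityTheorem} to obtain uniform $C^{2,\alpha}$ graphicality of the transition layers, from which the vanishing of the defect measure follows. The paper is in fact less explicit than you are about verifying hypothesis~\eqref{AlmostUnit} at regular points; it simply asserts the applicability of Theorem~\ref{MainRegularityTheorem} once the limit is locally a smooth flow, whereas you correctly flag this as requiring the measure convergence $d\mu^{\e_i}\to d\mu$ together with the smoothness of the limit to push the density below $(1+\tau_0)\alpha\omega_n$ on a definite cylinder.

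One small difference in the final step: rather than invoking Lemma~\ref{ExponentialDecay} to handle the region away from the interface, the paper computes $\int\phi|\nabla u^{\e_i}|$ directly via the coarea formula $\int\phi|\nabla u^{\e_i}|=\int_{-1}^1\int_{\{u^{\e_i}=s\}}\phi\,d\mathcal{H}^n\,ds$, rewrites each slice integral in the graph coordinates $h^{\e_i,s}$, and passes to the limit using the $C^{2,\theta}$ convergence $h^{\e_i,s}\to h^\infty$ to land on $2\mathcal{H}^n(\partial^*E_t)$. This is slightly cleaner than separately controlling the far-field contribution, since the coarea formula already localises all the mass of $|\nabla u^{\e_i}|$ onto the level sets, and the $s$-integration over $(-1,1)$ automatically produces the factor $2$ in $|\nabla(2\chi_E-1)|$. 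Your route via exponential decay would also work but requires a local version of Lemma~\ref{ExponentialDecay} adapted to a moving (rather than static flat) limit interface.
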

\begin{proof}
By the main result of  \cite{Ilmanen1994} and \cite{tonegawa2003integrality}, the sequence of energy measures converges as follows 
\begin{align*}
d\mu^{\e_i}(\cdot,t)=\left(\frac{\e_i|\nabla u^{\e_i}(\cdot,t)|^2}{2}+\frac{W(u^{\e_i}(\cdot,t))}{\e_i}\right) dx\rightarrow \alpha d\mu_t
\end{align*}
where $d\mu_t$ is an integral Brakke flow starting from $d\mu_0=\alpha \mathcal H^n\llcorner M_0$.

Since this limit flow is mean convex, it is non-fattening. Summing up the conclusion in \cite[12.2]{Ilmanen1994} in the non-fattening case, we have 
\begin{align*}
u^{\e_i}\rightarrow u&=2\chi_{E}-1,   \text{in $\mathbf{BV}_{loc}(\mathbb R^{n+1}\times[0,\infty))$}\\
\lim d\mu^{\e_i}&=\alpha d\mu_t=\alpha\mathcal H^n\llcorner\partial^* E_t,   \text{where $E_t=E\cap(\mathbb R^{n+1}\times\{t\})$}\\
\spt d\mu&=\spt d\mu_tdt=\overline{\partial^*E}=\partial^*E   (\mathcal H^{n+1} a.e.)
\end{align*}
Applying the partial regularity theory of \cite[Theorem 1.1]{White2000} to the limit flow $d\mu$, we further conclude
\begin{claim}\label{WhitePartialRegularity}
There is a set $S\subset\spt d\mu$ such that the parabolic Hausdorff measure satisifies $\mathcal H_p^{n-1}(S)=0$ and that $(\spt d\mu)\setminus S$ is a regular $n+1$ dimensional hypersurface in $\mathbb R^{n+1}\times[0,\infty)$.

Let us denote $S(t)=S\cap(\mathbb R^{n+1}\times\{t\})$, then $S(t)$ has Hausdorff dimension at most $n-1$ for every $t$, and have as an immediate consequence, $\mathcal H^{n}(S(t))=0$ for every $t$.
\end{claim} 
Now for any $(y,t)\in\spt d\mu\setminus S$, there exists $r_{y,t}$ such that $\spt d\mu\cap \left(B_{r_{y,t}}\times(t-r^2_{y,t},t+r^2_{y,t})\right)$ is a regular hypersurface. 

By the combination of Theorem \ref{MainRegularityTheorem} and the improvement of regularity results in \cite{Nguyen2020}, we have: up to a choice of coordinate, the level sets of the Allen--Cahn solutions $u^{\e_i}$ in the ball $B_{r_{y,t}}$ can be represented by uniform $C^{2,\theta}$ graphs $\{u^{\e_i}=s\}\cap B_{r_{y,t}}=\{(\hat x,x_{n+1})|x_{n+1}=h^{\e_i,s}(\hat x)\}$. Furthermore $h^{\e_i,s}$ converges in $C^{2,\theta}$ to $h^\infty$ where $\spt d\mu_t\cap B_{r_{y,t}}=\{(\hat x,x_{n+1})|x_{n+1}=h^\infty(\hat x)\}$. And so
\begin{align*}
\int_{B_{r_{y,t}}(y,t)}\phi|\nabla  u^\e(\cdot,t)| dx&=\int_{-1}^1\int_{\{u^\e=s\}\cap B_{r_{y,t}}(y,t)}\phi d\mu_{\{u^\e=s\}}ds\\
&=\int_{-1}^1\int_{\hat B_{r_{y,t}}(\hat y)}\phi\sqrt{1+|\hat\nabla h^{\e,s}|^2} d\mathcal H^nds\\
\rightarrow&\int_{-1}^1\int_{\hat B_{r_{y,t}}(y,t)(\hat y)}\phi\sqrt{1+|\hat\nabla h^\infty|^2} d\mathcal H^nds\\
&=\int_{-1}^1\left(\mathcal H^n(\Gamma\cap B_{r_{y,t}}(y,t))\right)ds\\
&=2\mathcal H^n(\partial^* E\cap B_{r_{y,t}}(y,t))\\
&=\int_{B_{r_{y,t}}(y,t)}\phi \nabla (2\chi_{E}-1)| dx\\
&=\int_{B_{r_{y,t}}(y,t)}\phi|\nabla u(\cdot,t)| dx.
\end{align*}
Thus $\gamma(t)(B_{r_{y,t}}(y,t))=0$ and $\spt\gamma(t)\subset S(t)$.
\end{proof}
Second, we show $\lim|\nabla u^{\e_i}|$, and thus $\gamma$, does not concentrate near $S(t)$.
\begin{proposition}\label{SingularNoCancelation}
For any $\delta>0$, there exists $\overline{\e}_0$ such that, if $\e_i\leq \overline{\e}_0$, then
\begin{align*}
|\nabla u^{\e_i}|\left(S(t)\right)\leq \delta.
\end{align*}
Since $\delta$ is arbitrary and $\gamma(t)\left(S(t)\right)\leq \lim|\nabla u^{\e_i}|\left(S(t)\right)$, we have
\begin{align*}
\gamma(t)\left(S(t)\right)=0.
\end{align*}
\end{proposition}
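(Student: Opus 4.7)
The plan is to exploit the smallness of $S(t)$ in Hausdorff dimension together with a uniform density upper bound for $|\nabla u^{\e_i}|$ on small balls.

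By Claim \ref{WhitePartialRegularity}, $S(t)$ has Hausdorff dimension at most $n-1$, so $\mathcal H^n(S(t))=0$. Since the Brakke flow starts from a compact initial hypersurface $M_0$, the support $\spt d\mu_t$ and hence $S(t)$ is contained in a fixed compact set. For any $\eta>0$ we may therefore cover $S(t)$ by finitely many Euclidean balls $\{B_{r_j}(x_j)\}_{j=1}^N$ with $\sum_{j=1}^N r_j^n<\eta$, where $r_\ast:=\min_j r_j>0$.

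The crucial analytic ingredient is the following uniform $n$-dimensional density bound: there exist constants $C_0>0$ and $c_0>0$, independent of $\e$, such that for every $\e$ sufficiently small and every ball $B_r(x)$ with $c_0\e\le r\le 1$,
$$
|\nabla u^{\e}(\cdot,t)|(B_r(x))\le C_0 r^n.
$$
This should follow by combining the energy density bound $\mu^{\e}_t(B_r(x))\le C r^n$, which is a consequence of the weighted monotonicity formula Theorem \ref{thm_WMF} applied to the assumed density bound on $u_0^{\e}$, with the Modica-type pointwise estimate $\sqrt{2W(u^{\e})}|\nabla u^{\e}|\le \mu^{\e}$. On the bulk $\{|u^{\e}|\le 1-b\}$ this immediately gives $|\nabla u^{\e}|\bigl(B_r(x)\cap\{|u^{\e}|\le 1-b\}\bigr)\le c_b^{-1}\mu^{\e}(B_r(x))\le c_b^{-1}Cr^n$, where $c_b:=\sqrt{2W(1-b)}>0$. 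On the complementary tail $\{|u^{\e}|>1-b\}$, the non-positive discrepancy \eqref{eqn_discrepancy} yields $\e|\nabla u^{\e}|\le\sqrt{2W(u^{\e})}\le \sqrt{2}\,b$, and the total mass of this tail inside $B_r(x)$ is controlled uniformly by a localized form of Lemma \ref{LevelSetCloseTo1}, producing a contribution of order $b\,r^n$. Choosing $b$ first, then $\e_0$, yields the displayed bound.

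With the density bound in place, the conclusion follows immediately. Given $\delta>0$, set $\eta:=\delta/(2C_0)$ and choose a finite cover $\{B_{r_j}(x_j)\}_{j=1}^N$ of $S(t)$ with $\sum_j r_j^n<\eta$. Let $\overline\e_0:=r_\ast/c_0$. For $\e_i\le\overline\e_0$ every radius satisfies $r_j\ge c_0\e_i$ so the density bound applies on each $B_{r_j}(x_j)$, whence
$$
|\nabla u^{\e_i}|(S(t))\le\sum_{j=1}^N|\nabla u^{\e_i}|(B_{r_j}(x_j))\le C_0\sum_{j=1}^N r_j^n<\frac{\delta}{2}<\delta,
$$
which is the first assertion. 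Combining with $\gamma(t)(S(t))\le\lim_i|\nabla u^{\e_i}|(S(t))\le\delta$ and letting $\delta\to 0$ gives $\gamma(t)(S(t))=0$. The principal obstacle is establishing the uniform density bound across the entire range $c_0\e\le r\le 1$: while the bulk contribution is straightforward from the Modica inequality, the tail $\{|u^{\e}|>1-b\}$ in balls much larger than $\e$ requires a delicate interplay between the discrepancy inequality and a space-localized equipartition estimate of Lemma \ref{LevelSetCloseTo1}, and this is where the technical work concentrates.
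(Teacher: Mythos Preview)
Your strategy—cover $S(t)$ by balls with $\sum_j r_j^n$ small and apply a uniform density upper bound $|\nabla u^{\e_i}|(B_r)\le C r^n$—is exactly the paper's. The paper, however, obtains the density bound in one line rather than via your bulk/tail decomposition: it asserts $|\nabla u^{\e_i}|\le C_1\,d\mu^{\e_i}$ as measures (from the a~priori parabolic bound $\e_i|\nabla u^{\e_i}|\le C_1(n)$) and then invokes the Euclidean volume growth $d\mu^{\e_i}_t(B_r)\le C_2 r^n$ inherited via monotonicity from the initial-data density bound, giving $|\nabla u^{\e_i}|(\cup_j B_{r_j})\le C_1C_2\sum_j r_j^n$ immediately.

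Your caution about the tail $\{|u^\e|>1-b\}$ is well placed—indeed the paper's pointwise comparison is not literally correct there, since for the one-dimensional profile the ratio $|\nabla u^\e|\big/\bigl(\tfrac{\e|\nabla u^\e|^2}{2}+\tfrac{W(u^\e)}{\e}\bigr)=1/(\e|\nabla u^\e|)$ diverges as $u\to\pm 1$—but your proposed fix does not close the gap either: Lemma~\ref{LevelSetCloseTo1} controls $\int_{\text{tail}}\e|\nabla u^\e|^2$, not $\int_{\text{tail}}|\nabla u^\e|$, and the pointwise bound $|\nabla u^\e|\le Cb/\e$ combined with the trivial volume $|B_r\cap\{|u^\e|>1-b\}|\lesssim r^{n+1}$ only yields $b\,r^{n+1}/\e$, which is useless for $r\gg\e$. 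The cleanest repair, which avoids both your tail analysis and the paper's imprecise step, is to work throughout with the Modica quantity $|\nabla\Phi(u^\e)|=\sqrt{2W(u^\e)}\,|\nabla u^\e|$ (the natural object in Ilmanen's BV formulation), for which Young's inequality gives the honest pointwise bound $|\nabla\Phi(u^\e)|\le d\mu^\e$ and the covering argument goes through verbatim.
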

\begin{proof}
First, by a priori parabolic estimates, there exists $C_1(n)$ such that $\e_i|\nabla u^{\e_i}|\leq C_1(n)$, and thus $|\nabla u^{\e_i}|\leq C_1d\mu^{\e_i}$ as measures. 

By Claim \ref{WhitePartialRegularity} and the definition of Hausdorff measure, for every $\bar\delta>0$, we can cover $S(t)$ by $S\subset\cup_{i=1}^\infty B_{r_{i,\bar\delta}}(x_{i,\bar\delta})$ and that $\sum_i r_{i,\bar\delta}^n<\bar\delta$.

Since the Allen--Cahn solutions under consideration have Euclidean volume growth (see \cite[1.5]{Ilmanen1994}), that is $d\mu_t^{\e_i}(B_{r_{i,\bar\delta}}(x_{i,\bar\delta}))\leq C_2r^n_{i,\bar\delta}$, we have
\begin{align*}
|\nabla u^{\e_i}|(\cup_{i=1}^\infty B_{r_{i,\bar\delta}}(x_{i,\bar\delta}))&\leq C_1d\mu^{\e_i}(\cup_{i=1}^\infty B_{r_{i,\bar\delta}}(x_{i,\bar\delta}))\\
&\leq C_1\sum_id\mu^{\e_i}(B_{r_{i,\bar\delta}}(x_{i,\bar\delta}))\\
&\leq C_1C_2\sum_i r_{i,\bar\delta}^n\\
&\leq C_1C_2\bar\delta.
\end{align*}
Thus by choosing $\bar\delta\leq \frac{\delta}{C_1C_2}$ we complete the proof.
\end{proof}

Proposition \ref{RegularNoCancelation} and Proposition \ref{SingularNoCancelation} together gives $\gamma(t)=0$, which completes the proof of Theorem \ref{NoCancelation}.

\section{Appendix}

\begin{lemma}\label{DensityLowerBound}
For any $b\in(0,1),\Omega_1$, there exists $K_0(b),R_0(b)>0$ so that the following holds: let $u^\e$ be a solution to \eqref{eqn_ACFe} in the parabolic ball $P_{R}$ with $R\geq 2\e R_0$. Suppose that $|u(0,0)|\leq 1-b$ and let us assume 
\begin{align*}
\iint_{P_r}\left(  \frac{\e|\nabla u^{\e}|^2}{2}+\frac{W(u^{\e})}{\e}  \right) dxdt\leq \Omega_1r^{n+2} 
\end{align*}
for any $r\geq 0$.

Then for any $r\in\left [\e,\frac{R}{2}\right]$, we have
\begin{align*}
r^{-n-2}\iint_{P_r}\left(  \frac{\e|\nabla u^{\e}|^2}{2}+\frac{W(u^{\e})}{\e}  \right) dxdt\geq K_0.
\end{align*}
\end{lemma}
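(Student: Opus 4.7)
The proof will separate the microscopic regime $r \in [\e, R_0\e]$ from the macroscopic regime $r \in [R_0\e, R/2]$. The microscopic lower bound will exploit the pointwise condition $|u^\e(0,0)| \leq 1-b$ directly, while the macroscopic lower bound will use the weighted monotonicity formula to propagate the microscopic concentration to all larger scales.

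For the microscopic regime, I would rescale by setting $v(x',t') := u^\e(\e x', \e^2 t')$, so that $v$ solves the unscaled equation \eqref{eqn_ACF1} with $|v(0,0)| \leq 1-b$ and $|v| \leq 1$. Standard parabolic regularity (with bounds depending only on the dimension since $|v|\leq 1$) yields a uniform $C^1$ estimate on $v$, and a continuity argument produces $\delta = \delta(b) > 0$ with $|v(x,t)| \leq 1-b/2$, hence $W(v) \geq c_0(b) > 0$, throughout $P_\delta$. Scaling back to $u^\e$ yields the time-slice bound
\begin{equation*}
\mu^\e_t(B_{\delta\e}) \;\geq\; \int_{B_{\delta\e}} \frac{W(u^\e)}{\e}\,dx \;\geq\; c_1(b)\,\e^n \qquad \text{for all } |t| \leq \delta^2 \e^2,
\end{equation*}
which, after integrating in $t$, establishes the lemma in the range $r \in [\e, R_0\e]$ for a suitable $R_0 = R_0(b)$.

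For the macroscopic regime I would apply the weighted monotonicity formula (Theorem \ref{thm_WMF}) at $(y,s) = (0,0)$ with $\rho \equiv 1$, localized by a spatial cut-off supported in $B_R$ whose error terms are absorbed using the assumed upper density bound. The non-positive discrepancy \eqref{eqn_discrepancy} renders the residual discrepancy term in \eqref{eqn_WMF} non-positive, and combined with the manifestly non-positive Willmore term this shows $t \mapsto \int \Phi_{0,0}(\cdot,t)\,d\mu^\e_t$ is non-increasing in $t$ for $t<0$. The microscopic estimate above gives, at $t=-\e^2/2$, a Gaussian lower bound $\int \Phi_{0,0}(x,-\e^2/2)\,d\mu^\e_{-\e^2/2} \geq c_2(b,n)$, and the monotonicity propagates this backward in time to yield
\begin{equation*}
\int \Phi_{0,0}(x,-\rho^2)\,d\mu^\e_{-\rho^2}(x) \;\geq\; c_2(b,n) \qquad \text{for every } \rho \geq \e/\sqrt{2}.
\end{equation*}

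To convert this time-slice Gaussian bound into a parabolic ball bound, I would control the Gaussian tails via a geometric-series decomposition using the hypothesized upper density, obtaining a dimensional constant $C = C(\Omega_1,n)$ such that $\mu^\e_{-\rho^2}(B_{C\rho}) \geq c_3(b,n,\Omega_1)\rho^n$. Setting $\rho = s/C$ produces, for each $s \in [\e, r]$, the time-slice estimate $\mu^\e_{-s^2/C^2}(B_s) \geq c_4 s^n$; since $B_s \subset B_r$, these are also lower bounds for $\mu^\e_{-s^2/C^2}(B_r)$ at times belonging to $(-r^2,0) \subset (-r^2, r^2)$. Integrating this family in $s$ via the substitution $t = -s^2/C^2$ then gives
\begin{equation*}
\iint_{P_r} d\mu^\e \;\geq\; \int_{-r^2/C^2}^{-\e^2/C^2} \mu^\e_t(B_r)\,dt \;\geq\; c_5\bigl(r^{n+2}-(C\e)^{n+2}\bigr) \;\geq\; K_0\,r^{n+2},
\end{equation*}
whenever $r \geq 2C\e$, which (after adjusting $R_0$) completes the proof. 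The main technical obstacle I anticipate is the localization of Huisken's monotonicity to the bounded cylinder $P_R$, where cut-off error terms concentrated in an annulus near $\partial B_R$ must be absorbed by the Euclidean upper density hypothesis; a secondary subtlety is that the monotonicity only propagates Gaussian lower bounds backward in time, so the parabolic integral must be assembled from a one-parameter family of time slices rather than a single one.
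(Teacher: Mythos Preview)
Your approach is correct and genuinely different from the paper's. The paper argues by contradiction and compactness: assuming a sequence $u^{\e_i}$ violates the lower bound at radii $r_i$, it rescales by $r_i$ and splits into two cases according to whether $\hat\e_i=\e_i/r_i\to 0$ or stays bounded away from $0$. In the first case it invokes Tonegawa's integrality \cite{tonegawa2003integrality} to conclude the limiting Brakke flow is empty near the origin, then Trumper's relaxation \cite{trumper2008relaxation} to force $|u^{\e_i}(0,0)|\to 1$; in the second case the limit is a constant solution, whose energy grows like $r^{n+3}$ and violates the upper density. Your argument is instead direct and quantitative: a local parabolic-regularity bound supplies a microscopic Gaussian density lower bound at scale $\e$, and Huisken's monotonicity (the unweighted $\rho\equiv 1$ case of Theorem~\ref{thm_WMF}, which under \eqref{eqn_discrepancy} has non-positive right-hand side) propagates it to all earlier times, after which an integration in $t$ yields the parabolic lower bound. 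Your route is more self-contained (no appeal to \cite{tonegawa2003integrality} or \cite{trumper2008relaxation}) and gives explicit constants, at the cost of handling the localization carefully; the paper's route is softer but imports substantial external machinery.

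One technical remark: your ``geometric-series decomposition'' to pass from the Gaussian lower bound to a ball lower bound tacitly needs a \emph{time-slice} upper density $\mu^\e_{-\rho^2}(B_{2^kC\rho})\lesssim (2^kC\rho)^n$, whereas the hypothesis is only the \emph{space-time} bound $\iint_{P_r}d\mu^\e\leq \Omega_1 r^{n+2}$. The cleanest way around this is to run the localized monotonicity with a cutoff $\phi_r$ at scale $r$ (rather than at scale $R$): then the cutoff error is supported in $B_r\setminus B_{r/2}$, and since $\rho\leq r/C$ the Gaussian on that annulus is $\leq C_n r^{-n}e^{-C^2/32}$, so the integrated error is bounded by $C_n e^{-C^2/32}\Omega_1$ directly from the space-time hypothesis. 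This yields $\int_{B_r}\Phi_{0,0}(\cdot,-\rho^2)\,d\mu^\e_{-\rho^2}\geq c$, and then the trivial bound $\Phi\leq (4\pi\rho^2)^{-n/2}$ gives $\mu^\e_{-\rho^2}(B_r)\geq c'\rho^n$ without any dyadic tail estimate.
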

\begin{proof}
For the sake of contradiction, let us suppose the statement is false. That is, there exists a sequence $u^{\e_i}$ of solutions to equation \eqref{eqn_ACFe} with $\e_i$ in place of $\e$ in $P_{R_i}\subset\mathbb R^{n+1}\times\mathbb R$ where $\frac{R_i}{\e_i}\rightarrow\infty$, such that $|u^{\e_i}(0,0)|\leq 1-b$ but for some $r_i\in[\e_i,\frac{R_i}{2}]$ we have
\begin{align*}
\sigma_i:=r_i^{-n-2}\iint_{P_{r_i}}\left(  \frac{\e_i|\nabla u^{\e_i}|^2}{2}+\frac{W(u^{\e_i})}{\e_i}  \right) dxdt\rightarrow0.
\end{align*}
After parabolically rescaling by $r_i$, we have a sequence $v^i(x,t)=u^{\e_i}(\frac{x}{r_i},\frac{t}{r_i^2})$ satisfying equation \eqref{eqn_ACFe} with $\e=\hat \e_i=\e_ir_i$. This new sequence satisfies 
\begin{align*}
\sigma_i:=\iint_{P_{1}}\left(  \frac{\hat\e_i|\nabla v^{i}|^2}{2}+\frac{W(v^{i})}{\hat\e_i}  \right) dxdt\rightarrow0.
\end{align*}
If $\hat \e_i=\e_ir_i\rightarrow0$, by \cite{tonegawa2003integrality} the sequence of energy measures converges to an integral Brakke flow and $\sigma_i\rightarrow0$ implies that $(0,0)$ is not in the support of the limit flow. Then by the results in \cite{trumper2008relaxation}, we have
\begin{align*}
|u^{\e_i}(0,0)|=|v^i(0,0)|\rightarrow\pm1,
\end{align*}
which contradicts the assumption that $|u^{\e_i}(0,0)|\leq 1-b$.

If $\hat \e_i=\e_ir_i\rightarrow\e_0>0$, without loss of generality, we can assume $\e_0=1$. In this case $\sigma_i\rightarrow0$ in $P_1$ implies that $v^i\equiv v^i(0,0)$. However, constant solutions satisfy 
\begin{align*}
\iint_{P_r}\left(  \frac{\e|\nabla u^{\e}|^2}{2}+\frac{W(u^{\e})}{\e}  \right) dxdt\geq O(r^{n+3})
\end{align*}
as $r\rightarrow\infty$, which contradicts the assumption $\iint_{P_r}\left(  \frac{\e|\nabla u^{\e}|^2}{2}+\frac{W(u^{\e})}{\e}  \right) dxdt\leq \Omega_1r^{n+2}$.
\end{proof}

\printaddress

\end{document}